\newif\ifrs
\ifrs \usepackage{mathrsfs} \fi  % Use \mathscr{*}
\newif\ifcol
\newtheorem{theorem}{Theorem}[section]
\newtheorem{ass}[theorem]{Assumption}
\newtheorem{lemma}[theorem]{Lemma}
\newtheorem{definition}[theorem]{Definition}
\newtheorem{proposition}[theorem]{Proposition}
\newtheorem{remark}[theorem]{Remark}
\numberwithin{equation}{section}
\newtheorem{theorem*}{Theorem}
\newtheorem{ass*}[theorem*]{Assumption}
\newtheorem{note*}[theorem*]{Note}
\newtheorem{lemma*}[theorem*]{Lemma}
\newtheorem{definition*}[theorem*]{Definition}
\newtheorem{proposition*}[theorem*]{Proposition}
\newtheorem{corollary*}[theorem*]{Corollary}
\newtheorem{remark*}[theorem*]{Remark}
\newtheorem{example*}[theorem*]{Example}
\numberwithin{equation}{section}
\newif\ifcol
\newcommand{\colorr}{\color[rgb]{0.8,0,0}}
\newcommand{\colorn}{\color[rgb]{1,1,1}}
\newcommand{\colorr}{\color{black}}% {{\color[rgb]{0.8,0,0}}
\newcommand{\colorn}{\color{black}}% {\color[rgb]{1,1,1}}
\def\mH{{\mathfrak H}}
\def\bd{\begin{description}}
\def\ed{\end{description}}
\def\D2{\bbD_{2,\infty-}}
\def\tj{{t_j}}
\def\tjm{{t_{j-1}}}
\def\B{{\bf B}}
\def\C{{\bf C}}
\def\D{{\bf D}}
\def\E{{\bf E}}
\def\O{{\bf O}}
\def\P{{\bf P}}
\def\T{{\bf T}}
\def\U{{\bf U}}
\def\cala{{\cal A}}
\def\cald{{\cal D}}
\def\cale{{\cal E}}
\def\calf{{\cal F}}
\def\calh{{\cal H}}
\def\cali{{\cal I}}
\def\calk{{\cal K}}
\def\call{{\cal L}}
\def\simleq{\ \raisebox{-.7ex}{$\stackrel{{\textstyle <}}{\sim}$}\ }
\def\ep{\epsilon}
\def\half{\frac{1}{2}}
\def\nn{\nonumber}
\def\be{\begin{equation}}
\def\ee{\end{equation}}
\def\bea{\begin{eqnarray}}
\def\eea{\end{eqnarray}}
\def\beas{\begin{eqnarray*}}
\def\eeas{\end{eqnarray*}}
\def\bi{\begin{itemize}}
\def\ei{\end{itemize}}
\def\bd{\begin{description}}
\def\ed{\end{description}}
\newcommand{\bbD}{{\mathbb D}}
\newcommand{\bbN}{{\mathbb N}}
\newcommand{\bbR}{{\mathbb R}}
\newcommand{\bbV}{{\mathbb V}}
\newcommand{\bbZ}{{\mathbb Z}}
\newsavebox{\@brx}
\newcommand{\llangle}[1][]{\savebox{\@brx}{\(\m@th{#1\langle}\)}%
  \mathopen{\copy\@brx\kern-0.5\wd\@brx\usebox{\@brx}}}
\newcommand{\rrangle}[1][]{\savebox{\@brx}{\(\m@th{#1\rangle}\)}%
  \mathclose{\copy\@brx\kern-0.5\wd\@brx\usebox{\@brx}}}
\newcommand\vsp{\vspace{1cm}}
\newcommand\hsp{\hspace{1cm}}
\newcommand\vspsm{\vspace{5mm}}
\newcommand\vspssm{\vspace{2.5mm}}
\newcommand\sfz{{\sf z}}
\newcommand\sfi{{\tt i}}
\newcommand\tti{{\tt i}}
\newcommand{\stout}[1]{\ifmmode\text{\sout{\ensuremath{#1}}}\else\sout{#1}\fi}
\newcommand{\comm}[1]{{#1}}
\newcommand{\delc}[1]{{}}
\newcommand{\delb}[1]{{[]}}
\newcommand\rze{{r_{0}}}
\newcommand\ron{{r_{1}}}
\newcommand\son{{s_{1}}}
\newcommand\stw{{s_{2}}}
\newcommand\tpr{{t'}}
\newcommand\vpr{{v'}}
\newcommand\jon{{j_{1}}}
\newcommand\jtw{{j_{2}}}
\newcommand\jth{{j_3}}
\newcommand{\bnorm}[1]{{\bigl\| #1 \bigr\|}}
\newcommand{\Bnorm}[1]{{\Bigl\| #1 \Bigr\|}}
\newcommand{\bbnorm}[1]{{\biggl\| #1 \biggr\|}}
\newcommand{\snorm}[1]{{\left\| #1 \right\|}}
\newcommand{\norm}[1]{{\left\| #1 \right\|}}
\newcommand{\abs}[1]{{\left| #1 \right|}}
\newcommand{\babs}[1]{{\bigl| #1 \bigr|}}
\newcommand{\Babs}[1]{{\Bigl| #1 \Bigr|}}
\newcommand{\bbabs}[1]{{\biggl| #1 \biggr| }}
\newcommand{\cbr}[1]{{\left\{ #1 \right\} }}
\newcommand{\ncbr}[1]{{\{ #1 \} }}
\newcommand{\bcbr}[1]{{\bigl\{ #1 \bigr\} }}
\newcommand{\Bcbr}[1]{{\Bigl\{ #1 \Bigr\} }}
\newcommand{\bbcbr}[1]{{\biggl\{ #1 \biggr\} }}
\newcommand{\rbr}[1]{{\left( #1 \right) }}
\newcommand{\nrbr}[1]{{( #1 ) }}
\newcommand{\brbr}[1]{{\bigl( #1 \bigr) }}
\newcommand{\Brbr}[1]{{\Bigl( #1 \Bigr) }}
\newcommand{\bbrbr}[1]{{\biggl( #1 \biggr) }}
\newcommand{\sbr}[1]{{\left[ #1 \right] }}
\newcommand{\bsbr}[1]{{\bigl[ #1 \bigr] }}
\newcommand{\Bsbr}[1]{{\Bigl[ #1 \Bigr] }}
\newcommand{\bbsbr}[1]{{\biggl[ #1 \biggr] }}
\newcommand{\abr}[1]{{\left\langle #1 \right\rangle }}
\newcommand{\babr}[1]{{\bigl\langle #1 \bigr\rangle }}
\newcommand{\Babr}[1]{{\Bigl\langle #1 \Bigr\rangle }}
\newcommand{\dabr}[1]{{\llangle[] #1 \rrangle[] }}
\newcommand{\bdabr}[1]{{\llangle[\big] #1 \rrangle[\big] }}
\newcommand{\subotimes}[1]{{\underset{ #1 }{\otimes}}}
\newcommand{\clop}[1]{{\left[ #1 \right) }}
\newcommand\qtor{{\sf qTor}}
\newcommand\mqtor{{\sf mqTor}}
\newcommand\qtan{{\sf qTan}}
\newcommand\tr{\tj}
\newcommand\tl{\tjm}
\newcommand{\mR}{\mathfrak R}
\newcommand{\mS}{\mathfrak S}
\newcommand{\mT}{\mathfrak T}
\newcommand{\frakg}{{\mathfrak g}}
\newcommand{\scrC}{{\mathscr C}}
\newcommand{\bbf}{{\mathbb f}}
\newcommand{\bbone}{{\mathbb 1}}
\newcommand{\tfor}{\quad\text{for}\ }
\newcommand{\tforsm}{\ \text{for}\ }
\newcommand{\tfornsp}{\text{for}\ }
\newcommand{\tif}{\quad\text{if}\ }
\newcommand{\tifsm}{\ \text{if}\ }
\newcommand{\tand}{\quad\text{and}\quad}
\newcommand{\wtand}{\qquad\text{and}\qquad}
\newcommand{\tandsm}{\ \text{and}\ }
\newcommand{\torsm}{\ \text{or}\ }
\newcommand{\twith}{\quad\text{with}\quad}
\newcommand{\tas}{\quad\text{as}\quad}
\newcommand{\tassm}{\ \text{as}\ }
    \newcommand\cala{{\cal A}}
    \newcommand\cale{{\cal E}}
    \newcommand\calf{{\cal F}}
    \newcommand\calh{{\cal H}}
    \newcommand\cali{{\cal I}}
    \newcommand\calk{{\cal K}}
    \newcommand\call{{\cal L}}
\newcommand{\mpl}{{m+1}}
\newcommand{\ilamv}{{\lambda_v}}
\newcommand{\blamc}{{\bar\lambda_C}}
\newcommand{\kerfvn}[2]{{f_{#1,n}(#2)}}
\newcommand{\charf}[1]{{1_{#1}}}
\newcommand{\charfn}[1]{{1_{n,#1}}}
\newcommand{\vc}[2]{{v^{#1}_{#2}}}
\newcommand{\edgeWt}{{\boldsymbol\theta}}
\newcommand{\vertWt}{{\boldsymbol{q}}}
\newcommand{\vertwtlow}{{\boldsymbol{q}}}
\newcommand{\barq}{{{\bar  q}}}
\newcommand{\bartheta}{{ {\bar  \theta}}}
\newcommand{\DLamCali}{\cali_n^{(\lambda)}}
\newcommand{\betacycle}[2]{\bar\beta_{#1}(#2)}
\newcommand{\Comp}{\scrC}
\newcommand{\cpr}{C_0}
\newcommand{\cppr}{C_1}
\newcommand{\hatc}{\widehat C}
\newcommand{\funcSec}{\cali_n^\T}
\newcommand{\numCBplus}{{\big| \CBplus\big|}}
\newcommand{\cplusrest}{{\Comp_+^{\vee}}}
\newcommand{\CBplus}{\Comp_\B^{\vee}}
\newcommand{\CUBplus}{\Comp_{\U\B}^{\vee}}
\newcommand{\CB}{\Comp_\B}
\newcommand{\CBo}{\Comp_\B^1}
\newcommand{\CBt}{\Comp_\B^{2+}}
\newcommand{\Ctp}{\Comp^{2+}}
\newcommand{\Ctpz}{\Comp^{2+}_{0}}
\newcommand{\Ctpo}{\Comp^{2+}_{\O}}
\newcommand{\Ctppe}{\Comp^{2+}_{\P\E}}
\newcommand{\cpe}{\Ctppe}
\newcommand{\Cpearg}[1]{\Comp^{2+}_{\P\E(#1)}}
\newcommand{\Ctpa}{\Comp^{2+}_{\bf a}}
\newcommand{\Ctpb}{\Comp^{2+}_{\bf b}}
\newcommand{\Cqzero}{\Comp_{0}}
\newcommand{\Cqom}{\Comp_{1-}}
\newcommand{\Cszero}{\Comp^0}
\newcommand{\Csone}{\Comp^1}
\newcommand{\Ctz}{\Comp^2_{0}}
\newcommand{\Ctt}{\Comp^2_{2}}
\newcommand{\Ctto}{\Comp^2_{2,1}}
\newcommand{\Cttt}{\Comp^2_{2,2+}}
\newcommand{\tensorc}{{\Comp}_{\T}}
\newcommand{\nottenc}{{\Comp}_{\U\T}}
\newcommand{\twochaos}{{\Comp}_{\bf\romanTwo}}
\newcommand{\tensorg}[1]{{\Comp}_{\T,#1}}
\newcommand{\cblam}[1]{{\Comp^{\bar\lambda}_{#1}}}
\newcommand{\shiftg}[2]{(#1)_{+ #2}}
\newcommand{\olog}{\bar O}
\newcommand{\romanTwo}{I\hspace{-0.8pt}I}
\newcommand{\expoT}{e_{\T}}
\newcommand{\cmpns}{M_{0,n}}
\newcommand{\pertur}[1]{N_{#1,n}}
\newcommand{\qtorker}[2]{\calk_{#1,#2}}
\newcommand{\dotd}{\dot{d}}
\newcommand{\ddotd}{\ddot{d}}
\newcommand{\kerone}[2]{z_{#1}(#2)}
\newcommand{\zndelta}{Z_N^\Delta }
\newcommand{\maxs}[1]{\bar s(#1)}
\newcommand{\singlegraph}[2]{
  (\cbr{#1},0,#2)
  %(\cbr{#1},\emptyset,#2)
}
\newcommand{\graphcycle}{
  \begin{tikzpicture}
    %% vertex labels
    \node[shape=circle,draw=black,inner sep=1pt] (1) at (120:1){0};
    \node[shape=circle,draw=black,inner sep=1pt] (2) at (180:1) {0};
    \node[shape=circle,draw=black,inner sep=1pt] (3) at (0:1) {0};
    \node[shape=circle,draw=black,inner sep=1pt] (4) at (60:1) {0};
    %%% edges
    \draw[thick] (1) to [out=210,in=90,looseness=0.5] (2);
    \draw[dashed,thick] (2) to [out=270,in=270,looseness=1.6] (3);
    \draw[thick] (3) to [out=90,in=330,looseness=0.5] (4);
    \draw[thick] (4) to [out=150,in=30,looseness=0.5] (1);
  
    \node  at (150:0.8) {1};
    \node  at (30:0.8) {1};
    \node  at (90:0.8) {1};
    \node  at (125:1.45) {$v^G_{1}$};
    \node  at (170:1.4) {$v^G_{2}$};
    \node  at (10:1.7) {$v^G_{I(G)-1}$};
    \node  at (55:1.5) {$v^G_{I(G)}$};
   \end{tikzpicture}
}
\newcommand{\graphcycletwo}{
  \begin{tikzpicture}
    %% vertex labels
    \node[shape=circle,draw=black,inner sep=1pt] (1) at (-0.5,0) [label=below:$v^G_{1}$]{0};
    \node[shape=circle,draw=black,inner sep=1pt] (2) at ( 0.5,0) [label=below:$v^G_{2}$]{0};
    \node  at (60:1.5) {};
    \node  at (240:1.5) {};
    %%% edges
    \draw[thick] (1) -- node[midway, above] {2} (2);
   \end{tikzpicture}
}
\newcommand{\graphpath}{
  \begin{tikzpicture}
    %% vertex labels
    \node[shape=circle,draw=black,inner sep=1pt] (1) at (120:1){1};
    \node[shape=circle,draw=black,inner sep=1pt] (2) at (180:1) {0};
    \node[shape=circle,draw=black,inner sep=1pt] (3) at (0:1) {0};
    \node[shape=circle,draw=black,inner sep=1pt] (4) at (60:1) {1};
    %%% edges
    \draw[thick] (1) to [out=210,in=90,looseness=0.5] (2);
    \draw[dashed,thick] (2) to [out=270,in=270,looseness=1.6] (3);
    \draw[thick] (3) to [out=90,in=330,looseness=0.5] (4);
    %\draw[thick] (4) to [out=150,in=30,looseness=0.5] (1);
  
    \node  at (150:0.8) {1};
    \node  at (30:0.8) {1};
    %\node  at (90:1.2) {1};
    \node  at (125:1.45) {$v^G_{1}$};
    \node  at (170:1.4) {$v^G_{2}$};
    \node  at (10:1.7) {$v^G_{I(G)-1}$};
    \node  at (55:1.5) {$v^G_{I(G)}$};
   \end{tikzpicture}
}
\newcommand{\graphpathtwo}{
  \begin{tikzpicture}
    %% vertex labels
    \node[shape=circle,draw=black,inner sep=1pt] (1) at (-.5,0) [label=below:$v^G_{1}$]{1};
    \node[shape=circle,draw=black,inner sep=1pt] (2) at ( .5,0) [label=below:$v^G_{2}$]{1};
    \node  at (60:1.5) {};
    \node  at (240:1.5) {};
    %%% edges
    \draw[thick] (1) -- node[midway, above] {1} (2);
   \end{tikzpicture}
}
\newcommand{\graphMtwon}[2]{
  \begin{tikzpicture}
    %% vertex labels
    \node[shape=circle,draw=black,inner sep=1pt] (1) at (0,0) [label=below:$#1$]{1};
    \node[shape=circle,draw=black,inner sep=1pt] (2) at (1,0) [label=below:$#2$]{1};
    %%% edges
    \draw[thick] (1) -- node[midway, above] {1} (2);
   \end{tikzpicture}
}
\newcommand{\graphMtwonon}[2]{
  \begin{tikzpicture}
    %% vertex labels
    \node[shape=circle,draw=black,inner sep=1pt] (1) at (0,0) [label=below:$#1$]{0};
    \node[shape=circle,draw=black,inner sep=1pt] (2) at (1,0) [label=below:$#2$]{0};
    %%% edges
    \draw[thick] (1) -- node[midway, above] {2} (2);
   \end{tikzpicture}
}
\newcommand{\graphMtwtw}[2]{
  \begin{tikzpicture}
    %% vertex labels
    \node[shape=circle,draw=black,inner sep=1pt] (1) at (0,0) [label=below:$#1$]{2};
    \node[shape=circle,draw=black,inner sep=1pt] (2) at (2/3,0) [label=below:$#2$]{1};
    %%% edges
   \end{tikzpicture}
}
\newcommand{\graphMtwth}[2]{
  \begin{tikzpicture}
    %% vertex labels
    \node[shape=circle,draw=black,inner sep=1pt] (1) at (0,0) [label=below:$#1$]{1};
    \node[shape=circle,draw=black,inner sep=1pt] (2) at (2/3,0) [label=below:$#2$]{1};
    %%% edges
   \end{tikzpicture}
}
\newcommand{\graphMtwfo}[2]{
  \begin{tikzpicture}
    %% vertex labels
    \node[shape=circle,draw=black,inner sep=1pt] (1) at (0,0) [label=below:$#1$]{0};
    \node[shape=circle,draw=black,inner sep=1pt] (2) at (1,0) [label=below:$#2$]{1};
    %%% edges
    \draw[thick] (1) -- node[midway, above] {1} (2);
   \end{tikzpicture}
}
\newcommand{\graphMtwtwth}[2]{
  \begin{tikzpicture}
    %% vertex labels
    \node[shape=circle,draw=black,inner sep=1pt] (1) at (0,0) [label=below:$#1$]{1};
    \node[shape=circle,draw=black,inner sep=1pt] (2) at (1,0) [label=below:$#2$]{0};
    %%% edges
    \draw[thick] (1) -- node[midway, above] {1} (2);
   \end{tikzpicture}
}
\newcommand{\graphMthon}[3]{
  \begin{tikzpicture}
    %% vertex labels
    \node[shape=circle,draw=black,inner sep=1pt] (1) at (0,0) [label=below:$#1$]{1};
    \node[shape=circle,draw=black,inner sep=1pt] (2) at (1,0) [label=below:$#2$]{1};
    \node[shape=circle,draw=black,inner sep=1pt] (3) at (2,0) [label=below:$#3$]{1};
    %%% edges
    \draw[thick] (1) -- node[midway, above] {1} (2);
   \end{tikzpicture}
}
\newcommand{\graphMthtw}[3]{
  \begin{tikzpicture}
    %% vertex labels
    \node[shape=circle,draw=black,inner sep=1pt] (1) at (0,0) [label=below:$#1$]{1};
    \node[shape=circle,draw=black,inner sep=1pt] (2) at (1,0) [label=below:$#2$]{0};
    \node[shape=circle,draw=black,inner sep=1pt] (3) at (2,0) [label=below:$#3$]{1};
    %%% edges
    \draw[thick] (1) -- node[midway, above] {1} (2);
    \draw[thick] (2) -- node[midway, above] {1} (3);
   \end{tikzpicture}
}
\newcommand{\graphMthth}[3]{
  \begin{tikzpicture}
    %% vertex labels
    \node[shape=circle,draw=black,inner sep=1pt] (1) at (0,0) [label=below:$#1$]{2};
    \node[shape=circle,draw=black,inner sep=1pt] (2) at (2/3,0) [label=below:$#2$]{1};
    \node[shape=circle,draw=black,inner sep=1pt] (3) at (4/3,0) [label=below:$#3$]{1};
    %%% edges
   \end{tikzpicture}
}
\newcommand{\graphMthfi}[3]{
  \begin{tikzpicture}
    %% vertex labels
    \node[shape=circle,draw=black,inner sep=1pt] (1) at (0,0) [label=below:$#1$]{2};
    \node[shape=circle,draw=black,inner sep=1pt] (2) at (3/3,0) [label=below:$#2$]{0};
    \node[shape=circle,draw=black,inner sep=1pt] (3) at (6/3,0) [label=below:$#3$]{1};
    %%% edges
    \draw[thick] (2) -- node[midway, above] {1} (3);
   \end{tikzpicture}
}
\newcommand{\graphonept}[2]{
  \begin{tikzpicture}
    %% vertex labels
    \node[shape=circle,draw=black,inner sep=1pt] (1) at (0,0) [label=below:$#1$]{#2};
    %%% edges
   \end{tikzpicture}
}
\newcommand{\graphExample}[9]{
  \begin{tikzpicture}
    %% vertex labels
    \node[shape=circle,draw=black,inner sep=1pt] (1) at (0,0) [label=below:$#7$]{#1};
    \node[shape=circle,draw=black,inner sep=1pt] (2) at (1.5,0) [label=below:$#8$]{#2};
    \node[shape=circle,draw=black,inner sep=1pt] (3) at (3.0,0) [label=below:$#9$]{#3};
    %%% edges
    \draw[thick] (1) -- node[midway, above] {#4} (2);
    \draw[thick] (2) -- node[midway, above] {#5} (3);
   \end{tikzpicture}
}
\DeclareMathOperator\supp{supp}
\DeclareMathOperator*{\argmax}{arg\,max}
\DeclareMathOperator*{\Dom}{Dom}
\begin{document}
%\setstretch{1.5} % gyoukan settei for all pages
%\pagecolor{black}\color{white} %white on black

%%%%%%%%%%%%%%%%%%%%%%%%%%%%%%%%%%%%%%%%%
%%%%%%%%%%%%%%%%%%%%%%%%%%%%%%%%%%%%%%%%%
\title{Order estimate of functionals related to fractional Brownian motion 
and %its application to 
asymptotic expansion of the quadratic variation 
%of stochastic differential equation% driven by fBm
of fractional stochastic differential equation
%arXiv
%Expansion of the Asymptotically Conditionally Normal Law %ISM Research Memo
%Submit
\footnote{
This work was in part supported by 
Japan Science and Technology Agency CREST JPMJCR14D7, JPMJCR2115; 
Japan Society for the Promotion of Science Grants-in-Aid for Scientific Research 
No. 17H01702 (Scientific Research); 
%No. 24340015 (Scientific Research), 
%Nos. 24650148 and 
%No. 26540011 (Challenging Exploratory Research); 
%the Global COE program ``The Research and Training Center for New Development in Mathematics'' of the Graduate School of Mathematical Sciences, University of Tokyo; 
%NS Solutions Corporation; 
and by a Cooperative Research Program of the Institute of Statistical Mathematics. %%
%The main parts in this paper were presented at 
%International conference ``Statistique Asymptotique des Processus Stochastiques VII'', Universit\'e du Maine, Le Mans, March 16-19, 2009, 
%MSJ Spring Meeting 2010, March 24-27, 2010, Keio University, Mathematical Society of Japan, 
%and 
%International conference ``DYNSTOCH Meeting 2010'', Angers, June 16-19, 2010. 
%The author thanks to the organizers of the meetings for opportunities of the talks. 
}
}
%%%% Author %%%%%%%%%%%%%%%%%%%%%%%
%%%%%%%%%%%%%%%%%%%%%%%%%%%%%%%
\author{Hayate Yamagishi and Nakahiro Yoshida}
\affil{Graduate School of Mathematical Sciences, University of Tokyo
\footnote{Graduate School of Mathematical Sciences, 
University of Tokyo: 3-8-1 Komaba, Meguro-ku, Tokyo 153-8914, Japan. 
e-mail: 
yhayate@ms.u-tokyo.ac.jp,
nakahiro@ms.u-tokyo.ac.jp}}
\affil{CREST, Japan Science and Technology Agency%\footnote{}
}
%%%% Authors %%%%%%%%%%%%%%%%%%%%%%%
%%%%%%%%%%%%%%%%%%%%%%%%%%%%%%%
%\author[1,3]{Masayuki Uchida}
%\author[2,3]{Nakahiro Yoshida}
%\affil[1]{Graduate School of Engineering Science, Osaka University
%\footnote{Graduate School of Engineering Science, Osaka University: Toyonaka, Osaka 560-8531, Japan}
%        }
%\affil[2]{Graduate School of Mathematical Sciences, University of Tokyo
%\footnote{Graduate School of Mathematical Sciences, University of Tokyo: 3-8-1 Komaba, Meguro-ku, Tokyo 153-8914, Japan. e-mail: nakahiro@ms.u-tokyo.ac.jp}
%        }
%\affil[3]{CREST, Japan Science and Technology Agency
%%\footnote{}
%        }
%%%% Date %%%%%%%%%%%%%%%%%%%%%%%%
%%%%%%%%%%%%%%%%%%%%%%%%%%%%%%%
%\date{September 14, 2010, \\
%%Revised February 25, 2012
%}
%%%%%%%%%%%%%%%%%%%%%%%%%%%%%%%
\maketitle
%%%%%%%%%%%%%%%%%%%%%%%%%%%%%%%
%%%%%%%%%%%%%%%%%%%%%%%%%%%%%%%
\ \\
{\it Summary}\;
We derive an asymptotic expansion for the quadratic variation of a stochastic process satisfying a stochastic differential equation driven by a fractional Brownian motion,  based on the theory of asymptotic expansion of Skorohod integrals converging to a mixed normal limit. 
In order to apply the general theory, it is necessary to estimate functionals that are a randomly weighted sum of products of multiple integrals of the fractional Brownian motion, in expanding the quadratic variation and identifying the limit random symbols. 
To overcome the difficulty, we introduce two types of exponents by means of the {\it weighted graphs} capturing the structure of the sum in the functional, and investigate how the exponents change by the action of the Malliavin derivative and its projection. 
\ \\
\ \\
{\it Keywords and phrases}\;
  Asymptotic expansion, 
  mixed normal distribution, 
  random symbol, 
  Malliavin calculus, 
  Skorohod integral, 
  fractional Brownian motion,
  stochastic differential equation,
  quadratic variation,
  multiple stochastic integral,
  product formula,
  exponent.
\ \\

%%%%%%%%%%%%%%%%%%%%%%%%%%%%%%%%%%%%%%%%%%%%%%%%%%%%%%%%%%%%
%%%%%%%%%%%%%%%%%%%%%%%%%%%%%%%%%%%%%%%%%%%%%%%%%%%%%%%%%%%%
%%%%%%%%%%%%%%%%%%%%%%%%%%%%%%%%%%%%%%%%%%%%%%%%%%%%%%%%%%%%
%%%%%%%%%%%%%%%%%%%%%%%%%%%%%%%%%%%%%%%%%%%%%%%%%%%%%%%%%%%%
%%%%%%%%%%%%%%%%%%%%%%%%%%%%%%%%%%%%%%%%%%%%%%%%%%%%%%%%%%%%

\section{Introduction}
{Asymptotic expansion has been recognized as a basic technique in statistics and other fields. 
It was initiated for independent variables and has been generalized to various dependent structures. 
As will explained in the last part of Introduction, historically the first attempts to dependent structures were directed 
to mixing Markov processes since it is possible to develop a theory parallel to that in the independent case 
by taking advantage of the Markovian property and the mixing property.

Developments in 70s-80s in martingale central limit theorems (cf. Jacod and Shiryaev \cite{JacodShiryaev2003}) 
and their applications to statistics for stochastic processes 
motivated studies of asymptotic expansion of martingales. 
Mykland \cite{Mykland1992,Mykland1993} and 
Yoshida \cite{Yoshida1997, yoshida2001malliavin} studied asymptotic expansion for martingales 
about moments and distributions, respectively, as a refinement of the martingale central limit theorem. 
Though the first application was to ergodic diffusion processes, i.e., Markovian processes, 
the martingale expansion does not need the Markovian property and as a matter of fact, and 
it was also applied to the volatility parameter estimation from the temporally-discrete samples in a finite-time horizon, 
in the case where the limit distribution of the estimator is Gaussian. 
Differently from the mixing approach, a non-Gaussian distribution appears in the higher-order term, 
which suggests a possibility of extension of the theory to the so-called non-ergodic statistics. 

Quantitative finance treats high frequency data, i.e., temporally discretely sampled data over a fixed time interval, 
and the realized volatility is one of the very basic statistic there. 
The error of the realized volatility is asymptotically mixed normal as the number of observations tends to infinity. 
Together with its applications, the martingale central limit theorem of mixture type was investigated 
by Jacod \cite{Jacod1997} and others. 
The martingale expansion was generalized to martingales with a mixed normal limit by Yoshida \cite{yoshida2013martingale} 
(updated by arXiv:1210.3680v3). 
The random symbols called {\it tangent} and {\it torsion} play an important role. 
The tangent corresponds to the correction term in the classical martingale expansion, but the torsion to the martingale newly appears. 
The asymptotic expansion formula is given in terms of the random symbols involving the Malliavin derivatives of functionals 
though the mixed normal central limit theorem can be handled by It\^o calculus. 
The torsion disappears when the asymptotic variance is deterministic, as it is the classical case of the martingale central limit theorem, 
and the expansion formula returned to the classical one. 
This martingale expansion was applied to 
the realized volatility of a solution to a stochastic differential equation (SDE) by Yoshida \cite{yoshida2012asymptotic}, 
the power variation by Podolskij and Yoshida \cite{podolskij2016edgeworth}, 
the pre-averaging estimator by Podolskij et al. \cite{podolskij2017edgeworth}, 
and to the Euler-Maruyama scheme by Podolskij et al. \cite{podolskij2018edgeworthToappear}. 

The martingale central limit theorem of mixture type 
can be reproduced by the stable limit theorem for Skorohod integrals presented by Nourdin, Nualart and Pecati \cite{nourdin2016quantitative}. 
By extending their interpolation formula to the second-order, 
Nualart and Yoshida \cite{nualart2019asymptotic} gave an asymptotic expansion for Skorohod integrals, 
and applied it to the quadratic variation of a fractional Brownian motion. 
The expansion formula is specified with newly introduced random symbols 
{\it quasi-tangent} and {\it quasi-torsion}. 
The expansion of the realized volatility can be reproduced by this method since the Skorohod integral generalizes the It\^o integral. 
However, the correspondence between \{tangent, torsion\} and \{quasi-tangent, quasi-torsion\} is not obvious. 
In fact, in this application, the quasi-tangent vanishes though the tangent does not. 
}%
\begin{en-text}
To apply the Skorohod integral approach, the essential steps are identification of the limit of the random symbols and 
estimation of the Sobolev norms of variables obtained by repeated Malliavin derivatives and their projections. 
Generally, both steps were not easy so far because of lack of a methodology for systematic assessment of 
the magnitude of random polynomials of multiple Wiener integrals with respect to the fractional Brownian motion. 
In this paper, we will approach this problem. 
\end{en-text}

In this paper,
we consider the following one-dimensional SDE driven by a fractional Brownian motion (fBm):
\begin{align}\label{210430.1615}
X_t =X_0+ 
\int_0^t V^{[2]}(X_s)ds+\int_0^t V^{[1]}(X_s) dB_s
\end{align}
for $t\in[0,T]$ with $X_0\in\bbR$ and a fixed terminal value $T>0$.
The Hurst parameter of the driving fBm 
$B=(B_t)_{t\in[0,T]}$ is restricted to $H\in(1/2,3/4)$.
The stochastic integral in (\ref{210430.1615}) is a pathwise Young integral, and 
it is known that 
there exists a unique solution under some regularity conditions on $V^{[i]}$ ($i=1,2$), 
as Nualart and Rascanu \cite{rascanu2002differential} detailed.
The quadratic variation (realised volatility) of the process $X_t$
is defined by
\begin{align}\label{220420.1130}
  \bbV_n =n^{2H-1}\sum_{j=1}^n (\Delta_jX)^2,
\end{align}
where
$\Delta_jX=X_\tj-X_\tjm$ with 
$\tj=jT/n$ for $n\in\bbN$ and $j\in\cbr{1,...,n}$.
This estimator converges in $L^p$ to
\begin{align*}
  \bbV_\infty = T^{2H-1} \int_0^T V^{[1]}(X_t)^2dt, 
\end{align*}
and 
Le{\'o}n and Lude{\~n}a \cite{leon2007limits} proved that
the rescaled error of the convergence 
$Z_n=n^{1/2}\brbr{\bbV_n-\bbV_\infty}$
%\begin{align}
%  Z_n=n^{1/2}\brbr{\bbV_n-\bbV_\infty}
%  \label{220404.1601}
%\end{align}
weakly converges to  %as $n$ goes to $\infty$,
a centered mixed normal distribution.
This convergence is a stable convergence in fact. 
% $MN(0,G_\infty)$ with conditional variance

{%\colorr 
In the case where the Hurst index $H$ is equal to $1/2$, the asymptotic expansion for $Z_n$ 
was approached by \cite{yoshida2012asymptotic} supported by \cite{yoshida2013martingale} (updated by arXiv:1210.3680v3), 
where the author developed a method of asymptotic expansion for martingales with a mixed normal limit. 
Although the functional corresponding to $\bbV_n$ in that paper also converges to a mixed normal distribution,
the technique used there %to obtain the asymptotic expansion %, namely the martingale embedding method, 
cannot apply $Z_n$ for the fBm,
since there is no obvious martingale structure in $\bbV_n$ of (\ref{220420.1130}).
Our strategy of expansion will rely on 
the asymptotic expansion for Skorohod integrals presented by \cite{nualart2019asymptotic}, 
that used Malliavin calculus as their central tools
and a second-order interpolation formula in the frequency domain to expand the characteristic function of the functional in question.
}

\begin{en-text}
{weighted quad variation. 
The main term of this error term behaves like the weighted quadratic variation of fBm.
}
\end{en-text}

{%\colorr 
To apply the asymptotic expansion for Skorohod integrals, the essential steps are identification of the limit of the random symbols and 
estimation of the Sobolev norms of variables obtained by repeated Malliavin derivatives and their projections. 
Generally, both steps were not easy so far because of lack of a methodology for systematic assessment of 
the magnitude of random polynomials of multiple Wiener integrals with respect to the fractional Brownian motion.}
%In this paper, we will approach this problem.
%%
\begin{en-text}
Although this paper is an application of the general theory developed in \cite{nualart2019asymptotic},
verifying the assumption given in the paper in this context is rather a heavy task.
In particular, functionals in a certain form appear in the calculation repeatedly, 
and we need to find out the order of each functional as $n$ goes to $\infty$. 
\end{en-text}
To deal with this problem, we introduce two exponents which give an upper bound of the order of functionals 
with some easy calculation.
These exponents are applicable to a wider class of functionals related to fBm than treated in this specific problem of 
the asymptotic expansion of the quadratic variations. 
{%\colorr 
Our exponents in a sense generalize the exponent defined by \cite{yoshida2020asymptotic} 
for functionals of a Brownian motion 
(i.e. fBm of $H=1/2$) having a structure similar to that we encounter in this paper.
The stability and contraction properties under the Malliavin derivatives and the projections will be clarified 
with the help of the exponents of functionals.

The organization of the paper is as follows.
Section \ref{220512.0943} gives the statement of the main result of the asymptotic expansion and 
outlines the strategies of this paper, 
with a overview of the general theory of asymptotic expansion for Skorohod integrals given by \cite{nualart2019asymptotic}.
Section \ref{210429.1655} is devoted to the theory of exponents for functionals.
The authors hope that this section is of interest even independently from other parts. 
In Section \ref{220317.2030}, 
we first give a stochastic expansion of $\bbV_n$ and identify the random symbols appearing in the expansion, and then 
we prove the main result by verifying the assumptions of the general theory in the present situation. 
To carry out this plot, we extensively use the exponents introduced in Section \ref{210429.1655}. 
Section \ref{220405.1158} is collecting some technical lemmas used in the preceding sections.
To clarify the background of the theory we will work with and to understand what we will do, the following paragraphs give some historical remarks about the initiation of the asymptotic expansion theory for independent models and the generalizations to stochastic processes.}

{%\colorr 
%Asymptotic expansion has been recognized as a basic technique in statistics and other fields. It was initiated for independent variables and has been generalized to various dependent structures. 
%
The basic references for independent cases include
Cram\'er \cite{cramer1928composition, cramer2004random}, 
Gnedenko and Kolmogorov \cite{gnedenko1954limit}, 
Bhattacharya \cite{Bhattacharya1971}, 
Petrov \cite{Petrov1975}, 
Bhattacharya and Ranga Rao \cite{BhattacharyaRanga1976}, 
Bhattacharya and Ghosh \cite{BhattacharyaGhosh1978}, and Bhattacharya et al. \cite{bhattacharya2016course}. 
Applications of the asymptotic expansion have spread to major areas of statistics as their theoretical basis.
Asymptotic theory concerning higher-order optimality of statistical inference 
cannot exist without asymptotic expansion: 
Akahira and Takeuchi \cite{AkahiraTakeuchi1981}, Pfanzagl \cite{Pfanzagl1985} and Ghosh \cite{Ghosh1994}, 
if picked up from the literature in early days. 
%if we list some of literature in the early days. 
%
One of the fields where asymptotic expansion techniques have been well developed is multivariate analysis; see 
Okamoto \cite{okamoto1963asymptotic}, Anderson \cite{anderson1962introduction}, 
Fujikoshi, Ulyanov and Shimizu \cite{fujikoshi2011multivariate}. 
%Informative textbooks for asymptotic expansion in the multivariate analysis are 
%
The reader is referred to Efron \cite{efron1979bootstrap} and Hall \cite{Hall1992} 
for the theory of Bootstrap methods and asymptotic expansion. 
A variant of the ordinary asymptotic expansion is 
the saddle-point approximation; see 
Barndorff-Nielsen \cite{barndorff2012parametric}, 
Jensen \cite{jensen1995saddlepoint} and 
Pace and Salvan \cite{PaceSalvan1997}. 
Information geometry, that is a differential geometry on a parameter space as a Riemannian manifold 
with the $\alpha$-affine connection, 
stems from a geometric interpretation of the expansion formula for statistical estimators (Amari \cite{Amari1985}). 
Asymptotic expansion serves as a basic tool of theory of 
information criteria for 
model selection as well as prediction problems; cf.  
Konishi and Kitagawa \cite{KonishiKitagawa1996}, 
Uchida and Yoshida \cite{UchidaYoshida2001,UchidaYoshida2004}, Komaki \cite{Komaki1996}.

After successful applications of asymptotic expansion to independent models, 
it was a natural attempt to generalize this method to dependent cases. 
The mixing property is a useful structure as an extension of independency since it determines the higher-order structure of the asymptotic distribution 
of an additive functional of the mixing process. 
G\"otze and Hipp \cite{GotzeHipp1983, GotzeHipp1994} completed 
the theory for discrete-time approximately Markovian processes having a mixing property. 
Kusuoka and Yoshida \cite{KusuokaYoshida2000} and Yoshida \cite{Yoshida2004} 
presented asymptotic expansion for 
for continuous-time mixing ($\ep$-) Markov processes, 
and it was applied to statistics for stochastic processes by 
Sakamoto and Yoshida 
\cite{
%SakamotoYoshida1996, 
%SakamotoYoshida1998a, 
SakamotoYoshida2003, 
SakamotoYoshida2004, 
%SakamotoYoshida2008, 
SakamotoYoshida2009, 
SakamotoYoshida2010} 
and 
Uchida and Yoshida \cite{UchidaYoshida2001,UchidaYoshida2004}. 
Regularity of the distribution is essential for asymptotic expansion. 
In other words, a fast decay of the characteristic function of an additive functional of the process is necessary to validate the Fourier inversion. 
The Markovian property enables us to reduce the estimate of the characteristic functional of the additive functional into the estimate of 
the characteristic function of summands. 
The Malliavin calculus takes the place of 
the classical Cram\'er condition on the regularity of the distribution 
to obtain a decay of each temporally local characteristic function. 
We refer the reader to Yoshida \cite{yoshida2016asymptotic} for a short exposition of  
asymptotic expansion and additional references therein. 
At this point, it would be worth mentioning that recently Tudor and Yoshida provided 
asymptotic expansion for general Wiener functionals in \cite{tudor2019asymptotic}, and 
arbitrary order of asymptotic expansion for Wiener functionals in 
\cite{tudor2019high} in the central limit case. 
Their scheme was applied to a stochastic wave equation in \cite{tudor2019high} 
and to the variation of a mixed fractional Brownian motion in \cite{tudor2020asymptotic}.
They characterized the expansion formula by the gamma factors of the functional. 
It is a natural way because the gamma factors correspond to the cumulants, that determined the formula in the classical cases under mixing. 
Logically, we cannot exploit the Markovian property, however, they showed the non-degeneracy of the asymptotic variance is sufficient 
to validate the expansion, without any additional non-degeneracy condition like Cram\'er's condition. 

The methodology we will rely on is stemming from the martingale expansion.
Differently from the mixing approach, this method has more freedom in that the cumulants do not admit classical regulations, nor determine the expansion. 
For example, in our expansion formula, a term corresponding to the fifth-order appears but it was impossible in the classical expansion since 
the fifth-order cumulant is always negligible in the classical setting. 
The classical frame may seem to have nothing to do with the martingale-Skorohod integral approach in this sense. 
However, as seen in the main body of this article, a kind of temporally local chaotic expansion plays an essential role in the analysis. 
According to the definition of the exponents, we can intuitively say the degree of the chaos is reflecting the speed of mixing, and 
hence, the method of exponents is quite important to bridge the two approaches, in other words, to incorporate the classical computational method into the new frame of asymptotic expansion. 
}

\section{Main results and strategies}
\label{220512.0943} 
In this section, firstly we state the statement of the main result and
overview the strategies to prove it.
\subsection{Asymptotic expansion of quadratic variations of SDE driven by fBm}

Let $(X_t)_{t\in[0,T]}$ the solution %\redb{Under what conditions?? Nualart Sauss..}
to SDE (\ref{210430.1615}).
We define the functional $G_\infty$, 
which plays the role of the asymptotic variance of the realized volatility
$\bbV_n$ defined at (\ref{220420.1130}),
by
\begin{align}
  G_\infty =2 c_H^2 T ^{4H-1}\int^T_0 (V^{[1]}(X_{t}))^4 dt,
  \label{220301.1030}
\end{align}
with the constant $c_H^2$ defined at (\ref{210417.1805}).
We impose the following conditions on SDE (\ref{210430.1615}).
\begin{ass}\label{220404.1535}
  (i) $V^{[i]}\in C^\infty_b(\bbR)$ for $i=1,2$, 
  where $C^\infty_b(\bbR)$ is the set of the smooth functions whose derivatives of any order are bounded together with itself.
  
  \item 
  (ii) The functional $G_\infty$ satisfies $G_\infty^{-1}\in L^{\infty-}$.
\end{ass}
\noindent 
The second condition is assumed to secure the nondegeneracy of the distribution of $Z_n$.

We introduce some notations to state the main result.
We write $\phi(z;\mu,v)$ for the density function of the normal distibution 
with mean $\mu\in\bbR$ and variance $v>0$.
For a (polynomial) random symbol 
$\varsigma(\xi) = \sum_\alpha c_\alpha\,\xi^\alpha$
with a random variable $c_\alpha$ and $\alpha\in\bbZ_{\geq0}$,
where $\xi$ is a dummy variable, 
the action of the adjoint $\varsigma(\partial_z)^*$ to $\phi(z;0,G_\infty)$ under the expectation
is  defined by 
\begin{align}
  E\sbr{\varsigma(\partial_z)^* \phi(z;0,G_\infty)}
  =\sum_\alpha (-\partial_z)^\alpha E\sbr{c_\alpha\phi(z;0,G_\infty)}.
  \label{220401.1952}
\end{align}
We denote by $\hat\cale(M,\gamma)$ 
the set of measurable functions $f:\bbR\to\bbR$
such that 
$\abs{f(z)}\leq M(1+\abs{z})^\gamma$ for all $z\in\bbR$.

\begin{theorem} \label{220404.1630}
  Suppose that $(X_t)_{t\in[0,T]}$ is the solution to SDE (\ref{210430.1615}) satisfying Assumption \ref{220404.1535}.
  Define $Z_n$ by
  \begin{align}
    Z_n=n^{1/2}\brbr{\bbV_n-\bbV_\infty}
    \label{220404.1601}
  \end{align}
  % (\ref{220404.1601})
  and let $r_n=n^{2H-3/2}$.
  Then, for any $M,\gamma>0$ the following estimate holds:
\begin{align}\label{220421.1700}
  \sup_{f\in\hat\cale(M,\gamma)}
  %\abs{\hat\Delta_n(f)}
  \abs{E\sbr{f(Z_n)}- \int_{\bbR}f(z)\hat p_n(z)dz}
  =o(r_n)
  \quad\text{as }n\to\infty,
\end{align}
where 
\begin{align*}
  %\hat p_n(z)&=
  %E\sbr{\exp\rbr{-\frac12 G_\infty^{-1} z^2}}
  %+r_n E\sbr{\mS(\partial_z)^* \exp\rbr{-\frac12 G_\infty^{-1} z^2}}
  %\\
  \hat p_n(z)&=
  E\sbr{\phi(z;0,G_\infty)}
  +r_n E\sbr{\mS(\partial_z)^* \phi(z;0,G_\infty)}
  %\\
  %\hat\Delta_n(f)&=
  %\abs{E\sbr{f(Z_n)}- \int_{z\in\bbR}f(z)\hat p_n(z)dz}
\end{align*}
with 
\begin{align}
  \mS=\mS^{(3,0)}+\mS^{(1,0)},
  \label{220404.1631}
\end{align}
where random symbols $\mS^{(3,0)}$ and $\mS^{(1,0)}$ 
are specified at (\ref{220308.2520}) and (\ref{220308.2521}).
\end{theorem}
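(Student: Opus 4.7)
The plan is to apply the general asymptotic expansion theorem for Skorohod integrals of Nualart and Yoshida \cite{nualart2019asymptotic}, for which one must cast $Z_n$ in the form $Z_n = \delta(u_n) + r_n N_n + o(r_n)$ in a suitable Sobolev sense, where $\delta(u_n)$ is a Skorohod integral whose divergence representation drives the mixed-normal convergence and $N_n$ collects the $r_n$-order corrections. To that end, I would first Taylor-expand $V^{[1]}(X_t)$ and $X_t$ on each subinterval $[\tjm,\tj]$ around the left endpoint, in enough orders to isolate contributions of order $r_n$ and capture higher-chaos components. Substituting into $\Delta_jX=\int_\tjm^\tj V^{[2]}(X_s)ds+\int_\tjm^\tj V^{[1]}(X_s)dB_s$ and then squaring and summing $n^{2H-1}\sum_j (\Delta_jX)^2$, the product formula for multiple Wiener integrals with respect to the fBm produces a decomposition of $Z_n-\bbV_\infty$ into (i) a principal second-chaos piece yielding $\delta(u_n)$, (ii) first-chaos correction pieces of order $r_n$, and (iii) a remainder of smaller order.

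Next, I would verify the hypotheses of the Skorohod-integral expansion theorem. Specifically: (a) the $L^p$-convergence $\langle u_n,\mathfrak D G_\infty\rangle \to$ suitable limits; (b) the nondegeneracy of $G_\infty$, which is Assumption \ref{220404.1535}(ii); (c) stable convergence of $\delta(u_n)$ to a mixed normal with conditional variance $G_\infty$, which is already available from Le\'on and Lude\~na \cite{leon2007limits}; and (d) convergence in probability of the random symbols $\mS^{(3,0)}_n$ and $\mS^{(1,0)}_n$ defined from the asymptotic cumulants and Malliavin derivatives of $u_n$ and $G_n$ to their limits $\mS^{(3,0)}$ and $\mS^{(1,0)}$ stated at (\ref{220308.2520}) and (\ref{220308.2521}). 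The quasi-tangent and quasi-torsion random symbols in the general theory become our $\mS^{(1,0)}$ and $\mS^{(3,0)}$; they are identified by carrying the chaos decomposition through the Taylor remainder and reading off the nonvanishing limits in $L^p$.

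The hard part will be the systematic order assessment of the many residual terms that arise from the Taylor expansion, the product formula, repeated Malliavin derivatives $D$, and their projections $P_\jonem$ onto the kernels associated with the subintervals. Each such term is a randomly weighted sum of products of multiple integrals of $B$, and naive bounding by Cauchy--Schwarz loses too many powers of $n$. My strategy is to parameterize each term by a weighted graph encoding its index structure and then apply the two exponents introduced in Section \ref{210429.1655}, which tightly bound the Sobolev norm of such a functional in terms of the combinatorial data of the graph. The stability of these exponents under the action of $D$ and of the projection operators, established in that section, then allows me to propagate the bounds through the repeated Malliavin derivatives and to conclude that all terms not belonging to the principal $\delta(u_n)$ or to the $r_n$-order random symbols are $o(r_n)$ in the requisite Sobolev sense.

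Finally, once the Skorohod-integral representation and the convergence of the random symbols are established, the general theorem yields the estimate (\ref{220421.1700}) with the density $\hat p_n(z)$ given by the action of $\mathrm{Id}+r_n\mS(\partial_z)^*$ on $E[\phi(z;0,G_\infty)]$, according to the convention (\ref{220401.1952}); the identification $\mS=\mS^{(3,0)}+\mS^{(1,0)}$ in (\ref{220404.1631}) then concludes the proof.
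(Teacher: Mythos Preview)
Your overall strategy matches the paper's: apply the Nualart--Yoshida theorem by (1) obtaining the stochastic expansion $Z_n=\delta(u_n)+r_nN_n$ via Taylor expansion and the product formula, (2) verifying Condition~{\bf [D]}, and (3) using the weighted-graph exponents of Section~\ref{210429.1655} to control the Sobolev norms of the many residual functionals arising from $D_{u_n}$ and $D^i$. That is exactly what the paper does in Section~\ref{220317.2030}.

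However, there is a concrete misidentification in your sketch. You write that ``the quasi-tangent and quasi-torsion random symbols in the general theory become our $\mS^{(1,0)}$ and $\mS^{(3,0)}$.'' The quasi-torsion does give $\mS^{(3,0)}$, but the quasi-tangent $\mS_{0}^{(2,0)}=\tfrac12\qtan_n[\tti\sfz]^2$ in fact \emph{vanishes} in the limit (the paper shows every piece of $r_n^{-1}(D_{u_n}M_n-G_\infty)$ is $o_{L^p}(1)$ after IBP), as does $\mS_1^{(2,0)}$. The symbol $\mS^{(1,0)}$ instead arises from the perturbation term $N_n$ in the decomposition $Z_n=\delta(u_n)+r_nN_n$: it is the limit of $N_n[\tti\sfz]$ (after one IBP against $\Psi(\sfz)$, producing the degree-3 and degree-1 pieces in (\ref{220308.2521})). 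If you computed $\mS^{(1,0)}$ as a quasi-tangent you would obtain the wrong formula.

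Two smaller points: the stable convergence result of Le\'on--Lude\~na is not needed as a separate input---it is subsumed in the verification of {\bf [D]}; and Assumption~\ref{220404.1535}(ii) alone does not give {\bf [D]}(iv)(b), which requires a tail estimate $P[\Delta_{M_n}<s_n]=O(r_n^{1+\kappa})$ obtained by comparing $\Delta_{M_n}$ with $2G_\infty$ plus a remainder of order $O_{L^p}(r_n)$.
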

The proof of this result is given in Section \ref{220317.2030}.

\begin{remark}
    The condition on the coefficient of the SDE (Assumption \ref{220404.1535} (i))
    is rather restrictive and this excludes basic examples like a fractional OU (fOU) process.
    The assumption that the coefficients are bounded is used to obtain 
    the Malliavin derivatives and their estimates.
    Since the Malliavin derivatives of fOU process has a simple expression,
    we have an asymptotic expansion for the quadratic variation of an fOU process as well. 
    See Section \ref{220518.2114}.
\end{remark}

\subsection{Overview on asymptotic expansion of Skorohod integrals {and strategies}}
\label{220421.2330}
We review the theory of asymptotic expansion of Skorohod integrals 
described in \cite{nualart2019asymptotic} in the one-dimensional case without reference variables.
Let $(\Omega, \calf, P)$ a complete probability space equipped with an isonormal Gaussian process 
$W=\cbr{W(h)_{h\in\calh}}$ 
on a real Hilbert space $\calh$.
We denote the Malliavin derivative operator by $D$ and 
its adjoint operator, namely the divergence operator or the Skorohod integral, by $\delta$.
For a real separable Hilbert space $V$, 
we write $\bbD^{k,p}(V)$ for the Sobolev space of $V$-valued random variables 
that has the Malliavin derivatives up to $k$-th order which have finite moments of order $p$.
We write $\bbD^{k,p}=\bbD^{k,p}(\bbR)$, 
$\bbD^{k,\infty}(V)=\cap_{p>1}\bbD^{k,p}(V)$ and
$\bbD^{\infty}(V)=\cap_{p>1,k\geq1}\bbD^{k,p}(V)$.
We refer to the monograph \cite{nualart2006malliavin} for a detailed account on this subject.

Consider a sequence of random varibales $Z_n$ defined on the probability space 
$(\Omega, \calf, P)$ written as 
\begin{align*}
  Z_n=M_n+r_nN_n,
\end{align*}
where
$M_n=\delta(u_n)$ is the Skorohod integral of $\calh$-valued random variable
$u_n\in\Dom(\delta)$,
$N_n$ is a random variable and 
$(r_n)_{n\in\bbN}$ is a sequence of positive numbers such that 
$\lim_{n\to\infty}r_n=0$.
We consider a stable convergence of $Z_n$ to 
a mixed normal distribution $G_\infty^{1/2}\zeta$,
where $G_\infty$ is a positive random variable 
and $\zeta$ is a standard normal distribution independent of $\calf$.

Nualart and Yoshida \cite{nualart2019asymptotic} introduced the following random symbols.
To write random symbols, we use a different notation from the paper 
due to the one-dimensional setting.
%We write $\mS(\xi)=\sum_{k}\chi_k\,\xi^k$ (a finite sum) for a polynomial random symbol,
% where $\chi_k$ are coefficient random variables
% %\redb{$\chi_k\,\xi^k$ is the random symbol of degree $k$ with its coefficient random variable $\chi_k$}
%  and $\xi$ stands for a dummy variable.
We may write
$D_{u_n}F=\abr{DF,u_n}_\calh$ for a random variable $F$ regular enough.
The quasi-tangent is defined by 
\begin{align*}
  \qtan_n=
  r_n^{-1}\brbr{\babr{DM_n, u_n}_\calh-G_\infty}
  =r_n^{-1}\brbr{D_{u_n}M_n-G_\infty}.
\end{align*}
The quasi-torsion and modified quasi-torsion are defined by 
\begin{align*}
  \qtor_n&=
  r_n^{-1}\Babr{D\babr{DM_n, u_n}_\calh,u_n}_\calh
  =r_n^{-1}(D_{u_n})^2M_n
  \\
  \mqtor_n&=
  r_n^{-1}\babr{DG_\infty,u_n}_\calh
  =r_n^{-1}D_{u_n}G_\infty.
\end{align*}

We write
\begin{align*}
  G^{(2)}_n
  %G^{(2)}_n(\sfz) 
  &= r_n\qtan_n
  = D_{u_n}M_n - G_\infty
  \\
  G^{(3)}_n
  %G^{(3)}_n(\sfz) 
  &= r_n\mqtor_n = D_{u_n}G_\infty
\end{align*}
and define the following random symbols
\begin{align*}
  \mS^{(3,0)}_n(\tti\sfz)&=
  \frac13r_n^{-1}(D_{u_n[\tti\sfz]})^2 M_n[\tti\sfz]=
  \frac13\qtor_n[\tti\sfz]^3\\
  \mS^{(2,0)}_{0,n}(\tti\sfz)&=
  \frac12 r_n^{-1}G^{(2)}_n(\sfz)=
  \frac12 r_n^{-1}\brbr{D_{u_n[\tti\sfz]} M_n[\tti\sfz] - G_\infty[\tti\sfz]^2}=
  \frac12 \qtan_n[\tti\sfz]^2\\
  \mS^{(1,0)}_n(\tti\sfz)&=N_n[\tti\sfz]\\
  \mS^{(2,0)}_{1,n}(\tti\sfz)&=
  D_{u_n[\tti\sfz]} N_n[\tti\sfz]
\end{align*}
for $\tti\sfz\in\tti\bbR$.
We consider the random symbols 
$\mS^{(3,0)}$,
$\mS^{(2,0)}_{0}$,
$\mS^{(1,0)}$ and
$\mS^{(2,0)}_{1}$.
These work as the limits of 
the above random symbols
$\mS^{(3,0)}_n$,
$\mS^{(2,0)}_{0,n}$,
$\mS^{(1,0)}_n$ and
$\mS^{(2,0)}_{1,n}$, respectively.
Let
\begin{align*}
  \Psi(\sfz)=
  \exp\rbr{2^{-1} G_\infty[\tti\sfz]^2}.
\end{align*}
for $\sfz\in\bbR$.

Since the distribution $\call(Z_n)$ on $\bbR$ 
for $Z_n$ treated in this paper is regular enough,
the asymptotic expansion (\ref{220421.1700}) holds for some non-differentiable functions $f$.
The following set of conditions {\bf [D]} from Nualart and Yoshida \cite{nualart2019asymptotic}
work as a sufficient condition to validate the expansion.
This parameter $l$ about differentiability below is $l=9$ in this case.
For a one-dimensional functional $F$ , we write 
$\Delta_F =\sigma_F$ for the Malliavin covariance matrix of $F$.

\begin{itemize}
\item [{\bf [D]}]
\begin{itemize}
  \item [(i)]
  $u_n\in\bbD^{l+1,\infty}(\mH)$,
  %$u_n\in\bbD^{l+1,\infty}(\mH\otimes\bbR^\sfd)$,
  $G_\infty \in \bbD^{l+1,\infty}(\bbR_+)$,
  %$G_\infty \in \bbD^{l+1,\infty}(\bbR^\sfd \otimes_+ \bbR^\sfd)$,
  %$W_n, 
  $N_n\in\bbD^{l,\infty}$.
  %$N_n\in\bbD^{l,\infty}(\bbR^\sfd)$,
  %$W_\infty\in\bbD^{l\vee\sfd_2,\infty}(\bbR^\sfd)$,
  %$X_n\in\bbD^{l,\infty}(\bbR^{\sfd_1})$,
  %$X_\infty\in\bbD^{l\vee(\sfd_2+1),\infty}(\bbR^{\sfd_1})$,

  \item [(ii)]
  There exists a positive constant $\kappa$ such that the following estimates hold for every $p>1$:
  \begin{align}
    &\norm{u_n}_{l,p}=O(1)
    \label{220215.1241}\\
    &\bnorm{G^{(2)}_n}_{l-2,p}=O(r_n)
    \label{220215.1242}\\
    &\bnorm{G^{(3)}_n}_{l-2,p}=O(r_n)
    \label{220215.1243}\\
    &\bnorm{D_{u_n}G^{(3)}_n}_{l-1,p}=O(r_n^{1+\kappa})
    \label{220215.1244}\\
    &\bnorm{D^2_{u_n}G^{(2)}_n}_{l-3,p}=O(r_n^{1+\kappa})
    \label{220215.1245}\\
    &\norm{N_n}_{l-1,p}=O(1)
    \label{220215.1246}\\
    &\bnorm{D^2_{u_n} N_n}_{l-2,p}=O(r_n^{\kappa})
    \label{220215.1247}
  \end{align}

  \item [(iii)]
  For each pair 
  %$(\mT_n,\mT)= (\mS_n^{(3,0)},\mS^{(3,0)})$,
  %$(\mS_{0,n}^{(2,0)},\mS_0^{(2,0)})$,
  %$(\mS_n^{(1,0)},\mS^{(1,0)})$ and 
  %$(\mS_{1,n}^{(2,0)},\mS_1^{(2,0)})$,
  $(\mT_n,\mT)= (\mS_n^{(3,0)},\mS^{(3,0)})$,
  $(\mS_{0,n}^{(2,0)},\mS_0^{(2,0)})$,
  $(\mS_n^{(1,0)},\mS^{(1,0)})$ and 
  $(\mS_{1,n}^{(2,0)},\mS_1^{(2,0)})$,
  the following conditions are satisfied.
  \begin{itemize}
    \item [(a)]
    $\mT$ is a polynomial random symbol the coefficients of which are in
    $L^{1+}=\cup_{p>1} L^p$.
    %$\bbD^{\check\sfd + \beta_x +1,1+} = \cup_{p>1} \bbD^{\check\sfd + \beta_x +1,p} $
    \item [(b)]
    For some $p>1$, there exists a polynomial random symbol $\bar\mT_n$ that has 
    $L^p$ coefficients and the same degree as $\mT$, 
    \begin{align*}
    E \sbr{\Psi (\sfz)\mT_n(\tti\sfz)} = E[\Psi (\sfz)\bar\mT_n(\tti\sfz)]
    %\hspsm{\tforsm \sfz\in\bbR \tandsm n\in\bbN}
    %E[\Psi (\sfz,\sfx)\mT_n(\tti\sfz,\tti\sfx)] = E[\Psi (\sfz,\sfx)\bar\mT_n(\tti\sfz,\tti\sfx)]
    \end{align*}
    and $\bar\mT_n\to\mT$ in $L^p$.
  \end{itemize}

  \item [(iv)]
  \begin{itemize}
    \item [(a)] $G^{-1}_{\infty}\in L^{\infty-}$
    \item [(b)]
    There exist %$c\in (-1,0)\cup(0,1)$ and 
    $\kappa>0$ such that
    \begin{align*}
    P[\Delta_{M_n}<s_n]=O(r_n^{1+\kappa})
    %P[\Delta_{(cM_n+W_\infty,X_\infty)}<s_n]=O(r_n^{1+\kappa})
    \end{align*}
    for some positive random variables $s_n\in\bbD^{l-2,\infty}$ satisfying 
    $\sup _{n\in\bbN}(\norm{s_n^{-1}}_p + \norm{s_n}_{l-2,p})<\infty$
    for every $p>1$.
  \end{itemize}
\end{itemize}
\end{itemize}
Note that $\bar\mT_n\to\mT$ in $L^p$ means that
the coefficient random variable of each degree of $\bar\mT_n$ converges in $L^p$ to 
the counterpart of $\mT$.
The functional $s_n$ in (iv) (b) is used to make
a truncation functional to gain local asymptotic non-degeneracy.
See Section 7 of \cite{nualart2019asymptotic} for a construction of a truncation function.

Define the random symbol 
$\mS_n = 1 +r_n\mS$ with
%\begin{align*}
%  \mS_n = 1 +r_n\mS,
%\end{align*}
%where the random symbol $\mS$ is defined by
\begin{align*}
  \mS(\tti\sfz) = 
  \mS^{(3,0)}(\tti\sfz) + \mS^{(2,0)}_{0}(\tti\sfz) +
  \mS^{(1,0)}(\tti\sfz) + \mS^{(2,0)}_{1}(\tti\sfz)
\end{align*}
and $\hat p_n(z)$ by
\begin{align*}
  \hat p_n(z) = 
  E\sbr{\mS_n(\partial_z)^* \phi(z;0,G_\infty)},
\end{align*}
where the action of the adjoint of a random symbol is defined at (\ref{220401.1952}).
We denote by $\hat\cale(M,\gamma)$ 
the set of measurable functions $f:\bbR\to\bbR$
such that 
$\abs{f(z)}\leq M(1+\abs{z})^\gamma$ for all $z\in\bbR$.
The following theorem rephrases Theorem 7.7 of \cite{nualart2019asymptotic}.
\begin{theorem}
  Suppose that Condition {\bf [D]} is satisfied.
  Then, for each $M,\gamma\in\bbR_{>0}$.
  \begin{align*}
    \sup_{f\in\hat\cale(M,\gamma)}
    \abs{E\sbr{f(Z_n)}- \int_{\bbR}f(z)\hat p_n(z)dz}
    =o(r_n)
    \quad\text{as }n\to\infty.
  \end{align*}
\end{theorem}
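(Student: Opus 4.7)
The approach I would take follows the Fourier-analytic route. I would first establish a second-order asymptotic expansion of the characteristic function $\varphi_n(\sfz) := E[e^{\tti\sfz Z_n}]$ with error $o(r_n)$ on polynomial windows of $\sfz$, and then invert it under the non-degeneracy hypothesis (iv) to recover the expansion of $E[f(Z_n)]$ against $f\in\hat\cale(M,\gamma)$.

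For the characteristic function, since $M_n = \delta(u_n)$, the Skorohod--Malliavin duality gives
$\partial_{\sfz} E[e^{\tti\sfz M_n}] = \tti E[M_n e^{\tti\sfz M_n}] = -\sfz E[e^{\tti\sfz M_n} D_{u_n} M_n] = -\sfz E[e^{\tti\sfz M_n} G_\infty] - \sfz E[e^{\tti\sfz M_n} G^{(2)}_n],$
which is the first-order interpolation identity of \cite{nourdin2016quantitative} after the decomposition $D_{u_n}M_n = G_\infty + G^{(2)}_n$. Rather than stopping here, I would iterate duality once more on each of the two residuals, transferring $e^{\tti\sfz M_n}$ off $G^{(2)}_n$ and off $\Psi(\sfz)$-adjusted variants and thereby producing the quantities $D_{u_n} G^{(2)}_n = r_n D_{u_n}\qtan_n$ (which, via a further application of $D_{u_n}$, is controlled by $D_{u_n}^2 G^{(2)}_n$) and $D_{u_n} G_\infty = G^{(3)}_n$. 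These second-order contributions combine to give exactly $\mS^{(3,0)}_n$ (cubic in $\tti\sfz$) and $\mS^{(2,0)}_{0,n}$ (quadratic), and their remainders are controlled by (\ref{220215.1244}) and (\ref{220215.1245}), each of size $O(r_n^{1+\kappa})$. Expanding $e^{\tti\sfz r_n N_n} = 1 + \tti\sfz r_n N_n + O(\sfz^2 r_n^2 N_n^2)$ and applying duality once to the $\tti\sfz r_n N_n$ factor yields the two remaining contributions $\mS^{(1,0)}_n$ and $\mS^{(2,0)}_{1,n}$, with remainders controlled by (\ref{220215.1246}) and (\ref{220215.1247}). Clause (iii) upgrades the finite-$n$ symbols to the limit symbols in $L^p$, and the aggregate $E[\Psi(\sfz) \mS_n(\tti\sfz)]$ is by construction the Fourier transform of $\hat p_n$ (since the adjoint $\mS_n(\partial_z)^*$ applied to $\phi(\cdot;0,G_\infty)$ integrates against $e^{\tti\sfz z}$ to give $\mS_n(\tti\sfz)\Psi(\sfz)$).

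To convert the characteristic-function expansion into the claimed bound against $f\in\hat\cale(M,\gamma)$, I would introduce the truncation functional $\psi_n := \chi(\Delta_{M_n}/s_n)$ with a smooth cutoff $\chi$ supported on $[1,\infty)$. On $\{\psi_n > 0\}$ the Malliavin covariance is bounded below by a power of $s_n$, so iterated integration by parts against smoothing kernels produces a smooth truncated density $p_{Z_n,\psi_n}$ satisfying the same second-order expansion as $\varphi_n$, with the differentiability budget supplied by (i) and (ii); the possibly non-smooth polynomially-growing $f$ is then integrated against this density. On $\{\psi_n = 0\}$, (iv)(b) gives probability $O(r_n^{1+\kappa})$, and the moment bounds on $Z_n$ obtained from (\ref{220215.1241}) and (\ref{220215.1246}) combined with H\"older's inequality and the polynomial growth of $f$ bound the contribution by $o(r_n)$. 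The hardest part will be the bookkeeping of the second iteration of duality: seven distinct Sobolev-norm estimates (\ref{220215.1241})--(\ref{220215.1247}) must be tracked through nested applications of $D$ and $\delta$, and it must be checked that the strict strengthening $r_n^{1+\kappa}$ (rather than $r_n$) in (\ref{220215.1244}), (\ref{220215.1245}), (\ref{220215.1247}) is precisely what upgrades the nominal $O(r_n)$ remainder to the required $o(r_n)$.
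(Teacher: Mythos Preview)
The paper does not prove this theorem. It is explicitly stated as a restatement of Theorem~7.7 of Nualart--Yoshida \cite{nualart2019asymptotic} (``The following theorem rephrases Theorem 7.7 of \cite{nualart2019asymptotic}''), and is invoked as a black box; the work of the present paper lies entirely in verifying Condition {\bf [D]} for the specific $u_n$, $N_n$, $G_\infty$, and random symbols arising from the fractional SDE, which is carried out in Section~\ref{220317.2030}.

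Your sketch is a reasonable high-level outline of the strategy in \cite{nualart2019asymptotic} itself---second-order interpolation of the characteristic functional via iterated duality, identification of the four random symbols, and localization by the truncation $\psi_n$ built from $s_n$ to handle non-smooth test functions of polynomial growth---so in spirit you are reconstructing the cited proof rather than offering an alternative. There is no discrepancy with the present paper because the present paper simply does not contain a proof of this statement to compare against.
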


\vspsm
%\subsection{About estimate of functionals of a certain form}
In order to justify the asymptotic expansion given in Theorem \ref{220404.1630},
we will check Condition {\bf [D]} with 
$u_n$, $G_\infty$, $N_n$ and $\mS$ in the context of 
the quadratic variaiton of $X_t$,
whose definitions are given at (\ref{220404.1632}), (\ref{220301.1030}), 
(\ref{220404.1633}) and (\ref{220404.1631}), respectively.
While verifying the conditions for those functionals,
we repeatedly need to estimate the order of functionals of a certain form.
For example, 
\begin{align*}
  \cali^{}_{n}=
  %\cali^{M(3,0)}_{5,n}=
  n^{6H-3/2} \sum_{j_1,j_2,j_3=1}^n
  \rbr{D_{1_\jtw}a_{t_{\jon-1}}} a_{t_{\jtw-1}} a_{t_{\jth-1}}
  I_2(1_\jon^{\otimes 2})I(1_\jth)\beta_n\rbr{\jtw,\jth},
\end{align*} 
where 
$1_j$ is the indicator function of $\clop{\tjm,\tj}$,
$I_2(1_\jon^{\otimes 2})$ and $I(1_\jth)$ are (multiple) stochastic integrals and 
$\beta_n(j_2,j_3)$ is the inner product between $1_{j_2}$ and $1_{j_3}$.
When we asses the order (e.g. in $L^p$ norm) of functionals of this form, 
we use the IBP formula of Malliavin calculus 
until the multiple stochastic integrals in functionals disappear.
When we consider $E\bsbr{(\cali^{M(3,0)}_{5,n})^4}$, 
we encounter with a linear combination of functionals of the following form:
\begin{align*}
  E\bbsbr{n^{\alpha} \sum_{j_1,...,j_{12}=1}^n
  %D_{1_\jon}\rbr{D_{1_\jtw}a_{t_{\jon-1}}} a_{t_{\jtw-1}} a_{t_{\jth-1}}
  A_n(j_1,...,j_{12})
  \prod_{v_1\neq v_2=1,...,12}
  \beta_n\rbr{j_{v_1},j_{v_2}}^{\theta_{v_1,v_2}}},
  %\beta_n\rbr{j_1,j_3}^{\theta_{1,3}}
  %...
  %\beta_n\rbr{j_{11},j_{12}}^{\theta_{11,12}}
\end{align*} 
where $A_n(j_1,...,j_{12})$ are random variables and $\theta_{v_1,v_2}$ are non-negative integers. 
The problem is that 
in the linear combination there appear %{functionals with}
a huge number of patterns of various $A_n$ and $\theta_{v_1,v_2}$
caused by  the Leibniz rule of the actions of $D_{1_j}$.
Even after desolving the multiple stochastic integrals in functionals by the duality of $D$ and $\delta$,
it is not a simple matter to estimate the sum with a product of $\beta_n(j_{v_1},j_{v_2})$, 
which mainly controls the order of the functional.
Furthermore, when checking Condition {\bf [D]},
we need to consider the functionals 
obtained by the action of $D_{u_n}$ and $D^i$ on these functionals as well.
%\redb{functional no sum-structure, beta no tameni order estimate was complicated dearu koto wo remark}
To handle this issue, we devote Section \ref{210429.1655} 
to develope two exponents that can be calculated with ease 
based on some graphical representation of the structure of the sum.
These exponents can be used in other contexts related to fBm and 
we hope they have interest of their own.
{
  The same obstacle has appeared in the case of functionals of classical Brownian motion,
  and in \cite{yoshida2020asymptotic} the author defined an exponent. 
  The exponents defined in this paper can be thought of a generalization of it.
  (See Remark \ref{220502.2011} and \ref{220502.2012}.)
}
In Section \ref{220317.2030}, by extensively applying the exponents, 
we derive the stochastic expansion of the functional in question,
calculate the random symbol in the asymptotic expansion 
and verify the assumption of the general theory from \cite{nualart2019asymptotic}.

\vspssm
We review the definitions and notations about fractional Brownian motion and related Malliavin calculus.
Let $B=\cbr{B_t\mid t\in[0,T]}$ a fractional Brownian motion 
with Hurst parameter $H\in\rbr{1/2,3/4}$
defined on some complete probability space $(\Omega,\calf, P)$.
An inner product on the set $\cale$ of step functions on $[0,T]$ is defined by 
\begin{align*}
  \abr{1_{[0,t]},1_{[0,s]}}_\calh = E\sbr{B_s\,B_t}
  = \frac12\rbr{\abs{t}^{2H}+\abs{s}^{2H}-\abs{t-s}^{2H}}.
\end{align*}
and $\calh$ is the closure of $\cale$ with respect to
$\snorm{\cdot}_\calh=\abr{\cdot,\cdot}_\calh^{1/2}$.
The map $\cale\ni1_{[0,t]}\mapsto B_t\in L^2(\Omega, \calf, P)$
can be extended linear-isometrically to $\calh$.
We denote this map by $\phi\mapsto B(\phi)$ and 
the process $\cbr{B(\phi), \phi\in \calh}$ is an isonormal Gaussian process. 
In the following sections, tools from Malliavin calculus are based on 
this isonormal Gaussian process.
The Hilbert space $\calh$ contains the linear space $\abs\calh$
of measurable functions $\phi:[0,T]\to\bbR$ such that 
\begin{align*}
  \int_0^T\int_0^T \abs{\phi(s)}\abs{\phi(t)}\abs{t-s}^{2H-2}dsdt<\infty,
\end{align*}
and for $\phi$ and $\psi$ in $\abs\calh$, the inner product of $\calh$ is written as 
\begin{align*}
  \abr{\phi,\psi}_\calh=
  H(2H-1)\int^T_0\int^T_0\phi(s)\psi(t)\abs{t-s}^{2H-2}dsdt.
\end{align*}
For a measurable function $\phi$ on $[0,T]^l$, we define
\begin{align*}
  \norm\phi_{\abs\calh^{\otimes l}}^2
  =\alpha_H^l \int_{[0,T]^{l}} \int_{[0,T]^{l}} \abs{\phi(u)} \abs{\phi(v)}
  \abs{u_1-v_1}^{2H-2}...\abs{u_{l}-v_{l}}^{2H-2}dudv,
\end{align*}
and the space
$\abs\calh^{\otimes l}=\bcbr{\phi:[0,T]^l\to\bbR\mid \norm\phi_{\abs\calh^{\otimes l}}<\infty}$ 
of measurable functions
forms a subspace of the $l$-fold tensor product space $\calh^{\otimes l}$ of $\calh$.
We refer to \cite{nualart2006malliavin} or 
\cite{nourdin2012selected} for a detailed account on fBm. 
We collects some basic estimates related to SDE driven by fBm with Hurst index $H>1/2$
in Section \ref{220423.1450}.

\vspsm
Recall we write 
$\tj=jT/n$ for $n\in\bbN$ and $j\in\cbr{1,...,n}$.
Let $I_j=\clop{\tjm,\tj}\subset[0,T]$,
and for brevity 
we write $1_j$ (or $1_{n,j}$) for the indicator function $1_{I_j}$ of this interval $I_j$.
Note that $\tj$, $I_j$ and $1_j$ are dependent on $n$.
We write $[n]=\cbr{1,..,n}$.
Define $\rho_H(k)$, $c_H$ and 
$\beta_n\rbr{j_1,j_2}$ by %=\beta_{j_1,j_2}
 %:= \abr{1_{\jon}, 1_\jtw}_\calh$  for $j_1,j_2\in[n]$ by
\begin{align}
  \rho_H(k) &:= \frac12 \rbr{\abs{k+1}^{2H} +\abs{k-1}^{2H} -2\abs{k}^{2H}}
  =\abr{1_{[0,1]}, 1_{[k,k+1]}}_\calh
  &&\tfornsp k\in\bbZ
  \nn\\
  c_H^2 &%= c_{H,(\ref{210417.1805})}^2
  =\sum_{k\in\bbZ} \rho_H(k)^2 
  \label{210417.1805}
  \\
  \beta_n\rbr{j_1,j_2}=\beta_{j_1,j_2}%=\beta_{j_1,j_2;T}
  &:= \abr{1_{\jon}, 1_\jtw}_\calh
  = T^{2H} n^{-2H} \rho_H(\jon-\jtw)
  &&\tfornsp \jon,\jtw\in[n].
  \nn%\label{220422.1911}
\end{align}

We use the following notation to express asymptotic orders.
Let  $\alpha\in\bbR$ and $(F_n)_{n\in\bbN}$ a sequence of random variables.
We write 
\begin{alignat*}{5}
  &F_n=O_{L^{p}}(n^\alpha) 
  \;&&[\text{resp. } F_n=o_{L^{p}}(n^\alpha)]
  &\quad&\tif\;
  \norm{F_n}_{p}=O(n^\alpha) 
  \;&&[\text{resp. }  \norm{F_n}_{p}=o(n^\alpha)]
  \qquad(p\geq1)
  \\
  &F_n=O_{L^{\infty-}}(n^\alpha) 
  \;&&[\text{resp. } F_n=o_{L^{\infty-}}(n^\alpha)]
  &\quad&\tif\;
  F_n=O_{L^{p}}(n^\alpha) 
  %\norm{F_n}_{p}=O(n^\alpha) 
  \;&&[\text{resp. }  F_n=o_{L^{p}}(n^\alpha)]
  %\;&&[\text{resp. }  \norm{F_n}_{p}=o(n^\alpha)]
  \quad\text{for any } p>1.
\end{alignat*}
For $(F_n)_{n\in\bbN}$ with $F_n\in\bbD^\infty$,
\begin{alignat*}{5}
  &F_n=O_M(n^\alpha) 
  \;&&[\text{resp. } F_n=o_M(n^\alpha)]
  &\quad&\tif\;
  \norm{F_n}_{i,p}=O(n^\alpha) 
  \;&&[\text{resp. } \norm{F_n}_{i,p}=o(n^\alpha)]
  \quad\text{for any } i\in\bbN \tandsm p>1.
\end{alignat*}
In the case of small ``$o$'', the asymptotic order of $\norm{F_n}_{i,p}$ may depends on $i$ and $p$.

%\newpage
\section{Estimates of functionals involving Wiener chaoses}\label{210429.1655}
We introduce two exponents which enable us to tell
the order of functionals of certain forms which appear in Section \ref{220317.2030}.
The first exponent is somewhat coarser with a wider class of functionals 
to apply than the second one.  
Below we introduce some graphical representations to express functionals
to make the arguments precise.
We use the notation 
$[n]=\{1,...,n\}$ for $n\in\bbZ_{\geq1}$.
For finite sets $V'\subset V$ and a map $j$ from $V$ to $[n]$
(i.e. $j\in[n]^V$),
we write $j_{V'}=j|_{V'}\in[n]^{V'}$ for the restriction of $j$ to $V'$.
Especially when $V'=\cbr{v}$, we write $j_v=j_{\cbr{v}}$ and 
identify $j_v=j(v)$.

%%\newpage
\subsection{Edge-vertex-weighted graph
and functionals}
For a set $V$, we define 
$p(V):=\cbr{(v,v')\in V^2|v\neq v'}/_\equiv$,
where $\equiv$ means the equivalence relation induced by 
$(v,v')\equiv(v',v)$ for $v\neq v'\in V$.
We write $[v,v']$ for the equivalence class $(v,v')$ belongs to. 
Given a nonempty finite set $V$,
consider functions 
$\edgeWt:p(V)\to\bbZ_{\geq0}$ and
$\vertWt:V\to\bbZ_{\geq0}$,
and we say the triplet $G=(V,\edgeWt,\vertWt)$ is
{\it a edge-vertex-weighted graph}
({\it weighted graph} for short). 
We call $V$, $\edgeWt$ and $\vertWt$
the set of vertices, the weight function on edges and the weight function on vertices, respectively.
For brevity, we write
$\edgeWt_{v,v'}=\edgeWt_{v',v}=\edgeWt([v,v'])$
and $\vertwtlow_v=\vertWt(v)$.
We denote 
by $V(G)$ or $V_G$
the set $V$ of vertices of a weighted graph $G=(V,\edgeWt, \vertWt)$.

For a weighted graph $G=(V,\edgeWt,\vertWt)$,
we say that
the graph defined by 
$(V,\cbr{[v,v']\in p(V): \edgeWt([v,v'])>0})$
is {\it the non-weighted graph of $G$}, and denote it by $\frakg(G)$.
A weighted graph $G=(V,\edgeWt,\vertWt)$ is {\it connected}
if the non-weighted graph $\frakg(G)$ is connected.
%($V$, $\cbr{[v,v']\in p(V): \edgeWt([v,v'])>0}$) 
%
For a subset $V'$ of $V$, we call the weighted graph 
$G|_{V'}=(V',\edgeWt|_{p(V')},\vertWt|_{V'})$
{\it a subgraph of $G$} or the subgraph of $G$ obtained by restricting to $V'$.
Consider the equivalence relation $\sim_\edgeWt$ on $V$ induced by 
$v\sim_\edgeWt v'$ for $v\neq v'\in V$ such that $\edgeWt([v,v'])>0$.
When $V'(\subset V)$ is a equivalence class with respect to $\sim_\edgeWt$,
we say the subgraph $G|_{V'}$ of $G$ is a {\it component} of $G$.
We denote the set of components of $G$ by $\Comp(G)$.

We denote by $\singlegraph{v}{k}$ %$G_{(\cbr{v},k)}$
a weighted graph $(V,\edgeWt,\vertWt)$ 
such that $V$ is written as $V=\cbr{v}$ with some $v$ (i.e. the vertex set is a singleton), and
the weight $\vertWt(v)$ on the only vertex $v$ is $k$.
Notice that $p(V)$ is empty and 
we write $0$ for the weight function $\edgeWt$ on the empty edge set.
We sometimes use graphical notations to express weighted graphs. 
For example, we write the following graphs 
for $\singlegraph{v}{k}$ and 
a weighted graph $(V,\edgeWt,\vertWt)$ with
$V=\cbr{1,2,3}$ and $\edgeWt_{1,3}=0$, respectively.
%$(\edgeWt([1,2]),\edgeWt([1,3]),\edgeWt([2,3]))=(\edgeWt_{1,2},\edgeWt_{1,3},\edgeWt_{2,3})$ and 
%$(\edgeWt([1,2]),\edgeWt([1,3]),\edgeWt([2,3]))=(3,2,4)$ and 
%$(\vertWt(1), \vertWt(2), \vertWt(3))=(\vertwtlow_1,\vertwtlow_2,\vertwtlow_3)$
\begin{figure}[H]
  \centering
  \begin{subfigure}[t]{0.23\textwidth}
    \centering
    \graphonept{v}{k}
    %\caption{$G^{M(3,0)}_{1}$}
  \end{subfigure}
  \begin{subfigure}[t]{0.23\textwidth}
    \centering
    \graphExample{$\vertwtlow_1$}{$\vertwtlow_2$}{$\vertwtlow_3$}{$\edgeWt_{1,2}$}{$\edgeWt_{2,3}$}{}{1}{2}{3}
    %\caption{$G^{M(3,0)}_{1}$}
  \end{subfigure}
\end{figure}
\noindent
We may often drop information about vertices ($v$ and $1,2,3$ in the above examples) 
when it is not important.

\vspssm
For a finite family $(G^{(\gamma)})_{\gamma\in\Gamma}$
of weighted graphs
$G^{(\gamma)}=(V^{(\gamma)}, \edgeWt^{(\gamma)}, \vertWt^{(\gamma)})$,
we say $(G^{(\gamma)})_{\gamma\in\Gamma}$ is {\it disjoint}
if $(V^{(\gamma)})_{\gamma\in\Gamma}$ is mutually disjoint,
that is $V^{(\gamma)}\cap V^{(\gamma')}=\emptyset$ 
for $\gamma\neq\gamma'\in\Gamma$.
For a disjoint family $(G^{(\gamma)})_{\gamma\in\Gamma}$ of weighted graphs, 
we define a weighted graph 
$\vee_{\gamma\in\Gamma} G^{(\gamma)}=(V,\edgeWt, \vertWt)$ by
\begin{align*}
  V&=\bigsqcup_{\gamma\in\Gamma} V^{(\gamma)}
  \\
  \edgeWt([v,v'])&=
  \begin{cases}
    \edgeWt^{(\gamma)}([v,v'])
   \quad&\text{ if $v,v'\in V^{(\gamma)}$ for some $\gamma\in\Gamma$}   \\
    0 &\text{ if $v\in V^{(\gamma)}$ and $v'\in V^{(\gamma')}$ for $\gamma\neq\gamma'$}
  \end{cases}
  \\
  \vertWt(v) &= \vertWt^{(\gamma)}(v)
  \qquad\qquad\quad
  \text{for $v\in V^{(\gamma)}$ with some $\gamma\in\Gamma$}.
\end{align*}
%$\vee (G^{(\gamma)})_{\gamma\in\Gamma}$.
When $\Gamma=\cbr{1,...,k}$ for some $k\geq1$,
we may simply write
\begin{align*}
  G^{(1)}\vee ...\vee G^{(k)}=\vee_{\gamma\in\Gamma} G^{(\gamma)}.
\end{align*}
\begin{makeitbetter}
  Add what this operation stands for.
\end{makeitbetter}

\vspssm%\noindent
For a finite subset $V$ of $\bbZ$, a weighted graph $G=(V,\edgeWt, \vertWt)$
and an integer $k\in\bbZ$,
we define a weighted graph $\shiftg{G}{k}$ by
\begin{align*}
  \shiftg{G}{k}
=(V+k, 
(\edgeWt([v-k,v'-k]))_{[v,v']\in p(V+k)},  (\vertWt(v-k))_{v\in V+k})
%(\edgeWt_{v-k,v'-k})_{[v,v']\in p(V+k)},  (\vertwtlow_{v-k})_{v\in V+k})
\end{align*}
with the notation $V+k=\cbr{v+k: v\in V}$.

\vspssm
\begin{definition}
Let  $G=(V,\edgeWt, \vertWt)$ a weighted graph. For
$\sigma:V\to\bbZ_{\leq0}$ satisfying
$%\begin{align*}
  \vertWt(v) + \sigma(v)\geq0,
  %\vertwtlow_v + \sigma(v)\geq0,
$ %\end{align*}
we define $\abr\sigma\vertWt: V\to\bbZ_{\geq0}$ by
\begin{align*}
  (\abr\sigma\vertWt)(v) = \vertWt(v) + \sigma(v)
  %(\abr\sigma\vertWt)(v) = \vertwtlow_v+\sigma(v)
  \quad\tfor v\in V,
\end{align*}
and 
we denote the weighted graph $(V, \edgeWt, \abr{\sigma}\vertWt)$ by 
$\abr{\sigma}(G)$.

For
$\tau:p(V)\to\bbZ_{\geq0}$ 
satisfying
\begin{align*}
  \tau_v:=\sum_{\substack{v'\in V,  v'\neq v }}\tau([v,v'])\leq \vertWt(v)
  %\tau_v:=\sum_{\substack{v'\in V,  v'\neq v }}\tau([v,v'])\leq \vertwtlow_v
  \quad\tfor v\in V,
\end{align*}
we define
$\dabr\tau\edgeWt: p(V)\to\bbZ_{\geq0}$ and
$\dabr\tau\vertWt: V\to\bbZ_{\geq0}$ by
\begin{alignat*}{2}
  (\dabr\tau\edgeWt)([v,v'])&=
  \edgeWt([v,v']) + \tau([v,v'])
  &&\quad\tfor [v,v']\in p(V),
  \\
  (\dabr\tau\vertWt)(v) &= \vertWt(v)-\tau_v
  %(\abr\tau\vertWt)(v) &= \vertwtlow_v-\tau_v
  &&\quad\tfor v\in V,
\end{alignat*}
and 
we denote the weighted graph $(V, \dabr\tau\edgeWt, \dabr\tau\vertWt)$ by 
$\dabr\tau(G)$.
If $V$ is a singleton, we read $\dabr{\tau}(G)=G$ by convention.
\end{definition}
Notice that 
if $\dabr\tau(\abr\sigma G)$ is well-defined, 
then $\abr\sigma(\dabr\tau G)$ is also well-defined and
$\dabr\tau(\abr\sigma G) = \abr\sigma(\dabr\tau G)$.
%\delc{Maybe we can add the commutation of two $\sigma$ or $\tau$.}
If $\sigma(v_0)<0$ for some $v_0\in V$ and $\sigma(v)=0$ for $v\neq v_0$,
we write
$\abr{v_0}_{\sigma(v_0)}(G) =\abr\sigma(G)$.
Similarly, if $\tau([v_0,v_0'])>0$ for some $[v_0,v_0']\in p(V)$ and 
$\tau([v,v'])=0$ for $[v,v']\neq [v_0,v_0']$,
we write
$\dabr{v_0,v_0'}_{\tau([v_0,v_0'])}(G) =\dabr\tau(G)$.

\begin{definition}
For a weighted graph $G=(V,\edgeWt,\vertWt)$,
we define $I(G)$, $\bartheta(G)$ and $\barq(G)$ by %the following indices.
\begin{alignat*}{4}
  I(G) =& \babs{V},&\quad 
  \bartheta(G) &= \sum_{[v,v']\in p(V)} \edgeWt([v,\vpr])&\tand
  \barq(G) =&  \sum_{v\in V} \vertWt(v),
\end{alignat*}
respectively.
For a connected weighted graph $G=(V,\edgeWt,\vertWt)$,
we define $s(G)$ by
\begin{align*}
  s(G) &= 2\brbr{\bartheta(G)-(I(G)-1)} + \barq(G).
\end{align*}
\end{definition}
We sometimes abbreviate 
$I(G)$ %, $\bartheta(G)$, $\barq(G)$ 
and $s(G)$ as 
$I_G$ %, \redc{$\bartheta_G$, $\bar q_G$} 
and $s_G$, respectively.
Notice that 
\begin{align*}
  \bartheta(G)\geq I(G)-1
\end{align*}
for a connected weighted graph $G$, 
and hence we have $s(G)\geq0$.

\begin{proposition}\label{220110.2555}
  Let $(G^{(\gamma)})_{\gamma\in\Gamma}$ 
  a disjoint family of weighted graphs 
  $G^{(\gamma)}=(V^{(\gamma)},\edgeWt^{(\gamma)},\vertWt^{(\gamma)})$ 
  labeled by a nonempty finite set $\Gamma$. % with $k=\abs{\Gamma}$.
  %Set $V=\sqcup_{\gamma\in\Gamma} V^{(\gamma)}$.
  Set $G=(V,\edgeWt,\vertWt):=\vee_{\gamma\in\Gamma} G^{(\gamma)}$.
  Consider $\sigma:V\to\bbZ_{\leq0}$  and $\tau:p(V)\to\bbZ_{\geq0}$  
  such that 
  $G^{\sigma,\tau}=(V,\edgeWt^{\sigma,\tau},\vertWt^{\sigma,\tau})
  :=\abr\sigma\dabr\tau(G)$ 
  %:=\abr\sigma\dabr\tau(\vee_{\gamma\in\Gamma} G^{(\gamma)})$ 
  is well-defined.
  Then the following relations hold:
  \begin{align}
    \bartheta(G^{\sigma,\tau})
    &=\sum_{\gamma\in\Gamma}\bartheta(G^{(\gamma)})+\sum_{[v,v']\in p(V)}\tau([v,v'])
    \nn\\
    \barq(G^{\sigma,\tau})
    &=
    \sum_{\gamma\in\Gamma} \barq(G^{(\gamma)})
    +\sum_{v\in V} \sigma(v)
    -2\sum_{[v,v']\in p(V)}\tau([v,v']).
    \label{220116.2106}
  \end{align}
  If $G^{(\gamma)}$ is connected for all $\gamma\in\Gamma$ and
  the weighted graph $G^{\sigma,\tau}$ is connected, then
  $s(G^{\sigma,\tau})$ satisfies
  \begin{align}
    s(G^{\sigma,\tau})
    &=\sum_{\gamma\in\Gamma}s(G^{(\gamma)}) 
    + \sum_{v\in V}\sigma(v) - 2(\abs{\Gamma}-1).
    \label{211225.2120}
  \end{align}
\end{proposition}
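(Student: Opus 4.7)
The plan is to prove the three identities by direct computation from the definitions of $\dabr\tau$, $\abr\sigma$, and $\vee_{\gamma\in\Gamma}$. The identities (\ref{220116.2106}) and the first one are purely combinatorial bookkeeping, and (\ref{211225.2120}) is obtained as an algebraic consequence of them using the connectivity hypothesis.

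For the first identity, I would use the definition of $\dabr\tau$ to write the edge weight of $G^{\sigma,\tau}$ at $[v,v']$ as $\edgeWt([v,v'])+\tau([v,v'])$ and then sum over $p(V)$. By construction of $\vee_\gamma G^{(\gamma)}$, the weight $\edgeWt([v,v'])$ vanishes whenever $v$ and $v'$ lie in distinct components $V^{(\gamma)}$ and $V^{(\gamma')}$, so $\sum_{[v,v']\in p(V)}\edgeWt([v,v'])$ collapses to $\sum_{\gamma}\bartheta(G^{(\gamma)})$, while $\sum_{[v,v']\in p(V)}\tau([v,v'])$ is the remaining term. For (\ref{220116.2106}), the vertex weight of $G^{\sigma,\tau}$ at $v$ is $\vertWt(v)+\sigma(v)-\tau_v$, with $\tau_v=\sum_{v'\neq v}\tau([v,v'])$. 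The only nontrivial step in summing over $v\in V$ is the double-counting identity $\sum_{v\in V}\tau_v = 2\sum_{[v,v']\in p(V)}\tau([v,v'])$, since each unordered pair $[v,v']$ contributes once to $\tau_v$ and once to $\tau_{v'}$. Combined with $\sum_{v\in V}\vertWt(v) = \sum_\gamma \barq(G^{(\gamma)})$, this gives (\ref{220116.2106}).

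For (\ref{211225.2120}), I would substitute the first two identities into $s(G^{\sigma,\tau}) = 2\bigl(\bartheta(G^{\sigma,\tau}) - (I(G^{\sigma,\tau})-1)\bigr) + \barq(G^{\sigma,\tau})$, noting that $I(G^{\sigma,\tau}) = \sum_{\gamma}I(G^{(\gamma)})$ because the vertex set is unchanged by $\abr\sigma$ and $\dabr\tau$. The $\tau$-contributions cancel ($+2\sum\tau$ from the first identity against $-2\sum\tau$ from (\ref{220116.2106})), leaving
\begin{align*}
s(G^{\sigma,\tau}) = \sum_{\gamma\in\Gamma}\brbr{2\bartheta(G^{(\gamma)})-2I(G^{(\gamma)})+\barq(G^{(\gamma)})} + 2 + \sum_{v\in V}\sigma(v).
\end{align*}
Since each $G^{(\gamma)}$ is connected, $s(G^{(\gamma)}) = 2(\bartheta(G^{(\gamma)})-I(G^{(\gamma)})+1) + \barq(G^{(\gamma)})$, so each summand equals $s(G^{(\gamma)})-2$; summation produces a contribution $-2|\Gamma|$ which combines with the isolated $+2$ to yield $-2(|\Gamma|-1)$, as required. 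The assumption that $G^{\sigma,\tau}$ itself is connected is only used to guarantee that $s(G^{\sigma,\tau})$ is defined.

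I do not anticipate a serious technical obstacle: the argument is essentially a careful unwinding of definitions. The most error-prone point is the double-counting identity $\sum_v \tau_v = 2\sum_{[v,v']}\tau([v,v'])$ and the consistent treatment of $p(V)$ as unordered pairs; once that is handled cleanly, everything else is substitution.
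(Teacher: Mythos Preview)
Your proposal is correct and follows essentially the same approach as the paper's proof: both compute $\bartheta$ and $\barq$ by unwinding the definitions of $\vee_\gamma$, $\dabr\tau$, $\abr\sigma$ (using that cross-component edge weights vanish and the double-counting identity $\sum_v\tau_v=2\sum_{[v,v']}\tau([v,v'])$), and then obtain (\ref{211225.2120}) by substituting into the definition of $s$ and regrouping via $s(G^{(\gamma)})=2\bartheta(G^{(\gamma)})-2I(G^{(\gamma)})+2+\barq(G^{(\gamma)})$.
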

\begin{proof}
  For $v\in V=\sqcup_{\gamma\in\Gamma} V^{(\gamma)}$,
  we write $\gamma(v)$ for $\gamma$ such that $v\in V^{(\gamma)}$.
  \begin{align*}
    \bartheta(G^{\sigma,\tau})
    &=\sum_{[v,v']\in p(V)} \edgeWt^{\sigma,\tau}([v,\vpr])
    =\sum_{[v,v']\in p(V)} \edgeWt([v,v'])
    + \sum_{[v,v']\in p(V)} \tau([v,v'])
    %=\sum_{\substack{[v,v']\in p(V)\\\gamma(v)=\gamma(v')}} \edgeWt^{(\gamma(v))}([v,v'])
    %+ \sum_{[v,v']\in p(V)} \tau([v,v'])
    \\
    &=\sum_{\gamma\in\Gamma} \sum_{[v,v']\in p(V^{(\gamma)})} \edgeWt^{(\gamma)}([v,v'])
    +\sum_{[v,v']\in p(V)}\tau([v,v'])
    =\sum_{\gamma\in\Gamma}\bartheta(G^{(\gamma)})
    +\sum_{[v,v']\in p(V)}\tau([v,v'])
  \\
    \barq(G^{\sigma,\tau})
    &=\sum_{v\in V} (\vertWt^{(\gamma(v))}(v) +\sigma(v) -\tau_v)\\
    &=
    \sum_{\gamma\in\Gamma} \sum_{v\in V^{(\gamma)}} \vertWt^{(\gamma)}(v) 
    +\sum_{v\in V} \sigma(v)
    -\sum_{v\in V} \sum_{\substack{v'\in V\\v'\neq v }}\tau([v,v'])\\
    &= 
    \sum_{\gamma\in\Gamma} \barq(G^{(\gamma)})
    +\sum_{v\in V} \sigma(v)
    -2\sum_{[v,v']\in p(V)}\tau([v,v'])
  \end{align*}

  We can prove the relation (\ref{211225.2120}) as follows.
  \begin{align*}
    s(G^{\sigma,\tau})
    &=2\brbr{\bartheta(G^{\sigma,\tau})-(I(G^{\sigma,\tau})-1)} + \barq(G^{\sigma,\tau})
    \\&=
    2\sum_{\gamma\in\Gamma}\bartheta(G^{(\gamma)})
    +2\sum_{[v,v']\in p(V)}\tau([v,v'])
    -2\sum_{\gamma\in\Gamma}I(G^{(\gamma)})+2
    \\&\qquad
    +\sum_{\gamma\in\Gamma} \barq(G^{(\gamma)})
    +\sum_{v\in V} \sigma(v)
    -2\sum_{[v,v']\in p(V)}\tau([v,v'])
    %\\&=
    %2\sum_{\gamma\in\Gamma}\rbr{\bartheta(G^{(\gamma)}) - I(G^{(\gamma)}) + 1}
    %-2\abs{\Gamma} + 2)
    %+\sum_{\gamma\in\Gamma} \barq(G^{(\gamma)})
    %+\sum_{v\in V} \sigma(v) 
    %%\\&=
    %%2\sum_{\gamma\in\Gamma}\rbr{\bartheta(G^{(\gamma)}) - I(G^{(\gamma)}) + 1}
    %%+\sum_{\gamma\in\Gamma} \sum_{v\in V^{(\gamma)}} q^{(\gamma)}_v 
    %%+\sum_{v\in V} \sigma(v) 
    %%-2(\abs{\Gamma} - 1)
    \\&=
    \sum_{\gamma\in\Gamma}
    \rbr{2\brbr{\bartheta(G^{(\gamma)}) -I(G^{(\gamma)}) + 1} +\barq(G^{(\gamma)})}
    +\sum_{v\in V} \sigma(v) 
    -2(\abs{\Gamma} - 1)
    \\&
    =\sum_{\gamma\in\Gamma}s(G^{(\gamma)}) + \sum_{v\in V}\sigma(v) - 2(\abs\Gamma-1)
  \end{align*}
\end{proof}

We proceed to define functionals using the weighted graphs explained above.
First we introduce some classes for notational convenience.
For a nonempty finite set $V$, 
we denote by $\cala(V)$
the set of families $A=(A_n(j))_{j\in[n]^V,n\in\bbN}$ 
with $A_n(j)\in\bbD^\infty$
satisfying the following conditions:
\begin{itemize}
  \item For 
  %$n\in\mathbb{N}$, $j\in[n]^V$ and 
  $i\in\bbN$, 
  $D^i A_n(j)\in \abs\calh^{\otimes i}$ a.s. 
  
  \item $\sup_{n\in\mathbb{N}} \sup_{j\in[n]^V} \norm{A_n(j)}_{L^p(P)} <\infty$ 
  for $p\geq1$.
  
  \item $D^i A_n(j)$ has a version 
  $\rbr{D^i_{s_1,...,s_i} A_n(j)}_{s_1,...,s_i\in[0,T]}$ which satisfies
  \begin{align}\label{220119.1721}
    %C(i,p)=
    \sup_{n\in\mathbb{N}} \sup_{j\in[n]^V} \sup_{s_1,...,s_i\in[0,T]} 
    \norm{D^i_{s_1,...,s_i} A_n(j)}_{L^p(P)} 
    <\infty
  \end{align}
  %for some constant $C(i,p)$ 
  for every $i\in\bbZ_{\geq0}$ and $p\geq1$.
\end{itemize}
Similarly, for a nonempty finite set $V$, 
we denote by $\calf(V)$
the set of families 
$\bbf=\rbr{\bbf_v}_{v\in V}
=\brbr{\kerfvn{v}{j}}_{j\in[n],n\in\bbN, v\in V}$ 
with $\kerfvn{v}{j}\in\abs\calh$ 
which has a version 
$\rbr{\kerfvn{v}{j}(s)}_{s\in[0,T]}$ 
such that
with some $C_\bbf>0$
\begin{align}
  \abs{\kerfvn{v}{j}(s)}\leq C_\bbf \charfn{j}(s)
  \label{210518.0001}
\end{align}
for $v\in V, n\in\bbN$, $j\in[n]$ and $s\in[0,T]$.
\begin{scrap}
  {\begin{alignat}{2}
  &\supp(\kerfvn{v}{j})\subset \overline{I_{n,j}}=[Tn^{-1}(j-1),Tn^{-1}j]
  &&\qquad\tforsm
  v\in V, n\in\bbN \tandsm j\in[n]
  \nn\\
  %C_{(\ref{210518.0001})}=
  &\max_{v\in V} \sup_{n\in\bbN} \sup_{j\in[n]} \sup_{s\in[0,T]} \abs{\kerfvn{v}{j}(s)}
  <\infty.&
  %\label{210518.0001}
\end{alignat}}
\end{scrap}
For $V'(\subset V)$, we write
$\bbf|_{V'}=(\bbf_v)_{v\in{V'}}(\in\calf(V'))$.
We denote 
$\bbone=\brbr{\charfn{j}}_{j\in[n],n\in\bbN}$,
and abuse this notation $\bbone$ for $(\bbone)_{v\in V}$ with any $V$.

\vspsm
Recall that we have defined
\begin{align*}
  \beta_n(j_1,j_2)=\abr{1_{n,j_1}, 1_{n,j_2}}_{\calh}
\end{align*}
for $n\in\bbN$ and $j_1,j_2\in[n]$
%$\beta_{j_1,j_2} = \abr{1_{j_1}, 1_{j_2}}$
with 
$1_{n,j} = 1_{(Tn^{-1}(j-1),Tn^{-1}j]}$
for $j\in[n]$.

\begin{definition}
Let $n\in\bbN$ and $V$ a nonempty finite set.
For $j\in[n]^V$ and a map $\edgeWt:p(V)\to\bbZ_{\geq0}$,
we define 
\begin{align*}
  \beta_{n,V}(j,\edgeWt) =
  \prod_{[v,v']\in p(V)} \beta_{n}(j_v,j_{v'})^{\edgeWt([v,v'])}.
  %\prod_{[v,v']\in p(V)} \beta_{n,V}(j,[v,v'])^{\edgeWt([v,v'])},
  %\prod_{[v,v']\in p(V)} \beta_{j_{v},j_{v'}}^{\edgeWt_{v,v'}}.
\end{align*}
(Notice that
$\beta_n(j_v,j_{v'}) =\abr{1_{n,j_v}, 1_{n,j_{v'}}}_\calh$ is well-defined
%\begin{align*}
%  \beta_{n,V}(j,[v,v'])=\beta_n(j_v,j_{v'}) =\abr{1_{n,j_v}, 1_{n,j_{v'}}}_\calh
%\end{align*}
for each $[v,v']\in p(V)$.)
When $V$ is a singleton, we read
$\beta_{n,V}(j,\edgeWt)=1$ 
%$\betaprod{V,\edgeWt}{j}=\prod_{[v,v']\in p(V)} \beta_{j_{v},j_{v'}}^{\edgeWt_{v,v'}}=1$
by convention, since $p(V)=\emptyset$.
For notational convenience, we may write
\begin{align*}
  \beta_n^{G}(j)=\beta_{n,V}(j,\edgeWt)
\end{align*}
for a weighted graph $G=(V,\edgeWt,\vertWt)$.

For
$n\in\bbN$, 
a weighted graph $G=(V,\edgeWt,\vertWt)$,
$j\in[n]^V$ and 
$\bbf\in\calf(V)$,
we define a functional $B_n^G(j,\bbf)$ by
\begin{align*}
  B_n^G(j,\bbf) &= 
  \beta_n^{G}(j)\; %\beta_{n,V}(j,\edgeWt)\;
  %\prod_{[v,v']\in P(V)} \beta_{j_{v},j_{v'}}^{\edgeWt_{v,v'}}.
  \prod_{v\in V}I_{\vertWt(v)}(\kerfvn{v}{j_v}^{\otimes\vertWt(v)}).
\end{align*}
If $G=(V,\edgeWt,\vertWt)$ is a connected weighted graph,
we define another functional $\check{B}_n^G(j,\bbf)$
\begin{align*}
  \check{B}_n^G(j,\bbf) &= 
  \beta_n^{G}(j)\; %\beta_{n,V}(j,\edgeWt)\; 
  %\prod_{[v,v']\in P(V)} \beta_{j_v,j_{v'}}^{\edgeWt([v,v'])}
  \delta^{\barq(G)}\Brbr{\subotimes{v\in V} (\kerfvn{v}{j_v}^{\otimes\vertWt(v)})}
  %=
  %\delta^{\barq(G)}  \Brbr{\subotimes{v\in V} (\kerfvn{v}{j_v}^{\otimes \vertwtlow_v})}
  %\prod_{[v,v']\in P(V)} \beta_{j_v,j_{v'}}^{\edgeWt_{v,v'}}
\end{align*}
We read
$I_{q}(f^{\otimes q})= 1$ if $q=0$ and
$\delta^{\barq(G)}\Brbr{\subotimes{v\in V} (\kerfvn{v}{j_v}^{\otimes\vertWt(v)})}=1$
if $\barq(G)=0$ by convention.
\end{definition}
%Notice that $\betaprod{G}{j}$ is independent of $\vertWt$.
We write $B_n^G(j)$ and $\check{B}_n^G(j)$ for
$B_n^G(j,\bbone)$ and $\check{B}_n^G(j,\bbone)$, respectively.
%Also we sometimes abbreviate $B_n(G,j,\bbf)$ as $B^G_n(j)$.
%
We often use the next lemma.
\begin{lemma}
  For a weighted graph $G=(V,\edgeWt,\vertWt)$, $n\in\bbN$, $j\in[n]^V$ and $\bbf\in\calf(V)$,
\begin{alignat*}{2}
  \beta_n^{G}(j) 
  &=\prod_{C\in \Comp(G)} \beta_{n}^{C}(j_{V_C}),&\qquad
  B_n^G(j,\bbf) &= \prod_{C\in \Comp(G)} B_n^C(j_{V_C},\bbf|_{V_C}),
\end{alignat*}
where 
%$C(G)$ is the set of components of $G$ and 
$V_C$ is the set of vertices of $C$,
$j_{V_C}=j|_{V_C}$ and
$\bbf|_{V_C}=(\bbf_v)_{v\in{V_C}}(\in\calf(V_C))$.
\end{lemma}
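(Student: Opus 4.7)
The proof plan is straightforward: both identities reduce to the observation that the component decomposition $V=\bigsqcup_{C\in\Comp(G)}V_C$ partitions the vertex set, and that across-component pairs carry zero edge-weight.

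First I would establish the factorization of $\beta_n^G(j)$. Split the index set $p(V)$ as
\begin{align*}
  p(V)=\Bigl(\bigsqcup_{C\in\Comp(G)} p(V_C)\Bigr)\;\sqcup\;\bcbr{[v,v']\in p(V): v\in V_C,\,v'\in V_{C'},\,C\neq C'}.
\end{align*}
By the very definition of the equivalence relation $\sim_\edgeWt$ and the resulting components, whenever $v$ and $v'$ belong to different components $C\neq C'$ we must have $\edgeWt([v,v'])=0$, since otherwise $v\sim_\edgeWt v'$ would force them into the same equivalence class. Hence every such across-component factor in the product defining $\beta_n^G(j)$ equals $\beta_n(j_v,j_{v'})^0=1$, and we are left with
\begin{align*}
  \beta_n^G(j)=\prod_{C\in\Comp(G)}\;\prod_{[v,v']\in p(V_C)}\beta_n(j_v,j_{v'})^{\edgeWt([v,v'])}
  =\prod_{C\in\Comp(G)}\beta_n^C(j_{V_C}),
\end{align*}
using that $\edgeWt([v,v'])$ restricted to $p(V_C)$ coincides with the edge-weight of the subgraph $C=G|_{V_C}$. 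The degenerate case where $V_C$ is a singleton is consistent with the convention $\beta_{n,V_C}(j_{V_C},\edgeWt|_{p(V_C)})=1$.

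Next I would deduce the factorization of $B_n^G(j,\bbf)$. Since the vertex set $V$ is the disjoint union of the $V_C$, the product over vertices factors trivially:
\begin{align*}
  \prod_{v\in V}I_{\vertWt(v)}\bigl(\kerfvn{v}{j_v}^{\otimes\vertWt(v)}\bigr)
  =\prod_{C\in\Comp(G)}\;\prod_{v\in V_C}I_{\vertWt(v)}\bigl(\kerfvn{v}{j_v}^{\otimes\vertWt(v)}\bigr).
\end{align*}
Combining this with the factorization of $\beta_n^G(j)$ obtained in the first step, and recognizing that $\vertWt|_{V_C}$ and $\bbf|_{V_C}$ are precisely the data defining $B_n^C(j_{V_C},\bbf|_{V_C})$, yields
\begin{align*}
  B_n^G(j,\bbf)
  =\prod_{C\in\Comp(G)}\Bigl(\beta_n^C(j_{V_C})\prod_{v\in V_C}I_{\vertWt(v)}\bigl(\kerfvn{v}{j_v}^{\otimes\vertWt(v)}\bigr)\Bigr)
  =\prod_{C\in\Comp(G)}B_n^C(j_{V_C},\bbf|_{V_C}),
\end{align*}
which is the desired identity. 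There is no real obstacle here; the only point that requires care is the across-component vanishing of $\edgeWt$, which follows immediately from the definition of components via $\sim_\edgeWt$, and the singleton conventions, which make the formulas consistent when $|V_C|=1$.
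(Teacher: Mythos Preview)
Your proof is correct. The paper states this lemma without proof, treating it as an immediate consequence of the definitions; your argument spells out exactly the two points that make it work (across-component edge weights vanish by the definition of $\sim_{\edgeWt}$, and the vertex product splits over the disjoint union $V=\bigsqcup_C V_C$), which is precisely what the paper leaves implicit.
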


\vspssm
\begin{definition}
  For $n\in\bbN$, $\alpha\in\bbR$,
  a weighted graph $G=(V,\edgeWt,\vertWt)$,
  $A\in\cala(V)$ and $\bbf\in\calf(V)$,
  %\redb{$A$ and $\bbf$ satisfies Assumption \ref{220119.2343}.}
we define a functional $\cali_n(\alpha, G, A, \bbf)$ by
\begin{align*}
  \cali_n(\alpha, G, A, \bbf)&=
  n^\alpha \sum_{j\in[n]^V} A_n(j)\; B_n^G(j,\bbf).
  %=
  %n^\alpha \sum_{j\in[n]^V} A_n(j)
  %  \prod_{C\in \Comp(G)} B_n^C (j_{V_C}, \bbf|_{V_C})
  %  \label{220118.2623}
  %%\\&=
    %%\sum_{j\in[n]^V} A_n(j)
    %%\prod_{c\in C_+} B_n(c;j_{V_c};\bbf|_{V_c})
    %%\prod_{c\in C_0} B_n(c;j_{V_c})
  %\\&=
  %n^\alpha\sum_{j\in[n]^V} A_n(j)
  %\prod_{c\in C_+(G)}\bbcbr{ 
  %  %\betaprod{c}{j_{V_c}}
  %  \bbrbr{\prod_{[v,v']\in P(V_c)} \beta_{j_{v},j_{v'}}^{\edgeWt_{v,v'}}}
  %  \bbrbr{\prod_{v\in V_c} I^{\vertwtlow_v}(\kerfvn{v}{j_v}^{\otimes \vertwtlow_v})}}
  %  \prod_{c\in C_0(G)}%\betaprod{c}{j_{V_c}}
  %  \bbcbr{\prod_{[v,v']\in P(V_c)} \beta_{j_{v},j_{v'}}^{\edgeWt_{v,v'}}},
  %  \nn
\end{align*}
\end{definition}
The functional $\cali_n(\alpha, G, A, \bbf)$ can be transformed as:
\begin{align}
  \cali_n(\alpha, G, A, \bbf)&=
  n^\alpha \sum_{j\in[n]^V} A_n(j)
    \prod_{C\in \Comp(G)} B_n^C (j_{V_C}, \bbf|_{V_C})
    \label{220118.2623}
  %\\&=
    %\sum_{j\in[n]^V} A_n(j)
    %\prod_{c\in C_+} B_n(c;j_{V_c};\bbf|_{V_c})
    %\prod_{c\in C_0} B_n(c;j_{V_c})
  \\&=
  n^\alpha\sum_{j\in[n]^V} A_n(j)
  \prod_{C\in \Comp_+(G)}\bbcbr{ 
    %\betaprod{c}{j_{V_c}}
    \bbrbr{\prod_{[v,v']\in p(V(C))} \beta_n\rbr{j_{v},j_{v'}}^{\edgeWt_{v,v'}}}
    \bbrbr{\prod_{v\in V(C)} I_{\vertwtlow_v}(\kerfvn{v}{j_v}^{\otimes \vertwtlow_v})}}
   \nn\\&\hspace{90pt}\times
    \prod_{C\in \Comp_0(G)}%\betaprod{c}{j_{V_c}}
    \bbcbr{\prod_{[v,v']\in p(V(C))} \beta_n\rbr{j_{v},j_{v'}}^{\edgeWt_{v,v'}}},
    \nn
\end{align}
where 
$\Comp_+(G)=\cbr{C\in \Comp(G)\mid \barq(C)>0}$ and 
$\Comp_0(G)=\cbr{C\in \Comp(G)\mid \barq(C)=0}$.

%%\newpage
\subsection{First exponent}\label{211006.0300}
In order to obtain estimates of orders of the functionals
having the form (\ref{220118.2623}),
% belonging to $\cals$,
we define the exponent based on a weighted graph.
First we define the following exponent $e(C)$
for a connected weighted graph $C=(V,\edgeWt,\vertWt)$ as follows.
For $C$ with $s(C)\in\cbr{0,1}$, 
\begin{align*}
  e(C)=2-I(C)-s(C);
\end{align*}
if $s(C)\geq2$,
\begin{subnumcases}{e(C)=}
  (2-I(C)) -1 + (1/2-2H) - H(s(C)-2)
  &\text{if $\barq(C)$ is odd, 
  or if $\barq(C)$ is even 
  and  $\displaystyle\max_{v\in V} \vertwtlow(v) > \frac{1}{2}\barq(C)$}
  \label{210820.0123}
  \\
  (2-I(C)) + 2(1/2-2H) - H(s(C)-2)
  &\text{if $\barq(C)$ is even %$\barq(G)\in2\bbZ_{\geq0}$
    and $\displaystyle\max_{v\in V} \vertwtlow(v) \leq \frac{1}{2}\barq(C)$}.
\label{210820.0124}
\end{subnumcases}
Notice that the case (\ref{210820.0124}) includes the case $\barq(C)=0$.

Since $-H \geq 1/2 - 2H\geq - 1$ for $H\in(1/2,3/4)$,
we have
\begin{align}\label{211029.1300}
  e(C) \geq 2 - I(C)- s(C),
\end{align}
for any connected weighted graph $C$, and
\begin{align}\label{211121.1302}
  (2-I(C)) -1 + (1/2-2H) - H(s(C)-2)\leq
  e(G) \leq
  (2-I(C)) + 2(1/2-2H) - H(s(C)-2).
\end{align}
for $C$ with $s(C)\geq2$.

For $\alpha\in\bbR$ and a weighted graph $G$,
we define the exponent $e(\alpha,G)$ by
\begin{align*}
  e(\alpha,G) = \alpha  + \sum_{C\in \Comp(G)} e(C).
  %\label{211028.2347}
\end{align*}
Here we abuse the notation $e$ for connected weighted graphs and $(\alpha,G)$.

\begin{remark}\label{220502.2011}
  An exponent was defined in \cite{yoshida2020asymptotic} 
  for functionals having the following form:
  \begin{align*}
    \cali_n = 
    n^\alpha \sum_{j\in[n]^m} A_n(j) I_{q_1}(1_{j_1}^{\otimes q_1}) \cdots 
    I_{q_m}(1_{j_m}^{\otimes q_m})
    \qquad\twith q_1,...,q_m\geq0,
  \end{align*}
  where the multiple stochastic integrals are defined with respect to a classical Brownian motion.
  With a weighted graph $G = \vee_{v\in[m]} (\cbr{v}, 0 , q_v)$,
  the above functional $\cali_n$ is written as 
  \begin{align*}
    \cali_n=\cali_n(\alpha, G, A, \bbone),
  \end{align*}
  where we abused the notation $\cali_n$ for $H=1/2$.
  The corresponding exponent $e(\alpha,G)$ is calculated as 
  \begin{align}
    e(\alpha,G) 
    = \alpha + \sum_{v\in[m]} e\rbr{(\cbr{v}, 0, q_v)}
    = \alpha 
    + \sum_{v\in[m]: q_v=0} 1 
    + \sum_{v\in[m]: q_v=1} 0
    + \sum_{v\in[m]: q_v\geq2} (\frac12 - H q_{v}).
    \label{220502.1914}
  \end{align}
  
  Since the exponent in \cite{yoshida2020asymptotic} is written as 
  \begin{align}
    e(\cali_n) &=
      \alpha -\frac12 \sum_{v\in[m]}q_v +m 
    %-\frac12 \sum_{v\in[m]: q_v>0} 1
    -\frac12 \Babs{\cbr{v\in[m]\mid q_v>0}}
    =%\nn\\&=
    \alpha 
    + \sum_{v\in[m]: q_v>0}\rbr{-\frac12 q_v + \frac12}
    %+ \sum_{v\in[m]: q_v=0}\rbr{-\frac12 q_v + 1}
    + \sum_{v\in[m]: q_v=0}1,
    \label{220502.1915}
  \end{align}
    the two exponents (\ref{220502.1914}) and (\ref{220502.1915}) coincide with $H=1/2$.
\end{remark}

\subsubsection{Estimate of the order of expectation of the functional 
$\cali_n(\alpha,G,A,\bbf)$}
For a weighted graph $G$,
we write
$\Comp_+(G) =\cbr{C\in \Comp(G)\mid \barq(C)>0}$ and
$\Comp_0(G) =\cbr{C\in\Comp(G)\mid \barq(C)=0}$.

\begin{proposition}\label{210108.1642}
  Let $\alpha\in\bbR$,
  $G=(V,\edgeWt,\vertWt)$ a weighted graph, 
  $A\in\cala(V)$ and $\bbf\in\calf(V)$.
  Then,
  \begin{align}\label{220118.1424}
    E[\cali_n] = O(n^{e(\alpha,G)})
  \end{align}
  as $n\to\infty$ for
  $\cali_n = \cali_n(\alpha,G,A,\bbf)$
  given by (\ref{220118.2623})
\end{proposition}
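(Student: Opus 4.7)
My plan is to evaluate $E[\cali_n]$ by systematically removing the multiple Wiener integrals via Malliavin-calculus duality, thereby reducing the problem to a finite linear combination of pure $\beta_n$-sums on augmented graphs, which can then be bounded by graph-combinatorial arguments.

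First, I would exchange expectation and summation and use the component factorization $B_n^G(j,\bbf)=\prod_{C\in\Comp(G)}B_n^C(j_{V_C},\bbf|_{V_C})$, separating the contribution of $\Comp_0(G)$ (already a pure $\beta_n$ product) from that of $\Comp_+(G)$ (which carries the factor $\prod_{v\in V_C}I_{\vertwtlow_v}(f_{v,j_v}^{\otimes\vertwtlow_v})$). For each component in $\Comp_+(G)$ I would apply iterated integration by parts via $E[F\,\delta^q(u^{\otimes q})]=E[\langle D^q F,u^{\otimes q}\rangle_{\calh^{\otimes q}}]$, expanding $D^q$ by the Leibniz rule so that each derivative either hits $A_n(j)$, hits another $I_{\vertwtlow_{v'}}(f_{v',j_{v'}}^{\otimes\vertwtlow_{v'}})$ (lowering its chaos order and extracting an extra $f_{v',j_{v'}}$), or pairs with an $f$ tensor factor produced by a previous differentiation. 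After all chaos integrals have been exhausted, each resulting term is a product of $\calh$-inner products: pairings of two $f$'s contribute additional $\beta_n(j_v,j_{v'})$ factors via the support bound \eqref{210518.0001}, while pairings against Malliavin derivatives of $A_n$ remain bounded uniformly in $j$ and $n$ by \eqref{220119.1721}. The full bookkeeping is encoded by a pair $(\sigma,\tau)$, with $-\sigma(v)$ counting legs of vertex $v$ absorbed by $A_n$-derivatives and $\tau([v,v'])$ counting legs paired into an inner product between $v$ and $v'$, producing the augmented graph $\abr\sigma\dabr\tau(G)$.

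Second, by Proposition \ref{220110.2555}, the invariants $\bartheta$, $\barq$ and $s$ of $\abr\sigma\dabr\tau(G)$ are explicit functions of those of $G$ and $(\sigma,\tau)$, so the remaining estimate reduces to bounding a pure $\beta_n$-sum on connected weighted graphs with zero vertex weights. Substituting $\beta_n(j,j')=T^{2H}n^{-2H}\rho_H(j-j')$ turns this into a lattice sum of products of $|\rho_H|^{\theta}$ on $[n]^V$. For $H\in(1/2,3/4)$ one has $\sum_{k\in\bbZ}\rho_H(k)^2=c_H^2<\infty$ and $\sum_{|k|\leq n}|\rho_H(k)|=O(n^{2H-1})$; an induction on the graph structure, presumably encapsulated as a lemma in Section \ref{220405.1158}, then yields the claimed power of $n$ for each $(\sigma,\tau)$.

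The main obstacle is matching the combinatorial bound to the precise trichotomy in the definition of $e(C)$. The three regimes $s(C)\in\{0,1\}$, \eqref{210820.0123}, and \eqref{210820.0124} correspond to different optimal trade-offs between pairing legs via $\tau$ (which adds edges to $G$) and absorbing legs via $\sigma$ (which removes vertex weight but leaves a residual inner product with a derivative of $A_n$). The parity of $\barq(C)$ together with the condition $\max_v \vertwtlow_v\leq \barq(C)/2$ controls whether all legs can be paired into interior edges, or whether some must inevitably be absorbed by $A_n$-derivatives. Verifying that the most economical $(\sigma,\tau)$ saturates $e(C)$ exactly in each regime, without overshoot, is the combinatorial heart of the argument and would require a careful case analysis structured around Proposition \ref{220110.2555}.
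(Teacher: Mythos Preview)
Your plan is correct and matches the paper's approach. The paper organizes the reduction as an induction on $|\Comp_+(G)|$: at each step it picks one component $C_1\in\Comp_+(G)$, first applies the product formula (Lemma~\ref{211031.2620}) inside $C_1$ to convert $\prod_{v\in V(C_1)}I_{\vertwtlow_v}$ into a sum over $\pi$ of single Skorohod integrals, then uses duality to push each $\delta$ onto $A_n$ and the remaining components, producing graphs $G^{(\pi,i)}$ with strictly fewer positive-weight components; the case analysis you correctly flag as the heart of the argument is precisely the verification that $e(-i_0,G^{(\pi,i)})\le e(0,G)$, split according to whether $s(C^{(\pi,i)})\le 1$ or $s(C^{(\pi,i)})\ge 2$, and in the latter case whether any legs are absorbed by $A_n$ or by other components (your $\sigma$ versus $\tau$).
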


\begin{proof}
Without loss of generality, we can assume $\alpha=0$.
We will prove the statement only in the case
$\bbf=\bbone$
since we can extend the argument below using Lemma \ref{201229.1452}(1)(a) and (2)
to general cases $\bbf\in\calf(V)$.

We prove the estimate (\ref{220118.1424}) 
by induction with respect to $\abs{\Comp_+(G)}$.
For a weighted graph $G=(V,\edgeWt, \vertWt)$ with $\abs{\Comp_+(G)}=0$ 
(i.e. $\vertWt=0$)
and $A\in\cala(V)$, 
we have
%by Lemma \ref{211028.2340} and Assumption \ref{201229.1450},
\begin{align*}
  \Babs{ E[\cali_n] }
  &=
  \abs{ E\bbsbr{ \sum_{j\in[n]^V} A_n(j) \prod_{C\in \Comp_0(G)}
    %\Bcbr{\prod_{\vpr<\vppr\in V_c} \beta_{j_\vpr,j_\vppr}^{\edgeWt_{\vpr,\vppr}}}
    \beta_{n,V_C}(j_{V_C}, \edgeWt|_{p(V_C)})}}
    %\beta_n^c(j_{V_c})}}
  %\leq%\\&\leq
  %\sum_{j\in[n]^V}\babs{E\sbr{A_n(j)}} \prod_{c\in C_0(G)} 
  %\beta_{n,V_c}(j_{V_c},\edgeWt|_{p(V_c)})
  %%\beta_n^c(j_{V_c})
  \\
  &\leq
  \sup_{n\in\bbN, j\in[n]^V} \norm{A_n(j)}_{L^1(P)}\;
  \sum_{j\in[n]^V}\prod_{C\in \Comp_0(G)} 
  \beta_{n,V_C}(j_{V_C},\edgeWt|_{p(V_C)})
  %\beta_{n,V_c}(j_{V_c}, \edgeWt|_{p(V_c)})
  \\
  &=C_{(\ref{220119.1721})}
  \prod_{C\in \Comp_0(G)}  \Brbr{\sum_{j\in [n]^{V_C}}
  \beta_{n,V_C}(j, \edgeWt|_{p(V_C)})
  %\beta_n^c(j)
  %\prod_{[v,v']\in p(V_c)} \beta_{j_{v},j_{v'}}^{\edgeWt_{v,v'}}
  }
  %%=C_{(\ref{220119.1721})}
  %%\prod_{c\in C_0(G)}  \bar\beta_{n,V_c}(\edgeWt|_{p(V_c)})
  %&&\text{(by Assumption \ref{201229.1450})}
  \\&=
  O(n^{\sum_{C\in \Comp_0(G)}e(C)})
  =O(n^{e(0,G)}),
  &&\text{(by Lemma \ref{211028.2340})}
  %\\&=O(n^{e(0,G)}),
  %&&\text{(by the definition (\ref{211028.2347}))}
\end{align*}
where $C_{(\ref{220119.1721})}$ is a constant obtained from
the bound (\ref{220119.1721}) in the assumption on $A$.
We applied Lemma \ref{211028.2340} to
the connected weighted graphs 
$C=(V(C), \edgeWt|_{p(V(C))},0)\in \Comp_0(G)$ with no weights on vertices.

\vspssm
Let $d\in\mathbb{Z}_{\geq0}$, and
assume that the estimate (\ref{220118.1424}) 
holds true for $G$ with $\abs{\Comp_+(G)}\leq d$. 
We are going to prove the estimate %(\ref{220118.1424})
for $G$ satisfying $\abs{\Comp_+(G)}=d+1$.
Fix such $G=(V,\edgeWt,\vertWt)$ and pick $C_1\in \Comp_+(G)$ arbitrarily and 
we denote $\cplusrest = \Comp_+(G)\setminus \cbr{C_1}$.
We write $V_1=V(C_1)$ (the vertex set of $C_1$).
For brevity, we may omit $(G)$ of $\Comp(G)$, $\Comp_+(G)$ etc.
For $\pi:p(V_1)\to\bbZ_{\geq0}$,
we write
$\pi_v = \sum_{v'\in V_1,v'\neq v}\pi([v,v'])$ for $v\in V_1$ and 
$\bar\pi = \sum_{[v,v']\in p(V_1)}\pi([v,v'])$.
Notice that 
\begin{align}\label{220110.2619}
  2\bar\pi=\sum_{v\in V_1}\pi_v.
\end{align}
We set 
\begin{align}
  \Pi(\vertWt|_{V_1})=%\Pi_{V_{C_1}}(\vertWt|_{V_{C_1}})=
  %\Pi(V_{c_1},\vertWt|_{V_{c_1}})=
  \cbr{\pi:p(V_1)\to\bbZ_{\geq0}\mid
  \vertwtlow(v)\geq\pi_v \tforsm v\in V_1}.
  \label{211225.2001}
\end{align}
If $\abs{V_1}=1$, then $p(V_1)=\emptyset$ and 
we consider $\Pi(\vertWt|_{V_{1}})$ is a singleton 
with the function $\pi:\emptyset\to\bbZ_{\geq0}$, 
for which we read $\pi_v=0$ for $\cbr{v}=V_1$ and $\bar\pi=0$.
By the product formula (Lemma \ref{211031.2620}),
\begin{align}
  \cali_n =&
    \sum_{j\in[n]^V} A_n(j)\;
      \bbrbr{\prod_{v\in V_1}I_{\vertwtlow_v}(\charf{j_v}^{\otimes \vertwtlow_v})}\,
      \beta_{n}^{C_1}(j_{V_1})
      %\prod_{[v,v']\in p(V_{c_1})} \beta_{j_{v},j_{v'}}^{\edgeWt_{v,v'}}
      \prod_{C\in \cplusrest\sqcup \Comp_0} B_n^C(j_{V_C})
      %\prod_{c\in \cplusrest\sqcup C_0} B_n(c, j_{V_c}, \bbone|_{V_c})
      %\prod_{c\in C_0} B_n(c;j_{V_c})}
  %\nn\\&
  %  \prod_{v\in V_{c_1}} I^{\vertwtlow_v}(1_{j_v}^{\otimes \vertwtlow_v}) 
  %    \prod_{\vpr<\vppr\in V_{c_1}}\beta_{j_{\vpr},j_{\vppr}}^{\edgeWt_{\vpr,\vppr}} \times  
  %  \prod_{c\in \cplusrest}\bigg\{
  %    \prod_{v\in V_c} I^{\vertwtlow_v}(1_{j_v}^{\otimes \vertwtlow_v})
  %    \prod_{\vpr<\vppr\in V_c}\beta_{j_{\vpr},j_{\vppr}}^{\edgeWt_{\vpr,\vppr}} 
  %     \bigg\}
  %  \prod_{c\in C_0}\bigg\{  
  %    \prod_{\vpr<\vppr\in V_c}\beta_{j_\vpr,j_\vppr}^{\edgeWt_{\vpr,\vppr}}  
  %    \bigg\}
  \nn\\=&
  \sum_{j\in[n]^V} A_n(j)
  \sum_{\pi\in \Pi(\vertWt|_{V_1})} C(\vertWt|_{V_1}, \pi)~
      \delta^{(\barq(C_1) - 2\bar\pi)}
      \Brbr{\subotimes{v\in V_{1}} (
        1_{j_v}^{\otimes(\vertwtlow_v - \pi_v)})}
      \prod_{[v,\vpr]\in p(V_{1})} \beta_{n}\rbr{j_{v},j_{\vpr}}^{\pi_{v,v'}}
      %\cdot\prod_{v'<v''\in V_{c_1}} \langle f_{v'}, f_{v''}\rangle^{\edgeWt_{v',v''}}
      \nn\\&\hspace{120pt}\times
      \beta_{n}^{C_1}(j_{V_{1}})
      %\prod_{[v,\vpr]\in p(V_{c_1})}\beta_{j_{v},j_{\vpr}}^{\edgeWt_{v,\vpr}} 
      \prod_{C\in \cplusrest\sqcup \Cqzero} B_n^C(j_{V_C})
    %\prod_{c\in \cplusrest\sqcup C_0} B_n(c, j_{V_c}, \bbone|_{V_c})
  %\nn\\&\times  
  %  \prod_{c\in \cplusrest }\bigg\{
  %    \prod_{v\in V_c} I^{\vertwtlow_v}(1_{j_v}^{\otimes \vertwtlow_v})
  %    \prod_{\vpr<\vppr\in V_c}\beta_{j_{\vpr},j_{\vppr}}^{\edgeWt_{\vpr,\vppr}} 
  %    \bigg\}
  %  \prod_{c\in C_0} \cbr{ 
  %    \prod_{\vpr<\vppr\in V_c}\beta_{j_\vpr,j_\vppr}^{\edgeWt_{\vpr,\vppr}}}
  \nn\\=:&
  \sum_{\pi\in \Pi(\vertWt|_{V_{1}})}
  C(\vertWt|_{V_{1}}, \pi)~ \cali_n^{(\pi)}
  \label{211101.1201}
\end{align}
where we write $\pi_{v,v'}=\pi([v,v'])$ for $[v,v']\in p(V_{1})$ for short,
and $C(\vertWt|_{V_{1}}, \pi)$ is a integer-valued constant 
dependent only on $\vertWt|_{V_{1}}$ and $\pi$.
The functional $\cali_n^{(\pi)}$ can be written as 
\begin{align*}
  \cali_n^{(\pi)}=
  &\sum_{j\in[n]^V} A_n(j)~
      \delta^{(\barq(C_1) - 2\bar\pi)}
      \Brbr{\subotimes{v\in V_{1}}(1_{j_v}^{\otimes(\vertwtlow_v - \pi_v)})}
      %\beta_{n,V_{c_1}}(j_{V_{c_1}}, \edgeWt+\pi)
      %\beta_{n,V_{c_1}}(j_{V_{c_1}}, \edgeWt|_{V_{c_1}}+\pi)
      \prod_{[v,\vpr]\in p(V_{1})} 
      \beta_n\rbr{j_{v},j_{\vpr}}^{\pi_{v,v'}+\edgeWt_{v,\vpr}}
      \prod_{C\in \cplusrest\sqcup \Cqzero} B_n^C(j_{V_C})
      %\prod_{c\in \cplusrest\sqcup C_0} B_n(c, j_{V_c}, \bbone|_{V_c}).
\end{align*}

\vspssm
Set $\dot V =(\sqcup_{C\in \cplusrest} V(C)) \sqcup \{0\}$.
For $\pi\in \Pi(\vertWt|_{V_{1}})$, let
\begin{align*}
  I(\pi)
  %I_{G,c_1}(\pi)
  %I(\vertWt|_{V_{c_1}},\pi;(\vertWt|_{V_c})_{c\in \cplusrest})
  =\bcbr{i:V_{1}\times\dot V\to\bbZ_{\geq0}\mid
  \text{satisfying }(\ref{220110.2620})\tandsm (\ref{211101.1825})}
\end{align*}
  with 
\begin{alignat}{2}
  \sum_{v'\in\dot V} i(v,v') &=\vertwtlow(v) - \pi_v
  &&\quad\text{ for $v\in V_{1}$}\label{220110.2620}\\
  \sum_{v\in V_{1}} i(v,v') &\leq \vertwtlow(v')
  &&\quad\text{ for $v'\in \bigsqcup_{C\in \cplusrest} V(C)$}.\label{211101.1825}
\end{alignat}
We write $i_{v'} = \sum_{v\in V_1} i(v,v')$ for $v'\in\dot V$ and
$\bar i_C = \sum_{v'\in V(C)} i_\vpr$ for $C\in \cplusrest$.
%\redb
We have $\barq(C)\geq \bar i_C(\geq0)$ by (\ref{211101.1825}).
We abbreviate $i(v,v')$ as $i_{v,v'}$.
%$I_{G,c_1}(\pi)$ as $I(\pi)$.
%$I(\vertWt|_{V_{c_1}},\pi;(\vertWt|_{V_c})_{c\in \cplusrest})$ as $I(\vertWt,\pi)$.
By using the duality of $\delta$ and $D$, 
we have
\begin{align}
  %\Expe{ \cali_n^{(\pi)} }
  E \sbr{ \cali_n^{(\pi)} }
  =&
  %E\bigg[\sum_{j\in[n]^V} A_n(j)
  %\cdot\delta^{(\barq(c_1) - 2\bar\pi)}
  %\Brbr{\subotimes{v\in V_{c_1}} (1_{j_v}^{\otimes(\vertwtlow_v - \pi_v)})}
  %\cdot\prod_{[v,\vpr]\in p(V_{c_1})} \beta_{j_{v},j_{\vpr}}^{\pi_{v,v'} + \edgeWt_{v,\vpr}}
  %%%%\cdot\prod_{v'<v''\in V_{c_1}} \langle f_{v'}, f_{v''}\rangle^{\edgeWt_{v',v''}}
  %\prod_{c\in \cplusrest\sqcup C_0} B_n(c;j_{V_c};\bbone|_{V_c})
  %\bigg]
  %\nn\\=&
  \sum_{j\in[n]^V} E\sbr{ A_n(j)\;
  \delta^{(\barq(C_1) - 2\bar\pi)}
  \Brbr{\subotimes{v\in V_1} (1_{j_v}^{\otimes(\vertwtlow_v - \pi_v)})}\;
  \prod_{[v,v']\in p(V_1)} 
  \beta_n\rbr{j_{v},j_{\vpr}}^{\pi_{v,v'} +\edgeWt_{v,v'}}
  %\cdot\prod_{v'<v''\in V_{c_1}} \langle f_{v'}, f_{v''}\rangle^{\edgeWt_{v',v''}}
  \prod_{C\in \cplusrest\sqcup\Cqzero} B_n^C(j_{V_C})}
  \nn\\=&
  \sum_{j\in[n]^V} E\bigg[ \sum_{i\in I(\pi)} C(i)\;
  D^{i_0}_{\otimes_{v\in V_1} (1_{j_{v}}^{\otimes i_{v,0}})} A_n(j)
  %\cdot\delta^{(\barq({c_1}) - 2\bar\pi)}
  %\big(\otimes_{v\in V_{c_1}} (
  %  1_{j_v}^{\otimes(\vertwtlow_v - \pi_v)})\big)
  \prod_{[v,\vpr]\in p(V_1)} 
  \beta_n\rbr{j_{v},j_{v'}}^{\pi_{v,v'} +\edgeWt_{v,v'}}
  %\cdot\prod_{v'<v''\in V_{c_1}} \langle f_{v'}, f_{v''}\rangle^{\edgeWt_{v',v''}}
  \nn\\&\hspace{70pt} \times  
  \prod_{C\in\cplusrest}\rbr{
    \beta_{n}^{C}(j_{V_{C}})
    \prod_{v'\in V_C} D^{i_{v'}}_{\otimes_{v\in V_1} (1_{j_{v}}^{\otimes i_{v,v'}})} 
    I_{\vertwtlow_{v'}}(1_{j_{v'}}^{\otimes \vertwtlow_{v'}})}\,
  %\prod_{[v,\vpr]\in p(V_{c})}\beta_{j_{v},j_{\vpr}}^{\edgeWt_{v,\vpr}}}
  \prod_{C\in\Cqzero} B_n^C(j_{V_C})
  %\cbr{\prod_{\vpr<\vppr\in V_c}\beta_{j_\vpr,j_\vppr}^{\edgeWt_{\vpr,\vppr}}}
  \bigg]
  \nn\\=&
  \sum_{i\in I(\pi)} E\bigg[ C(i) C(\vertWt,i) \sum_{j\in[n]^V} \bigg\{
    D^{i_0}_{\otimes_{v\in V_1} (1_{j_{v}}^{\otimes i_{v,0}})} A_n(j)
    \prod_{[v,\vpr]\in p(V_1)} 
    \beta_n\rbr{j_{v},j_{v'}}^{\pi_{v,v'} +\edgeWt_{v,v'}}
  %\cdot\prod_{v'<v''\in V_{c_1}} \langle f_{v'}, f_{v''}\rangle^{\edgeWt_{v',v''}}
  \nn\\&\hsp \times  
  \prod_{C\in \cplusrest} \bbrbr{
    \beta_{n}^{C}(j_{V_{C}})
    \Brbr{\prod_{v\in V_1,v'\in V_C} \beta_n\rbr{j_{v}, j_{v'}}^{i_{v,v'}}}
    \Brbr{\prod_{v'\in V_C} I_{(\vertwtlow_{v'}-i_{v'})}
    (1_{j_{v'}}^{\otimes (\vertwtlow_{v'}-i_{v'})})}
    %\prod_{[v,\vpr]\in p(V_{c})}\beta_{j_{v},j_{\vpr}}^{\edgeWt_{v,\vpr}} 
  }
  \prod_{C\in\Cqzero} B_n^C(j_{V_C})
  %\prod_{c\in C_0}B_n(c;j_{V_c};\bbone|_{V_c})
    \bigg\}\bigg]
  \nn\\=:&
  \sum_{i\in I(\pi)} C(i) C(\vertWt,i) E\sbr{\cali_n^{(\pi,i)}},
  \label{211101.2100}
\end{align}
where $C(i)$ and $C(\vertWt,i)$ are the integers which come from 
the repetition by the Leibniz rule and the action of $D$ to multiple stochastic integrals respectively.
%
%For $c\in \cplusrest$, set $\bar i_c = \sum_{v'\in V_c} i_\vpr$ and we have
%$\barq(c)\geq \bar i_c(\geq0)$ by (\ref{211101.1825}).
%$\delthetab = \sum_{c\in \cplusrest} \bar i_c = \sum_{c\in C^\calb_+} \bar i_c$.
For each $\pi\in\Pi(\vertWt|_{V_{c_1}})$ and 
$i\in I(\pi)$,
let
$\CBplus = \cbr{C\in \cplusrest\mid \bar i_C>0}$,
$\CUBplus = \cbr{C\in \cplusrest\mid \bar i_C=0}$ and 
$\CB = \cbr{C_1}\sqcup\CBplus$.
%$\CBplus(\pi,i) = \{ c\in \cplusrest: \bar i_c>0\}$,
%$\CUBplus(\pi,i) = \{ c\in \cplusrest: \bar i_c=0\}$ and 
%$\CB(\pi,i) = \cbr{c_1}\sqcup\CBplus(\pi,i)$.
%We omit $(\pi,i)$ and simply write $\CBplus$, $\CUBplus$ and $\CB$, respectively.
The functional $\cali_n^{(\pi,i)}$ can be written as
\begin{align*}
  \cali_n^{(\pi,i)}
  =&
  n^{-i_0} \sum_{j\in[n]^V} \bigg\{
  n^{i_0} D^{i_0}_{\otimes_{v\in V_{1}} (1_{j_{v}}^{\otimes i_{v,0}})} A_n(j)\,
  \prod_{[v,\vpr]\in p(V_1)} \beta_n\rbr{j_{v},j_{v'}}^{\pi_{v,v'} + \edgeWt_{v,v'}}
  \bbrbr{\prod_{C\in \CBplus} \beta_{n}^{C}(j_{V_{C}})}
  \nn\\&\hspace{50pt} \times  
  \bbrbr{\prod_{\substack{v\in V_{1}\\ v'\in \sqcup_{C\in\CBplus} V(C)}} 
  \beta_n\rbr{j_{v}, j_{v'}}^{i_{v,v'}}}
  \Brbr{\prod_{v'\in \sqcup_{C\in\CBplus} V(C)} I_{(\vertwtlow_{v'}-i_{v'})}
  (1_{j_{v'}}^{\otimes (\vertwtlow_{v'}-i_{v'})})}
  %\cdot\prod_{v'<v''\in V_{c_1}} \langle f_{v'}, f_{v''}\rangle^{\edgeWt_{v',v''}}
  %\nn\\&\hspace{50pt} \times  
  \prod_{C\in \Cqzero\sqcup\CUBplus}B_n^C (j_{V_C})
  \bigg\}
\end{align*}
By (\ref{220110.2619}) and (\ref{220110.2620}), 
we have the following relation
\begin{align}\label{210102.1051}
  \barq(C_1)
  = \sum_{v\in V(C_1)} \vertWt(v)
  = i_0 + \sum_{C\in \cplusrest} \bar i_C + 2\bar\pi
  = i_0 + \sum_{C\in \CBplus} \bar i_C + 2\bar\pi
\end{align}

\vspssm
%Fix $\pi\in\Pi(V_{c_1};\vertWt|_{V_{c_1}})$ and $i\in I(\vertWt,\pi)$.
We set the maps 
$\sigma^{(i)}:V_{1}\to\bbZ_{\leq0}$ and 
$\tau^{(i)}:p(\sqcup_{C\in \CB} V(C))\to\bbZ_{\geq0}$ as 
\begin{alignat*}{2}
  \sigma^{(i)}(v) &=-i(v,0)
  \hspace{16pt}\tfor v\in V_{1}
\\
  \tau^{(i)}([v,v'])&=
  \begin{cases}
    i(v,v')
    & \tfor v\in V_{1}
    \tandsm v'\in \sqcup_{C\in \CBplus} V(C)
    \\ 
    0
    & \tfor [v,v']\in 
    p(\sqcup_{c\in \CB} V(C))\setminus
    \cbr{[v_1,v_2]\mid v_1\in V_1, v_2 \in \sqcup_{C\in \CBplus} V(C)},
  \end{cases}
\end{alignat*}
and let
\begin{align*}
  C^{(\pi,i)}=
  \bdabr{\tau^{(i)}}\Brbr{\babr{\sigma^{(i)}}\brbr{\dabr{\pi} C_1}
  \vee \Brbr{\mathop\vee_{C\in \CBplus} C}}.
  %(V_{c^{(\pi,i)}},\edgeWt_{c^{(\pi,i)}}, \vertWt_{c^{(\pi,i)}})
\end{align*}
We define $G^{(\pi,i)}=(V,\edgeWt^{(\pi,i)},\vertWt^{(\pi,i)})$ 
and $A^{(\pi,i)}$ by
\begin{align*}
  G^{(\pi,i)}&=
  C^{(\pi,i)} \vee \Brbr{\mathop\vee_{C\in \Cqzero\sqcup\CUBplus} C}
  \\
  A^{(\pi,i)}&=(A^{(\pi,i)}_n(j))_{j\in[n]^V, n\in\bbZ} = 
\big(n^{i_0} D^{i_0}_{\otimes_{v\in V_{1}} (1_{j_{v}}^{\otimes i_{v,0}})}
 A_n(j)\big)_{j\in[n]^V, n\in\bbN}.
\end{align*}
The weighted graph $C^{(\pi,i)}$ is connected and we have
\begin{align*}
  V(C^{(\pi,i)})=\sqcup_{C\in\CB}V(C) \tand
  \Comp(G^{(\pi,i)})= \cbr{C^{(\pi,i)}}\sqcup\CUBplus\sqcup\Cqzero.
\end{align*}
We can see that 
%\redb
{$A^{(\pi,i)}\in\cala(V)$ by Lemma \ref{201229.1452}(1)(a)} and
$\cali_n^{(\pi,i)}$ is written as
\begin{align*}%\label{211101.2012}
  \cali_n^{(\pi,i)}
  =
  %n^{-i_0} \sum_{j\in[n]^V} \bigg\{
  %n^{i_0} D^{i_0}_{\otimes_{v\in V_{c_1}} (1_{j_{v}}^{\otimes i_{v,0}})} A_n(j)
  %\cdot\prod_{[v,\vpr]\in p(V_{c_1})} \beta_{j_{v},j_{\vpr}}^{\pi_{v,v'} + \edgeWt_{v,\vpr}}
  %%%%%\cdot\prod_{v'<v''\in V_{c_1}} \langle f_{v'}, f_{v''}\rangle^{\edgeWt_{v',v''}}
  %\nn\\&\hsp \times  
  %\prod_{c\in \cplusrest} \bbrbr{
  %  \prod_{v'\in V_c} I^{(\vertwtlow_{v'}-i_{v'})}(1_{j_{v'}}^{\otimes (\vertwtlow_{v'}-i_{v'})})
  %  \prod_{v\in V_{c_1}, v'\in V_c} \beta_{j_{v}, j_{v'}}^{i_{v,v'}}
  %  \prod_{[v,\vpr]\in p(V_{c})}\beta_{j_{v},j_{\vpr}}^{\edgeWt_{v,\vpr}}}
  %\prod_{c\in C_0}B_n(c;j_{V_c};\bbone|_{V_c})
  %\bigg\}
  %\nn\\=&
  \cali_n(-i_0, G^{(\pi,i)}, A^{(\pi,i)}, \bbone).
\end{align*}
Indeed, we have 
\begin{alignat*}{2}
  \vertWt^{(\pi,i)}(v) 
  &= \vertWt(v) - \pi_v - i(v,0) - 
  \sum_{\substack{v'\in\sqcup_{C\in\CB}V(C)\\ v'\neq v}} \tau^{(i)}([v,v'])
  = \vertWt(v) - \pi_v - \sum_{v'\in \dot V}i(v,v')=0
  %= \vertWt(v) - \pi_v - i_{v,0} - \sum_{c\in\CBplus}\sum_{v'\in V_c}i_{v,v'}=0
  &&\tfor v\in V(C_1)
  \\
  \vertWt^{(\pi,i)}(v')
  &= \vertWt(v') -
  \sum_{\substack{v\in\sqcup_{C\in\CB}V(C)\\ v\neq v'}} \tau^{(i)}([v,v'])
  = \vertwtlow(v') - \sum_{v\in V_{1}}i(v,v')=\vertwtlow(v')-i_{v'}
  &&\tfor v'\in \sqcup_{C\in \CBplus} V(C),
\end{alignat*}
where we used (\ref{220110.2620}) at the first equality,
and hence for each $j\in[n]^V$
\begin{align*}
  B_n^{C^{(\pi,i)}}(j_{V\rbr{C^{(\pi,i)}}})
  &=
  \prod_{[v,\vpr]\in p(V_{1})}
    \beta_n\rbr{j_{v},j_{\vpr}}^{\edgeWt_{v,\vpr}+\pi_{v,v'}}
  \prod_{C\in \CBplus} \beta_{n}^{C}(j_{V_{C}})
  \Brbr{\prod_{\substack{v\in V_{1}\\v'\in \sqcup_{C\in\CBplus} V_C}} 
  \beta_n\rbr{j_{v}, j_{v'}}^{i_{v,v'}}}
  \\&\hsp\times
  \Brbr{\prod_{v'\in \sqcup_{C\in\CBplus} V_C} I_{(\vertwtlow_{v'}-i_{v'})}
  (1_{j_{v'}}^{\otimes (\vertwtlow_{v'}-i_{v'})})}.
  \end{align*}

Notice that
\begin{align*}%\label{220118.1720}
  \babs{\Comp_+(G^{(\pi,i)})}\leq d.
\end{align*}
Indeed, if $\CBplus$ is empty, then
we have
$C^{(\pi,i)} = \babr{\sigma^{(i)}}\brbr{\dabr{\pi} C_1}$,
and hence by (\ref{220116.2106}) and (\ref{210102.1051}),
\begin{align}\label{220115.2445}
  \barq(C^{(\pi,i)})=
    \barq(C_1) 
    -2\sum_{[v,v']\in p(V_{1})}\pi([v,v'])
    +\sum_{v\in V_{1}} \sigma^{(i)}(v)
    =
    \barq(C_1) -2\bar\pi -i_0 =0,
\end{align}
which implies
$\Comp_+(G^{(\pi,i)})=\CUBplus=\cplusrest$ and hence
$\abs{\Comp_+(G^{(\pi,i)})}=d$;
if $\CBplus$ is non-empty, then 
$\Comp_+(G^{(\pi,i)})\subset\CUBplus\sqcup\cbr{C^{(\pi,i)}}$
and 
$\abs{\Comp_+(G^{(\pi,i)})}\leq\abs\CUBplus+1\leq(d-1)+1=d$.
Hence, by the assumption of the induction, we obtain 
\begin{align}\label{220324.1452}
  E\sbr{\cali^{(\pi,i)}_n} = O(n^{e(-i_0, G^{(\pi,i)})})
  \quad \text {as } n\to\infty.
\end{align}
%for $\pi\in\Pi_{V_{c_1}}(\vertWt|_{V_{c_1}})$ and $i\in I_{G,c_1}(\pi)$, and
We are going to prove 
$e(-i_0, G^{(\pi,i)})\leq e(0,G)$.

\vspsm
For $\pi\in\Pi(\vertWt|_{V_{1}})$ and $i\in I(\pi)$,
we have
\begin{align}\label{211107.1346}
 I({C^{(\pi,i)}}) = \sum_{C\in \CB}I({C}),
\end{align}
and by Proposition \ref{220110.2555},
\begin{align}
  s(C^{(\pi,i)})
  &=s(\babr{\sigma^{(i)}}\brbr{\dabr{\pi} C_1}) + \sum_{C\in\CBplus}s(C) 
    - 2(\abs{\CB}-1)
  \nn\\&
  =s(C_1) - i_0 + \sum_{C\in\CBplus}s(C) - 2\abs{\CBplus}
    = - i_0 + \sum_{C\in\CB}s(C) - 2\abs{\CBplus}.
     \label{211029.1556}
\end{align}

\vspssm
(i) First we consider $\pi,i$ such that $s(C^{(\pi, i)})\leq1$.
Since we have the following inequality
\begin{align*}
  - i_0 + e(C^{(\pi,i)}) 
  = - i_0 + (2-I({C^{(\pi,i)}}) - s(C^{(\pi,i)})) 
  &= -i_0  - s(C^{(\pi,i)}) - 2 \numCBplus 
  + \sum_{C\in \CB}(2-I(C)) 
    &&(\because(\ref{211107.1346}))
   %&&\text{($\because \numCBplus = \sum_{c\in C^\calb_+} 1$)}
  \nn\\
  &=
    \sum_{C\in \CB}(2-I(C)-s(C)) 
    &&(\because (\ref{211029.1556}))
    %&&\text{($\because s_{c_1} = i_0 + \numCBplus$ and 
    %$\sum_{c\in \CBplus} s_{c} = \numCBplus $)}
    %\\&=(2-I_{c_1}) -s_{c_1} + \sum_{c\in C_+^\calb}(1-I_{c}) + \sum_{c\in C_+^\mathcal{UB} \cup C_0} e(V_c)
  \nn\\
  &\leq
   \sum_{C\in\CB}e(C),   &&(\because(\ref{211029.1300}))
\end{align*}
the exponent $e(-i_0, G^{(\pi,i)}) $ is bounded as:
\begin{align*}
  e(-i_0, G^{(\pi,i)}) 
  &= - i_0 + \sum_{C\in \ncbr{C^{(\pi,i)}}\sqcup\CUBplus\sqcup \Cqzero} e(C)
  \leq
  \sum_{C\in\CB} e(C)
  +\sum_{C\in\CUBplus\sqcup\Cqzero} e(C)
  = e(0,G).
\end{align*}
%since $e(V_c)$ is bounded from below as $e(V_c) \geq 2-I_c -s_{c}$ in general.
%(See (\ref{211029.1300}).)

%%%%%%%%%%%%%%%%%%%%%%%%%%%%%%%%%%%%
%\begin{expoonenew}
    (ii) The case $s(C^{(\pi,i)})\geq2$. 

    (ii-i)
    First consider $\pi$ and $i$ such that $\babs\CB=1$ (i.e.  $\CB=\cbr{C_1}$)
    and $i_0=0$ hold.
    By $i_0=0$ and $\abs\CBplus=0$,
    the equalities (\ref{210102.1051}) and (\ref{211029.1556}) shows
    \begin{align}\label{220116.1711}
      \barq(C_1)=2\bar\pi \tand
      s(C_1) =
      s(C^{(\pi,i)})(\geq2)
    \end{align}
    respectively.
    Notice that $\cali_n^{(\pi)}$ is written as
    \begin{align*}
      \cali_n^{(\pi)}=
      &\sum_{j\in[n]^V} A_n(j)
          \prod_{[v,\vpr]\in p(V_{1})} 
          \beta_n\rbr{j_{v},j_{\vpr}}^{\pi_{v,v'}+\edgeWt_{v,\vpr}}
          \prod_{C\in\cplusrest\sqcup\Cqzero} B_n^C (j_{V_C}),
    \end{align*}
    and $\cali_n^{(\pi)}=\cali_n^{(\pi,i)}$ in this case.
  %Write $v_0=\argmax_{v\in V_{c_1}}\vertwtlow_v$.
  For $v\in V_{1}=V(C_1)$,
  we have 
  \begin{align}
    \pi_{v}
    &= \sum_{\substack{v'\in V_{1}\\v'\neq v}} \pi_{v',v}
    \leq \sum_{\substack{v'\in V_{1}\\ v'\neq v}}
    \sum_{\substack{v''\in V_{1}\\v''\neq v'}}\pi_{v',v''}
    = \sum_{\substack{v'\in V_{1}\\ v'\neq v}}\pi_\vpr,
    \label{220116.1632}
  \end{align}
  where we used 
  $\cbr{v}\subset\cbr{v''\in V_{1}\mid v''\neq v'}$ for $v'\in V_{1}$ with $v'\neq v$;
  %at the inequality in (\ref{220116.1632});
  hence
  \begin{align*}
    %\barq(c_1) = 
    2\bar\pi 
    = \sum_{v'\in V_{1}}\pi_\vpr
    = \sum_{\substack{v'\in V_{1}\\v'\neq v}}\pi_\vpr + \pi_{v}
    \leq 2\sum_{\substack{v'\in V_{1}\\ v'\neq v}}\pi_\vpr
    \leq 2\sum_{\substack{v'\in V_{1}\\ v'\neq v}}\vertwtlow(\vpr)
    =2(\barq({C_1})-\vertwtlow(v))
  \end{align*}
  for $v\in V_{1}$.
  We used 
  (\ref{220116.1632}) and (\ref{211225.2001})
  at the inequalities above, respectively.
  Thus, %under the assumption $\babs\CB=1$  and $i_0=0$, 
  by (\ref{220116.1711}) we obtained 
  \begin{align*}
    \vertwtlow(v) \leq\half \barq(C_1)
    \tfor v\in V_{1},
  \end{align*}
  which, with $\barq({C_1})=2\bar\pi\in2\bbZ_{\geq1}$,
  implies that the component $C_1$ satisfies the condition (\ref{210820.0124}).
  Since $\CBplus$ is empty,
  we have
  $I(C^{(\pi,i)}) = \sum_{C\in \CB}I({C}) = I({C_1})$ and
  $\barq({C^{(\pi,i)}})=0$. 
  (See (\ref{220115.2445}).)
  The latter and the assumption $s(C^{(\pi,i)})\geq2$
  suggests that  $C^{(\pi,i)}$ also satisfies 
  (\ref{210820.0124}), 
  and by (\ref{220116.1711}) we have $e(C_1)=e(C^{(\pi,i)})$; thus,
  \begin{align*}
    e(-i_0, G^{(\pi,i)})
    = e(C^{(\pi,i)}) + \sum_{C\in \CUBplus\sqcup\Cqzero} e(C) 
    = e(C_1)+ \sum_{C\in \CUBplus \sqcup \Cqzero} e(C) 
    =e(0,G).
  \end{align*}

  (ii-ii) Assume that $\babs\CB\geq2$ or $i_0\geq1$ 
  (i.e.  $\babs\CB+i_0\geq2$) hold.
  Let $\CBo = \cbr{C\in \CB\mid s(C)=1}$ 
  and $\CBt = \cbr{C\in \CB\mid s(C)\geq2}$.
  Then $\CB = \CBo \sqcup \CBt$, 
  since $\CB \subset \Comp_{+}$ and $s(C)\geq\barq(C)\geq1$ 
  for $c\in \Comp_+$.
  We have
  \begin{align}
    s(C^{(\pi,i)})-2 
    =& -i_0 + \sum_{C\in \CB}s(C)  -2\numCBplus - 2
    &&\text{($\because$ (\ref{211029.1556}))}
    \nn\\=&
    - i_0  + \sum_{C\in \CB} (s(C)-2) 
    =  -i_0 - \abs{\CBo} +\sum_{C\in \CBt}(s(C)-2),
    \label{211029.1730}
    \\
    \sum_{C\in \CB}e(C)
    %=&\sum_{c\in \CBo}e(c)+ \sum_{c\in \CBt}e(c)
    %\nn\\
    \geq&
    \sum_{C\in \CBo}(2 - I(C) - 1)
    + \sum_{C\in \CBt}(2 - I(C) - 1 + (1/2-2H) - H(s(C)-2))
    &&(\because (\ref{211121.1302}))
    \nn\\=&
    \babs\CB - \sum_{C\in \CB}I(C)
    +(1/2-2H)\abs\CBt 
    -H\sum_{C\in \CBt}(s(C) - 2),
    %- H(i_0 + \abs{\CBo} + s(c^{(\pi,i)})-2 ), &&(\because (\ref{211029.1730}))
    \label{220115.2255}
  \end{align}
  and hence
  \begin{align*}%%\label{211102.1809}
    e(0,G) - e(-i_0, G^{(\pi,i)})
    =&\sum_{C\in \CB} e(C)+ \sum_{C\in \CUBplus\sqcup\Comp_0} e(C)
    -\bbrbr{- i_0 + e(C^{(\pi,i)}) + \sum_{C\in \CUBplus\sqcup\Comp_0} e(C)}
    \nn\\=&
    \sum_{C\in \CB} e(C) + i_0 - e(C^{(\pi,i)})
    \nn\\\geq&
    \babs\CB - \sum_{C\in \CB}I(C)
    +(1/2-2H)\abs\CBt 
    -H\sum_{C\in \CBt}(s(C)-2),
    %- H(i_0 + \abs{\CBo} + s(c^{(\pi,i)})-2 )
    &&(\because (\ref{220115.2255}))
    \nn\\&
    + i_0 - \Brbr{2-I(C^{(\pi,i)}) + (1/2 - 2H)2 - H(s(C^{(\pi,i)})-2)}
    &&(\because (\ref{211121.1302}))
    \nn\\=&
    \babs\CB + i_0 -2
    + (1/2-2H)(\abs\CBt-2) - H(i_0 + \abs{\CBo})
    &&(\because (\ref{211107.1346}) \tandsm (\ref{211029.1730}))
    \nn\\\geq&
    \babs\CB + i_0 -2
    + (1/2-2H)\rbr{\abs\CB +i_0 -2}
    \nn\\=&
    (3/2-2H)\rbr{\abs\CB +i_0 -2}\geq0.
  \end{align*}
  We used 
  $-H>1/2-2H$ for $H\in\rbr{1/2,3/4}$
  at the second last inequality.
  We used the assumption $\babs\CB+i_0\geq2$ at the last inequality.
%\end{expoonenew}
%%%%%%%%%%%%%%%%%%%%%%%%%%%%%%%%%%%%

\vspssm
Therefore, in all the cases (i) and (ii), we have
$e(-i_0, G^{(\pi,i)})\leq e(0,G)$,
%and by (\ref{211101.2012}), (\ref{220118.1720})
%and the assumption of the induction, we have
%$E\big[ \cali_n^{(\pi,i)} \big] 
%= O(n^{e(-i_0, G^{(\pi,i)})}) = O(n^{e(0,G)})$ 
%for any $\pi\in\Pi(V_{c_1};\vertWt|_{V_{c_1}})$ and $i\in I(\vertWt,\pi)$.
and by (\ref{211101.1201}), (\ref{211101.2100}) and (\ref{220324.1452})
we obtain $$E\big[ \cali_n \big] = O(n^{e(0,G)}).$$
\end{proof}

\begin{proposition}\label{210407.1401}
  Let $\alpha\in\bbR$,
  $G=(V,\edgeWt,\vertWt)$ a weighted graph, 
  $A\in\cala(V)$ and $\bbf\in\calf(V)$.
  Define $\cali_n = \cali_n(\alpha,G,A,\bbf)$ by (\ref{220118.2623}) for $n\in\bbN$.

  Then, for $k\in\bbZ_{\geq1}$,
  \begin{align*}
    E\sbr{\rbr{\cali_n}^{2k}} = O(n^{2k\, e(\alpha,G)})
  \end{align*}
  as $n\to\infty$. In particular, for $p\geq1$,
  \begin{align*}
    \bnorm{\cali_n}_{p} = O(n^{e(\alpha,G)}).
  \end{align*}
\end{proposition}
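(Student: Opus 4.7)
The plan is to reduce to Proposition \ref{210108.1642} by observing that $(\cali_n)^{2k}$ is itself a functional of the same form $\cali_n(\alpha', G', A', \bbf')$ built from $2k$ disjoint copies of $G$, and then to use monotonicity in $p$ to upgrade from even moments to $L^p$.

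First I would fix $k \in \bbZ_{\geq 1}$ and take $2k$ disjoint copies $G^{(1)}, \ldots, G^{(2k)}$ of $G = (V, \edgeWt, \vertWt)$, with disjoint vertex sets $V^{(1)}, \ldots, V^{(2k)}$ that are copies of $V$. Let $\widetilde V = \bigsqcup_{\ell=1}^{2k} V^{(\ell)}$ and $\widetilde G = G^{(1)} \vee \cdots \vee G^{(2k)}$. For $\widetilde\jmath = (j^{(1)}, \ldots, j^{(2k)}) \in [n]^{\widetilde V}$, define
\begin{align*}
\widetilde A_n(\widetilde\jmath) = \prod_{\ell=1}^{2k} A_n(j^{(\ell)}),
\qquad
\widetilde\bbf = \bbf^{\sqcup 2k} = (\bbf_v)_{v \in \widetilde V},
\end{align*}
where each $v \in V^{(\ell)}$ is identified with its original in $V$ to define $\widetilde\bbf_v := \bbf_v$. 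Because the vertex sets are disjoint and $\edgeWt^{\widetilde G}([v,v'])=0$ for $v,v'$ in different copies, one has $\beta_n^{\widetilde G}(\widetilde\jmath) = \prod_\ell \beta_n^{G^{(\ell)}}(j^{(\ell)})$, so the product of the multiple stochastic integrals factors to give
\begin{align*}
(\cali_n(\alpha,G,A,\bbf))^{2k} \;=\; \cali_n(2k\alpha, \widetilde G, \widetilde A, \widetilde\bbf).
\end{align*}

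Next I would verify that $\widetilde A \in \cala(\widetilde V)$ and $\widetilde \bbf \in \calf(\widetilde V)$. Membership of $\widetilde\bbf$ is immediate from the definition, since (\ref{210518.0001}) transfers coordinatewise. For $\widetilde A$, the uniform $L^p$-bounds follow from Hölder's inequality applied to the product $\prod_\ell A_n(j^{(\ell)})$, and the pointwise bounds on the Malliavin derivatives $D^i_{s_1,\ldots,s_i} \widetilde A_n(\widetilde\jmath)$ follow from the Leibniz rule together with Hölder, using the hypotheses on $A$. This is routine but needs to be spelled out briefly.

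Then I would compute the exponent. Since $\Comp(\widetilde G) = \bigsqcup_\ell \Comp(G^{(\ell)})$ and each $\Comp(G^{(\ell)})$ is a copy of $\Comp(G)$,
\begin{align*}
e(2k\alpha, \widetilde G) = 2k\alpha + \sum_{\ell=1}^{2k} \sum_{C \in \Comp(G^{(\ell)})} e(C) = 2k\Bigl(\alpha + \sum_{C \in \Comp(G)} e(C)\Bigr) = 2k\, e(\alpha,G).
\end{align*}
Applying Proposition \ref{210108.1642} to $\cali_n(2k\alpha, \widetilde G, \widetilde A, \widetilde\bbf)$ yields $E[(\cali_n)^{2k}] = O(n^{2k\,e(\alpha,G)})$, which is the first claim. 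For the second, given any $p \geq 1$, pick an integer $k$ with $2k \geq p$; by Lyapunov's inequality,
\begin{align*}
\|\cali_n\|_p \;\leq\; \|\cali_n\|_{2k} \;=\; \bigl(E[(\cali_n)^{2k}]\bigr)^{1/(2k)} \;=\; O(n^{e(\alpha,G)}),
\end{align*}
completing the proof. The only non-mechanical step is the verification that $\widetilde A \in \cala(\widetilde V)$, but this is essentially bookkeeping with Leibniz and Hölder; no new combinatorial or analytic difficulty arises beyond what was handled in Proposition \ref{210108.1642}.
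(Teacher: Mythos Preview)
Your proof is correct and follows essentially the same approach as the paper: both rewrite $(\cali_n)^{2k}$ as a single functional of the form $\cali_n(2k\alpha,\widetilde G,\widetilde A,\widetilde\bbf)$ built from $2k$ disjoint copies of $G$, compute $e(2k\alpha,\widetilde G)=2k\,e(\alpha,G)$, and then invoke Proposition~\ref{210108.1642}. The paper's version is slightly terser (it uses the concrete shift $\shiftg{G}{mk'}$ and does not spell out the $\widetilde A\in\cala(\widetilde V)$ check or the Lyapunov step), but the content is the same.
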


\begin{proof}
  Without loss of generality, we assume that 
  $V$ is written as $V=[m]$ with some $m\in\bbN$.
  Set 
  \begin{align*}
    G' =& \vee_{k'=0,..,2k-1} \shiftg{G}{mk'}
    \\
    A'=&\rbr{A'_n(j)}_{j\in[n]^{2km}, n\in\bbN}\twith
    \\&\qquad 
    A'_n({j}) =
    A_n(j_{[m]})A_n(j_{[m+1,2m]})\cdots A_n(j_{[(2k-1)m+1,2km]})
    \tfor j\in[n]^{2km}
    \\
    \bbf' =& \rbr{\bbf'_v}_{v\in[2km]}
    \twith \bbf'_{v+mk'} = \bbf_v \tforsm v\in[m] \tandsm k'=0,..,2k-1,
  \end{align*}
  where $[k_1,k_2]=\cbr{k_1,...,k_2}$ for $k_1\leq k_2\in\bbN$.
  Then we have
  $%\begin{align*}
    \rbr{\cali_n(\alpha,G,A,\bbf)}^{2k}
    = \cali_n(2k\alpha, G', A', \bbf').
  $ %\end{align*}
  Since
  \begin{align*}
    e(2k\alpha, G')
    = 2k\alpha + \sum_{C\in \Comp(G')}e(C)
    = 2k\alpha + 2k\sum_{C\in \Comp(G)}e(C)
    = 2k\, e(\alpha, G),
  \end{align*} 
  by Proposition \ref{210108.1642} we obtain 
  \begin{align*}
    E\bsbr{\brbr{\cali_n(\alpha,G,A,\bbf)}^{2k}}
    = O(n^{e(2k\alpha, G')})
    = O(n^{2k\,e(\alpha, G)})
    \tas n\to\infty.
  \end{align*}
\end{proof}

\subsubsection{Change of the exponent by the action of $D_{u_n}$}
Given 
$A'\in\cala(V)$ with some singleton $V=\cbr{v}$,
define $\abs\calh$-valued random variable $u_n(A')$ by 
\begin{align}\label{220318.1700}
  u_n(A')=
  n^{2H-1/2} \sum_{j\in[n]} A'_n(j) I_1(1_j) 1_j,
\end{align}
where we identify $[n]=[n]^V$ and we recall 
$1_j=1_{\clop{Tn^{-1}(j-1),Tn^{-1}j}}$.
We often omit $(A')$ and simply write $u_n$ for $u_n(A')$.
We will give the estimate of how much the order of functionals in the form of
$\cali_n(\alpha,G,A,\bbf)$
changes by the action of $D_{u_n}$ in terms of the exponent.

For a weighted graph $G$,
we classify the components of $G$ as
$\Comp^s(G) = \cbr{C\in \Comp(G)\mid s(C)=s}$ for $s=0,1$ and
$\Ctp(G) = \cbr{C\in \Comp(G)\mid s(C)\geq2}$.
%and 
Moreover, let
%$\Ctpz(G) = \cbr{c\in\Ctp(G)\mid \barq(c) = 0}$,
%$\Ctpo(G) = \cbr{c\in\Ctp(G)\mid \barq(c)\text{ is odd}}$ and
%$\cpe(G) = \cbr{c\in\Ctp(G)\mid \barq(c)\text{ is positive and even}}$.
\begin{itemize}
\renewcommand\labelitemii{\labelitemi}
  \item $\Ctpz(G) = \cbr{C\in\Ctp(G)\mid \barq(C) = 0}$
  \item $\Ctpo(G) = \cbr{C\in\Ctp(G)\mid \barq(C)\text{ is odd}}$
  \item $\cpe(G) = \cbr{C\in\Ctp(G)\mid \barq(C)\text{ is positive and even}}$
  \begin{itemize}
    \item $\Cpearg{0}(G) = \cbr{C\in\cpe(G)\mid 
      \max_{v\in V(C)} \vertWt(v) \leq \half\barq(C)}$ 
    \item $\Cpearg{1}(G) = \cbr{C\in\cpe(G)\mid 
      \max_{v\in V(C)} \vertWt(v) = \half\barq(C)+1}$ 
    \item $\Cpearg{2}(G) = \cbr{C\in\cpe(G)\mid 
      \max_{v\in V(C)} \vertWt(v) \geq \half\barq(C)+2}$,
  \end{itemize}
\end{itemize}
Recall that $V(C)$ is the set of vertices of a weighted graph $C$.
We write
\begin{itemize}
  \item $\Ctpa(G) = \cbr{C\in\Ctp(G)\mid \barq(C) \text{ is odd, or } 
  \barq(C) \text{ is even and } \max_{v\in V(C)} \vertWt(v) > \half\barq(C)}$
  \item $\Ctpb(G) = \cbr{C\in\Ctp(G)\mid \barq(C) \text{ is even and } 
  \max_{v\in V(C)} \vertWt(v) \leq \half\barq(C)}$.
\end{itemize}
The above two conditions correspond to the two case 
(\ref{210820.0123}) and (\ref{210820.0124}) respectively.
%{and $C_{1-}=C_0\sqcup C^1$ since $\barq(c)=0$ for $c\in C_0$ and 
%$\barq(c)\leq1$ for $c\in C_{1-}$}.

We have
\begin{alignat*}{2}
  \Ctp=&\Ctpz\sqcup\cpe\sqcup\Ctpo=\Ctpa\sqcup\Ctpb,
  &\qquad%\\
  \cpe=&\Cpearg{0}\sqcup\Cpearg{1}\sqcup\Cpearg{2},
  \\
  \Ctpa=&\Ctpo\sqcup\Cpearg{1}\sqcup\Cpearg{2},
  &%\\
  \Ctpb=&\Ctpz\sqcup\Cpearg{0},
\end{alignat*}
where we omitted $(G)$ for brevity.

\begin{center}
 \begin{tabular}{ r|c|c|c|c }
   %\diagbox{$s$}{$\barq$} 
   & $\barq=0$ & $\barq=1$ & even, $\geq2$ & odd, $\geq2$\\ \hline\hline
   $s=0$ & $\Cszero$ & - & - & - \\ \hline
   $s=1$ & - & $\Csone$ & - & - \\ \hline
   $s\geq2$& $\Ctpz$ & $\Ctpo$ & $\Ctppe$ & $\Ctpo$ \\ \hline
 \end{tabular}
\end{center}

%
%For notational convenience, let 
%$\calc_{0}=\calc_{0,0} \vee \calc_{0,2}$ and 
%$\calc_{+}=\calc_{1} \vee \calc_{2,1}\vee\calc_{2,2} \vee \calc_{2,3}$.
%Notice that the condition $\calc_{2,2}\vee\calc_{2,3}$ 
%corresponds to (\ref{210820.0123}) and 
%$\calc_{0,2}\vee\calc_{2,1}$ to (\ref{210820.0124}).

\begin{proposition}\label{210822.1800}
  Suppose a weighted graph $G=(V, \edgeWt, \vertWt)$ and a singleton $V'$
  such that $V\cap V'=\emptyset$.
  Write $V''=V\sqcup V'$.
  Let $\alpha\in\bbR$, 
  $A\in\cala(V)$, $\bbf\in\calf(V)$ and $A'\in\cala(V')$.
  Consider
  $\cali_n = \cali_n(\alpha,G,A,\bbf)$
  given by (\ref{220118.2623}):
  \begin{align*}
    \cali_n =
    n^\alpha \sum_{j\in[n]^V} A_n(j)
    \prod_{C\in \Comp(G)} B_n^C(j_{V_C}, \bbf|_{V_C}) 
  \end{align*}
  Then,
  there exists a finite set $\Lambda$,
  $\alpha^\lambda\in\bbR$,
  a weighted graph 
  $G^\lambda=(V'',\edgeWt^\lambda,\vertWt^\lambda)$, 
  $A^\lambda\in\cala(V'')$ and $\bbf^\lambda\in\calf(V'')$
  for each $\lambda\in\Lambda$,
  such that
  $D_{u_n(A')} \cali_n$ can be written as
  \begin{align*}
    D_{u_n(A')}\cali_n = 
    \sum_{\lambda\in\Lambda} \cali_n(\alpha^\lambda, G^\lambda, 
    A^\lambda, \bbf^\lambda)
  \end{align*}
  %$(\alpha^\lambda, G^\lambda, A^\lambda, \bbf^\lambda)_{\lambda\in\Lambda}$
  and the following condition is fulfilled:
  \begin{alignat*}{3}
    &\text{(a)}&\;\;
    &\max_{\lambda\in\Lambda} e(\alpha^\lambda, G^\lambda)
    =e(\alpha, G) + 2H-\frac32
    &\hsp&\text{if $\Cpearg{1}(G)$ is empty;}
    \\
    &\text{(b)}&
    &\max_{\lambda\in\Lambda} e(\alpha^\lambda, G^\lambda) 
    =e(\alpha, G)
    &&\text{if $\Cpearg{1}(G)$ is nonempty.}
  \end{alignat*}
\end{proposition}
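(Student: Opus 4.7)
The plan is to compute $D_{u_n(A')}\cali_n = \abr{D\cali_n, u_n(A')}_\calh$ via the Leibniz rule and identify each resulting term as a new instance of $\cali_n(\cdot,\cdot,\cdot,\cdot)$. Since $u_n(A') = n^{2H-1/2}\sum_{j'\in[n]} A'_n(j') I_1(1_{j'}) 1_{j'}$, pairing any $\calh$-valued random variable with $u_n(A')$ introduces a sum over a new index $j' \in [n]$, an extra factor $I_1(1_{j'})$ (graphically, a new vertex $v'$ of weight $1$), an extra coefficient $A'_n(j')$, and an overall factor $n^{2H-1/2}$. Applied to $\cali_n$, the Leibniz rule yields two families of terms: \emph{type (A)}, where the derivative hits the scalar coefficient $A_n(j)$, and \emph{type (I$_v$)}, where it hits the multiple integral $I_{\vertWt(v)}(\bbf_v^{\otimes\vertWt(v)})$ at some $v \in V$ with $\vertWt(v) \geq 1$, through $D_s I_q(f^{\otimes q}) = q I_{q-1}(f^{\otimes(q-1)}) f(s)$.

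For type (A), the resulting coefficient factor is $\abr{DA_n(j), u_n(A')}_\calh = n^{2H-1/2}\sum_{j'} A'_n(j') I_1(1_{j'}) \abr{DA_n(j), 1_{j'}}_\calh$. Using Minkowski's integral inequality with the uniform bound on $D_s A_n(j)$ built into the definition of $\cala(V)$, and the fact that $1_{j'}$ is supported on an interval of length $T/n$, one obtains $\norm{\abr{DA_n(j), 1_{j'}}_\calh}_p = O(n^{-1})$ uniformly in $j,j'$; the analogous bound holds for every order of Malliavin derivative. Setting $A^\lambda_n(j,j') := n A'_n(j')\abr{DA_n(j), 1_{j'}}_\calh$ therefore gives an element of $\cala(V\sqcup\{v'\})$, and the term equals $\cali_n(\alpha + 2H - 3/2,\, G\vee\singlegraph{v'}{1},\, A^\lambda,\, \bbf^\lambda)$ with $\bbf^\lambda$ extending $\bbf$ by $1_{j'}$ at the new vertex. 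Since $e(\singlegraph{v'}{1}) = 0$, the exponent is $e(\alpha, G) + 2H - 3/2$.

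For type (I$_v$), one rewrites the term as an $\cali_n$ in which the component $C$ of $G$ containing $v$ is replaced by $C^\lambda$ obtained by decreasing $\vertWt(v)$ by $1$, adjoining a new vertex $v'$ of weight $1$, and adding an edge $[v,v']$ of weight $1$ (the factor $\abr{\bbf_v, 1_{j'}}_\calh$ is absorbed, via the support condition $|\bbf_v|\leq C_\bbf 1_{j_v}$, into the product of $\beta_n(j_v,j')$ with a new bounded coefficient). Direct computation gives $I(C^\lambda) = I(C)+1$, $\bartheta(C^\lambda) = \bartheta(C) + 1$, $\barq(C^\lambda) = \barq(C)$, and hence $s(C^\lambda) = s(C)$; the new scaling is $\alpha^\lambda = \alpha + 2H - 1/2$. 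A case analysis on the classification of $C$ then determines $e(C^\lambda) - e(C)$: no term arises from $C \in \Cszero(G) \sqcup \Ctpz(G)$ since $\barq(C) = 0$; when $C$ lies in $\Csone(G)$, $\Ctpo(G)$, $\Cpearg{0}(G)$, or $\Cpearg{2}(G)$, the category of $C^\lambda$ matches that of $C$ modulo the $+1$ in $I$, giving $e(C^\lambda) - e(C) = -1$ and total exponent $e(\alpha, G) + 2H - 3/2$. The distinguished case is $C \in \Cpearg{1}(G)$, where the unique vertex $v^\ast$ with $\vertWt(v^\ast) = \half\barq(C) + 1$ is singled out: differentiating at $v \neq v^\ast$ preserves $C^\lambda \in \Cpearg{1}(G^\lambda)$ and gives the same $-1$, but differentiating at $v^\ast$ itself drops the new maximum to exactly $\half\barq(C^\lambda)$, so $C^\lambda \in \Cpearg{0}(G^\lambda) \subset \Ctpb(G^\lambda)$; the formula for $e$ switches from (\ref{210820.0123}) to (\ref{210820.0124}), yielding $e(C^\lambda) - e(C) = 1/2 - 2H$, which together with $\alpha^\lambda - \alpha = 2H - 1/2$ exactly cancels to $e(\alpha^\lambda, G^\lambda) = e(\alpha, G)$.

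Taking the maximum of $e(\alpha^\lambda, G^\lambda)$ over all $\lambda$ then yields $e(\alpha, G) + 2H - 3/2$ when no $\Cpearg{1}(G)$-component is present (establishing (a)) and $e(\alpha, G)$ as soon as one exists (establishing (b)). The main technical delicacies are (i) the uniform-in-$(j,j')$ extraction of the $n^{-1}$ gain in $\abr{DA_n(j),1_{j'}}_\calh$ so that $A^\lambda$ lies in $\cala(V'')$, together with the parallel treatment of $\abr{\bbf_v,1_{j'}}_\calh$ for general $\bbf \in \calf(V)$, and (ii) the precise bookkeeping in $\Cpearg{1}(G)$, where the unique maximum-weight vertex is the only one whose differentiation avoids the $3/2 - 2H$ loss in the exponent, thereby producing the qualitative distinction between the two claimed bounds.
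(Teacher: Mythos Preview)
Your proposal is correct and follows essentially the same approach as the paper: the Leibniz decomposition into the term where $D$ hits $A_n$ (your type~(A), the paper's $\cali_n^{(0)}$) versus each multiple integral (your type~(I$_v$), the paper's $\cali_n^{(v)}$), followed by the same case analysis on the component class of $C_v$, with the distinguished $\Cpearg{1}$ case producing the exponent cancellation when the derivative lands on the unique maximal-weight vertex. The only cosmetic difference is that the paper packages the new vertex via the auxiliary graph $C_0=(\{m{+}1\},0,2)$ and the $\dabr{\cdot}$ operation, whereas you describe the modified component $C^\lambda$ directly; also note that for $C\in\Cpearg{2}$ and $v$ the argmax, $C^\lambda$ may land in $\Cpearg{1}$ rather than $\Cpearg{2}$, but since both lie in $\Ctpa$ the exponent formula is unchanged and your conclusion $e(C^\lambda)-e(C)=-1$ remains valid.
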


\begin{proof}
Without loss of generality, we assume $\alpha=0$, $\bbf=\bbone$
and $V=[m]$ and $V'=\cbr{m+1}$ with some $m\in\bbN$.
We abbreviate $u_n(A')$ as $u_n$.
Let $V_+ = \cbr{v\in V\mid \vertWt(v)>0}$.
For $v\in V$, define $C_v$ by $C\in\Comp(G)$  such that $v\in V(C)$.
Then for $v\in V_+$, we have 
$C_v\in\Csone(G)\sqcup\Ctpo(G)\sqcup\Ctppe(G)$.
The functional $D_{u_n}\cali_n$ is written as the following sum:
\begin{align*}%\label{210407.2101}
D_{u_n}\cali_n = \cali_{n}^{(0)} + \sum_{v\in V_+} \cali_{n}^{(v)},
\end{align*}
where we set 
\begin{align*}
  \cali_{n}^{(0)} =&
    n^{2H-3/2} \sum_{j\in[n]^{[m+1]}} 
    \Brbr{n D_{1_{j_\mpl}}A_n(j_{[m]})}
    A'_n(j_{m+1})\; I_1(1_{j_{m+1}})
    %a_{t_{j_\mpl-1}} I_1(1_{j_\mpl})
  \prod_{C\in \Comp(G)} B^C_n(j_{V(C)},\bbone)
  %\prod_{c\in C} \cbr{
  % \prod_{v'<v''\in V_c}\beta_{j_{v'},j_{v''}}^{\edgeWt_{v',v''}} 
  % \prod_{v\in V_c} I^{\vertwtlow_v}(1_{j_v}^{\otimes \vertwtlow_v})}
  \\
  \cali_{n}^{(v)} =&
  n^{2H-1/2} \sum_{j\in[n]^{[m+1]}} A_n(j_{[m]})A'_n(j_{m+1})\;
  %a_{t_{j_\mpl-1}}
  \bbrbr{\prod_{\substack{C\in \Comp(G)\\C\neq C_v}} B^C_n(j_{V(C)},\bbone)}\;
  %\prod_{\substack{c\in C\\c\neq c_v}} \cbr{  
  %  \prod_{v'<v''\in V_c}\beta_{j_{v'},j_{v''}}^{\edgeWt_{v',v''}} 
  %  \prod_{\vpr\in V_c} I_{\vertwtlow_\vpr}(1_{j_\vpr}^{\otimes \vertwtlow_\vpr})}
  \beta_{n}^{C_v}(j_{V\rbr{C_v}})
  %\rbr{\prod_{[v,\vpr]\in p(V_{c_v})} \beta_{j_v,j_\vpr}^{\edgeWt_{v,\vpr}} }
  \bbrbr{\prod_{\substack{v'\in V\rbr{C_v}\\v'\neq v}}
  I_{\vertwtlow_\vpr}(1_{j_\vpr}^{\otimes \vertwtlow_\vpr}) } 
  \\&%\times
    \hspace{75pt}\times \vertwtlow(v) I_{\vertwtlow(v)-1}(1_{j_v}^{\otimes (\vertwtlow(v)-1)})\,
    \beta_n\rbr{j_v,j_\mpl}\,
  %D_{1_{j_0}} I_{\vertwtlow_\vpr}(1_{j_\vpr}^{\otimes \vertwtlow_\vpr})
  I_1(1_{j_\mpl}).
\end{align*}

Let $A''$ as %and $\bbf'=(\bbf'_v)_{v\in[m+1]}$ as
\begin{align*}
  &A''=\brbr{A''_n(j) }_{j\in[n]^{m+1},  n\in\bbN}=
  \brbr{A_n(j_{[m]}) A'_n(j_{m+1})}_{j\in[n]^{m+1}, n\in\bbN}
  %\\
  %&\bbf'_v=\bbf_v\;\text{ for }\; v\in[m]\;\text{ and }\;
  %\bbf'_{m+1}=\bbone
\end{align*}
and define a weighted graph
$C_0:=(\cbr{m+1},0,2)$.
Note that 
$\barq(C_0)=2$ and
$s(C_0)=2$.
We can see that $A''\in\cala([m+1])$.% and $\bbf'$ satisfy 
%Assumption \ref{201229.1450} and \ref{211126.1140}, respectively.

(i) First we consider $(\cali_n^{(0)})_n$.
Let
$\alpha^{(0)}=2H-\frac{3}{2}$ and 
define $G^{(0)}$ and $A^{(0)}$ by
\begin{align*}
  &G^{(0)}=
  \brbr{\abr{m+1}_{-1} C_0} \vee G
  \\
  &A^{(0)}=\brbr{A^{(0)}_n(j) }_{j\in[n]^{m+1},  n\in\bbN}
  =\Brbr{n\brbr{D_{\charf{j_{m+1}}} A_n(j_{[m]})} A'_n(j_{m+1})}
  _{j\in[n]^{m+1}, n\in\bbN}.
\end{align*}
We denote 
$C^{(0)}=\abr{m+1}_{-1} C_0=(\cbr{m+1},0,1)$.
We have
$I(C^{(0)})=1$, $\barq(C^{(0)})=s(C^{(0)})=1$ and
$\Comp(G^{(0)}) = \Comp(G) \sqcup \cbr{C^{(0)}}$.
We can see that 
$A^{(0)}\in\cala([m+1])$ %satisfies Assumption \ref{201229.1450},
and
\begin{align}
  \cali_n^{(0)} = \cali_n(\alpha^{(0)}, G^{(0)}, A^{(0)},\bbone).
  \label{220118.1901}
\end{align}
The exponent $e(\alpha^{(0)}, G^{(0)})$ is 
\begin{align*}
  e(\alpha^{(0)}, G^{(0)})
  &= 2H-\frac{3}{2} + (2-I(C^{(0)})-1) + \sum_{C\in \Comp(G)} e(C)
  = e(0,G) + 2H-\frac{3}{2}.
  %\label{210407.1501}
\end{align*}

(ii) Consider $(\cali_n^{(v)})_n$ for $v\in  V_+$.
Let
$\alpha^{(v)}=2H-\frac{1}{2}$ and 
define weighted graphs $C^{(v)}$ and $G^{(v)}$ by
\begin{align*}
  C^{(v)} %{= (V_{c^*}, \edgeWt_{c^*}, \vertWt_{c^*}) }
  &= \dabr{v, m+1}_{1} (C_v\vee C_0)\\
  G^{(v)}&=
  C^{(v)} \vee \bbrbr{\mathop\vee_{C\in \Comp(G)\setminus\cbr{C_v}} C}.
\end{align*}
We write $G^{(v)}=([m+1], \edgeWt^{(v)}, \vertWt^{(v)})$.
We have
$\Comp(G^{(v)}) = (\Comp(G)\setminus\cbr{C_v})\sqcup\cbr{C^{(v)}}$.
By (\ref{220116.2106}) and (\ref{211225.2120}), 
the component $C^{(v)}$ satisfies 
\begin{align}
  I(C^{(v)}) &= I(C_v)+1
  \label{220116.2133}\\
  \barq({C^{(v)}}) &= (\barq(C_v)+\barq(C_0)) + 0 - 2 = \barq(C_v)
  \label{220116.2134}\\
  s(C^{(v)}) &= (s(C_v)+s(C_0)) + 0 - 2(2-1) = s(C_v).
  \label{220116.2135}
\end{align}
We have
\begin{align}
  \cali_n^{(v)} = \vertwtlow(v)\;\cali_n(\alpha^{(v)}, G^{(v)}, A'', \bbone).
  \label{220118.1902}
\end{align}

(ii-i) 
If $C_v\in\Csone(G)$ or $C_v\in\Ctpo(G)$,
the connected weighted graph $C^{(v)}$ satisfies 
$C^{(v)}\in\Csone(G^{(v)})$ or $C^{(v)}\in\Ctpo(G^{(v)})$, respectively, thanks to (\ref{220116.2134}) and (\ref{220116.2135}).
In either case, we have 
$e(C^{(v)}) = e(C_v) - 1$ by (\ref{220116.2133}).
%since $I_{c^*} = I_{c_v}+1$.
Hence, 
\begin{align}\label{210822.1801} 
  e(\alpha^{(v)}, G^{(v)})=
  (2H-1/2) + \sum_{C\in\Comp(G)\setminus\cbr{C_v}} e(C) + (e(C_v)-1)
  =e(0, G)+2H-3/2.
\end{align}

(ii-ii)
If $C_v\in\Ctppe(G)$ and 
$v\notin\argmax_{\vpr\in V\rbr{C_v}} \vertwtlow(\vpr)$
(i.e. $\vertwtlow(v)<\max_{\vpr\in V\rbr{C_v}} \vertwtlow(\vpr)$),
then 
\begin{align*}
  \max_{v'\in V(C^{(v)})} \vertWt^{(v)}(\vpr) 
  = 1 \vee (\vertwtlow(v)-1) \vee \max_{v'\in V\rbr{C_v}\setminus\cbr{v}} \vertwtlow(\vpr)
  = \max_{\vpr\in V\rbr{C_v}} \vertwtlow(\vpr).
\end{align*}
Hence, with the help of (\ref{220116.2134}) and (\ref{220116.2135}), 
we have  $C^{(v)}\in \Cpearg{i}(G^{(v)})$ 
with $i\in\cbr{0,1,2}$ such that $C_v\in \Cpearg{i}(G)$.
In any case, $e(C^{(v)}) = e(C_v) - 1$ holds, since 
$I(C^{(v)}) = I(C_v)+1$.
The exponents $e(0, G)$ and $e(\alpha^{(v)}, G^{(v)})$ 
satisfy %the same relation as (\ref{210822.1801}).
\begin{align*}%\label{220324.1801} 
  e(\alpha^{(v)}, G^{(v)})
  =(2H-1/2) + \sum_{C\in \Comp(G)\setminus\cbr{C_v}} e(C) + (e(C_v)-1)
  =e(0, G)+2H-3/2.
\end{align*}

\vspssm
(ii-iii) Assume that $C_v\in\Ctppe(G)$ and
$ \displaystyle \vertwtlow(v)=\max_{v'\in V\rbr{C_v}} \vertwtlow(\vpr)$.

%\noindent
(ii-iii-i) 
If $C_v\in\Cpearg{1}(G)(\subset\Ctpa(G))$,
then $\argmax_{v'\in V\rbr{C_v}} \vertwtlow(\vpr)=\cbr{v}$, and we have
\begin{align*}
  \max_{v'\in V(C^{(v)}) } \vertWt^{(v)}(\vpr) 
  = 1 \vee (\vertwtlow(v)-1) \vee \max_{v'\in V\rbr{C_v}\setminus\cbr{v}} \vertwtlow(\vpr)
  = \vertwtlow(v)-1 = \max_{\vpr\in V(C_v)} \vertwtlow(\vpr) - 1
  = \half\barq(C_v) = \half\barq(C^{(v)}),
\end{align*}
which means $C^{(v)}\in\Cpearg{0}(G^{(v)})\subset\Ctpb(G^{(v)})$.
The exponent of $C^{(v)}$ is 
\begin{align*}
  e(C^{(v)}) &= (2-I(C^{(v)})) + (1/2-2H)2 -H(s(C^{(v)})-2) 
  %= (1-I_{c_v})   + (1/2-2H)2 -H(s_{c_v}-2) 
  \\&= \Brbr{2-(I(C_v) +1)  + (1/2-2H) -H(s({C_v})-2)}  + 1/2-2H
  = e(C_v) + 1/2 - 2H
\end{align*}
by (\ref{220116.2133}) and (\ref{220116.2135}).
Hence,
\begin{align*}%\label{211104.1536} 
  e(\alpha^{(v)}, G^{(v)})=
  (2H-1/2) + \sum_{C\in \Comp(G)\setminus\cbr{C_v}} e(C)
  + (e(C_v) + 1/2 - 2H)
  =e(0,G).
\end{align*}

\vspssm%\noindent
(ii-iii-ii) 
If $C_v\in\Cpearg{2}(G)(\subset\Ctpa(G))$,
then $\argmax_{v'\in V\rbr{C_v}} \vertwtlow(\vpr)=\cbr{v}$, and we have
\begin{align*}
  \max_{v'\in V(C^{(v)})} \vertWt^{(v)}(\vpr) 
  = 1 \vee (\vertwtlow(v)-1) \vee \max_{v'\in V\rbr{C_v}\setminus\cbr{v}} \vertwtlow(\vpr)
  = \vertwtlow(v)-1
  = \max_{\vpr\in V\rbr{C_v}} \vertwtlow(\vpr) - 1
  \geq \half \barq(C_v)+1 = \half\barq(C^{(v)})+1,
\end{align*}
and hence,  
$C^{(v)}\in\Cpearg{1}(G^{(v)})\sqcup\Cpearg{2}(G^{(v)})
\subset\Ctpa(G^{(v)})$;
if $C_v\in\Cpearg{0}(G)(\subset\Ctpb(G))$,
then  we have
\begin{align*}
  \max_{v'\in  V(C^{(v)})} \vertWt^{(v)}(\vpr) 
  = 1 \vee (\vertwtlow(v)-1) \vee \max_{v'\in V\rbr{C_v}\setminus\cbr{v}} \vertwtlow(\vpr)
  \leq \max_{\vpr\in V\rbr{C_v}} \vertwtlow(\vpr)
  \leq \frac{1}{2} \barq(C_v) = \frac{1}{2} \barq(C^{(v)}),
\end{align*}
and hence,  $C^{(v)}\in\Cpearg{0}(G^{(v)})\subset\Ctpb(G^{(v)})$.
In both cases,
the relation $e(C^{(v)}) = e(C_v) - 1$ holds
from (\ref{220116.2133}) and (\ref{220116.2135}).
The exponents $e(0, G)$ and $e(\alpha^{(v)}, G^{(v)})$ 
satisfy the same relation as (\ref{210822.1801}), namely,
\begin{align*} 
  e(\alpha^{(v)}, G^{(v)})=
  (2H-1/2) + \sum_{C\in \Comp(G)\setminus\cbr{C_v}} e(C) + (e(C_v)-1)
  =e(0, G)+2H-3/2.
\end{align*}

\vspssm
By summing up the above arguments (i) and (ii),
we have
\begin{align*}
  D_{u_n} \cali_n =&\;
  \cali_{n}^{(0)}
  + \sum_{v\in V_+} \cali_{n}^{(v)} 
\end{align*}
with the representations 
(\ref{220118.1901}) and (\ref{220118.1902}).
Let
$\Lambda=\cbr{0} \sqcup V_+$.
In the case (a), where $\Cpearg{1}(G)$ is empty,
we obtain the relation 
$%\begin{align*}
  e(\alpha^{(v)}, G^{(v)})=e(0, G)+2H-3/2
$ %\end{align*}
in any case $v\in\cbr{0}\sqcup V_+=\Lambda$, and hence
\begin{align*}
  \max_{v\in\Lambda} e(\alpha^{(v)}, G^{(v)})=e(0, G)+2H-3/2
\end{align*}
In the case (b),
there exists $v\in V_+$ such that 
$C_v\in\Cpearg{1}(G)$
and $\vertwtlow(v)=\max_{v'\in V\rbr{C_v}} \vertwtlow(\vpr)$, 
for which 
$e(\alpha^{(v)}, G^{(v)})=e(0, G)$  holds.
Hence, we have
\begin{align*}
  \max_{v\in\Lambda} e(\alpha^{(v)}, G^{(v)})
  = e(0, G).
\end{align*}
\end{proof}

\begin{remark}\label{220502.2012}
  Proposition \ref{210822.1800} extends Proposition 7.1 in \cite{yoshida2020asymptotic}
  in the case $q=2$.
  When the weighted graph is written as 
  $G=\vee_{v\in V} \rbr{\cbr{v}, 0, q_v}$ 
  (i.e. each component of $G$ has one vertex),
  both $\Ctpz(G)$ and $\Cpearg{0}(G)$ are empty, 
  and we have
  \begin{align*}
    \Comp(G)=
    \Cszero(G) \sqcup \Csone(G)  \sqcup 
    \Cpearg{1}(G) \sqcup \Cpearg{2}(G) \sqcup \Ctpo(G)
  \end{align*}
  and $\Cpearg{1}(G)=\cbr{(\cbr{v},0,q_v)\mid q_v=2}$.
  Notice also that 
  $\Comp^s(G)=\cbr{(\cbr{v},0,q_v)\mid q_v=s}$ for $s=0,1$,
  $\Cpearg{2}(G)=\cbr{(\cbr{v},0,q_v)\mid q_v\geq4,\; q_v \text{ is even}}$ and 
  $\Ctpo(G) =\cbr{(\cbr{v},0,q_v)\mid q_v\geq2,\; q_v \text{ is odd}}$. 
  Hence the condition in (ii) of Proposition 7.1 of \cite{yoshida2020asymptotic} under $q=2$ 
  (i.e. there is no $m'\in\cbr{1,...,m}$ such that $q_{m'}=q$) and 
  the condition in (a) of Proposition \ref{210822.1800} (i.e.  $\Cpearg{1}(G)$ is empty) coincides.
  So does the difference occurred by the action of $D_{u_n}$, if we read $H=1/2$.
\end{remark}

\subsubsection{Change of the order of by the action of $D^i$}
We verify that the order of the functional $\cali_n$ is stable 
under the action of $D^i$ by means of the exponent.

\begin{proposition}\label{210823.2400}%\label{210407.2246}
  Let $\alpha\in\bbR$,
  $G=(V,\edgeWt,\vertWt)$ a weighted graph, 
  $A\in\cala(V)$ and $\bbf\in\calf(V)$.
  For $i\in\bbZ_{\geq1}$ and any $p>1$,
  \begin{align*}%\label{210822.2211}
    \Bnorm{ \snorm{D^i \cali_n}_{\calh^{\otimes i}} }_p
    = O(n^{e(\alpha, G)})
  \end{align*}
  as $n\to\infty$ 
  for
  $\cali_n = \cali_n(\alpha,G,A,\bbf)$
  given by (\ref{220118.2623}).
\end{proposition}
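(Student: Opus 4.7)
The plan is to compute $D^i \cali_n$ via the Leibniz rule and the identity $D\brbr{I_q(f^{\otimes q})} = q\, I_{q-1}(f^{\otimes(q-1)})\otimes f$, then bound the $\calh^{\otimes i}$-norm in $L^p$ by squaring the tensor and reducing to a real-valued functional of $\cali_n$-type, to which Proposition \ref{210407.1401} applies.

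\textbf{Step 1 (Decomposition).} Each application of $D$ falls on either some $A_n(j)$ factor, producing $D A_n(j)\in\abs\calh$ (controlled via (\ref{220119.1721})), or on some $I_{\vertWt(v)}(\bbf_{v,n,j_v}^{\otimes\vertWt(v)})$, peeling off one $\bbf_{v,n,j_v}\in\abs\calh$ and lowering that chaos degree by one; derivatives of the deterministic factor $\beta_n^G(j)$ vanish. Iterating $i$ times, $D^i\cali_n$ becomes a finite sum indexed by multi-indices $(i_0,(i_v)_{v\in V_+})$ with $V_+=\cbr{v\in V\mid\vertWt(v)\geq1}$, $i_0+\sum_v i_v=i$, and $0\leq i_v\leq\vertWt(v)$. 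A generic term is the $\calh^{\otimes i}$-valued functional
\[
  \cali_n^{(\lambda)} = n^\alpha \sum_{j\in[n]^V}\Bbrbr{D^{i_0}A_n(j)\otimes\bigotimes_{v\in V_+}\bbf_{v,n,j_v}^{\otimes i_v}}\, B_n^{G^{(\lambda)}}(j,\bbf^{(\lambda)}),
\]
where $G^{(\lambda)}$ is obtained from $G$ by replacing each $\vertWt(v)$ by $\vertWt(v)-i_v$ with edge weights unchanged, and $\bbf^{(\lambda)}\in\calf(V)$ still satisfies (\ref{210518.0001}).

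\textbf{Step 2 (Reduction to $\cali_n$-type).} Squaring the $\calh^{\otimes i}$-norm,
\begin{align*}
  \|\cali_n^{(\lambda)}\|_{\calh^{\otimes i}}^2 &= n^{2\alpha}\sum_{j,j'\in[n]^V}\babr{D^{i_0}A_n(j),D^{i_0}A_n(j')}_{\calh^{\otimes i_0}}\prod_{v\in V_+}\babr{\bbf_{v,n,j_v},\bbf_{v,n,j'_v}}_\calh^{i_v}\\
  &\qquad\times B_n^{G^{(\lambda)}}(j,\bbf^{(\lambda)})\, B_n^{G^{(\lambda)}}(j',\bbf^{(\lambda)}).
\end{align*}
By (\ref{210518.0001}), $\babs{\babr{\bbf_{v,n,j_v},\bbf_{v,n,j'_v}}_\calh}\leq C_\bbf^2\,\beta_n(j_v,j'_v)$, and by (\ref{220119.1721}) combined with Cauchy--Schwarz, the random field $(j,j')\mapsto\babr{D^{i_0}A_n(j),D^{i_0}A_n(j')}_{\calh^{\otimes i_0}}$ belongs to $\cala(V\sqcup V')$ on a disjoint copy $V'$ of $V$. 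Hence $\|\cali_n^{(\lambda)}\|_{\calh^{\otimes i}}^2$ is dominated in absolute value by a functional $\cali_n(2\alpha,\widetilde G^{(\lambda)},\widetilde A^{(\lambda)},\widetilde\bbf^{(\lambda)})$, where $\widetilde G^{(\lambda)}$ consists of two disjoint copies of $G^{(\lambda)}$ joined by an edge of weight $i_v$ between the two copies of each $v\in V_+$ with $i_v>0$.

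\textbf{Step 3 (Exponent tracking).} By Proposition \ref{210407.1401},
\[
  \bnorm{\|\cali_n^{(\lambda)}\|_{\calh^{\otimes i}}^2}_{p/2}=O\brbr{n^{e(2\alpha,\widetilde G^{(\lambda)})}},
\]
so it remains to verify $e(2\alpha,\widetilde G^{(\lambda)})\leq 2\,e(\alpha,G)$. For each component $C$ of $G$ that contains some touched vertex, let $C'$ denote the merged component of $\widetilde G^{(\lambda)}$; a direct computation gives $I(C')=2I(C)$, $\bartheta(C')=2\bartheta(C)+\sum_{v\in V(C)}i_v$, $\barq(C')=2\barq(C)-2\sum_{v\in V(C)}i_v$, and consequently $s(C')=2s(C)-2$. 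Untouched components simply contribute two disjoint copies. A case analysis based on the split (\ref{210820.0123})--(\ref{210820.0124}), using $H\in(1/2,3/4)$, shows $e(C')\leq 2e(C)$ in every combination; summing gives $e(2\alpha,\widetilde G^{(\lambda)})\leq 2\,e(\alpha,G)$. Taking square roots and summing over the finite family $\Lambda$ yields the required estimate.

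The main obstacle is the exponent-level case analysis of Step 3: the parity of $\barq(C')$ and its relation to the maximum vertex weight may differ from those of $C$ after halving $\barq$ via the $i_v$'s and doubling it again via merging, so one must reconcile the four combinations of the (\ref{210820.0123})--(\ref{210820.0124}) split between $C$ and $C'$. In each case the inequality $e(C')\leq 2e(C)$ holds, with equality in the extremal configurations already encountered in the proof of Proposition \ref{210822.1800}; the Hurst restriction $H\in(1/2,3/4)$ is used exactly to close the remaining subcases.
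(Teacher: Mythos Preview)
Your approach is essentially the paper's: decompose $D^i\cali_n$ by the Leibniz rule, square the $\calh^{\otimes i}$-norm to obtain an $\cali_n$-type functional on a doubled vertex set, and bound its exponent by $2e(\alpha,G)$ via Proposition~\ref{210407.1401}. Two remarks.

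\textbf{Step 2.} The phrase ``dominated in absolute value'' is not quite right as written: since $B_n^{G^{(\lambda)}}(j)\,B_n^{G^{(\lambda)}}(j')$ has no fixed sign, bounding $|\langle\bbf_{v,n,j_v},\bbf_{v,n,j'_v}\rangle_\calh|\leq C_\bbf^2\,\beta_n(j_v,j'_v)$ does \emph{not} yield a pointwise bound on the double sum. The paper avoids this by reducing to $\bbf=\bbone$ at the outset; equivalently, write the bounded ratio $\langle\bbf_{v,n,j_v},\bbf_{v,n,j'_v}\rangle_\calh/\beta_n(j_v,j'_v)$ (Lemma~\ref{201229.1452}(2)) and absorb it into $\widetilde A^{(\lambda)}$. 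Then the squared norm \emph{equals} a functional of $\cali_n$-type, and Proposition~\ref{210407.1401} applies directly.

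\textbf{Step 3.} Your ``four combinations'' case analysis is more work than needed. The paper observes that, by symmetry of the two copies, the merged component $C'$ always has even $\barq(C')$ and $\max_v\vertWt'(v)\leq\tfrac12\barq(C')$ (the maximum is attained on a pair $v_0,v_0+m$ with equal weights), so $C'$ is automatically in case~(\ref{210820.0124}). Hence there are only two cases for $C$: if $s(C)=1$ then $s(C')=0$ and $e(C')=2-2I(C)=2e(C)$; if $s(C)\geq2$ then $s(C')\geq2$ and the bound $e(C')\leq 2e(C)$ follows from~(\ref{211121.1302}) in one line.
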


\begin{proof}
Without loss of generality, we assume that 
$\alpha=0$, $\bbf=\bbone$ and $V$ is written as $V=[m]$ with some $m\in\bbN$.
The $i$-th derivative of $\cali_n$ can be written as:
\begin{align*}
  D^i\cali_n = \sum_{\lambda\in (\cbr{0}\sqcup[m])^{[i]}} \DLamCali,
\end{align*}
where the above summation runs through 
the set $(\cbr{0}\sqcup[m])^{[i]}$ of all the mappings $\lambda$ 
from $\cbr{1,..,i}$ to $\cbr{0,..,m}$,
\begin{align*}
  \DLamCali = 
  c_\lambda \rbr{
    \sum_{j\in[n]^m}
    D^{\lambda_0}_{s_{\lambda,0}} A_n(j)
    \rbr{\prod_{v\in V}
      I_{(\vertwtlow(v)-\lambda_v)}(1_{n,j_v}^{\otimes(\vertwtlow(v)-\lambda_v)})
      1_{n,j_v}^{\otimes\lambda_v}(s_{\lambda,v})}\;
    \beta_n^{G}(j)
    %\prod_{\vpr<\vppr\in[m]} \beta_{j_\vpr,j_\vppr}^{\edgeWt_{\vpr,\vppr}}
  }_{s_1,..,s_i\in[0,T]}
\end{align*}
with the conbinatorial constant $c_\lambda=\prod_{v\in V} \frac{\vertwtlow_v!}{(\vertwtlow_v-\lambda_v)!} $, 
and for $v\in\cbr{0}\sqcup[m]$ we denote 
$\ilamv:=\abs{\lambda^{-1}\cbr{v}}$ and
$s_{\lambda,v}$ reads
$s_{k_1},..,s_{k_{\ilamv}}$
if $\lambda^{-1}(\cbr{v})=\cbr{k_1,..,k_{\ilamv}}(\subset\cbr{1,..,i})$
as in Lemma \ref{211120.1740}.
If $\lambda_v>\vertwtlow(v)$ for some $v\in[m]$, we read $\DLamCali=0$.
Thus we only consider $\lambda\in\cbr{0,..,m}^{[i]}$ 
such that $\vertwtlow(v)\geq\lambda_v$ for all $v\in [m]$ hereafter.
We have
$i = \sum_{v\in\cbr{0}\sqcup[m]} \lambda_v$.

\vspssm
Since the following inequality holds for $p\geq2$
\begin{align}\label{211120.2811}
  \Bnorm{\bnorm{D^i\cali_n}_{\calh^{\otimes i}}}_p 
  \leq
  \sum_{\substack{\lambda\in\cbr{0,..,m}^i\\\vertwtlow(v)\geq\lambda_v\tforsm v\in[m]}}
  \Bnorm{\bnorm{\DLamCali}_{\calh^{\otimes i}}}_p
  =
  \sum_{\substack{\lambda\in\cbr{0,..,m}^i\\\vertwtlow(v)\geq\lambda_v\tforsm v\in[m]}}
    \Bnorm{\bnorm{\DLamCali}_{\calh^{\otimes i}}^2 }_{p/2}^{1/2},
\end{align}
we will consider the functional
$\bnorm{\DLamCali}_{\calh^{\otimes i}}^2 
%= \langle \DLamCali,\DLamCali\rangle_{\calh^{\otimes i}}
$.
Fix $\lambda\in(\cbr{0}\sqcup[m])^i$ 
satisfying $\vertwtlow(v)\geq\lambda_v$. % for $v\in [m]$.
We can write
\begin{align*}
  \bnorm{\DLamCali}_{\calh^{\otimes i}}^2
  =&
  c_\lambda^2 \sum_{j,k\in[n]^m}
  \abr{D^{\lambda_0}A_n(j), D^{\lambda_0}A_n(k)
  }_{\calh^{\otimes \lambda_0}}
  \nn\\&\hspace{40pt}
  \times\prod_{C\in \Comp(G)}
  \bigg\{
  \beta_n^{C}(j_{V_C})\,
  %\Brbr{\prod_{\vpr<\vppr\in V_c}\beta_{j_{\vpr},j_{\vppr}}^{\edgeWt_{\vpr,\vppr}}}
  \beta_n^{C}(k_{V_C})\,
  %\Brbr{\prod_{\vpr<\vppr\in V_c}\beta_{k_{\vpr},k_{\vppr}}^{\edgeWt_{\vpr,\vppr}}}
    \Brbr{\prod_{v\in V_C}\beta_n\rbr{j_{v},k_{v}}^{\lambda_v}}
    \nn\\&\hspace{100pt}
    \Brbr{\prod_{v\in V_C} I_\rbr{\vertwtlow(v)-\lambda_v}
    (1_{j_v}^{\otimes (\vertwtlow(v)-\lambda_v)})}
    \Brbr{\prod_{v\in V_C} I_\rbr{\vertwtlow(v)-\lambda_v}
    (1_{k_v}^{\otimes (\vertwtlow(v)-\lambda_v)})}
  \bigg\},
  %\label{210822.2245}
  %\label{210407.1331}
\end{align*}
Set
$\bar\lambda_C = \sum_{v\in V(C)} \lambda_v$ for $C\in \Comp(G)$ and
we write
$\cblam{+} = \cbr{C\in \Comp(G)\mid \bar\lambda_C>0}$ and 
%$C_{\lambda,+} = \cbr{c\in C: \bar\lambda_c>0}$ and 
$\cblam{0} = \cbr{C\in \Comp(G)\mid \bar\lambda_C=0}$.
%$C_{\lambda,0} = \cbr{c\in C: \bar\lambda_c=0}$.
For $\lambda$ in our consideration, 
the following relation holds:
\begin{align}\label{211120.2800}
  \barq(C)\geq\bar\lambda_C \tfor C\in \Comp(G).
\end{align}
For $C\in \cblam{+} $, define a map $\tau^{(C)}:p(V(C)\sqcup (V(C)+m))\to\bbZ_{\geq0}$ by
\begin{alignat*}{2}
  &\tau^{(C)}([v,v+m])=\lambda_{v}
  &&\qquad\tforsm v\in V(C)\\
  &\tau^{(C)}([v,v'])=0
  &&\qquad\tforsm [v,v']\in p(V(C)\sqcup (V(C)+m))\setminus\cbr{[v,v+m]\mid v\in V(C)}
\end{alignat*}
and let
\begin{align*}
  G^{(\lambda,C)}
  %{=G^*_c = (V_{c^*}, \edgeWt_{c^*}, \vertWt_{c^*})}
  = \dabr{\tau^{(C)}}(C\vee \shiftg{C}{m}).
\end{align*}
We define a weighted graph $G^{(\lambda)}$ by 
\begin{align*}
  G^{(\lambda)}
  =(V^{(\lambda)}, \edgeWt^{(\lambda)}, \vertWt^{(\lambda)})
  :=\Brbr{\mathop\vee_{C\in \cblam{0}} (C\vee \shiftg{C}{m})}
  \vee \Brbr{\mathop\vee_{C\in \cblam{+}} G^{(\lambda,C)}},
\end{align*}
and obviously we have
$%\begin{align*}
  \Comp(G^{(\lambda)}) = \bcbr{C, \shiftg{C}{m} \mid C\in\cblam{0}}
  \sqcup \bcbr{ G^{(\lambda,C)} \mid C\in\cblam{+}}.
$ %\end{align*}
We set 
\begin{align*}
  A^{(\lambda)}
  &=\brbr{A^{(\lambda)}_n(j)}_{j\in[n]^{2m},n\in\bbN}=
  %&=\Brbr{\brbr{A^{(\lambda)}_n(j_{[2m]})}_{j_{[2m]}\in[n]^{2m}} }_n=
  \Brbr{c_\lambda^2 
  \Babr{D^{\lambda_0}A_n(j_{[m]}), D^{\lambda_0}A_n(j_{[m+1,2m]})}
  _{\calh^{\otimes \lambda_0}}}_{j\in[n]^{2m}, n\in\bbN},
  %\\
  %\bbf^*&=(\bbf^*_v)_{v\in V^*} \;\text{ with }\;
  %\bbf^*_v=\bbf^*_{v+m} =\bbf_v \tforsm v\in V,
\end{align*}
where $[m+1,2m]=\cbr{m+1,...,2m}$.
Indeed, we have
$A^{(\lambda)}\in\cala([2m])$ %and $\bbf^*$ satisfy Assumptions 
and
\begin{align}\label{220118.2051}
  \bnorm{\DLamCali}_{\calh^{\otimes i}}^2
  =\cali_n(0, G^{(\lambda)}, A^{(\lambda)}, \bbone).
\end{align}

%\contifrom{220117.1300}

For $C\in\cblam{+}$, we have
$I(G^{(\lambda, C)})=2I(C)$, 
\begin{align*}
  s(G^{(\lambda, C)}) &=
  2s(C) +0 -2(2-1) = 2(s(C)-1)
  &&(\because (\ref{211225.2120}))\\
  \barq(G^{(\lambda, C)})&=
  2\barq(C) -2\sum_{[v,v']\in p(V_C\sqcup (V_C+m))}\tau^{(C)}([v,v'])
  \Brbr{=2\barq(C) -2\sum_{v\in V(C)} \lambda_{v}}
  &&(\because (\ref{220116.2106}))
\end{align*}
and hence $\barq(G^{(\lambda, C)})$ is even.
We also have $s(C)\geq\barq(C)\geq1$ by (\ref{211120.2800}).
%for $C\in \cblam{+}$.
We can show 
$e(G^{(\lambda,C)})\leq2e(C)$ 
as follows.

\noindent 
(i) If $s(C)=1$, then
we have $s(G^{(\lambda,C)})=0$ and the exponent of $G^{(\lambda,C)}$ is 
\begin{align*}
  e(G^{(\lambda,C)}) = 2-I(G^{(\lambda,C)}) = 2(1-I(C)) = 2e(C).
\end{align*}

\noindent
(ii) If $s(C)\geq2$, then $s(G^{(\lambda,C)})\geq2$.
With $v_0\in\argmax_{v\in V(C)} \vertWt^{(\lambda)}(v)$, we have
%With $v_0\in\argmax_{v\in V(G^{(\lambda,C)})\cap[m]} \vertWt^{(\lambda)}(v)$, we have
\begin{align*}
  \max_{v\in V(G^{(\lambda,C)})} \vertWt^{(\lambda)}(v)
  = \vertWt^{(\lambda)}(v_0)
  = \vertWt^{(\lambda)}(v_0+m)
  = \half\rbr{\vertWt^{(\lambda)}(v_0)+\vertWt^{(\lambda)}(v_0+m)}
  \leq \half\barq(G^{(\lambda,C)}),
\end{align*}
and hence $G^{(\lambda,C)}\in\Ctpb(G^{(\lambda)})
%\Ctpb(G) 
= \cbr{C\in\Ctp(G^{(\lambda)})\mid \barq(C) \text{ is even and } 
  \max_{v\in V(C)} \vertWt^{(\lambda)}(v) \leq \half\barq(C)}$.
%the component $G^*_c$ satisfies the conditions
%$\barq(G^*_c)$ is even
%and $\max_{v\in V_{c^*}} \vertWt_{c^*}(v) \leq \frac{1}{2}\barq(G^*_c)$.
The exponent of $G^{(\lambda,C)}$ is bounded as
\begin{align*}
  e(G^{(\lambda,C)}) &=
  2 - I(G^{(\lambda,C)}) + (1-4H) -H (s(G^{(\lambda,C)})-2) 
  \\&=
  2\rbr{1 -I(C)  + (1/2-2H) -H (s(C)-2)}
  \leq 2e({C}).&
  & (\because (\ref{211121.1302}))
\end{align*}
%We used (\ref{211121.1302}) at the inequality above.

Thus we have 
\begin{align*}
  e(0,G^{(\lambda)}) 
  = \sum_{C\in\cblam{0}}2e(C) + \sum_{C\in\cblam{+}} e(G^{(\lambda,C)})
  \leq 2\sum_{C\in \Comp(G)}e(C) = 2 e(0,G),
\end{align*}
and hence by (\ref{220118.2051}) and  Proposition \ref{210407.1401},
\begin{align*}
  \Bnorm{\bnorm{\DLamCali}_{\calh^{\otimes i}}^2}_p
  =O(n^{2e(0,G)})
\end{align*}
for $p\geq1$.
This estimate holds for any $\lambda\in(\cbr{0}\sqcup[m])^i$ such that
$\vertwtlow(v)\geq\lambda_v$ for $v\in [m]$.
By the inequality (\ref{211120.2811}), we have
\begin{align*}
  \Bnorm{\bnorm{D^i\cali_n}_{\calh^{\otimes i}}}_p 
  \leq
  %\sum_{\lambda\in\cbr{0,..,m}^i}
  %  \Bnorm{\snorm{\DLamCali}_{\calh^{\otimes i}}}_p
  %=
  \sum_{\substack{\lambda\in\cbr{0,..,m}^i\\\vertwtlow(v)\geq\lambda_v\tforsm v\in[m]}}
    \Bnorm{\bnorm{\DLamCali}_{\calh^{\otimes i}}^2 }_{p/2}^{1/2}
  = O(n^{e(0,G)})
\end{align*}
for $p\geq2$.
\end{proof}

%\newpage
\subsection{Second exponent}
For the second exponent,
we restirct weighted graphs to consider in order to obtain a sharper estimate.
First we introduce two types of weighted graphs.
\begin{definition}\label{220322.1900}
  (i) A weighted graph $G=(V,\edgeWt,\vertWt)$ is called a cycle graph if 
  $\vertWt(v)=0$ for $v\in V$ and 
  $G$ satisfies the following conditons
  \begin{itemize}
  \renewcommand\labelitemii{\labelitemi}
    \item if $I(G)=\abs{V}=2$ and $V$ is written as $V=\cbr{v_1,v_2}$, then
    $\edgeWt([v_1,v_2])=2$;

    \item if $I(G)=\abs{V}\geq3$ and 
    $V$ is written as $V=\cbr{v_1,...,v_m}$ with $m=I(G)$, then
    \begin{itemize}
      \item $\edgeWt([v,v'])=1$ 
      if $[v,v']=[v_i,v_{i+1}]$ with some $i=1,...,m-1$ or $[v,v']=[v_m,v_1]$,
      \item $\edgeWt([v,v'])=0$ otherwise.
    \end{itemize}
  \end{itemize}
  \noindent
  (ii) A weighted graph $G=(V,\edgeWt,\vertWt)$ is called 
  {\it a path graph with weighted ends} if 
  $V$ can be written as $V=\cbr{v_1,...,v_m}$ with $m\geq2$ and 
  \begin{itemize}
    \item $\edgeWt([v,v'])=1$ 
    if $[v,v']=[v_i,v_{i+1}]$ with some $i=1,...,m-1$;
    $\edgeWt([v,v'])=0$ otherwise.
    \item $\vertWt(v_i)=0$ for $1<i<m$ and $\vertWt(v_i)=1$ for $i=1,m$.
  \end{itemize}
\end{definition}
When $G$ is a cycle graph or a path graph with weighted ends,
we will denote 
the vertices $v_1,...,v_{m}$ ($m=I(G)$) in the above definitions of 
by ${v^G_1,...,v^G_{I(G)}}$.
Notice that a cycle graph is connected, so is a path graph with weighted ends.
If $G$ is a cycle graph, then $s(G)=2$ and $\barq(G)=0$.
If $G$ is a path graph with weighted ends, then $s(G)=2$ and $\barq(G)=2$.
We consider the following condition on connected weighted graphs:
\begin{ass}\label{220317.1320}
  For a connected weighted graph $G$,
  the two following conditions are fulfuilled:
  \begin{itemize}
  \item [(i)] $s(G)\leq2$, 
  \item [(ii)] in the case $s(G)=2$ and  $I(G)\geq2$,
    \begin{itemize}
      \item [(a)] if $\barq(G)=0$, then $G$ is a cycle graph;
      \item [(b)] if $\barq(G)=2$, 
      then $G$ is a path graph with weighted ends.
    \end{itemize}
  \end{itemize}
\end{ass} 
Notice that, in general, 
$\barq(G)$ for a connected weighted graph $G$ with $s(G)=2$
can be either $0$ or $2$;
if $G$ satisfies $(s(G),\barq(G))=(2,0)$, 
then $G$ has multiple vertices (i.e. $I(G)\geq2$).
Hence, $G$ with $s(G)=2$ and $I(G)=1$ must satisfy 
$\barq(G)=2$, %since $p(V)$ is empty,
and is written as 
$G=(\cbr{v},0, 2)$ with some $v$. 
%$G=(\cbr{v},\emptyset, \vertWt)$ with some $v$ and $\vertWt(v)=2$. 

\vspssm
In this section, we will define another exponent 
for the weighted graphs satisfying the following condition.
\begin{ass}\label{211203.1620}
  For a weighted graph,
  every component of it satisfies Assumption \ref{220317.1320}.
\end{ass}

%(In fact, this follows from the initial setting. 
%This is written merely for clarity.)
%{table wo ireru}

\captionsetup[subfigure]{labelformat=empty}
\begin{figure}[h]
  \centering
  \begin{subfigure}{0.15\textwidth}
    \centering
    \graphcycletwo
    \caption{cycle graph $G$\\with $I(G)=2$}
\end{subfigure}
  \begin{subfigure}{0.25\textwidth}
    \centering
    \graphcycle
    \caption{cycle graph $G$\\with $I(G)\geq3$}
\end{subfigure}
\begin{subfigure}{0.15\textwidth}
  \centering
  \graphpathtwo
  \caption{path graph $G$\\with $I(G)=2$}
\end{subfigure}
\begin{subfigure}{0.25\textwidth}
  \centering
  \graphpath
  \caption{path graph $G$\\with $I(G)\geq3$}
\end{subfigure}
%
%\begin{subfigure}{0.15\textwidth}
%  \centering
%  \graphCtto
%  \caption{{$G$ with $I(G)=1$ and $\barq(G)=2$}}
%\end{subfigure}%
\end{figure}

For a weighted graph $G$,
 we classify the components as
$\Comp^s(G) = \cbr{C\in \Comp(G)\mid s(C)=s}$ for $s=0,1,2$.
%and 
Moreover, let
$\Comp^2_{k}(G) = \cbr{C\in \Comp^2(G)\mid \barq(C) = k}$ for $k\in\cbr{0,2}$,
$\Ctto(G) =\cbr{C\in \Ctt(G)\mid I(C)=1}$ and 
$\Cttt(G) =\cbr{C\in \Ctt(G)\mid I(C)\geq2}$.
For brevity, we write 
$\Comp_{0}(G)=\Comp^0(G)\sqcup \Comp^2_0(G)$
and $\Cqom(G)=\Comp_0(G)\sqcup \Comp^1(G)$
since %$\barq(c)=0$ for $c\in C_0(G)$ and 
$\barq(C)=1$ for $C\in \Comp^{1}(G)$.

\begin{center}
  \begin{tabular}{ r|c|c|c|c }
    %\diagbox{$s$}{$\barq$} 
    & $\barq=0$ & $\barq=1$ & \multicolumn{2}{c}{$\barq=2$}\\ \hline
    & - & - & $I=1$ & $I\geq2$ \\ \hline\hline
    $s=0$ & $\Comp^0$ & - & - & - \\ \hline
    $s=1$ & - & $\Comp^1$ & - & - \\ \hline
    $s=2$& $\Comp^2_0$ & - & 
    $\Comp^2_{2,1}$ & $\Comp^2_{2,2+}$ \\ \hline
  \end{tabular}
\end{center}

\noindent
%With this notation,
In addition to $G$, suppose that 
a subset $\tensorc$ of $\Cttt(G)$ is given.
We write
$\twochaos := \Ctto(G) \sqcup \tensorc$ and 
$\nottenc := \Cttt(G) \setminus \tensorc$.
%$\tildc := \Cttt \setminus \ddotc$ and 
%$\check C^2_2 := C^2_{2,1} \cup \widetilde \Cttt$.
Notice that if $G$ satisfies Assumption \ref{211203.1620}, 
then $\Comp(G)$ can be written as the following disjoint unions:
\begin{align*}
  \Comp 
  = \Cszero \sqcup \Csone \sqcup \Ctz \sqcup \Ctto \sqcup \tensorc \sqcup \nottenc
  = \Cqzero \sqcup \Csone \sqcup \twochaos \sqcup \nottenc
  = \Cqzero \sqcup \Csone \sqcup \Ctt,
\end{align*}
where we omitted $(G)$ for brevity.

\begin{definition}
  For $n\in\bbN$, $\alpha\in\bbR$, 
  a weighted graph $G=(V,\edgeWt,\vertWt)$ satisfying Assumption \ref{211203.1620}, 
  a subset $\tensorc$ of $\Cttt(G)$,
  $A\in\cala(V)$ and $\bbf\in\calf(V)$,
  we define a functional $\cali^\T_n(\alpha,G,A,\bbf,\tensorc)$ by 
  \begin{align}\label{211126.2643}
    \funcSec(\alpha,G, \tensorc, A, \bbf) =&
    n^\alpha
    \sum_{j\in[n]^V} A_n(j)
    \prod_{C\in \Comp(G)\setminus\tensorc} 
    B_n^C(j_{V_C},\bbf|_{V_C})
    \prod_{C\in \tensorc} \check{B}_n^C(j_{V_C},\bbf|_{V_C})
    %\\=&
    %n^\alpha\sum_{j\in[n]^V} A_n(j) \prod_{c\in C(G)\setminus\tensorc}\bbcbr{ 
    %  \prod_{v\in V_c} I_{\vertwtlow_v}(\kerfvn{v}{j_v}^{\otimes \vertwtlow_v})
    %  \prod_{[v,v']\in p(V_c)}\beta_{j_{v},j_{\vpr}}^{\edgeWt_{v,\vpr}}}
    %\nn\\&\hsp\hsp\hsp\quad\times
    %\prod_{c\in \tensorc}\bbcbr{ 
    %  I_2(\kerfvn{\vc{c}{1}}{j_{\vc{c}{1}}} %\tilde
    %  \otimes \kerfvn{\vc{c}{I_c}}{j_{\vc{c}{I_c}}})
    %  \beta_{j_{\vc{c}{1}},j_{\vc{c}{2}}}\cdots\beta_{j_{\vc{c}{I_c-1}},j_{\vc{c}{I_c}}}}
  %\\=&
  %n^\alpha\sum_{j\in[n]^V} A_n(j)
  %\prod_{c\in C(G)\setminus\tensorc}\bbcbr{ 
  % \prod_{v\in V_c} I^{\vertwtlow_v}(\kerf{v}{j_v}^{\otimes \vertwtlow_v})
  % \prod_{\vpr<\vppr\in V_c}\beta_{j_{\vpr},j_{\vppr}}^{\edgeWt_{\vpr,\vppr}} 
  %}
  %\nn\\&\hsp\hsp\hsp\quad\times
  %\prod_{c\in \tensorc}\bbcbr{ 
  %  I^2(\kerf{\vc{c}{1}}{j_{\vc{c}{1}}} \tilde\otimes \kerf{\vc{c}{I_c}}{j_{\vc{c}{I_c}}})
  %  \prod_{\vpr<\vppr\in V_c}\beta_{j_{\vpr},j_{\vppr}}^{\edgeWt_{\vpr,\vppr}}
  %}
  %%
  \\=&
  n^\alpha \sum_{j\in[n]^V} A_n(j)
  \prod_{C\in \nottenc} B_n^C(j_{V_C},\bbf|_{V_C})
  \prod_{C\in \Comp(G)\setminus\nottenc} 
  \check{B}_n^C(j_{V_C},\bbf|_{V_C})
  %\prod_{c\in C_0(G)\sqcup C^1(G)\sqcup\nottenc} B_n^{\,c}(j_{V_c},\bbf|_{V_c})
  %\prod_{c\in \twochaos}\check{B}_n^{\,c}(j_{V_c},\bbf|_{V_c})
  %\\=&\delc{n^\alpha
  %\sum_{\jonem\in[n]^m} A_n(\jonem)
  %\prod_{c\in C_0\sqcup C^1\sqcup\nottenc} B^c_n(j_{V_c};\bbf)
  %\prod_{c\in \delc{\Ctto\sqcup\tensorc}} \check{B}^c_n(j_{V_c};\bbf),}
  \label{211127.1630}
  \end{align}
\end{definition}
The last expression (\ref{211127.1630}) is verified,
because the two functionals $B_n^G(j,\bbf)$ and $\check{B}_n^G(j,\bbf)$
coincides for a connected weighted graph $G=(V,\edgeWt,\vertWt)$ such that
$\babs{\cbr{v\in V\mid \vertWt(v)>0}}\leq1$.
%with $\barq(G)\leq1$ or $I(G)=1$.

\vspsm
Let 
\begin{alignat}{2}
  &e^+_2(I) = (2-I)+2\phi_H(I) = (1-2I H)\vee(-I)
  &\quad&\tfor I\geq2
  \label{220121.1137}\\
  &e^-_2(I) = (2-I)-1+\phi_H(2I)
  &&\tfor I\geq1,
  \nn
\end{alignat}
where
\begin{align}\label{211006.1341}
  \phi_H(I) = -1/2 + ((1/2-H)I)\vee (-1/2) 
  \quad\text{ for }\; I\geq2.
\end{align}
Notice that 
\begin{align}\label{211127.2424}
  %-H>\phi_H(l)\geq\phi_H(l+1)\geq-1 \quad\text{for}\; l\geq2
  -H>\phi_H(I)\geq-1 \tfor I\geq2, \tand
  \phi_H(I_1)\geq\phi_H(I_2) \tfor 2\leq I_1<I_2
\end{align}
and hence 
\begin{alignat}{4}\label{211202.2516}
  e^-_2(I) &\geq (2-I)-2=-I  &&\tfor I\geq1, %.\nn
  \tand&%\\
  e^+_2(I) &\geq e^-_2(I)   &&\tfor I\geq2.
\end{alignat}
Indeed,
$\phi_H(I)\geq-1$ and
$\phi_H(I)\geq\phi_H(2I)$.
We define the exponent $e_2$ for a connected weighted graph $C$
satisfying Assumption \ref{220317.1320} as follows.
If $s(C)=0$ or $1$, 
\begin{align}
  e_2(C)=2-I(C)-s(C);\label{220325.1321}
\end{align}
If $s(C)=2$,
\begin{subnumcases}{e_2(C)=}
  e^-_2(I(C))=1/2-2H
  &\text{if $I(C)=1$}
  \label{220325.1323}
  \\
  e^+_2(I(C))
  &\text{if $I(C)\geq2$}.
  \label{220325.1322}
\end{subnumcases}
Recall that for a connected weighted graph $C$,
the condition $(s(C),I(C))=(2,1)$ is equivalent to 
$(s(C),\barq(C),I(C))=(2,2,1)$, and 
the condition $(s(C),\barq(C))=(2,0)$ implies 
$I(C)\geq2$.
Hence we can write for $C$ with $s(C)=2$
\begin{align*}
  e_2(C)=
  \begin{cases}
  e^-_2(I(C))%=1/2-2H
  & \tif (s(C),\barq(C),I(C))=(2,2,1);
  \vspssm\\
  e^+_2(I(C))
  &\tif (s(C),\barq(C))=(2,0), \torsm (s(C),\barq(C))=(2,2) \tandsm I(C)\geq2.
\end{cases}
\end{align*}
%\end{scrap}
%
\begin{scrap}  
For a connected weighted graph $G$ satisfying Assumption \ref{220317.1320},
\begin{subnumcases}{e_2(G)=}
  2-I(G)-s(G)
  & if $s(G)=0$ or $1$;%\label{220325.1321}
  \vspssm\\
  e^+_2(I(G))
  &{if $(s(G),\barq(G))=(2,0)$, or $(s(G),\barq(G))=(2,2)$ and $I(G)\geq2$; }
  %\label{220325.1322}
  %\\& {if $s(G)=2$ and $I(G)\geq2$;}\nn
  \vspssm\\
  e^-_2(I(G))=1/2-2H
  & if $(s(G),\barq(G),I(G))=(2,2,1)$.%\label{220325.1323}
  %$(s_c,\barq(c))=(2,2)$, $I_c\geq2$ and $\labp=1$. \label{210412.1641}
\end{subnumcases}
\end{scrap}
Notice that for $C$ in the case
(\ref{220325.1321}), 
(\ref{220325.1323}) or
(\ref{220325.1322}) with $I(C)=2$,
we have 
\begin{align}
  e_2(C)=e(C),
  \label{220325.1410}
\end{align}
where $e(C)$ is the exponent defined in Section \ref{211006.0300}.
If $s(C)=2$ and $I(C)\geq3$,
then the relation $e_2(C)\leq e(C)$ holds.

For a connected weighted graph $C$ satisfying Assumption \ref{220317.1320} with
$(s(C),\barq(C))=(2,2)$ and $I(C)\geq2$,
we define another exponent $\expoT(C)$ by
\begin{align*} 
  \expoT(C) = e^-_2(I(C)).
\end{align*}
For $\alpha\in\bbR$, a weighted graph $G$ satisfying Assumption \ref{211203.1620}
and $\tensorc\subset\Cttt(G)$,
we define the second exponent $e_2(\alpha,G,\tensorc)$ by
\begin{align*}%\label{211202.1735}
  e_2(\alpha,G,\tensorc)
  &= \alpha + \sum_{C\in\Comp(G)\setminus\tensorc} e_2(C)
  + \sum_{C\in \tensorc} \expoT(C).
  %\\&= \alpha + \sum_{c\in C\setminus\check C^2_2} e_2(c)
  %+ \sum_{c\in \check C^2_2} e^{-}_2(I_c),
  %= \alpha + \sum_{c\in C\setminus \Cttt} e_2(c)
  %  + \sum_{c\in \ddot  \Cttt} e^+_2(I_c)
  %  + \sum_{c\in \widetilde \Cttt} e^{-}_2(I_c),
\end{align*}
The exponent $e_2(\alpha,G,\tensorc)$ can be written in several ways as follows:
\begin{align*}%\label{211202.1735}
  e_2(\alpha,G,\tensorc)
  &= \alpha 
  + \sum_{C\in\Comp(G)\setminus\Ctt(G)} e_2(C)
  + \sum_{C\in\Ctto(G)\sqcup\nottenc}  e_2(C)
  + \sum_{C\in \tensorc}  \expoT(C)
  \\&= \alpha 
  + \sum_{C\in\Comp(G)\setminus\Ctt(G)} e_2(C)
  + \sum_{C\in\nottenc}  e^+_2(I(C))
  + \sum_{C\in \Ctto(G)\sqcup\tensorc}  e^-_2(I(C))
  \\&= \alpha 
  + \sum_{C\in\Cqom(G)} e_2(C)
  + \sum_{C\in\nottenc}  e^+_2(I(C))
  + \sum_{C\in \twochaos}  e^-_2(I(C)).
\end{align*}
In particular, when $\tensorc=\Cttt(G)$, the exponent has the following expression:
\begin{align*}%\label{211202.1735}
  e_2(\alpha,G,\Cttt(G))
  &= \alpha 
  + \sum_{C\in\Comp(G)\setminus\Ctt(G)} e_2(C)
  + \sum_{C\in \Ctt(G)}  e^-_2(I(C)).
\end{align*}

Furthermore we introduce the notation $f(n)=\olog(n^q)$
meaning that $f(n)=O(n^q (\log n)^k)$ for some $k\geq0$.

\subsubsection{Estimate of the order of expectation of the functional 
$\funcSec(\alpha,G,\tensorc,A,\bbf)$}
First we prove the estimate of 
$E\sbr{\funcSec(\alpha,G,\tensorc, A,\bbf)}$
 under the assumption 
$\tensorc=\Cttt(G)$.

\begin{lemma} \label{211226.1426}
  Let $\alpha\in\bbR$, 
  $G=(V,\edgeWt,\vertWt)$ a weighted graph satisfying Assumption \ref{211203.1620}, 
  $A\in\cala(V)$ and $\bbf\in\calf(V)$.
  Then,
  \begin{align}\label{211226.1438}%\label{210412.1101}
    E[\cali_n] = \olog(n^{e_2(\alpha,G,\Cttt(G))})
  \end{align}
  as $n\to\infty$ for
  $\cali_n = \funcSec(\alpha,G,\Cttt(G),A,\bbf)$
  given by (\ref{211126.2643}).
\end{lemma}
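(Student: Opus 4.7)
The plan is to prove the estimate by induction on $N(G):=|\Csone(G)|+|\Ctto(G)|$, the number of single-vertex chaos-bearing components of $G$, mirroring the strategy of Proposition~\ref{210108.1642} but tracking the refined exponent $e_2$ instead of $e$. Because $\tensorc=\Cttt(G)$, every multi-vertex $s=2$ component is already packaged as $\check B^C_n(j,\bbf)=\beta_n^C(j_{V_C})\,\delta^2\!\brbr{\bbf_{v_1^C}(j_{v_1^C})\otimes\bbf_{v_{I(C)}^C}(j_{v_{I(C)}^C})}$, so these are dealt with directly by the duality $E[F\,\delta^2(h)]=E[\la D^2F,h\ra_{\calh^{\otimes 2}}]$; only the single-vertex chaos components in $\Csone\sqcup\Ctto$ require product-formula expansion, which is what the induction peels off one at a time.

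For the base case $N(G)=0$, I would iteratively apply the above duality to discharge every $\delta^2$ coming from a path graph in $\tensorc$, ending with a deterministic sum of products of $\beta_n$'s weighted by iterated Malliavin derivatives of $A_n$. The sum factors over the components of $G$, and each factor is bounded via the convolution estimate of Lemma~\ref{211028.2340}: cycles in $\Ctz(G)$ contribute $\olog(n^{e^+_2(I)})$, and path graphs in $\tensorc$ contribute $\olog(n^{e^-_2(I)})$, with the sharper bound $e^-_2$ reflecting precisely the two extra $\beta_n$-contractions produced by $D^2$ acting between the weighted endpoints. For the inductive step $N(G)\geq 1$, I would pick any $C_1\in\Csone(G)\sqcup\Ctto(G)$ and follow the product formula and duality as in (\ref{211101.1201})--(\ref{211101.2100}): every resulting term rewrites as $\funcSec(-i_0,G^{(\pi,i)},\Cttt(G^{(\pi,i)}),A^{(\pi,i)},\bbone)$ with $N(G^{(\pi,i)})<N(G)$, so the inductive hypothesis applies once one checks that $G^{(\pi,i)}$ still satisfies Assumption~\ref{211203.1620}.

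The core bookkeeping is the exponent inequality $e_2(-i_0,G^{(\pi,i)},\Cttt(G^{(\pi,i)}))\leq e_2(\alpha,G,\Cttt(G))$, established by a case analysis on which partners of $C_1$ receive the Malliavin derivatives, i.e., on the values of $(\pi,i_0,i_{v'})$. Typical scenarios include a cycle being opened into a path in the new $\tensorc$ (trading $e^+_2(I)$ for $e^-_2(I{+}1)$), two singletons fusing into a path of length $2$ (trading $2e^-_2(1)$ for $e^-_2(2)$), or all derivatives hitting $A_n$ and leaving other components unchanged (reducing to the $e\to e_2$ refinement of Proposition~\ref{210108.1642}). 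The identities of Proposition~\ref{220110.2555} together with the comparison inequalities (\ref{211127.2424}) and (\ref{211202.2516}) make the arithmetic routine.

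I expect the main obstacle to be ensuring $G^{(\pi,i)}$ remains inside Assumption~\ref{211203.1620}: \emph{a priori} a singleton attaching to a cycle at one vertex would produce a cycle with a weighted vertex, which is excluded by Definition~\ref{220322.1900}, while attaching at two distinct vertices of the same cycle would split it into two paths. Such outputs must either be shown to fit the allowed cycle/path shapes after possibly reabsorbing an unused chaos into $\Cqom$, or to vanish by the combinatorics of the contraction parameters. A secondary obstacle is tracking the $\log n$ factors from borderline cycle convolutions; these grow at most polynomially in $|V|$ across the induction and are absorbed by the $\olog$ notation in the statement.
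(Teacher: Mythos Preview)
The induction on $N(G)=|\Csone(G)|+|\Ctto(G)|$ does not terminate. Take $C_1\in\Ctto(G)$ in the inductive step and consider the branch in which one of the two derivatives is absorbed by $A_n$ while the other contracts with an endpoint of some $C'\in\Cttt(G)$. The merged component $C^{(\pi,i)}$ is then a tree on $I(C')+1$ vertices with exactly one weighted vertex, so $s(C^{(\pi,i)})=1$ and $C^{(\pi,i)}\in\Csone(G^{(\pi,i)})$; hence $|\Ctto|$ drops by one, $|\Csone|$ rises by one, and $N(G^{(\pi,i)})=N(G)$. The same stalling occurs when $C_1\in\Csone(G)$ and its single derivative lands on a component of $\Cttt(G)$. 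Your base case has the mirror defect: with $N(G)=0$ there may still be path graphs in $\Cttt(G)$, and discharging one of their $\delta^2$'s by duality can create a $\Csone$ component in the next graph, so the ``base'' configuration is not closed under the iteration you describe. (A related point: after such a step the two surviving $I_1$'s from distinct hit components appear as a product, i.e.\ in $B_n$-form rather than $\check B_n$-form, so the output is not literally $\funcSec(\cdot,\cdot,\Cttt(\cdot),\cdot,\cdot)$ without a further product-formula split.)

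The paper circumvents this with a nested induction: the outer parameter is $|\Ctt(G)|=|\Ctto(G)|+|\Cttt(G)|$, and at each outer step one picks $\cpr\in\Ctt(G)$ (not only from $\Ctto$), writes $\check B^{\cpr}_n=\beta_n^{\cpr}\,\delta\!\brbr{I_1(1_{j_{v^{\cpr}_{I(\cpr)}}})\,1_{j_{v^{\cpr}_1}}}$, and peels off a single $\delta$; every branch strictly reduces $|\Ctt|$, and the product formula on the remaining $I_1\cdot I_1$ yields either a cycle in $\Ctz$ or a longer path recorded via $\check B_n$. The possible growth of $|\Csone|$ is absorbed by a separate inner induction on $|\Csone(G)|$ run only once $|\Ctt(G)|=0$. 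Your scheme becomes viable if you replace $N(G)$ by $|\Csone(G)|+|\Ctt(G)|$ (which does strictly decrease in the problematic branches, since $C'$ leaves $\Cttt$) and add the $I_1\cdot I_1\to\beta_n+\delta^2$ step. Finally, the obstacle you anticipate---a singleton attaching to a cycle---cannot occur: cycles lie in $\Ctz(G)$ with $\barq=0$, carry no Wiener chaos, and are therefore invisible to the Malliavin derivatives distributed by duality.
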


\begin{proof}
%For the meantime, 
%we assume that $\kerf{v}{j}=\charfn{j}$ for $v\in V$.
We assume $\alpha=0$ and $\bbf=\bbone$ without loss of generality.
%

%In this case,
We have
$\twochaos(= \Ctto(G) \sqcup \tensorc)= \Ctt(G)$ and 
$\nottenc = \emptyset$.
We prove the estimate by induction with respect to $\abs{\Ctt(G)}$, 
namely the cardinality of $\Ctt(G)$. 

(i) First we consider the case $\abs{\Ctt(G)}=0$,
where $\tensorc=\Cttt(G)$ is empty as well.
Here we use another induction with respect to $\abs{\Csone(G)}$.
In the case of $\abs{\Csone(G)}=0$, in other words
$\Comp(G)=\Cqzero(G)(=\Cszero(G)\sqcup \Ctz(G))$,
the functional $\cali_n$ is written as
\begin{align*}
  \cali_n 
  %= n^\alpha \sum_{j\in[n]^V} A_n(j) 
  %\prod_{c\in C_0(G)} B_n^{\,c}(j_{V_c},\bbf|_{V_c})
  = %n^\alpha
    \sum_{j\in[n]^V} A_n(j)
    %\prod_{c\in C_0(G)} \beta_n^{\,c} (j_{V_c})
    \prod_{C\in \Cqzero(G)} \beta_{n}^{C}(j_{V_C}).
    %\prod_{C\in \Cqzero(G)} \beta_{n, V_C}(j_{V_C},\edgeWt|_{p(V_C)}).
\end{align*}
For $C\in\Cszero(G)$, we have $e(C)=e_2(C)$ (see (\ref{220325.1410})),
and by Lemma \ref{211028.2340}
\begin{align*}
  \sum_{j\in [n]^{V(C)}} \beta_{n}^{C}(j)
  %\sum_{j\in [n]^{V_C}} \beta_{n,V_C}(j, \edgeWt|_{p(V_C)})
  =O(n^{e(C)})=\bar O(n^{e_2(C)}).
\end{align*}
If $C\in \Ctz(G)$, then $C$ is a cycle graph by Assumption \ref{211203.1620}, 
and we have 
\begin{align*}
  \sum_{j\in [n]^{V(C)}} \beta_{n}^{C}(j)%, \edgeWt|_{p(V_C)})
  %\sum_{j\in [n]^{V_C}} \beta_{n,V_C}(j, \edgeWt|_{p(V_C)})
  &=\sum_{j\in[n]^{V(C)}} 
  \beta_n\nrbr{j_{v^C_1},j_{v^C_2}}\cdots
  \beta_n\nrbr{j_{v^C_{I(C)-1}},j_{v^C_{I(C)}}}
  \beta_n\nrbr{j_{v^C_{I(C)}},j_{v^C_1}}
  \\&=\bar O(n^{e^+_2(I(C))})
  =\bar O(n^{e_2(C)})
\end{align*}
by Lemma \ref{211006.1220}(c) with $k=I(C)$. 
(See (\ref{220325.1344}).)
Hence the following estimate holds:
\begin{align*}
  \Babs{ E[\cali_n] }
  %&=
  %\abs{ E\bbsbr{ \sum_{j\in[n]^V} A_n(j) \prod_{c\in C_0(G)}
  %  \beta_{n,V_c}(j_{V_c}, \edgeWt|_{p(V_c)})}}
  %\\
  &\leq
  \sup_{n\in\bbN, j\in[n]^V} \norm{A_n(j)}_{L^1(P)}\;
  \sum_{j\in[n]^V}\prod_{C\in\Cqzero(G)} 
  \beta_{n}^{C}(j_{V_C})
  %\beta_{n,V_c}(j_{V_c},\edgeWt|_{p(V_c)})
  \\
  &=C_{(\ref{220119.1721})}
  \prod_{C\in \Cqzero(G)}  \Brbr{\sum_{j\in [n]^{V(C)}}
  \beta_{n}^{C}(j)
  %\beta_{n,V_c}(j, \edgeWt|_{p(V_c)})
  }
  \\&=
  \bar O(n^{\sum_{C\in\Cqzero(G)}e_2(C)})
  =\bar O(n^{e_2(0,G,\emptyset)}),
\end{align*}
where $C_{(\ref{220119.1721})}$ is a constant obtained from
the bound (\ref{220119.1721}) in the assumption on $A$.

%%\newpage
\vspssm
Let $d\in\bbZ_{\geq1}$.
Assume that the estimate (\ref{211226.1438}) holds 
for any weighted graph $G'$ satisfying Assumption \ref{211203.1620} 
with $\abs{\Ctt(G)}=0$ and $\abs{\Csone(G')}\leq d-1$. 
%and any $\abs{C_0}\in\bbZ_{\geq0}$.
We are going to prove the estimate (\ref{211226.1438})
%for $G$ with
when $\abs{\Csone(G)}= d$. 
Fix 
$G=(V,\edgeWt,\vertWt)$ such that $\abs{\Csone(G)}=d$ and $\abs{\Ctt(G)}=0$, and
$A\in\cala(V)$.
%Without loss of generality, we can assume that $\alpha=0$.
%\redb{We omit $(G)$ of $C(G)$ until the end of (a-1).}
Pick $\cpr\in\Csone(G)$ arbitrarily.
The expectation of $\cali_n= \funcSec(\alpha,G,\emptyset,A,\bbone)$ can be transformed as follows:
\begin{align}
  E\sbr{\cali_n} =&
  \sum_{j\in[n]^V}  E\sbr{A_n(j)
  \Brbr{\beta_n^{\cpr}(j_{V_{\cpr}})\; I_1\brbr{1_{j_{v_{C_0}}}}}
  %\prod_{\vpr<\vppr\in V_{c'}}\beta_{j_{\vpr},j_{\vppr}}^{\edgeWt_{\vpr,\vppr}}
  \prod_{C\in \Comp(G)\setminus\cbr{\cpr}} B^C_n(j_{V_C})
  }
  \nn\\=&
  \sum_{j\in[n]^V}  
  E\sbr{ \rbr{D_{\charf{j_{v_{C_0}}}} A_n(j)}\;
  \beta_n^{\cpr}(j_{V_{\cpr}})
  %\bbcbr{\prod_{\vpr<\vppr\in V_{c'}}\beta_{j_{\vpr},j_{\vppr}}^{\edgeWt_{\vpr,\vppr}}}
  \prod_{C\in \Comp(G)\setminus\cbr{\cpr}} B^C_n(j_{V_C})
  }
  \nn\\&+
  \sum_{\cppr\in\Csone(G)\setminus\cbr{\cpr}} \sum_{j\in[n]^V}  
  E\sbr{A_n(j)\;
  \beta_n^{\cpr}(j_{V_{\cpr}})
  %\bbcbr{\prod_{\vpr<\vppr\in V_{c'}}\beta_{j_{\vpr},j_{\vppr}}^{\edgeWt_{\vpr,\vppr}}}
    %\nn\\&\hsp\hsp\hsp\hsp\hsp\times
    \rbr{D_{\charf{j_\vc{}{C_0}}} B^{\cppr}_n(j_{V_{\cppr}})}
    \prod_{C\in \Comp(G)\setminus\cbr{\cpr,\cppr}} B^C_n(j_{V_C})}
  \nn\\=&
  E\bsbr{ \cali_n^{(0)} }
  + \sum_{\cppr\in \Csone(G) \setminus \cbr{\cpr} } E\bsbr{ \cali_n^{(\cppr)} },
  \label{220530.1800}
\end{align}
where we write
\begin{align*}
  \cali_n^{(0)} = &
  n^{-1} \sum_{j\in[n]^V} \rbr{nD_{\charf{j_\vc{}{\cpr}}} A_n(j)}\;
  \beta_n^{\cpr}(j_{V_{\cpr}})
  %\bbcbr{\prod_{\vpr<\vppr\in V_{c'}}\beta_{j_{\vpr},j_{\vppr}}^{\edgeWt_{\vpr,\vppr}} }
  \prod_{C\in \Comp(G)\setminus\cbr{\cpr}} B^C_n(j_{V_C})
  \\
  \cali_n^{(\cppr)} = &
  \sum_{j\in[n]^V}
    A_n(j)\;
  \beta_n^{\cpr}(j_{V_{\cpr}})\;
  %\bbcbr{\prod_{\substack{\vpr<\vppr\\\in V_{c'}}}
  %\beta_{j_{\vpr},j_{\vppr}}^{\edgeWt_{\vpr,\vppr}}}
  \beta_n\brbr{j_\vc{}{\cpr},j_{\vc{}{\cppr}}}\;
  \beta_n^{\cppr}(j_{V_{\cppr}})\;
  %\bbcbr{\prod_{\substack{\vpr<\vppr\\\in V_{c''}}}
  %  \beta_{j_{\vpr},j_{\vppr}}^{\edgeWt_{\vpr,\vppr}}}
  \prod_{C\in \Comp(G)\setminus\cbr{\cpr,\cppr}} B^C_n(j_{V_C})
  &&\tfor \cppr\in \Csone(G)\setminus \cbr{\cpr},
\end{align*}
and 
for $C\in \Csone(G)$
we denote by $\vc{}{C}$ 
the only vertex $v\in V\rbr{C}$ such that $\vertwtlow(v)=1$.

Define a connected weighted graph $C^{(0)}$, 
a weighted graph $G^{(0)}$ and 
$A^{(0)}$ by 
\begin{align*}
  C^{(0)}&=\babr{v_{\cpr}}_{-1}(\cpr)\\
  G^{(0)}&=
  C^{(0)} \vee \Brbr{\mathop\vee_{C\in \Comp(G)\setminus\cbr{\cpr}} C}\\
  A^{(0)}&=
  \brbr{A^{(0)}_n(j) }_{j\in[n]^V,n\in\bbN}=
  \brbr{n\,D_{\charf{j_{v_{\cpr}}}} A_n(j)}_{j\in[n]^V,n\in\bbN}.
\end{align*}
We can see that 
$\Comp(G^{(0)})=\cbr{C^{(0)}}\sqcup(\Comp(G)\setminus\cbr{\cpr})$,
$A^{(0)}\in\cala(V)$ and
\begin{align*}
  \cali_n^{(0)} = \funcSec(-1, G^{(0)},\emptyset, A^{(0)},\bbone)
\end{align*}
since 
$\beta_n^{\cpr}(j_{V_{\cpr}})
=\beta_n^{C^{(0)}}(j_{V_{C^{(0)}}})
=B_n^{C^{(0)}}(j_{V_{C^{(0)}}})$.
The component $C^{(0)}$ of $G^{(0)}$ satisfies 
\begin{align*}
  I(C^{(0)})=I(\cpr) \tand
  s(C^{(0)})=s(\cpr)-1=0
  %s(C^{(0)})=2\rbr{\bartheta(\cpr)-(I(\cpr)-1)}+\barq(c^{(0)})=0
\end{align*}
by (\ref{211225.2120}).
%since $\bartheta(\cpr)-(I(\cpr)-1)=0$ and $\barq(\cpr)=1$ follows from  $s(\cpr)=1$. 
%The rest of the graph remains the same as $G$
Hence $G^{(0)}$ satisfies Assumption \ref{211203.1620} and 
 $\abs{\Csone(G^{(0)})}=\abs{\Csone(G)\setminus\cbr{\cpr}}=d-1$.
Since we have
\begin{align*}
  e_2(-1,G^{(0)},\emptyset) 
  = -1 + (2-I(C^{(0)})) + \sum_{C\in \Comp(G)\setminus\cbr{\cpr}} e_2(C)
  = (1-I\rbr{\cpr}) + \sum_{C\in \Comp(G)\setminus\cbr{\cpr}} e_2(C)
  = e_2(0,G,\emptyset), 
\end{align*}
we obtain 
\begin{align}
  E[\cali_n^{(0)}] = \olog(n^{e_2(-1,G^{(0)},\emptyset)})
  = \olog(n^{e_2(0,G,\emptyset)})
  \label{220325.1911}
\end{align}
by the assumption of 
induction with respect to $\abs{\Csone(G)}$.

\vspssm
Consider $(\cali_n^{(\cppr)})_n$ for $\cppr\in \Csone(G)\setminus\cbr{\cpr}$. %$\cali_n^{(c'')}$, 
With $C^{(\cppr)}$ and $G^{(\cppr)}$ defined by
\begin{align*}
  C^{(\cppr)}&=\dabr{v_{\cpr}, v_{\cppr}}_1 (\cpr\vee\cppr)
\\
  G^{(\cppr)}&=C^{(\cppr)}\vee\Brbr{\mathop\vee_{C\in \Comp(G)\setminus\cbr{\cpr,\cppr}} C},
\end{align*}
we have
$%\begin{align*}
  \cali_n^{(\cppr)} = \funcSec(0,G^{(\cppr)},\emptyset,A,\bbone)
$ %\end{align*}
and $\Comp(G^{(\cppr)})=\bcbr{C^{(\cppr)}}\sqcup(\Comp(G)\setminus\cbr{\cpr,\cppr})$.
By (\ref{211225.2120}), 
the component $C^{(\cppr)}$ satisfies 
$%\begin{align*}
  s(C^{(\cppr)}) = (1+1)+0-2(2-1) = 0,
$ %\end{align*}
which implies that
$G^{(\cppr)}$ satisfies Assumption \ref{211203.1620} and 
$\babs{\Csone(G^{(\cppr)})}=d-2$.
%and hence the number of the components in $G^{(c'')}$ satisfying $\calc^1$ is $d-2$.
Since we have
\begin{align*}
  e_2(C^{(\cppr)}) =2-I(C^{(\cppr)}) =(1-I(\cpr)) + (1-I(\cppr)) 
  =e_2(\cpr)+e_2(\cppr),
\end{align*}
we obtain
$e_2(0,G^{(\cppr)},\emptyset) = e_2(0,G,\emptyset)$ and
\begin{align}
  E[\cali_n^{(\cppr)}]  
  = \olog(n^{e_2(0,G^{(\cppr)},\emptyset)})
   = \olog(n^{e_2(0,G,\emptyset)}),
   \label{220325.1912}
\end{align}
by the assumption of induction.
As a result, by (\ref{220530.1800}), (\ref{220325.1911}) and (\ref{220325.1912}) we obtain
\begin{align*}
  E[\cali_n] = \olog(n^{e_2(0,G,\emptyset)}).
\end{align*}
By induction,
the estimate (\ref{211226.1438}) holds true when
$\Ctt(G)=\emptyset$.

\vspsm
%%\newpage
(ii) Let $d\in\bbZ_{\geq1}$. Assume that 
%under the additional assumption (\ref{211218.2500}) 
the estimate (\ref{211226.1438}) holds
for any $G'$ with $\abs{\Ctt(G')}\leq d-1$. 
%in the case of $\abs{C^2_2}=\abs{C^2_2(G)}\leq d-1$.
%and any $\abs{C_0}, \abs{C^1}\in\bbZ_{\geq0}$.
We are going to prove it for $G$ with $\abs{\Ctt(G)}=d$. 
Fix 
$G=(V,\edgeWt,\vertWt)$ satisfying Assumption \ref{211203.1620} and $\abs{\Ctt(G)}=d$,
and $A\in\cala(V)$.
Since we have assumed $\tensorc=\Cttt(G)$ and hence
$\nottenc=\emptyset$,
%$\twochaos = C^2_{2,1}(G) \sqcup \tensorc = C^2_2(G)$,
the functional $\cali_n$ is written as
\begin{align*}
  \funcSec(\alpha, G, \tensorc, A, \bbone) =&
  \sum_{j\in[n]^V} A_n(j)
  \prod_{C\in \Comp(G)} \check{B}^C_n(j_{V_C})
  %\prod_{c\in C_{1-}(G)} B^c_n(j_{V_c})
  %\prod_{c\in C^2_2(G)} \check{B}^c_n(j_{V_c}).
  %\label{220327.1454}
\end{align*}
by the definition (\ref{211127.1630}) of $\funcSec$.
%$C_{1-}(G)=C_0(G)\sqcup C^1(G)$
%\redb{220326: Under the assumption that all the component in $\Cttt(G)$ (and $\Ctz(G)$)
%satisfies Assumption \ref{220317.1320}}
For $C\in\Ctt(G)$,
the functional $\check{B}^C_n(j_{V_C})$ is written as
\begin{align*}
  \check{B}^C_n(j_{V_C})=
  \begin{cases}
    %I_2(1_{j_{v^c_1}}^{\otimes2})=
    \delta^2\brbr{1_{j_{v^C_1}}^{\otimes 2}}
    =\delta^2\brbr{1_{j_{v^C_1}}\otimes1_{j_{v^C_{I(C)}}}}
    \quad&\tifsm I(C)=1 \\
    \beta_n(j_{v^C_1},j_{v^C_2})\cdots
    \beta_n\nrbr{j_{v^C_{I(C)-1}},j_{v^C_{I(C)}}}
    \delta^2\brbr{1_{j_{v^C_1}}\otimes1_{j_{v^C_{I(C)}}}}
    \quad&\tifsm I(C)\geq2,
  \end{cases}
\end{align*}
where we write $v^C_1$ for the only vertex in $V(C)$ for $C\in\Ctt(G)$ with $I(C)=1$.
We can write 
$\check{B}^C_n(j_{V_C})
=\beta_n^{C}(j_{V_C})\,\delta^2\brbr{1_{j_{v^C_1}}\otimes1_{j_{v^C_{I(C)}}}}$.
%
%
%
%Pick $c'\in C^2_2$ arbitrarily.

Take $\cpr\in \Ctt(G)$ arbitrarily.
%We use the IBP formula to reduce the number of connected components $V_c$ of $c\in C_2$.
The expectation of $\cali_n$ can be transformed as follows:
\begin{align*}
  E\sbr{\cali_n} =&
  \sum_{j\in[n]^V}  E\sbr{A_n(j)
  \Brbr{\beta_n^{\cpr}(j_{V_{\cpr}})
  \delta\brbr{ I_1(\charf{j_\vc{\cpr}{I(\cpr)}}) \charf{j_\vc{\cpr}{1}}} }
  %\prod_{\vpr<\vppr\in V_{c'}}\beta_{j_{\vpr},j_{\vppr}}^{\edgeWt_{\vpr,\vppr}} 
  \prod_{C\in \Comp(G)\setminus\cbr{\cpr}} \check{B}^C_n(j_{V_C})
  %\prod_{c\in C_{1-}} B^c_n(j_{V_c})
  %\prod_{c\in C^2_2\setminus\cbr{c'}} \check{B}^c_n(j_{V_c})
  }
  \nn\\=&
  \sum_{j\in[n]^V}
  E\sbr{ \rbr{D_{\charf{j_\vc{\cpr}{1}}} A_n(j)}
  \beta_n^{\cpr}(j_{V_{\cpr}})\, I_1(\charf{j_\vc{\cpr}{I(\cpr)}})
  %\bbcbr{\prod_{\vpr<\vppr\in V_{c'}}\beta_{j_{\vpr},j_{\vppr}}^{\edgeWt_{\vpr,\vppr}}}
  \prod_{C\in \Comp(G)\setminus\cbr{\cpr}} \check{B}^C_n(j_{V_C})
  %\prod_{c\in C_{1-}} B^c_n(j_{V_c})
  %\prod_{c\in C^2_2\setminus\cbr{c'}} \check{B}^c_n(j_{V_c})
  }
  \nn\\&+
  \sum_{\cppr\in \Csone(G)\sqcup(\Ctt(G)\setminus\cbr{\cpr})} \sum_{j\in[n]^V}
  E\bbsbr{A_n(j)\,
  \beta_n^{\cpr}(j_{V_{\cpr}})\, I_1(\charf{j_\vc{\cpr}{I(\cpr)}}) 
  %\bbcbr{\prod_{\vpr<\vppr\in V_{c'}}\beta_{j_{\vpr},j_{\vppr}}^{\edgeWt_{\vpr,\vppr}}}
  %\nn\\&\hspace{110pt}\times
  D_{\charf{j_\vc{\cpr}{1}}} B^{\cppr}_n(j_{V_{\cppr}})
  \prod_{C\in \Comp(G)\setminus\cbr{\cpr,\cppr}} \check{B}^C_n(j_{V_C})}
  %\prod_{c\in C_0\sqcup (C^1\setminus\cbr{c''}) } B^c_n(j_{V_c})
  %\prod_{c\in C^2_2\setminus\cbr{c'}} \check{B}^c_n(j_{V_c})
  %%\nn\\&+
  %%\sum_{c''\in C^2_2\setminus\cbr{c'}} \sum_{j\in[n]^V}
  %%E\bigg[A_n(j)\,
  %%\beta_n^{\,c'}(j_{V_{c'}})\, I_1(\charf{j_\vc{c'}{I(c')}}) 
  %%  \nn\\&\hspace{110pt}\times
  %%  D_{\charf{j_\vc{c'}{1}}} \check{B}^{c''}_n(j_{V_{c''}})
  %%\prod_{c\in C_{1-}} B^c_n(j_{V_c})
  %%\prod_{c\in C^2_2\setminus\cbr{c',c''}} \check{B}^c_n(j_{V_c})\bigg]
  \nn\\=&
  E\bsbr{ \cali_n^{(0)} }
  + \sum_{\cppr\in \Csone(G)\sqcup (\Ctt(G) \setminus \cbr{\cpr}) } E\bsbr{ \cali_n^{(\cppr)} },
\end{align*}
where we write
\begin{align}
  \cali_n^{(0)} &=
  n^{-1} \sum_{j\in[n]^V} \rbr{nD_{\charf{j_\vc{\cpr}{1}}} A_n(j)}\,
  \beta_n^{\cpr}(j_{V_{\cpr}})\, I_1(\charf{j_\vc{\cpr}{I(\cpr)}})  
  %\prod_{c\in C\setminus\checc} B^c_n(j_{V_c})
  \prod_{C\in \Comp(G)\setminus\cbr{\cpr}} \check{B}^C_n(j_{V_C})\nn
  %\prod_{c\in C_{1-}} B^c_n(j_{V_c})
  %\prod_{c\in C^2_2\setminus\cbr{c'}} \check{B}^c_n(j_{V_c})
  %\\\text{and}\nn
  \\
  \cali_n^{(\cppr)} &=
  \sum_{j\in[n]^V} A_n(j)\,
  \beta_n^{\cpr}(j_{V_{\cpr}})\, I_1(\charf{j_\vc{\cpr}{I(\cpr)}})\,
  \Brbr{D_{\charf{j_\vc{\cpr}{1}}} B^{\cppr}_n(j_{V_{\cppr}})}
  \prod_{C\in \Comp(G)\setminus\cbr{\cpr,\cppr}}  \check{B}^C_n(j_{V_C})
  %\prod_{c\in C_0\sqcup (C^1\setminus\cbr{c''}) } B^c_n(j_{V_c})
  %\prod_{c\in C^2_2\setminus\cbr{c'}} \check{B}^c_n(j_{V_c})
  \label{211217.1858}
  %\\&\text{for $c''\in C^1$}
  %\nn\\
  %\cali_n^{(c'')} &=
  %\sum_{j\in[n]^V} A_n(j)\,
  %\beta_n^{\,c'}(j_{V_{c'}})\, I_1(\charf{j_\vc{c'}{I(c')}})\,
  %\Brbr{D_{\charf{j_\vc{c'}{1}}} \check{B}^{c''}_n(j_{V_{c''}})}
  %\prod_{c\in C_{1-}} B^c_n(j_{V_c})
  %\prod_{c\in C^2_2\setminus\cbr{c',c''}} \check{B}^c_n(j_{V_c})
  %\\&\text{for $c''\in C^2_2\setminus\cbr{c'}$.}\nn
\end{align}
for $\cppr\in \Csone(G)\sqcup(\Ctt(G)\setminus\cbr{\cpr})$.

\vspssm
(ii-i) First we consider $(\cali_n^{(0)})_n$.
Let $\alpha^{(0)}=-1$ and 
define $C^{(0)}$, $G^{(0)}$ and 
$A^{(0)}=\brbr{A^{(0)}_n(j)}_{j\in[n]^V,n\in\bbN}$ by
  \begin{alignat*}{3}
  C^{(0)} &= \babr{\vc{\cpr}{1}}_{-1} \cpr,&\quad
  G^{(0)}&= C^{(0)} \vee \Brbr{\mathop\vee_{C\in \Comp(G)\setminus\cbr{\cpr}} C}&\tand
  A^{(0)}_n(j)&
  =nD_{\charf{j_\vc{\cpr}{1}}}A_n(j).
  %A^{(0)}&=
  %\Brbr{A^{(0)}_n(j)}_{j\in[n]^V,n\in\bbN}
  %=\Brbr{ nD_{\charf{j_\vc{c'}{1}}}A_n(j)}_{j\in[n]^V,n\in\bbN}.
\end{alignat*}
We can see that $A^{(0)}\in\cala(V)$.
We have
$\Comp(G^{(0)}) = (\Comp(G) \setminus \cbr{\cpr}) \sqcup \cbr{C^{(0)}}$ 
and $s(C^{(0)})=1$,
which imply that $G^{(0)}$ satisfies Assumption \ref{211203.1620}.
Notice that
\begin{align*}
  \check{B}^{C^{(0)}}_n(j_{V_{C^{(0)}}})=
  \beta_n^{\cpr}(j_{V_{\cpr}})\, I_1(\charf{j_\vc{\cpr}{I(\cpr)}}).
\end{align*}

We have
$\Cqzero(G^{(0)})=\Cqzero(G)$ and
%$C^{0}(G^{(0)})=C^0(G)$, 
%$C^{2}_{0}(G^{(0)})=C^{2}_{0}(G)$. 
$\Csone(G^{(0)})=\Csone(G) \sqcup \cbr{C^{(0)}}$.
%$C^{2}(G^{(0)})=C^2\setminus \cbr{c'}$ and
%$s_c\leq2$ for all $c\in C^*$,
The set $\Ctt(G^{(0)})$ is decomposed as
\begin{alignat*}{3}
\Ctto(G^{(0)}) &= \Ctto(G) \setminus \cbr{\cpr} &\tand
\Cttt(G^{(0)}) &= \Cttt(G) &
&\tif
\cpr\in \Ctto(G)\\
%if $c'\in \tensorc=\Cttt$, then
\Ctto(G^{(0)}) &= \Ctto(G) &\tand
\Cttt(G^{(0)}) &= \Cttt(G) \setminus \cbr{\cpr}\quad&
&\tif
\cpr\in \Cttt(G)
\end{alignat*}
In any case,
%$C^{2}_{2,1}(G^{(0)}) = C^{2}_{2,1}(G) \setminus \cbr{c'}$,
%$C^{2}_{2,2+}(G^{(0)}) = C^{2}_{2,2+}(G) \setminus \cbr{c'}$ and
%{\Ctto(G^*)\sqcup\Cttt(G^*)=}
$\Ctt(G^{(0)}) = \Ctt(G) \setminus \cbr{\cpr}$ and
$\babs{\Ctt(G^{(0)})}=d-1$.
By (\ref{211127.1630}) we have 
\begin{align*}
  \cali_n^{(0)}=
  \funcSec(\alpha^{(0)}, G^{(0)}, \Cttt(G^{(0)}), A^{(0)}, \bbone).
  %\cali_n(\alpha^*, m, \edgeWt, \vertWt^*, A^*, \bbf, \widetilde C^{2,*}_{2,2+}).
\end{align*}
Hence we apply the assumption of the induction to this functional to obtain
\begin{align*}
  E\sbr{\cali_n^{(0)}}=
  \olog(n^{e_2(\alpha^{(0)}, G^{(0)}, \Cttt(G^{(0)}) )}).
  %\tilde O(n^{e_2(\alpha^*, G^*, \widetilde C^{2,*}_{2,2+})}).
\end{align*}

%With the notation
%$\check C^{2,*}_2:=C^{2}_{2,1}(G^*)\sqcup C^{2}_{2,2+}(G^*)=C^{2}_{2}(G^*)$,
The exponent 
$e_2(\alpha^{(0)}, G^{(0)}, \Cttt(G^{(0)}))$ 
is bounded as follows:
\begin{align*}
  e_2(\alpha^{(0)}, G^{(0)}, \Cttt(G^{(0)}))
  %=& \alpha^* + \sum_{c\in C^*\setminus\widetilde C^{2,*}_{2,2+}} e_2(c)
  %+ \sum_{c\in\widetilde C^{2,*}_{2,2+}} \widetilde{e}_2(c)
  %\nn\\
  =& \alpha^{(0)}
  + \sum_{C\in \Comp(G^{(0)})\setminus \Ctt(G^{(0)})} e_2(C)
  + \sum_{C\in \Ctt(G^{(0)}) } e^-_2(I(C))
  \nn\\=& 
  -1 
  + \sum_{C\in \Cqzero(G)\sqcup \Csone(G) \sqcup \ncbr{C^{(0)}}} e_2(C)
  + \sum_{C\in  \Ctt(G)\setminus \cbr{\cpr}} e^-_2(I(C))
  \nn%\label{211212.1252}
  \\=&
  -1 + \sum_{C\in \Cqzero(G)\sqcup \Csone(G)} e_2(C) + (1-I(C^{(0)})) 
  + \sum_{C\in \Ctt(G)} e^-_2(I(C)) - e^-_2(I(\cpr))
  \nn\\\leq& 
  \sum_{C\in \Cqzero(G)\sqcup \Csone(G)} e_2(C) 
  + \sum_{C\in  \Ctt(G)} e^-_2(I(C))
  =e_2(0, G, \Cttt(G)).
\end{align*}
%where for readability we wrote $\alpha^{*}, G^{*}$ and $C^{*}$
%for $\alpha^{(0)}$ $G^{(0)}$ and $C^{(0)}$, respectively.
We used
(\ref{211202.2516}) and $I(\cpr)=I(C^{(0)})$ 
at the last inequality.
%and used (\ref{211205.1301})
%at the first and last equality.
Therefore,
\begin{align*}
  E\bsbr{\cali_n^{(0)}}=
  \olog(n^{e_2(0, G, \Cttt(G))}).
\end{align*}

\vspsm
(ii-ii) Consider $(\cali_n^{(\cppr)})_n$ for $\cppr\in  \Ctt(G) \setminus \cbr{\cpr}$.
We have
\begin{align*}
  D_{\charf{j_\vc{\cpr}{1}}} \check{B}^{\cppr}_n(j_{V_{\cppr}})
  =&
  \rbr{\beta_n\brbr{j_\vc{\cpr}{1},j_{\vc{\cppr}{1}}} I_1(\charf{j_{\vc{\cppr}{I(\cppr)}}})
  + \beta_n\brbr{j_\vc{\cpr}{1},j_{\vc{\cppr}{I(\cppr)}}} I_1(\charf{j_{\vc{\cppr}{1}}})}
  \beta_n^{\cppr}(j_{V_{\cppr}})
  %\bbcbr{\prod_{\vpr<\vppr\in V_{c''}}\beta_{j_{\vpr},j_{\vppr}}^{\edgeWt_{\vpr,\vppr}}},
  \nn\\
  =&
  \begin{cases}
    \rbr{\beta_n\brbr{j_\vc{\cpr}{1},j_{\vc{\cppr}{1}}} I_1(\charf{j_{\vc{\cppr}{I(\cppr)}}})
  + \beta_n\brbr{j_\vc{\cpr}{1},j_{\vc{\cppr}{I(\cppr)}}} I_1(\charf{j_{\vc{\cppr}{1}}})}
  \\\hspace{120pt}\times
  \beta_n\brbr{j_{\vc{\cppr}{1}},j_{\vc{\cppr}{2}}}\cdots
  \beta_n\brbr{j_{\vc{\cppr}{I(\cppr)-1}},j_{\vc{\cppr}{I(\cppr)}}}  
  \quad&\tifsm \cppr\in \Cttt(G) \\
  2I_1(1_{j_{v^{\cppr}_1}})\beta_n\brbr{j_{v^{\cpr}_1},j_{v^{\cppr}_1}}
  \quad&\tifsm \cppr\in \Ctto(G)
  \end{cases}
\end{align*}
The functional $\cali_n^{(\cppr)}$ expressed by (\ref{211217.1858}) 
is decomposed as
%\begin{align}
$\cali_n^{(\cppr)} = \cali_n^{(\cppr,1)} + \cali_n^{(\cppr,2)},$
%\end{align}
where
\begin{align*}
  \cali_n^{(\cppr,1)} = &
  \sum_{j\in[n]^V}
    A_n(j)\;
    I_1(\charf{j_\vc{\cpr}{I(\cpr)}}) \;
    \beta_n^{\cpr}(j_{V_{\cpr}})\;
    %\bbcbr{\prod_{\substack{\vpr<\vppr\in V_{c'}}}
    % \beta_{j_{\vpr},j_{\vppr}}^{\edgeWt_{\vpr,\vppr}}}
    \beta_n\brbr{j_\vc{\cpr}{1},j_{\vc{\cppr}{1}}}\;
    I_1(\charf{j_{\vc{\cppr}{I(\cppr)}}})\;
    \beta_n^{\cppr}(j_{V_{\cppr}})\;
    %\bbcbr{\prod_{\substack{\vpr<\vppr\in V_{c''}}}
    %  \beta_{j_{\vpr},j_{\vppr}}^{\edgeWt_{\vpr,\vppr}}}
  \prod_{C\in \Comp(G)\setminus\cbr{\cpr,\cppr}} \check{B}^C_n(j_{V_C})\\
  \nn\\
  %&\hsp\hsp\times
  %\prod_{c\in C_{1-}(G)} B^c_n(j_{V_c})
  %%\prod_{c\in C_0\sqcup C^1} B^c_n(j_{V_c})
  %\prod_{c\in C^2_2(G)\setminus\cbr{c',c''}} \check{B}^c_n(j_{V_c})\\
  \cali_n^{(\cppr,2)} = &
  \sum_{j\in[n]^V} A_n(j)\;
    I_1(\charf{j_\vc{\cpr}{I(\cpr)}}) \;
    \beta_n^{\cpr}(j_{V_{\cpr}})\;
    %\bbcbr{\prod_{\substack{\vpr<\vppr\in V_{c'}}}
    %  \beta_{j_{\vpr},j_{\vppr}}^{\edgeWt_{\vpr,\vppr}}}
    \beta_n\brbr{j_\vc{\cpr}{1},j_{\vc{\cppr}{I(\cppr)}}}\;
    I_1(\charf{j_{\vc{\cppr}{1}}})\;
    \beta_n^{\cppr}(j_{V_{\cppr}})\;
    %\bbcbr{\prod_{\substack{\vpr<\vppr\in V_{c''}}}
    %  \beta_{j_{\vpr},j_{\vppr}}^{\edgeWt_{\vpr,\vppr}}}
  \prod_{C\in \Comp(G)\setminus\cbr{\cpr,\cppr}} \check{B}^C_n(j_{V_C})
  %\nn\\
  %&\hsp\hsp\times
  %\prod_{c\in C_{1-}(G)} B^c_n(j_{V_c})
  %%\prod_{c\in C_0(G)\sqcup C^1(G)} B^c_n(j_{V_c})
  %  \prod_{c\in C^2_2(G)\setminus\cbr{c',c''}} \check{B}^c_n(j_{V_c})
\end{align*}

We only consider $\cali_n^{(\cppr,1)}$ here.
By the product formula, $\cali_n^{(\cppr,1)}$ is decomposed into the sum
$\cali_n^{(\cppr,1)}=\cali_n^{(\cppr,1,0)}+\cali_n^{(\cppr,1,2)}$, 
where
\begin{align*}
  \cali_n^{(\cppr,1,0)} = &
  \sum_{j\in[n]^V}
    A_n(j)\;
    \beta_n\brbr{j_\vc{\cpr}{I({\cpr})},j_{\vc{\cppr}{I(\cppr)}}} 
    \beta_n^{\cpr}(j_{V_{\cpr}})\;
    %\bbcbr{\prod_{\substack{\vpr<\vppr\in V_{c'}}}
    %  \beta_{j_{\vpr},j_{\vppr}}^{\edgeWt_{\vpr,\vppr}}}
    \beta_n\brbr{j_\vc{\cpr}{1},j_{\vc{\cppr}{1}}} 
    \beta_n^{\cppr}(j_{V_{\cppr}})\;
    %\bbcbr{\prod_{\substack{\vpr<\vppr\in V_{c''}}}
    %\beta_{j_{\vpr},j_{\vppr}}^{\edgeWt_{\vpr,\vppr}}}
  \prod_{C\in \Comp(G)\setminus\cbr{\cpr,\cppr}} \check{B}^C_n(j_{V_C})
  \nn\\
  %&\hspace{120pt}\times
  %\prod_{c\in C_{1-}(G)} B^c_n(j_{V_c})
  %%\prod_{c\in C_0\sqcup C^1} B^c_n(j_{V_c})
  %\prod_{c\in C^2_2(G)\setminus\cbr{c',c''}} \check{B}^c_n(j_{V_c})
  %\\
  \cali_n^{(\cppr,1,2)} = &
  \sum_{j\in[n]^V}
    A_n(j)\;
    \delta^2(\charf{j_\vc{\cpr}{I(\cpr)}} \otimes \charf{j_{\vc{\cppr}{I(\cppr)}}})
    \beta_n^{\cpr}(j_{V_{\cpr}})\;
    %\bbcbr{\prod_{\substack{\vpr<\vppr\in V_{c'}}}
    %  \beta_{j_{\vpr},j_{\vppr}}^{\edgeWt_{\vpr,\vppr}}}
    \beta_n\brbr{j_\vc{\cpr}{1},j_{\vc{\cppr}{1}}}\;
    \beta_n^{\cppr}(j_{V_{\cppr}})\;
    %\bbcbr{\prod_{\substack{\vpr<\vppr\in V_{c''}}}
    %  \beta_{j_{\vpr},j_{\vppr}}^{\edgeWt_{\vpr,\vppr}}}
  \prod_{C\in \Comp(G)\setminus\cbr{\cpr,\cppr}} \check{B}^C_n(j_{V_C})
  \nn
  %\\
  %&\hspace{120pt}\times
  %\prod_{c\in C_{1-}(G)} B^c_n(j_{V_c})
  %%\prod_{c\in C_0\sqcup C^1} B^c_n(j_{V_c})
  %\prod_{c\in C^2_2(G)\setminus\cbr{c',c''}} \check{B}^c_n(j_{V_c})
\end{align*}
We define $C^{(\cppr,1,0)}$ and $G^{(\cppr,1,0)}$ by
\begin{align*}
  C^{(\cppr,1,0)}= 
  \dabr{\vc{\cpr}{1}, \vc{\cppr}{1}}_{1}
  \rbr{\dabr{\vc{\cpr}{I\rbr{\cpr}}, \vc{\cppr}{I\rbr{\cppr}}}_{1} (\cpr\vee \cppr)}
  \tand
  &G^{(\cppr,1,0)}=
  C^{(\cppr,1,0)} \vee \Brbr{\mathop\vee_{C\in \Comp(G)\setminus\cbr{\cpr,\cppr}} C}.
\end{align*}
If $\cpr,\cppr\in \Ctto(G)$,
we can read 
$C^{(\cppr,1,0)} =\dabr{\vc{\cpr}{1}, \vc{\cppr}{1}}_{2} (\cpr\vee \cppr)$.
Notice that
\begin{align*}
  \check{B}^{C^{(\cppr,1,0)}}_n(j_{V\nrbr{C^{(\cppr,1,0)}}})=
  %B^{c^{(c'',1,0)}}_n(j_{V_{c^{(c'',1,0)}}})=
  \beta_n\brbr{j_\vc{\cpr}{I({\cpr})},j_{\vc{\cppr}{I(\cppr)}}}\;
  \beta_n^{\cpr}(j_{V_{\cpr}})\;
  \beta_n\brbr{j_\vc{\cpr}{1},j_{\vc{\cppr}{1}}}\;
  \beta_n^{\cppr}(j_{V_{\cppr}}).
\end{align*}
We may write $C^{*}$ and $G^{*}$ for $C^{(\cppr,1,0)}$ and $G^{(\cppr,1,0)}$
untill the end of (ii-ii).
We have $\barq(C^{*})=0$ and $s(C^{*})=2$, and 
we can observe that the component $C^{*}$
is a cycle graph of 
$I\brbr{C^{*}}=I\rbr{\cpr}+I\rbr{\cppr}(\geq2)$ vertices 
in the sence of Definition \ref{220322.1900}.

The set of the components of $G^{*}$ is written as
$\Comp(G^{*})=(\Comp(G)\setminus\cbr{\cpr,\cppr})\sqcup\bcbr{C^{*}}$, and
the weighted graph $G^{*}$ satisfies Assumption \ref{211203.1620}.
We have
$\Cszero(G^{*})=\Cszero(G)$, 
$\Csone(G^{*}) = \Csone(G)$,
$\Ctz(G^{*})=\Ctz(G)\sqcup \bcbr{C^{*}}$, 
%\redb{Since 
%$C^{2}_{2,1}(G^{*}) = C^{2}_{2,1}(G) \setminus \cbr{c',c''}$ and
%$C^{2}_{2,2+}(G^{*}) = C^{2}_{2,2+}(G) \setminus \cbr{c',c''}$,
%we  have}
$%{\Ctto(G^*)\sqcup\Cttt(G^*)=}
\Ctt(G^{*}) = \Ctt(G) \setminus \cbr{\cpr,\cppr}$ and
$\babs{\Ctt(G^{*})}=d-2\leq d-1$.
Then, we have 
\begin{align*}
  \cali_n^{(\cppr,1,0)}=
  \funcSec(0, G^{*}, \Cttt(G^{*}), A, \bbone)  
  \tand
  E\sbr{\cali_n^{(\cppr,1,0)}}=
  \bar O(n^{e_2(0, G^{*}, \Cttt(G^{*}) )}).
  %\bar O(n^{e_2(0, G^{(c'',1,0)}, C^{2}_{2,2+}(G^{(c'',1,0)}) )}).
\end{align*}

Since 
\begin{align}
  %e_2({c^*})=
  e_2^+(I\rbr{C^*})
  &=e_2^+(I\rbr{\cpr}+I\rbr{\cppr})
  =2-(I\rbr{\cpr}+I\rbr{\cppr}) + 2 \phi_H\brbr{I\rbr{\cpr}+I\rbr{\cppr}}
  \nn\\
  &\leq(1-I\rbr{\cpr})+(1-I\rbr{\cppr}) + \phi_H(2I\rbr{\cpr})+\phi_H(2I\rbr{\cppr})
  = e^-_2(I\rbr{\cpr}) + e^-_2(I\rbr{\cppr}) 
\label{220327.1641}
\end{align}
by Lemma \ref{211006.1220} (d),
%(i.e. $2\phi_H(I_{c'}+I_{c''}) \leq \phi_H(2I_{c'})+\phi_H(2I_{c''})$),
%On the other hand, 
the exponent 
$e_2(0, G^{*}, \Cttt(G^{*}) )$ 
%$e_2(0, G^{(c'',1,0)}, C^{2}_{2,2+}(G^{(c'',1,0)}) )$ 
is bounded as follows:
\begin{align*}
  e_2(0, G^*, \Cttt(G^*) )
  =& 
  0
  + \sum_{C\in \Comp(G^*)\setminus \Ctt(G^*)} e_2(C)
  + \sum_{C\in \Ctt(G^*)} e^-_2(I(C))
  \nn\\=& 
  \sum_{C\in \Comp(G)\setminus \Ctt(G)} e_2(C) + e_2^+(I\rbr{C^*})%e_2(c^*)
  + \sum_{C\in \Ctt(G) \setminus \cbr{\cpr,\cppr}} e^-_2(I(C))
  %\label{211217.2126}
  \\\leq& 
  \sum_{C\in \Comp(G)\setminus \Ctt(G)} e_2(C) 
  + \sum_{C\in \Ctt(G) } e^-_2(I(C))
  \;=\;
  e_2(0, G, \Cttt(G))
  %\label{211217.2125}
\end{align*}
%We used (\ref{211205.1301})
%at the first and last equality.
%$e^+_2(I)\geq e^-_2(I)$ for $I\geq2$ (\ref{211202.2516}), respectively.

\vspssm
Similarly, 
we define $C^{(\cppr,1,2)}$ and $G^{(\cppr,1,2)}$ by
\begin{align*}
  C^{(\cppr,1,2)}= 
  \dabr{\vc{\cpr}{1}, \vc{\cppr}{1}}_{1}(\cpr\vee\cppr) 
  \tand
  &G^{(\cppr,1,2)}=
  C^{(\cppr,1,2)} \vee \Brbr{\mathop\vee_{C\in \Comp(G)\setminus\cbr{\cpr,\cppr}} C}.
\end{align*}
Notice that
\begin{align*}
  \check{B}^{C^{(\cppr,1,2)}}_n(j_{V\nrbr{C^{(\cppr,1,2)}}})=
  %B^{c^{(c'',1,0)}}_n(j_{V_{c^{(c'',1,0)}}})=
  \delta^2(\charf{j_\vc{\cpr}{I(\cpr)}} \otimes \charf{j_{\vc{\cppr}{I(\cppr)}}})\;
  %\beta_n\brbr{j_\vc{c'}{I({c'})},j_{\vc{c''}{I(c'')}}}\;
  \beta_n^{\cpr}(j_{V_{\cpr}})\;
  \beta_n\brbr{j_\vc{\cpr}{1},j_{\vc{\cppr}{1}}}\;
  \beta_n^{\cppr}(j_{V_{\cppr}}).
\end{align*}
We may write $C^{**}$ and $G^{**}$ for $C^{(\cppr,1,2)}$ and $G^{(\cppr,1,2)}$
untill the end of (ii-ii).
We have $\barq(C^{**})=2$ and $s(C^{**})=2$, and 
we can observe that the component $C^{**}$
is a path graph with weighted ends of 
$I(C^{**})=I\rbr{\cpr}+I\rbr{\cppr}(\geq2)$ vertices % with 
%$q^{**}_v=1$ for $v=\vc{c'}{I_{c'}},\vc{c''}{I_{c''}}$
%and $q^{**}_v=0$ for the other vertices $v$.
%We can observe that the component $c^{*}$
%is a path graph of $I_{c^*}=I_{c'}+1(\geq2)$ vertices 
and satisfies Assumption \ref{220317.1320}.

Since
$\Comp(G^{**})=(\Comp(G)\setminus\cbr{\cpr,\cppr})\sqcup\bcbr{C^{**}}$,
the weighted graph $G^{**}$ satisfies Assumption \ref{211203.1620}.
We have
$\Cszero(G^{**})=\Cszero(G)$, 
$\Csone(G^{**}) = \Csone(G)$,
%$C^{2}(G^*)=(C^{2}\setminus \cbr{c', c''}) \sqcup \cbr{c^*}$,
$\Ctz(G^{**})=\Ctz(G)$,
%$C^{2}_{2,1}(G^{**}) = C^{2}_{2,1}(G) \setminus \cbr{c',c''}$ and
%$C^{2}_{2,2+}(G^{**}) = (C^{2}_{2,2+}(G) \setminus \cbr{c',c''}) \sqcup \cbr{c^{**}}$.
%Notice that
$\Ctt(G^{**}) = (\Ctt(G) \setminus \cbr{\cpr,\cppr}) \sqcup \cbr{C^{**}}$ and
$\babs{\Ctt(G^{**})}=d-1$.
Then, we have 
\begin{align*}
  \cali_n^{(\cppr,1,2)}=
  \funcSec(0, G^{**}, \Cttt(G^{**}), A, \bbone)  
  \tand
  E\sbr{\cali_n^{(\cppr,1,2)}}=
  \bar O(n^{e_2(0, G^{**}, \Cttt(G^{**}))})
\end{align*}
by the assumption of induction.

On the other hand, the exponent 
$e_2(0, G^{**}, \Cttt(G^{**}))$ 
is bounded as follows:
\begin{align}
  e_2(0, G^{**}, \Cttt(G^{**}))
  =& 
  %\alpha^* + \sum_{c\in C^*\setminus\widetilde C^{2,*}_{2,2+}} e_2(c)
  %+ \sum_{c\in\widetilde C^{2,*}_{2,2+}} \widetilde{e}_2(c)
  %\nn\\=&
  \sum_{C\in \Comp(G^{**})\setminus \Ctt(G^{**})} e_2(C)
  + \sum_{C\in \Ctt(G^{**})} e^-_2(I(C))
  \nn\\=& 
  \sum_{C\in \Comp(G)\setminus \Ctt(G)} e_2(C)
  + \sum_{C\in \Ctt(G)\setminus \cbr{\cpr,\cppr}} e^-_2(I(C))
  + e^-_2(I(C^{**}))
  \nn\\\leq& 
  \sum_{C\in \Comp(G)\setminus \Ctt(G)} e_2(C) 
  + \sum_{C\in \Ctt(G)\setminus \cbr{\cpr,\cppr}} e^-_2(I(C))
  + e^-_2(I\rbr{\cpr}) + e^-_2(I\rbr{\cppr}) 
  \label{211226.1650}
  \\%=&\; %e_2(0, G^*, C^{2}_{2,2+}(G^*) )
  =& e_2(0, G, \Cttt(G))
  \nn
\end{align}
At the inequality 
(\ref{211226.1650}),
we used
$e^-_2(I\rbr{C^{**}})\leq e^+_2(I\rbr{C^{**}})$ by (\ref{211202.2516}) and 
the inequality (\ref{220327.1641}) with 
$I\rbr{C^{**}} = I\rbr{\cpr}+I\rbr{\cppr} = I\rbr{C^*}$.
  
\vspssm
Therefore,
\begin{align*}
  E\bsbr{\cali_n^{(\cppr,1)}}=
  E\sbr{\cali_n^{(\cppr,1,0)}} + E\sbr{\cali_n^{(\cppr,1,2)}}=
  \bar O(n^{e_2(0, G, \Cttt(G))}).
\end{align*}
This argument works for $\cali_n^{(\cppr,2)}$ with slight modification and 
we shall have
$E\bsbr{\cali_n^{(\cppr,2)}}=
\bar O(n^{e_2(0, G, \Cttt(G))})$
as well.

%%\newpage
\vspsm
(ii-iii) Consider $(\cali_n^{(\cppr)})_n$ for $\cppr\in  \Csone(G)$.
We define $C^{(\cppr)}$ and $G^{(\cppr)}$ by
\begin{align*}
  C^{(\cppr)}= 
  \dabr{\vc{\cpr}{1}, \vc{\cppr}{1}}_{1}(\cpr\vee \cppr) 
  \tand
  &G^{(\cppr)}=
  C^{(\cppr)} \vee \Brbr{\mathop\vee_{C\in \Comp(G)\setminus\cbr{\cpr,\cppr}} C}
\end{align*}
and we have
\begin{align*}
  \beta_n^{\cpr}(j_{V_{\cpr}})\, I_1(\charf{j_\vc{\cpr}{I(\cpr)}})\,
  \Brbr{D_{\charf{j_\vc{\cpr}{1}}} B^{\cppr}_n(j_{V_{\cppr}})}
  =
  %B^{c^{(c'',1,0)}}_n(j_{V_{c^{(c'',1,0)}}})=
  \beta_n^{\cpr}(j_{V_{\cpr}})\;
  I_1(\charf{j_\vc{\cpr}{I(\cpr)}})\;
  \beta_n\brbr{j_\vc{\cpr}{1},j_{v_{\cppr}}}\;
  \beta_n^{\cppr}(j_{V_{\cppr}})
  =
  \check{B}^{C^{(\cppr)}}_n(j_{V\nrbr{C^{(\cppr)}}})
\end{align*}
for $j\in[n]^V$,
where we denote the only vertex  $v\in V(\cppr)$ such that $\vertwtlow(v)=1$ by $v_{\cppr}$.

We may write $C^{*}$ and $G^{*}$ for $C^{(\cppr)}$ and $G^{(\cppr)}$
untill the end of (ii-iii).
We have %$\barq(c^{*})=1$ and 
$s(C^{*}) = (2+1)+0-2(2-1) = 1$ by (\ref{211225.2120}).
Since 
$\Comp(G^*)=(\Comp(G)\setminus\cbr{\cpr,\cppr})\sqcup\bcbr{C^{*}}$,
the weighted graph $G^*$ satisfies Assumption \ref{211203.1620}.
We have
$\Cszero(G^*)=\Cszero(G)$, 
$\Csone(G^*) = (\Csone(G)\setminus \cbr{\cppr}) \sqcup \cbr{C^*}$,
%$C^{2}(G^*) = C^{2}\setminus \cbr{c'}$,
$\Ctz(G^*)=\Ctz(G)$,
%$C^{2}_{2,1}(G^*) = C^{2}_{2,1}(G) \setminus \cbr{c'}$ and
%$C^{2}_{2,2+}(G^*) = C^{2}_{2,2+} \setminus \cbr{c'}$. Notice that
$\Ctt(G^*) =\Ctt(G)\setminus \cbr{\cpr}$
and
$\babs{\Ctt(G^*)}=d-1$.
Then, we have 
\begin{align*}
  \cali_n^{(\cppr)}=
  \funcSec(0, G^*, \Cttt(G^*), A, \bbone)
  \tand 
  E\sbr{\cali_n^{(\cppr)}}=
  \bar O(n^{e_2(0, G^*, \Cttt(G^*) )}).
\end{align*}
Since we have
\begin{align*}
  e_2(C^*)
  =(1-(I\rbr{\cpr}+I\rbr{\cppr}))
  =(2-I\rbr{\cpr}-2) + (1-I\rbr{\cppr})
  \leq
  e^-_2(I\rbr{\cpr}) + e_2(\cppr)
\end{align*}
by (\ref{211202.2516}),
the exponent 
$e_2(0, G^*, \Cttt(G^*))$ 
is bounded as follows
\begin{align}
  e_2(0, G^*, \Cttt(G^*))
  =& 
  \sum_{C\in \Comp(G^*) \setminus \Ctt(G^*)} e_2(C)
  + \sum_{C\in\Ctt(G^*)} e^-_2(I(C))
  \nn\\=& 
  \sum_{C\in \Comp(G)\setminus (\Ctt(G)\sqcup\cbr{\cppr})} e_2(C)
  + e_2(C^*)
  + \sum_{C\in \Ctt(G) \setminus\cbr{\cpr}} e^-_2(I(C))
  \nn
  \\\leq&
  \sum_{C\in \Comp(G)\setminus \Ctt(G)} e_2(C)
  + \sum_{C\in \Ctt(G)} e^-_2(I(C))
  \nn%\label{211127.2612}
  %\\=&
  %\sum_{c\in C\setminus C^{2}_2} e_2(c)
  %+ \sum_{c\in C^{2}_2} e^-_2(I_c)
  %\label{211127.2613}
  =e_2(0, G, \Cttt(G)).
\end{align}
Therefore, for $\cppr\in\Csone(G)$ we obtain
\begin{align*}
  E\bsbr{\cali_n^{(\cppr)}}=
  \bar O(n^{e_2(0, G, \Cttt(G)) }).
\end{align*}

As a result, we obtain
\begin{align*}%\label{211127.2801}
  \abs{E\sbr{\cali_n}} \leq&
  \abs{E\bsbr{ \cali_n^{(0)}}} +
  \sum_{\cppr\in \Csone(G) } \abs{E\bsbr{ \cali_n^{(\cppr)}}} + 
  \sum_{\cppr\in \Ctt(G) \setminus \cbr{\cpr} } 
   \rbr{\Babs{E\bsbr{ \cali_n^{(\cppr,1)} }} + \Babs{E\bsbr{ \cali_n^{(\cppr,2)} }}}
  = \bar O(n^{e_2(0, G, \Cttt(G))}).
\end{align*}
By induction
we have obtained the estimate (\ref{211226.1438})
for weighted graphs satisfying Assumption \ref{211203.1620}.
%under the assumption $\tensorc=\Cttt$.

For general $\bbf\in\calf(V)$, %$\kerf{v}{j}$,
we modify $A_n(j)$ by incorporating the factors 
$\abr{\kerfvn{v'}{j'}, \kerfvn{v''}{j''}}/\beta_n\rbr{j',j''}$ into it
thanks to Lemma \ref{201229.1452} (2).
\end{proof}

%\contifrom{1218.1430}
\begin{proposition}\label{210413.0100}%{210108.1642}
  Let $\alpha\in\bbR$.
  Suppose that $G=(V,\edgeWt,\vertWt)$ is a weighted graph satisfying Assumption \ref{211203.1620}, 
  $A\in\cala(V)$ and $\bbf\in\calf(V)$.
  For a subset $\tensorc$ of $\Cttt(G)$,
  \begin{align}\label{210412.1101}
    E[\cali_n] = \olog(n^{e_2(\alpha,G,\tensorc)})
  \end{align}
  as $n\to\infty$ for
  $\cali_n = \funcSec(\alpha,G,\tensorc,A,\bbf)$
  given by (\ref{211126.2643}).
\end{proposition}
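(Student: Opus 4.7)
The plan is to proceed by induction on the cardinality of $\nottenc=\Cttt(G)\setminus\tensorc$, reducing to the case $\tensorc=\Cttt(G)$ already handled by Lemma \ref{211226.1426}. As in that lemma, one first reduces to $\bbf=\bbone$ by absorbing the ratios $\abr{\kerfvn{v'}{j'},\kerfvn{v''}{j''}}/\beta_n(j',j'')$ into the coefficient $A$ through Lemma \ref{201229.1452}(2); the base case $\abs{\nottenc}=0$ is then precisely Lemma \ref{211226.1426}.

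For the inductive step, assume the conclusion for every weighted graph whose non-tensor set has cardinality strictly less than $\abs{\nottenc}$, and pick an arbitrary $\cpr\in\nottenc$. Since $\cpr\in\Cttt(G)$, Assumption \ref{211203.1620} forces $\cpr$ to be a path graph with weighted ends, so $\vertWt(v)=1$ only for $v\in\cbr{\vc{\cpr}{1},\vc{\cpr}{I(\cpr)}}$ and the interior vertices carry weight $0$. The elementary Skorohod identity
\begin{align*}
I_1(f)\,I_1(g)=\delta^2(f\otimes g)+\abr{f,g}_\calh,
\end{align*}
applied with $f=\charf{j_{\vc{\cpr}{1}}}$ and $g=\charf{j_{\vc{\cpr}{I(\cpr)}}}$, splits the factor at $\cpr$ as
\begin{align*}
B_n^{\cpr}(j_{V_\cpr})
=\check{B}_n^{\cpr}(j_{V_\cpr})+\beta_n^{\cpr}(j_{V_\cpr})\,\beta_n\brbr{j_{\vc{\cpr}{1}},j_{\vc{\cpr}{I(\cpr)}}}.
\end{align*}
This decomposes $\cali_n$ into the sum of two functionals of the form (\ref{211126.2643}), each with exactly $\abs{\nottenc}-1$ non-tensor components and thus eligible for the inductive hypothesis.

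The first summand has $G$ unchanged and $\tensorc$ enlarged to $\tensorc':=\tensorc\sqcup\cbr{\cpr}\subset\Cttt(G)$; since
\begin{align*}
e_2(\alpha,G,\tensorc')-e_2(\alpha,G,\tensorc)
=\expoT(\cpr)-e_2(\cpr)
=e_2^-(I(\cpr))-e_2^+(I(\cpr))\leq 0
\end{align*}
by (\ref{211202.2516}), the inductive bound gives the desired estimate. The second summand equals $\funcSec(\alpha,G^*,\tensorc,A,\bbone)$, where $G^*$ is obtained from $G$ by replacing the component $\cpr$ with $\cpr'':=\dabr{\vc{\cpr}{1},\vc{\cpr}{I(\cpr)}}_1\cpr$, i.e.\ the cycle graph on $V(\cpr)$ obtained by closing the path with the added edge. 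Then $(s(\cpr''),\barq(\cpr''))=(2,0)$ by Proposition \ref{220110.2555}, and $\cpr''$ is a cycle graph in the sense of Definition \ref{220322.1900} (with the double-edge convention when $I(\cpr)=2$), so $\cpr''\in\Ctz(G^*)$, $G^*$ still satisfies Assumption \ref{211203.1620}, and $\tensorc\subset\Cttt(G)\setminus\cbr{\cpr}=\Cttt(G^*)$. The inductive hypothesis therefore applies, and together with the identity
\begin{align*}
e_2(\cpr'')=e_2^+(I(\cpr''))=e_2^+(I(\cpr))=e_2(\cpr)
\end{align*}
it yields $e_2(\alpha,G^*,\tensorc)=e_2(\alpha,G,\tensorc)$, closing the induction.

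The main obstacle is the bookkeeping: one must verify that $G^*$ satisfies Assumption \ref{211203.1620}, that $\cpr''$ falls correctly into $\Ctz(G^*)$ in all cases $I(\cpr)\geq 2$, and that the partition $\Cszero,\Csone,\Ctz,\Ctto,\Cttt$ reorganizes in a tracked manner under the substitution, all of which are immediate from the definitions and (\ref{211225.2120}). The structural reason the scheme succeeds is the equality $e_2^+(I(\cpr''))=e_2^+(I(\cpr))$ — closing a path into a cycle preserves the vertex count — while absorbing $\cpr$ into $\tensorc$ incurs only the harmless gap $e_2^-(I(\cpr))-e_2^+(I(\cpr))\leq 0$ granted by (\ref{211202.2516}).
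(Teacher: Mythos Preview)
Your proof is correct and uses essentially the same idea as the paper. Both arguments rest on the product-formula identity $B_n^{\cpr}=\check{B}_n^{\cpr}+\check{B}_n^{\widehat{\cpr}}$ for each $\cpr\in\nottenc$ (where $\widehat{\cpr}$ is the cycle graph obtained by closing the path), together with the exponent identity $e_2(\widehat{\cpr})=e_2^+(I(\cpr))=e_2(\cpr)$ and the inequality $e_2^-(I(\cpr))\leq e_2^+(I(\cpr))$ from (\ref{211202.2516}). The only difference is organizational: the paper applies the splitting simultaneously to all components of $\nottenc$, obtaining a sum over $\pi\in\{0,2\}^{\nottenc}$ and then invoking Lemma~\ref{211226.1426} directly on each summand, whereas you peel off one component at a time and appeal to an inductive hypothesis. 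Unfolding your induction yields exactly the paper's $2^{\abs{\nottenc}}$ terms, so the two proofs are equivalent.
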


\begin{proof}
Without loss of generality, we can assume that $\alpha=0$.
As seen in the proof of Lemma \ref{211226.1426},
we may only consider the case where $\bbf=\bbone.$ %$\kerf{v}{j}=\charfn{j}$ for $v\in V$.
If $\tensorc=\Cttt(G)$,  
the estimate (\ref{210412.1101}) is already proved in Lemma \ref{211226.1426}.
Consider the general case
$\tensorc\subsetneq \Cttt(G)$.

Fix a weighted graph $G=(V,\edgeWt,\vertWt)$, 
$\tensorc\subsetneq \Cttt(G)$  %$\bbf$,
and $A\in\cala(V)$.
Recall that we write $\nottenc=\Cttt(G)\setminus\tensorc$.
For $C\in\Cttt(G)$,  let
$\hatc=\dabr{\vc{C}{1}, \vc{C}{I(C)}}_{1}(C)$.
We can observe that 
$s(\hatc)=2$, $\barq(\hatc)=0$, $I(\hatc)=I(C)\geq2$ and
$\hatc$
is a cycle graph %of $I\rbr{C}$ vertices 
in the sence of Definition \ref{220322.1900}.
Notice that 
\begin{align}\label{211226.1826}
  e_2(C)=e_2(\hatc)=e_2^+(I(C))
  \quad\tfor C\in\Cttt(G).
\end{align}

By the product formula,
%{$B^C_n(j_{V_C})$ for $C\in\nottenc\subset\Cttt(G)$}
$B^C_n(j)$ for $C\in\Cttt(G)$ and $j\in[n]^{V(C)}$
can be decomposed as 
%\begin{align*}
%  B^C_n(j_{V_C}) = \check{B}^{\hatc}_n(j_{V_C}) + \check{B}^C_n(j_{V_C}),
%\end{align*}
\begin{align*}
  B^C_n(j) = \check{B}^{\hatc}_n(j) + \check{B}^C_n(j),
\end{align*}
and we have
\begin{align*}
  \cali_n =&
  \sum_{j\in[n]^V} A_n(j)
  \prod_{C\in \nottenc} B^C_n(j_{V_C})
  \prod_{C\in\Comp(G)\setminus \nottenc} \check{B}^C_n(j_{V_C})
  %\prod_{c\in C_0\sqcup C^1\sqcup\nottenc} B^c_n(j_{V_c})
  %\prod_{c\in \twochaos} \check{B}^c_n(j_{V_c})
  \nn\\=&
  \sum_{\pi\in\cbr{0,2}^{\nottenc}}
  \sum_{j\in[n]^V} A_n(j)
  \prod_{C\in \pi^{-1}(0)} \check{B}^{\hatc}_n(j_{V_C})
  \prod_{C\in (\Comp(G)\setminus \nottenc)\sqcup\pi^{-1}(2)} 
  \check{B}^C_n(j_{V_C})
  \nn%\label{220327.2153}
  %\prod_{c\in C_0\sqcup C^1} B^c_n(j_{V_c})
  %\prod_{c\in \pi^{-1}(\cbr{0})} B^{\hatc}_n(j_{V_{\hatc}})
  %\prod_{c\in \twochaos\sqcup\pi^{-1}[2]} \check{B}^c_n(j_{V_c})
  \\=:&
  \sum_{\pi\in\cbr{0,2}^{\nottenc}}
  \cali_n^{(\pi)},
  \nn
\end{align*}
where the summation runs through all the mappings
$\pi$ from $\nottenc$ to $\cbr{0,2}$ and
\begin{align}
  \cali_n^{(\pi)}=
  \sum_{j\in[n]^V} A_n(j)
  \prod_{C\in \pi^{-1}(0)} \check{B}^{\hatc}_n(j_{V_C})
  \prod_{C\in \Comp(G)\setminus\pi^{-1}(0)} 
  \check{B}^C_n(j_{V_C}).
  \label{220327.2153}
\end{align}
%Notice that
%
Define $G^{(\pi)}$ by
\begin{align*}
  G^{(\pi)}=
  \Brbr{\mathop\vee_{C\in \pi^{-1}(0)} \hatc}
  \vee \Brbr{\mathop\vee_{C\in\Comp(G)\setminus\pi^{-1}(0)} C}.
\end{align*}
Since $\pi^{-1}(0)\subset\Cttt(G)$
and $\hatc$ for $C\in\Cttt(G)$ is a cycle graph,
%Since $\hatc$ for $C\in\pi^{-1}(0)$ is a cycle graph,
$G^{(\pi)}$ satisfies Assumption \ref{211203.1620}.
By the definition of $\funcSec$ (\ref{211127.1630}), %with 
%$(C(G)\setminus \nottenc)\sqcup\pi^{-1}(2)
%=C(G)\setminus\pi^{-1}(0)$,
we have 
\begin{align*}
  \cali_n^{(\pi)}=
  \funcSec(0, G^{(\pi)}, \Cttt(G^{(\pi)}), A, \bbone),
\end{align*}
and we can apply Lemma \ref{211226.1426} to obtain
\begin{align*}
  E\sbr{\cali_n^{(\pi)}} = 
  \olog(n^{e_2\rbr{0, G^{(\pi)}, \Cttt(G^{(\pi)})}}).
\end{align*}

We have
$\Cszero(G^{(\pi)})=\Cszero(G)$, 
$\Csone(G^{(\pi)}) = \Csone(G)$,
%$C^{2}(G^*) = C^{2}\setminus \cbr{c'}$,
$\Ctz(G^{(\pi)}) =\Ctz(G)\sqcup\cbr{\hatc\mid C\in \pi^{-1}(0)}$,
%$C^{2}_{2,1}(G^{(\pi)}) = C^{2}_{2,1}(G)$ and
%$C^{2}_{2,2+}(G^{(\pi)}) = \tensorc \sqcup \pi^{-1}(2)
%=\Cttt(G)\setminus \pi^{-1}(0)$.
and
$\Ctt(G^{(\pi)})
=\Ctt(G)\setminus\pi^{-1}(0)
=\twochaos\sqcup \pi^{-1}(2)$.
Thus, the exponent $e_2\rbr{0, G^{(\pi)}, \Cttt(G^{(\pi)})}$
is bounded as
\begin{align}
  e_2\rbr{0, G^{(\pi)}, \Cttt(G^{(\pi)})}
  =& 
  \sum_{C\in \Comp(G^{(\pi)}) \setminus \Ctt(G^{(\pi)})} e_2(C)
  + \sum_{C\in \Ctt(G^{(\pi)})} e^-_2(I(C))
  \nn\\=& 
  \sum_{C\in\Cqom(G)\sqcup\cbr{\hatc\mid C\in \pi^{-1}(0)}} e_2(C) 
  + \sum_{C\in\twochaos\sqcup\pi^{-1}(2)} e^-_2(I(C))
  \nn
  \\=&
  \sum_{C\in\Cqom(G)} e_2(C) 
  + \sum_{C\in\pi^{-1}(0)} e_2^+(I(C)) 
  + \sum_{C\in\twochaos\sqcup\pi^{-1}(2)} e^-_2(I(C))
  \label{211226.1840}
  \\\leq&
  \sum_{C\in\Cqom(G)} e_2(C) 
  + \sum_{C\in\nottenc} e_2^+(I(C))
  + \sum_{C\in\twochaos} e^-_2(I(C))
  =e_2(0, G, \tensorc).
  \label{211226.1841}
\end{align}
We used %$e_2(c)=e_2(\hat c)=e_2^+(I_c)$ for $c\in\Cttt$
(\ref{211226.1826})
at (\ref{211226.1840}),
and (\ref{211202.2516})
at (\ref{211226.1841}).
Note that the above inequality is an equality,
if $\pi(C)=0$ for all $C\in \nottenc$.

As a result, we obtain
\begin{align*}
  E\sbr{\cali_n} 
  = \sum_{\pi\in\cbr{0,2}^{\nottenc}} E\sbr{\cali_n^{(\pi)}} 
  = \olog(n^{e_2(0, G, \tensorc)}).
\end{align*}
Thus we proved the statement for general
$\tensorc\subset\Cttt(G)$.
\end{proof}

\begin{proposition}\label{210407.1402}
  Let $k_0\in\bbZ_{\geq1}$ and $\alpha\in\bbR$. 
  Suppose that $G=(V,\edgeWt,\vertWt)$ is a weighted graph satisfying Assumption \ref{211203.1620}, 
  $A\in\cala(V)$ and $\bbf\in\calf(V)$.
  For a subset $\tensorc$ of $\Cttt(G)$, 
\begin{align*}
  %E[\cali_n] = \tilde O(n^{e_2(\alpha,G,\widetilde \Cttt)})
  E\sbr{{(\cali_n)}^{2k_0}} = \olog(n^{2k_0\,e_2(\alpha,G,\tensorc)})
\end{align*}
as $n\to\infty$ for
$\cali_n = \funcSec(\alpha,G,\tensorc,A,\bbf)$
given by (\ref{211126.2643}).
In particular, for $p\geq1$,
\begin{align*}
\bnorm{\cali_n}_{p} = \olog(n^{e_2(\alpha,G,\tensorc)}).
\end{align*}
\end{proposition}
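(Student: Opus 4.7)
The plan is to reduce this moment bound to the expectation estimate of Proposition~\ref{210413.0100} by the same $2k_0$-fold disjoint-union device used to prove the first-exponent analog, Proposition~\ref{210407.1401}. The key observation is that $\funcSec(\alpha,G,\tensorc,A,\bbf)$ is, by its definition~(\ref{211127.1630}), a sum of products of the scalar-valued factors $B_n^C$ and $\check B_n^C$ indexed by components of $G$. Taking the $2k_0$-th power of such a sum, after relabeling the index variables in each copy to make vertex sets disjoint, produces another instance of $\funcSec$ attached to a larger weighted graph built from disjoint shifted copies of $G$.

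Concretely, assuming $V=[m]$, set $V'=[2k_0 m]$, $G'=\vee_{l=0}^{2k_0-1}\shiftg{G}{ml}$, and let $\tensorc'\subset\Cttt(G')$ consist of the corresponding shifted copies of the graphs in $\tensorc$; set $\bbf'_{v+ml}=\bbf_v$ for $v\in V$ and $l=0,\dots,2k_0-1$, and $A'_n(j)=\prod_{l=0}^{2k_0-1}A_n(j|_{[ml+1,\,ml+m]})$ for $j\in[n]^{V'}$. Writing $(\cali_n)^{2k_0}$ as a sum over $(j^1,\dots,j^{2k_0})\in([n]^V)^{2k_0}$ and concatenating these into a single $j\in[n]^{V'}$, the product of $2k_0$ copies of the component-indexed factors rearranges, one factor per component of $G'$, into $\prod_{C'\in\nottenc(G')}B_n^{C'}\prod_{C'\in\Comp(G')\setminus\nottenc(G')}\check B_n^{C'}$, which gives the identity
\begin{equation*}
\bigl(\funcSec(\alpha,G,\tensorc,A,\bbf)\bigr)^{2k_0}=\funcSec(2k_0\alpha,G',\tensorc',A',\bbf').
\end{equation*}
Since each component of $G'$ is a vertex-shifted copy of a component of $G$, the cycle and path-with-weighted-ends structure of Definition~\ref{220322.1900} and the values of $s,\barq,I$ are preserved; hence $G'$ again satisfies Assumption~\ref{211203.1620}.

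Applying Proposition~\ref{210413.0100} to the right-hand side yields $E[(\cali_n)^{2k_0}]=\olog(n^{e_2(2k_0\alpha,G',\tensorc')})$. Since $e_2$ and $\expoT$ depend on a component only through $s,\barq,I$, both are invariant under vertex shifts and additive over disjoint unions, so
\begin{equation*}
e_2(2k_0\alpha,G',\tensorc')=2k_0\alpha+2k_0\Bigl[\sum_{C\in\Comp(G)\setminus\tensorc}e_2(C)+\sum_{C\in\tensorc}\expoT(C)\Bigr]=2k_0\,e_2(\alpha,G,\tensorc),
\end{equation*}
which gives the claimed moment bound. The $L^p$-bound then follows by choosing $k_0$ with $2k_0\geq p$ and applying Jensen's inequality, together with the elementary fact $(\olog(n^q))^{1/(2k_0)}=\olog(n^{q/(2k_0)})$. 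The only nontrivial verification is that $A'\in\cala(V')$: the required uniform $L^p$-bounds on $A'_n(j)$ and on its Malliavin derivatives $D^iA'_n(j)$, together with the pointwise estimate~(\ref{220119.1721}), follow from the Leibniz rule applied to the $2k_0$-fold product combined with the uniform bounds defining $\cala(V)$ and H\"older's inequality, so no real obstacle arises and the argument parallels the first-exponent proof almost verbatim.
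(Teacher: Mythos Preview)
Your proposal is correct and follows essentially the same approach as the paper: both assume $V=[m]$, form $G'=\vee_{l=0}^{2k_0-1}\shiftg{G}{ml}$ with the corresponding shifted $\tensorc'$, $A'$, $\bbf'$, identify $(\cali_n)^{2k_0}$ with $\funcSec(2k_0\alpha,G',\tensorc',A',\bbf')$, and then apply Proposition~\ref{210413.0100} together with the additivity of $e_2$ over disjoint shifted copies. Your extra remarks on verifying $A'\in\cala(V')$ and deducing the $L^p$-bound via Jensen are fine elaborations of steps the paper leaves implicit.
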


\begin{proof}
  Without loss of generality, we assume that 
  $V$ is written as $V=[m]$ with some $m\in\bbN$.
  Set 
  \begin{align*}
    G' =& \vee_{k=0,...,2k_0-1} \shiftg{G}{mk}
    \\
    A'=&\rbr{A'_n(j)}_{j\in[n]^{2k_0m}, n\in\bbN}\twith
    \\&\qquad 
    A'_n({j}) =
    A_n(j_{[m]})A_n(j_{[m+1,2m]})\cdots A_n(j_{[(2k_0-1)m+1,2k_0m]})
    \tfor j\in[n]^{2k_0m}
    \\
    \bbf' =& \rbr{\bbf'_v}_{v\in[2k_0m]}
    \twith \bbf'_{v+mk} = \bbf_v \tforsm v\in[m] \tandsm k=0,..,2k_0-1,
  \end{align*}
  where $[k_1,k_2]=\cbr{k_1,...,k_2}$ for $k_1\leq k_2\in\bbN$.
  Notice that 
  $\Comp(G')=\bigsqcup_{k=0,...,2k_0-1}\cbr{\shiftg{C}{mk}\mid C\in \Comp(G)}$
  and
  $%\begin{align*}
    \Cttt(G')=
    \bigsqcup_{k=0,...,2k_0-1}\cbr{\shiftg{C}{mk}\mid C\in\Cttt(G)}
  $, %\end{align*}
  and set 
  \begin{align*}
    \Comp_{\T,G'}=
    \bigsqcup_{k=0,...,2k_0-1}\cbr{\shiftg{C}{mk}\mid C\in\tensorc}.
  \end{align*}
  \delc{
    $G'$, $A'$ and $\bbf'$ satisfies Assumption \ref{220119.2343} and
    $G'$ satisfies Assumption \ref{211203.1620}.
  }
  Obviously $\tensorg{G'}\subset\Cttt(G')$.
  Then we have
  $%\begin{align*}
    \rbr{\funcSec(\alpha,G,\tensorc,A,\bbf)}^{2k_0}
    = \funcSec(2k_0\alpha, G',\tensorg{G'}, A', \bbf')
  $ %\end{align*}
  by Proposition \ref{210413.0100}.
  Since
  \begin{align*}
    e_2(2k_0\alpha, G',\tensorg{G'})
    =& 2k_0\alpha + \sum_{C\in \Comp(G')\setminus\tensorg{G'}}e_2(C)
      + \sum_{C\in \tensorg{G'}} \expoT(C)
    \\=& 2k_0\alpha + 2k_0\sum_{C\in \Comp(G)\setminus\tensorc}e_2(C) 
      + 2k_0\sum_{C\in \tensorc} \expoT(C)
    = 2k_0\, e_2(\alpha, G,\tensorc),
  \end{align*} 
  by Proposition \ref{210413.0100} we obtain 
  \begin{align*}
    E\sbr{ \rbr{\funcSec(\alpha,G,\tensorc,A,\bbf)}^{2k_0}}
    = \olog(n^{e_2(2k_0\alpha, G', \tensorg{G'})})
    = \olog(n^{2k_0\; e_2(\alpha, G, \tensorc)})
    \tas n\to\infty.
  \end{align*}
\end{proof}

%%\newpage
\subsubsection{Change of the 2nd exponent by the action of $D_{u_n}$}
%\contifrom{211221.0521}
Recall that we defined $u_n(A')$ 
for $A'\in\cala(V)$ with some singleton $V=\cbr{v}$
at (\ref{220318.1700}):
\begin{align*}
  u_n(A')=
  n^{2H-1/2} \sum_{j\in[n]} A'_n(j) I_1(1_j) 1_j.
\end{align*}
We will give the estimate of how much the order of functionals written as
$\funcSec(\alpha,G,\tensorc,A,\bbf)$
changes by the action of $D_{u_n}$ in terms of the exponent.

\begin{proposition}\label{210413.0101}
  Consider a weighted graph $G=(V, \edgeWt, \vertWt)$ satisfying Assumption \ref{211203.1620}
  and a singleton $V'$
  such that $V\cap V'=\emptyset$. Write $V''=V\sqcup V'$.
  Let $\alpha\in\bbR$, 
  $A\in\cala(V)$, $\bbf\in\calf(V)$ and $A'\in\cala(V')$.
  For a subset $\tensorc$ of $\Cttt(G)$, % satisfying $\cond^2_{2,2+}$,
  suppose
  $\cali_n = \funcSec(\alpha,G,\tensorc,A,\bbf)$
  is given by (\ref{211126.2643}):
  \begin{align*}
    \cali_n =
    n^\alpha
    \sum_{j\in[n]^V} A_n(j)
    \prod_{C\in\Comp(G)\setminus\tensorc} B_n^C(j_{V_C},\bbf|_{V_C})
    \prod_{C\in \tensorc} \check{B}_n^C(j_{V_C},\bbf|_{V_C})
  \end{align*}
  
  Then, there exists a finite set $\Lambda$,
  $\alpha^\lambda\in\bbR$,
  a weighted graph 
  $G^\lambda=(V'',\edgeWt^\lambda,\vertWt^\lambda)$, 
  $A^\lambda\in\cala(V'')$, $\bbf^\lambda\in\calf(V'')$ and 
  a subset $\tensorc^{\lambda}$ of $\Cttt(G^\lambda)$
  for each $\lambda\in\Lambda$,
  such that
  $D_{u_n(A')} \cali_n$ can be written as
  \begin{align*}
    D_{u_n(A')}\cali_n = 
    \sum_{\lambda\in\Lambda} \funcSec(\alpha^\lambda, G^\lambda, 
    \tensorc^{\lambda},
    A^\lambda, \bbf^\lambda)
  \end{align*}
  and the following condition is fulfilled:
  \begin{alignat*}{3}
    &\text{(a)}&\;\;
    &\max_{\lambda\in\Lambda} 
    e_2(\alpha^\lambda, G^\lambda, \tensorc^{\lambda})
    = e_2(\alpha, G, \tensorc) + 2H-\frac32
    &\hsp&\text{if $\twochaos=\Ctto(G)\sqcup\tensorc$ is empty;}
    \\
    &\text{(b)}&
    &\max_{\lambda\in\Lambda} 
    e_2(\alpha^\lambda, G^\lambda, \tensorc^{\lambda})
    = e_2(\alpha, G, \tensorc) + \delta_H(I_{\min})
    &&\text{if $\twochaos$ is nonempty,}
  \end{alignat*}
    where
    $I_{\min}=\min_{C\in\twochaos} I(C)$ and
    \begin{align*}
      \delta_H(k) = 2\phi_H(k+1) - (\phi_H(2k) + \phi_H(2))
    \end{align*}
    for $k\in\bbZ_{\geq1}$. 
    In particular, if $\Ctto(G)$ is nonempty, then
    $%\begin{align*}
      \max_{\lambda\in\Lambda} e_2(\alpha^\lambda, G^\lambda, \tensorc^{\lambda})
      = e_2(\alpha, G, \tensorc).
    $%\end{align*}
\end{proposition}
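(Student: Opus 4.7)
The plan is to proceed in analogy with the proof of Proposition \ref{210822.1800}, but with additional bookkeeping to track the tensor subset $\tensorc^{\lambda}$ and to leverage the two-piece form of $\phi_H$ entering $e_2^+$ and $e_2^-$. Using the Leibniz rule together with the formula $u_n=n^{2H-1/2}\sum_j A'_n(j)I_1(1_j)1_j$, I would first decompose
\[
D_{u_n}\cali_n=\cali_n^{(0)}+\sum_{v\in V_+}\cali_n^{(v)},
\]
where $V_+=\{v\in V\mid \vertwtlow(v)>0\}$, $\cali_n^{(0)}$ collects the terms in which $D$ lands on $A_n(j)$, and $\cali_n^{(v)}$ collects the terms in which $D$ is commuted through the chaos factor at the vertex $v$. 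For $v\in C_v\in\Comp(G)\setminus\tensorc$ the latter is the $I_{\vertwtlow(v)}$-factor, while for $v\in C_v\in\tensorc$ it arises from $D\delta^2(1_{v^{C_v}_1}\otimes 1_{v^{C_v}_{I(C_v)}}) = I_1(1_{v^{C_v}_{I(C_v)}})1_{v^{C_v}_1}+I_1(1_{v^{C_v}_1})1_{v^{C_v}_{I(C_v)}}$. In each contribution, the new $I_1(1_j)$ brought in by $u_n$ is expanded against the remaining chaos factor via the product formula $I_1\cdot I_p=I_{p+1}+p\beta_n I_{p-1}$, giving summands that are identified as $\funcSec(\alpha^{\lambda},G^{\lambda},\tensorc^{\lambda},A^{\lambda},\bbf^{\lambda})$ with $\alpha^{\lambda}=\alpha+2H-\tfrac12$.

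Next, for each subcase I would compute $(I,s,\barq)$ of the modified component using Proposition \ref{220110.2555}. The cases are: (i) $C_v\in\Csone(G)$ merging with the free vertex yields a new component with $s=0$ and $I=I(C_v)+1$; (ii) $C_v\in\nottenc$: expansion of $I_1(1_j)I_1(1_{v^{C_v}_{I(C_v)}})$ splits into a path graph with weighted ends of length $I(C_v)+1$ (placed in $\tensorc^{\lambda}$) plus a cycle graph of length $I(C_v)+1$ (in $\Ctz$ of $G^{\lambda}$); (iii) $C_v\in\Ctto(G)\sqcup\tensorc$: after differentiating the $\delta^2$-structure and applying the product formula, one again obtains a path graph with weighted ends of length $I(C_v)+1$ (placed in $\tensorc^{\lambda}$) and a cycle of length $I(C_v)+1$ (in $\Ctz$); and finally the $\cali_n^{(0)}$ term attaches a free $\Csone$-component to $G$. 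In each subcase, $G^{\lambda}$ continues to satisfy Assumption \ref{211203.1620}, since cycles and paths with weighted ends are the only new multi-vertex components introduced, and the subset $\tensorc^{\lambda}$ can always be chosen to include the newly created paths with weighted ends, minimizing the resulting exponent via $e_2^-\leq e_2^+$.

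The comparison of $e_2(\alpha^{\lambda},G^{\lambda},\tensorc^{\lambda})$ with $e_2(\alpha,G,\tensorc)$ then reduces to a finite case check, anchored on the identity $(2H-\tfrac12)+\phi_H(2)=-1$ valid for $H\in(1/2,3/4)$ and the monotonicity of $\phi_H$ in (\ref{211127.2424}). In case (a), where $\Ctto(G)\sqcup\tensorc=\emptyset$, only (i), (ii) and $\cali_n^{(0)}$ contribute; the last two realize the shift $2H-\tfrac32$ exactly via $e_2(C)\to e_2(C)-1$ on the attached $\Csone$-component, while the subterms of (ii) are bounded by it since $\phi_H(I+1)-\phi_H(I)\leq 0$. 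In case (b), the dominant contribution comes from the cycle branch of (iii) applied to a $C_v\in\twochaos$ with $I(C_v)=I_{\min}$: its exponent shift is
\[
(2H-\tfrac12)+e_2^+(I_{\min}+1)-e_2^-(I_{\min})=2\phi_H(I_{\min}+1)-\phi_H(2I_{\min})-\phi_H(2)=\delta_H(I_{\min}),
\]
matching the claim; when $\Ctto(G)$ is nonempty, $I_{\min}=1$ and $\delta_H(1)=0$ is immediate.

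The main obstacle will be the verification that no other subterm exceeds $\delta_H(I_{\min})$ in case (b). This amounts to showing that the path-with-weighted-ends shift $(2H-\tfrac12)-1+\phi_H(2I+2)-\phi_H(2I)$ arising in (iii), and the analogous shift from (ii), stay below $\delta_H(I_{\min})$ for every $I$ occurring. Each reduces, via the identity $(2H-\tfrac12)+\phi_H(2)=-1$, to inequalities of the form $\phi_H(2I+2)\leq 1+2\phi_H(I+1)$; these are routine but case-sensitive computations using the piecewise formula (\ref{211006.1341}), splitting along the knee $(1/2-H)k=-1/2$, where in each piece of the knee one obtains finitely many elementary linear inequalities in $H$. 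Secondary care is also required to confirm that, after the product formulas are applied inside a $\tensorc$-component, each resulting summand is genuinely of the form required by the definition of $\funcSec$, in particular that the $\delta^2$-factor on the newly-lengthened path can be legitimately absorbed into a $\check{B}^{C^\lambda}_n$ with the correct choice of endpoints $v^{C^\lambda}_1, v^{C^\lambda}_{I(C^\lambda)}$.
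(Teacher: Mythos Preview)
Your plan is correct and follows the same decomposition as the paper: split $D_{u_n}\cali_n$ by Leibniz into $\cali_n^{(0)}$ plus one term per component with positive $\barq$, then track the exponent shift in each piece. Where you diverge is in the handling of the $\Ctt$ pieces. After differentiating a factor in some $C_1\in\Ctt(G)$ and multiplying by the $I_1(1_{j_{m+1}})$ from $u_n$, the paper does \emph{not} apply the product formula; it simply observes that the resulting expression
\[
I_1(1_{j_{m+1}})\,I_1(1_{j_{v^{C_1}_{I(C_1)}}})\;\beta_n\brbr{j_{m+1},j_{v^{C_1}_1}}\;\beta_n^{C_1}(j_{V_{C_1}})
\]
is already $B_n^{C^*}$ for a path graph $C^*$ with weighted ends of length $I(C_1)+1$, and places $C^*$ in $\Cttt(G^*)\setminus\tensorc^*$ (taking $\tensorc^*=\tensorc$ if $C_1\in\nottenc\cup\Ctto(G)$ and $\tensorc^*=\tensorc\setminus\{C_1\}$ if $C_1\in\tensorc$). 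This single summand contributes $e_2^+(I(C_1)+1)$, and for $C_1\in\twochaos$ the shift computation
\[
(2H-\tfrac12)+e_2^+(I(C_1)+1)-e_2^-(I(C_1))
=2\phi_H(I(C_1)+1)-\phi_H(2I(C_1))-\phi_H(2)=\delta_H(I(C_1))
\]
falls out in one line. Your route---splitting $I_1\cdot I_1$ into $\delta^2+\beta$ to obtain a cycle in $\Ctz$ plus a path placed in $\tensorc^\lambda$---is equivalent (the cycle branch reproduces exactly the $e_2^+$ contribution, and the path-in-$\tensorc^\lambda$ branch is dominated by it via $e_2^-\le e_2^+$), but it doubles the number of summands and makes the residual inequalities you flag as ``the main obstacle'' necessary where the paper needs none.

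Two computational slips to fix. In your case (i), the merged component has $s=1$, not $s=0$: with $C_0=(\{m{+}1\},0,2)$, Proposition~\ref{220110.2555} gives $s\bigl(\dabr{v^{C_v},m{+}1}_1(C_v\vee C_0)\bigr)=(1+2)-2=1$; this is consistent with your later shift claim $2H-\tfrac32$, so only the intermediate value is misstated. Second, the identity you invoke should read $(2H-\tfrac32)+\phi_H(2)=-1$, not $(2H-\tfrac12)+\phi_H(2)=-1$ (since $\phi_H(2)=\tfrac12-2H$). With this correction your reduction to $\phi_H(2I+2)\le 1+2\phi_H(I+1)$ is valid; but note that the paper sidesteps this entirely, since once $C^*$ is kept outside $\tensorc^*$ the only inequalities needed are the monotonicity $\phi_H(I{+}1)\le\phi_H(I)$ from~(\ref{211127.2424}) and $\delta_H(k)\ge 2H-\tfrac32$ from~(\ref{211224.2242}).
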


The value $\delta_H(k)$ is written explicitly as
\begin{subnumcases}{\delta_H(k)=}
  0
  & for $1\leq k\leq \frac1{2(2H-1)}$\nn
  %& for $H\in\opcl{\frac12,\frac12 + \frac1{4I}}$
  \vspssm\\
  \frac12 + (1-2H) k
  & for $\frac1{2(2H-1)}\leq k\leq \frac{2-2H}{2H-1}$\nn
  %& for $H\in\sbr{\frac12 + \frac1{4I}, \frac12 + \frac1{2(I+1)}}$
  \vspssm\\
  2H-\frac32
  & for $\frac{2-2H}{2H-1}\leq k$\nn
  %& for $H\in\clop{\frac12 + \frac1{2(I+1)}, \frac34}.$
\end{subnumcases}
Notice that 
\begin{align}
  0=\delta_H(1)\geq\delta_H(k)\geq 2H-\frac32
  \tfor k\in\bbN  \tand
  \delta_H(k_1)\geq\delta_H(k_2)
  \tfor k_1<k_2\in\bbN.
  \label{211224.2242}
\end{align}
%$\delta_H(I)=0$ for $I=1$ and
%$\delta_H(I_1)\geq\delta_H(I_2)$ for $I_1<I_2\in\bbZ_{\geq1}$

\begin{remark}
  This proposition is not used in the proof of the asymptotic expansion.
  However some functionals can have sharper estimates by this proposition.
  %Some terms \redb{specific} can have a sharper estimate.
  %\redb{Adding examples are better}
\end{remark}

\begin{proof}[Proof of  Proposition \ref{210413.0101}.]
  Without loss of generality, we assume $\alpha=0$, %$\bbf=\bbone$
and $V=[m]$ and $V'=\cbr{m+1}$ with some $m\in\bbN$.
As seen in the proof of Lemma \ref{211226.1426},
we may only consider the case where $\bbf=\bbone.$ %$\kerf{v}{j}=\charfn{j}$ for $v\in V$.
We abbreviate $u_n(A')$ as $u_n$.

Then, we can write $D_{u_n} \cali_n$ as:
\begin{align*}
  D_{u_n} \cali_n =&
  n^{2H-1/2} \sum_{j\in[n]^{m+1}}
  %\brbr{D_{1_{j_{m+1}}} A_n(j_{[m]})} A'_n(j_{m+1}) %a_{t_{j_{m+1}-1}}
  \brbr{D_{1_{j_{m+1}}} A_n(j_{[m]})} A'_n(j_{m+1})\;\,
  I_1(1_{j_{m+1}})
  \prod_{C\in\Comp(G)\setminus\tensorc} B^C_n(j_{V_C})
  \prod_{C\in \tensorc} \check{B}^C_n(j_{V_C})
  \nn\\&+
  \sum_{\cppr\in \Csone(G)\sqcup(\Ctt(G)\setminus\tensorc)} 
  n^{2H-1/2} \sum_{j\in[n]^{m+1}}
  A_n(j_{[m]}) A'_n(j_{m+1})\;% a_{t_{j_{m+1}-1}}
  I_1(\charf{j_{m+1}})
  \brbr{D_{\charf{j_{m+1}}} B^{\cppr}_n(j_{V_{\cppr}})}
  \nn\\&\hspace{170pt}\times
  \prod_{C\in (\Comp(G)\setminus\tensorc)\setminus\cbr{\cppr}} 
  %\prod_{c\in C_0\sqcup (C^1\sqcup\nottenc)\setminus\cbr{c'}} 
    B^C_n(j_{V_C})
  \prod_{C\in \tensorc} \check{B}^C_n(j_{V_C})
  \nn\\&+
  \sum_{\cppr\in \tensorc} 
  n^{2H-1/2} \sum_{j\in[n]^{m+1}}
  A_n(j_{[m]}) A'_n(j_{m+1})\;%a_{t_{j_{m+1}-1}}
  I_1(\charf{j_{m+1}})
  \brbr{D_{\charf{j_{m+1}}} \check{B}^{\cppr}_n(j_{V_{\cppr}})}
  \nn\\&\hspace{110pt}\times
  \prod_{C\in \Comp(G)\setminus\tensorc} B^C_n(j_{V_C})
  \prod_{C\in \tensorc\setminus\cbr{\cppr}} \check{B}^C_n(j_{V_C})
  \\=:&\;
  \cali_{n}^{(0)}
  + \sum_{\cppr\in \Csone(G)\sqcup\Ctt(G)} \cali_{n}^{(\cppr)} 
  \nn
\end{align*}

Let $A''$ as %and $\bbf'=(\bbf'_v)_{v\in[m+1]}$ as
\begin{align*}
  &A''=\brbr{A''_n(j) }_{j\in[n]^{m+1},  n\in\bbN}=
  \brbr{A_n(j_{[m]}) A'_n(j_{m+1})}_{j\in[n]^{m+1}, n\in\bbN}
  %\\
  %&\bbf'_v=\bbf_v\;\text{ for }\; v\in[m]\;\text{ and }\;
  %\bbf'_{m+1}=\bbone
\end{align*}
and define a weighted graph
$\cpr:=(\cbr{m+1},0,2)$.
Notice that $A''\in\cala([m+1])$ and $s(\cpr)=2$.

(i) First we consider $(\cali_n^{(0)})_n$.
Let
$\alpha^{(0)}=2H-\frac{3}{2}$.
Define $C^{(0)}$, $G^{(0)}$ and 
$A^{(0)}=\brbr{A^{(0)}_n(j)}_{j\in[n]^{m+1}, n\in\bbN}$ by
\begin{align*}
  C^{(0)}&=\abr{m+1}_{-1} \cpr,\quad
  G^{(0)}=C^{(0)}\vee G \tand
  A^{(0)}_n(j)=n\brbr{D_{\charf{j_{m+1}}} A_n(j_{[m]})} A'_n(j_{m+1}).
\end{align*}
Since
$\Comp(G^{(0)}) = \Comp(G)\sqcup\cbr{C^{(0)}}$
and $s(C^{(0)})=1$,
the weighted graph $G^{(0)}$ satisfies Assumption \ref{211203.1620}.
We can see that 
$A^{(0)}\in\cala([m+1])$,
and
\begin{align}
  \cali_n^{(0)} = \funcSec(\alpha^{(0)}, G^{(0)}, \tensorc, A^{(0)}, \bbone).
  \label{211224.2201}
\end{align}
Since 
$I\rbr{C^{(0)}}=1$,
the exponent $e_2(\alpha^{(0)}, G^{(0)}, \tensorc)$ is 
\begin{align}
  e_2(\alpha^{(0)}, G^{(0)}, \tensorc)
  &= 2H-\frac{3}{2} + (2-I\brbr{C^{(0)}}-1) 
  + \sum_{C\in\Comp(G)\setminus\tensorc} e_2(C)
  + \sum_{C\in \tensorc} \expoT(C)
  = 2H-\frac{3}{2} + e_2(0,G,\tensorc).
\label{211224.2221}
\end{align}
We set $\tensorc^{(0)}=\tensorc$.

\vspssm
(ii) Consider $(\cali_n^{(\cppr)})_n$ for $\cppr\in\Csone(G)$.
Let
$\alpha^{(\cppr)}=2H-\frac{1}{2}$ and 
define $C^{(\cppr)}$ and $G^{(\cppr)}$ by
\begin{align*}
  C^{(\cppr)} = \dabr{v^{\cppr}, m+1}_{1} (\cppr\vee\cpr) \tand
  &G^{(\cppr)}= C^{(\cppr)} \vee \Brbr{\mathop\vee_{C\in\Comp(G)\setminus\cbr{\cppr}} C},
\end{align*}
where we denote the only vertex  $v\in V(\cppr)$ such that $\vertwtlow(v)=1$ by $v^{\cppr}$.
We have 
\begin{align*}
  B_n^{C^{(\cppr)}}(j_{V_{C^{(\cppr)}}})
  =
  \beta_n^{\cppr}(j_{V_{\cppr}})
  \beta_n(j_{v^{\cppr}}, j_{m+1})  I_1(\charf{j_{m+1}})
  =
  I_1(\charf{j_{m+1}})\brbr{D_{\charf{j_{m+1}}} B^{\cppr}_n(j_{V_{\cppr}})}.
\end{align*}
Since 
$\Comp(G^{(\cppr)}) = (\Comp(G)\setminus\cbr{\cppr}) \sqcup \bcbr{C^{(\cppr)}}$ and
$s(C^{(\cppr)})= (1+2) + 0 - 2(2-1) = 1$
by (\ref{211225.2120}),
the weighted graph $G^{(\cppr)}$ satisfies Assumption \ref{211203.1620}.
We have
\begin{align*}
\Cttt(G^{(\cppr)})=\Cttt(G)\supset\tensorc \tand
\Comp(G^{(\cppr)})\setminus\tensorc
=((\Comp(G)\setminus\tensorc)\setminus\cbr{\cppr}) \sqcup \bcbr{C^{(\cppr)}},
\end{align*}
and hence
\begin{align}
  \cali_n^{(\cppr)} = \funcSec(\alpha^{(\cppr)}, G^{(\cppr)}, \tensorc, A'', \bbone).
  \label{211224.2202}
\end{align}
Since
$I(C^{(\cppr)}) = I(\cppr)+1$ and
$e_2(C^{(\cppr)}) = 2-I(C^{(\cppr)})-1 %= 2-(I_{c'}+1)-1
= (2 - I(\cppr) -1) -1 = e_2(\cppr) -1$,
the exponent $e_2(\alpha^{(\cppr)}, G^{(\cppr)}, \tensorc)$ is 
\begin{align}
  e_2(\alpha^{(\cppr)}, G^{(\cppr)}, \tensorc)
  &= 
  2H-\frac{1}{2} + e_2(C^{(\cppr)})
  + \sum_{C\in \Comp(G)\setminus(\tensorc\sqcup\cbr{\cppr})} e_2(C)
  + \sum_{C\in \tensorc} \expoT(C)
  \nn\\&=
  2H-\frac{3}{2} + e_2(\cppr)
  + \sum_{C\in \Comp(G)\setminus(\tensorc\sqcup\cbr{\cppr})} e_2(C)
  + \sum_{C\in \tensorc} \expoT(C)
  = 2H-\frac{3}{2} + e_2(0,G,\tensorc).
  \label{211224.2222}
\end{align}
We set $\tensorc^{(\cppr)}=\tensorc$.

\vspssm
(iii) Consider $(\cali_n^{(\cppr)})_n$ for $\cppr\in \Ctt(G)$.
Since we have
\begin{align*}
  D_{1_{j_{m+1}}} \check{B}^{\cppr}_n(j_{V_{\cppr}})=&
  D_{1_{j_{m+1}}} B^{\cppr}_n(j_{V_{\cppr}})
  =
  \rbr{\beta_n\brbr{j_{m+1},j_{\vc{\cppr}{1}}} 
  I_1(\charf{j_{\vc{\cppr}{I(\cppr)}}})
  + \beta_n\brbr{j_{m+1},j_{\vc{\cppr}{I(\cppr)}}} 
  I_1(\charf{j_{\vc{\cppr}{1}}})}
  \beta_n^{\cppr}(j_{V_{\cppr}})
  \nn\\
  =&
  \begin{cases}
    \rbr{\beta_n\brbr{j_{m+1},j_{\vc{\cppr}{1}}}
    I_1(\charf{j_{\vc{\cppr}{I(\cppr)}}})
    + \beta_n\brbr{j_{m+1},j_{\vc{\cppr}{I(\cppr)}}}
    I_1(\charf{j_{\vc{\cppr}{1}}})}
  \\\hspace{110pt}\times
  \beta_n\brbr{j_{\vc{\cppr}{1}},j_{\vc{\cppr}{2}}}\cdots
  \beta_n\brbr{j_{\vc{\cppr}{I(\cppr)-1}},j_{\vc{\cppr}{I(\cppr)}}}  
  \quad&\tifsm \cppr\in \Cttt(G) \\
  2I_1(1_{j_{v^{\cppr}_1}})\beta_n\brbr{j_{m+1},j_{v^{\cppr}_1}}
  \quad&\tifsm \cppr\in \Ctto(G),
  \end{cases}
\end{align*}
the functional $\cali_n^{(\cppr)}$ 
is decomposed as
%\begin{align}
$\cali_n^{(\cppr)} = \cali_n^{(\cppr,1)} + \cali_n^{(\cppr,2)},$
%\end{align}
where
\begin{align*}
  \cali_n^{(\cppr,1)} = &
  n^{2H-1/2} \sum_{j\in[n]^{m+1}} A''_n(j)\;
  I_1(\charf{j_{m+1}})
  \beta_n\brbr{j_{m+1},j_{\vc{\cppr}{1}}} 
  I_1(\charf{j_{\vc{\cppr}{I(\cppr)}}})\;
  \beta_n^{\cppr}(j_{V_{\cppr}})
  %\bbcbr{\prod_{\vpr<\vppr\in V_{c'}}\beta_{j_{\vpr},j_{\vppr}}^{\edgeWt_{\vpr,\vppr}}}
  \nn\\&\hspace{80pt}\times
  \prod_{C\in (\Comp(G)\setminus\tensorc)\setminus\cbr{\cppr}} B^C_n(j_{V_C})
  \prod_{C\in \tensorc\setminus\cbr{\cppr}} \check{B}^C_n(j_{V_C})
  %\label{211222.1804}
  \\
  \cali_n^{(\cppr,2)} = &
  n^{2H-1/2} \sum_{j\in[n]^{m+1}}
  A''_n(j)
  I_1(\charf{j_{m+1}})
  \beta_n\brbr{j_{m+1},j_{\vc{\cppr}{I(\cppr)}}} 
  I_1(\charf{j_{\vc{\cppr}{1}}})
  \beta_n^{\cppr}(j_{V_{\cppr}})
  %\bbcbr{\prod_{\vpr<\vppr\in V_{c'}}\beta_{j_{\vpr},j_{\vppr}}^{\edgeWt_{\vpr,\vppr}}}
  \nn\\&\hspace{80pt}\times
  \prod_{C\in (\Comp(G)\setminus\tensorc)\setminus\cbr{\cppr}} B^C_n(j_{V_C})
  \prod_{C\in \tensorc\setminus\cbr{\cppr}} \check{B}^C_n(j_{V_C}).
  \nn
\end{align*}
We only consider $\cali_n^{(\cppr,1)}$ here.
(Similar aruguments work for $\cali_n^{(\cppr,2)}$.)
Let
$\alpha^{(\cppr,1)}=2H-\frac{1}{2}$
and 
define $C^{(\cppr,1)}$ and $G^{(\cppr,1)}$ by
\begin{align*}
  C^{(\cppr,1)} =\dabr{\vc{\cppr}{1}, m+1}_{1} (\cppr\vee \cpr) \tand
  &G^{(\cppr,1)}=
  C^{(\cppr,1)} \vee 
  \Brbr{\mathop\vee_{C\in \Comp(G)\setminus\cbr{\cppr}} C}.
\end{align*}
We can observe that $C^{(\cppr,1)}$
is a path graph with weighted ends of $I\brbr{C^{(\cppr,1)}}=I\brbr{\cppr}+1(\geq2)$ vertices.
%with 
%$q^{*}_v=1$ for $v=\vc{c'}{I_{c'}},m+1$
%and $q^{*}_v=0$ for the other vertices $v$.
Since 
$\Comp(G^{(\cppr,1)}) = (\Comp(G)\setminus\cbr{\cppr}) \sqcup 
\bcbr{C^{(\cppr,1)}}$,
the weighted graph $G^{(C',1)}$ satisfies Assumption \ref{211203.1620}.
We may abbreviate
$\alpha^{(\cppr,1)}$, $C^{(\cppr,1)}$ and $G^{(\cppr,1)}$ 
as $\alpha^*$, $C^*$ and $G^*$ respectively.
Notice that
\begin{align*}
  B^{C^*}_n(j_{V_{C^*}})=
  I_1(\charf{j_{m+1}})
  \beta_n\brbr{j_{m+1},j_{\vc{\cppr}{1}}} 
  I_1(\charf{j_{\vc{\cppr}{I(\cppr)}}})\;
  \beta_n^{\cppr}(j_{V_{\cppr}}).
\end{align*}

%{We have$\Cqom(G^{(\cppr,1)})=\Cqom(G)$.}

\vspssm
(iii-i) The case $\cppr\in\nottenc$. 
%The last two products in (\ref{211222.1804}) read 
%$\prod_{c\in \nottenc\setminus\cbr{c'}}$ and 
%$\prod_{c\in C_{1-}(G)\sqcup\twochaos}$, respectively.
Since $\tensorc\setminus\cbr{\cppr}=\tensorc$,
the functional $\cali_n^{(\cppr,1)}$ is written as
\begin{align}
  \cali_n^{(\cppr,1)} &=
  n^{2H-1/2} \sum_{j\in[n]^{m+1}} A''_n(j)\;
  \prod_{C\in\nrbr{\Comp(G)\setminus(\tensorc\sqcup\cbr{\cppr})}\sqcup\ncbr{C^{*}}} B^C_n(j_{V_C})
  \prod_{C\in \tensorc} \check{B}^C_n(j_{V_C})
  \nn\\&= \funcSec(\alpha^*, G^{*}, \tensorc, A'', \bbone).
  \label{211224.2203}
\end{align}
Notice that 
$\Cttt(G^{*})=(\Cttt(G)\setminus\cbr{\cppr})\sqcup\bcbr{C^{*}}
\supset\tensorc$ and 
$\Comp(G^{*})\setminus\tensorc
=\brbr{\Comp(G)\setminus(\tensorc\sqcup\cbr{\cppr})}\sqcup\bcbr{C^{*}}$.

The exponent $e_2(C^{(\cppr,1)})$ is bounded as 
\begin{align*}
  e_2(C^{*}) =e_2^+(I(C^{*}))&=
  2-I(C^{*}) + 2\phi_H(I(C^{*}))=
  2- I(\cppr) -1 + 2 \phi_H(I(\cppr)+1)
  \\&\leq
  -1 + (2-I(\cppr) + 2 \phi_H(I(\cppr)))
  =-1 + e_2^+(I(\cppr))
  =-1 + e_2(\cppr)
\end{align*}
using $\phi_H(I+1)\leq\phi_H(I)$  (\ref{211127.2424}).
The exponent $e_2(\alpha^*, G^{*}, \tensorc)$ is 
\begin{align}
  e_2(\alpha^*, G^{*}, \tensorc)
  &= 2H-\frac{1}{2} + e_2(C^{*})
  + \sum_{C\in\Comp(G)\setminus(\tensorc\sqcup\cbr{\cppr})} e_2(C)
  + \sum_{C\in \tensorc} \expoT(C)
  %\nn\\&=
  %2H-\frac{1}{2} + 2-I_{c^{*}} + 2\phi_H(I_{c^{*}})
  %+ \sum_{c\in C\setminus(\cbr{c'}\sqcup\tensorc)} e_2(c)
  %+ \sum_{c\in \tensorc} \expoT(c)
  \nn\\&\leq
  2H-\frac{3}{2} + e_2(\cppr)
  + \sum_{C\in \Comp(G)\setminus(\tensorc\sqcup\cbr{\cppr})} e_2(C)
  + \sum_{C\in \tensorc} \expoT(C)
  = 2H-\frac{3}{2} + e_2(0,G,\tensorc).
  \label{211224.2223}
\end{align}
We set $\tensorc^{(\cppr,1)}=\tensorc$.

\vspssm
(iii-ii) The case $\cppr\in\tensorc$.
Since
$(\Comp(G)\setminus\tensorc)\setminus\cbr{\cppr}=\Comp(G)\setminus\tensorc$,
the functional $\cali_n^{(\cppr,1)}$ is written as
\begin{align}
  \cali_n^{(\cppr,1)} &=
  n^{2H-1/2} \sum_{j\in[n]^{m+1}} A''_n(j)\;
  \prod_{C\in (\Comp(G)\setminus\tensorc)\sqcup\ncbr{C^{*}}} B^C_n(j_{V_C})
  \prod_{C\in \tensorc\setminus\cbr{\cppr}} \check{B}^C_n(j_{V_C})
  \nn\\&= \funcSec(\alpha^*, G^{*}, \tensorc\setminus\cbr{\cppr}, A'', \bbone).
  \label{211224.2204}
\end{align}
Notice that
%$\Ctto(G^{*})=\Ctto(G)$,
$\Cttt(G^{*})=(\Cttt(G)\setminus\cbr{\cppr})\sqcup\bcbr{C^{*}}
\supset\tensorc\setminus\cbr{\cppr}$ and 
$\Comp(G^{*})\setminus(\tensorc\setminus\cbr{\cppr})
=(\Comp(G)\setminus\tensorc)\sqcup\bcbr{C^{*}}$.

Since 
\begin{align*}
  e_2(C^{*}) 
  = e_2^+(I(C^{*})) 
  &= 2-(I(\cppr)+1) + 2\phi_H(I(\cppr)+1)
  \\&=
  \Brbr{2-I(\cppr) -1  + \phi_H(2I(\cppr))} + \phi_H(2) + \delta_H(I(\cppr))
  %\\\leq& 
  %(2-I_{c'} -1  + \phi_H(2I_{c'})) + \phi_H(2)
  %(2-I_{c'} -1  + \phi_H(2I_{c'})) + (2- 1 -1  + \phi_H(2))
  %= e_2^-(I_{c'}) + e_2^-(1) 
  \\&= e_2^-(I(\cppr)) + \frac{1}{2} -2H + \delta_H(I(\cppr))
  = \expoT(\cppr) + \frac{1}{2} -2H + \delta_H(I(\cppr)),
\end{align*}
the exponent $e_2(\alpha^*, G^{*}, \tensorc\setminus\cbr{\cppr})$ is 
\begin{align*}
  e_2(\alpha^*, G^{*}, \tensorc\setminus\cbr{\cppr})
  &= 2H-\frac{1}{2} + e_2(C^{*}) 
  + \sum_{C\in\Comp(G)\setminus\tensorc} e_2(C)
  + \sum_{C\in \tensorc\setminus\cbr{\cppr}} \expoT(C)
  =  e_2(0,G,\tensorc) + \delta_H(I(\cppr)) .
\end{align*}
We set $\tensorc^{(\cppr,1)}=\tensorc\setminus\cbr{\cppr}$.

\vspsm
(iii-iii) The case $\cppr\in \Ctto(G)$.
Since $\tensorc\setminus\cbr{\cppr}=\tensorc$,
the functional $\cali_n^{(\cppr,1)}$ is written as
\begin{align}
  \cali_n^{(\cppr,1)} &=
  n^{2H-1/2} \sum_{j\in[n]^{m+1}} A''_n(j)\;
  \prod_{C\in \nrbr{\Comp(G)\setminus(\tensorc\sqcup\cbr{\cppr})}\sqcup\ncbr{C^{*}}} B^C_n(j_{V_C})
  \prod_{C\in \tensorc} \check{B}^C_n(j_{V_C})
  \nn\\&= \funcSec(\alpha^*, G^{*},\tensorc, A'', \bbone).
  \label{211224.2205}
\end{align}
Notice that
%$\Ctto(G^{*})=\Ctto(G)\setminus\cbr{c'}$,
$\Cttt(G^{*})=\Cttt(G)\sqcup\bcbr{C^{*}}
\supset\tensorc$ and 
$\Comp(G^{*})\setminus\tensorc=\brbr{\Comp(G)\setminus(\tensorc\sqcup\cbr{\cppr})}\sqcup\bcbr{C^{*}}$.

Since we have
\begin{align*}
  e_2(C^{*}) 
  = e_2^+(I(C^*)) 
  = e_2^+(2) 
  = 2\phi_H(2)
  %= 2e_2^-(1)
  = e_2^-(I(\cppr)) + \frac12 -2H
  = e_2(\cppr) + \frac12 -2H,
\end{align*}
the exponent $e_2(\alpha^*, G^{*}, \tensorc)$ is
\begin{align*}
  e_2(\alpha^*, G^{*}, \tensorc)
  &= 2H-\frac{1}{2} + e_2(C^{*}) 
  + \sum_{C\in (\Comp(G)\setminus\tensorc)\setminus\cbr{\cppr}} e_2(C)
  + \sum_{C\in \tensorc} \expoT(C)
  %\nn\\&{\text{the other way to decompose the sum would be easier to understand}}
  \nn\\&=
  2H-\frac{1}{2} 
  %+ e_2^-(I_{c'}) 
  + e_2(\cppr)
  + \frac{1}{2} -2H
  + \sum_{C\in (\Comp(G)\setminus\tensorc)\setminus\cbr{\cppr}} e_2(C)
  + \sum_{C\in \tensorc} \expoT(C)
  \\&=  e_2(0,G,\tensorc)
  =  e_2(0,G,\tensorc) + \delta_H(1).
\end{align*}
We set $\tensorc^{(\cppr,1)}=\tensorc$.

\vspssm
By summing up the above arguments (i), (ii), (iii-i), (iii-ii) and (iii-iii),
we have
\begin{align*}
  D_{u_n} \cali_n =&\;
  \cali_{n}^{(0)}
  + \sum_{C\in\Csone(G)} \cali_{n}^{(C)} 
  + \sum_{C\in \Ctt(G)} \rbr{\cali_{n}^{(C,1)} + \cali_{n}^{(C,2)} },
\end{align*}
where 
$\cali_{n}^{(0)}$, 
$\cali_{n}^{(C)}$ for $C\in\Csone(G)$ and
$\cali_{n}^{(C,1)}$ for $C\in\Ctt(G)$ 
have the representations 
(\ref{211224.2201}), (\ref{211224.2202}), (\ref{211224.2203}), (\ref{211224.2204})
and (\ref{211224.2205}).
We can see that $\cali_{n}^{(C,2)}$ $(C\in\Ctt(G))$ 
has a similar representation corresponding 
to one of (\ref{211224.2203}), (\ref{211224.2204})
and (\ref{211224.2205}).
Let
$\Lambda=\cbr{0} \sqcup \Csone(G) \sqcup \cbr{(C,1),(C,2)\mid C\in\Ctt(G)}$.

Consider the case (a) $\twochaos$ is empty,
where 
$\Lambda=\cbr{0} \sqcup \Csone(G) \sqcup \cbr{(C,1),(C,2)\mid C\in\nottenc}$.
We have
\begin{align*}
  \max_{\lambda\in\Lambda} 
  e_2(\alpha^\lambda, G^\lambda, \tensorc^\lambda)
  = e_2(0, G, \tensorc) + 2H-\frac32
\end{align*}
from (\ref{211224.2221}), (\ref{211224.2222}) and (\ref{211224.2223})
and $0\in\Lambda$ in any case.
%{When the functionals belonging to $A$ are deteministic,
%we can set $\Lambda=\Csone \sqcup \cbr{(c,1),(c,2): c\in\nottenc}$}
%
In the case (b) $\twochaos$ is nonempty, 
we have
\begin{align*}
  \max_{\lambda\in\Lambda} 
  e_2(\alpha^\lambda, G^\lambda, \tensorc^\lambda)
  = e_2(0, G, \tensorc) + 
  \rbr{2H-\frac32} \vee
  \max_{c\in\twochaos} \delta_H(I(C))
  = e_2(0, G, \tensorc) + \delta_H(I_{0})
\end{align*}
with $I_0=\min_{C\in\twochaos} I(C)$
by (\ref{211224.2242}).
\end{proof}

\subsubsection{Change of the order of by the action of $D^i$ }
\begin{proposition}\label{210407.2246}
  Let $\alpha\in\bbR$.
  Suppose that $G=(V,\edgeWt,\vertWt)$ is a weighted graph satisfying Assumption \ref{211203.1620}, 
  $A\in\cala(V)$ and $\bbf\in\calf(V)$.
  For a subset $\tensorc$ of $\Cttt(G)$ % satisfying $\cond^2_{2,2+}$,
  and $i\in\bbZ_{\geq1}$,
  \begin{align*}
    \Bnorm{ \snorm{D^i \cali_n}_{\calh^{\otimes i}} }_p
    = \olog(n^{e_2(\alpha, G, \tensorc)})
  \end{align*}
  for any $p>1$ %for any $p\geq2$
  as $n\to\infty$ for
  $\cali_n = \funcSec(\alpha,G,\tensorc,A,\bbf)$
  given by (\ref{211126.2643}).
\end{proposition}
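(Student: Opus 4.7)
The proof follows the structure of Proposition \ref{210823.2400}, with the first exponent replaced by the second and the richer functionals $\funcSec$ in place of $\cali_n$. As before we may assume $\alpha=0$, $\bbf=\bbone$, and $V=[m]$ with some $m\in\bbN$. A preliminary observation is that for any $C\in\Cttt(G)$ the difference $B_n^C(j,\bbone)-\check{B}_n^C(j,\bbone)=\beta_n(j_{v^C_1},j_{v^C_{I(C)}})\beta_n^C(j)$ is (Malliavin-)deterministic, so $D\check{B}_n^C=DB_n^C$. Consequently the Leibniz expansion of $D^i\cali_n$ has exactly the same combinatorial form whether the components are written with $B$ or $\check{B}$:
\begin{align*}
D^i\cali_n \;=\; \sum_{\lambda\in(\{0\}\sqcup V)^{[i]}} c_\lambda\,\calb^\lambda\cali_n,
\end{align*}
where $\lambda_k=0$ indicates that the $k$-th derivative is applied to $A_n$ and $\lambda_k=v$ that it is applied to the Wiener-chaos factor at vertex $v$; only $\lambda$ with $\lambda_v\le\vertWt(v)$ for all $v\in V$ contribute.

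By the triangle inequality together with $\|X\|_p=\|X^2\|_{p/2}^{1/2}$ (as in the argument leading to (\ref{211120.2811})), it suffices to bound $\|\,\|\calb^\lambda\cali_n\|_{\calh^{\otimes i}}^2\,\|_{p/2}$ for each valid $\lambda$. Pairing two independent copies of the multi-index $j$ (labelled $[m]$ and $[m+1,2m]$) rewrites this squared Hilbert norm as a functional on the doubled vertex set $V\sqcup(V+m)$:
\begin{align*}
\|\calb^\lambda\cali_n\|_{\calh^{\otimes i}}^2 \;=\; \funcSec(0,G^{(\lambda)},\tensorc^{(\lambda)},A^{(\lambda)},\bbone),
\end{align*}
where $G^{(\lambda)}$ is built from $G$ on $[m]$ and its shift $\shiftg{G}{m}$ by inserting an edge of weight $\lambda_v$ between $v$ and $v+m$ whenever $\lambda_v>0$, with the corresponding vertex weights reduced by $\lambda_v$ on each side; $A^{(\lambda)}_n(j)=c_\lambda^2\,\langle D^{\lambda_0}A_n(j_{[m]}),D^{\lambda_0}A_n(j_{[m+1,2m]})\rangle_{\calh^{\otimes\lambda_0}}$; and $\tensorc^{(\lambda)}$ is chosen to contain $C$ and $\shiftg{C}{m}$ for every untouched $C\in\tensorc$ (i.e.\ those with $\bar\lambda_C:=\sum_{v\in V(C)}\lambda_v=0$), together with any merged component whose natural description is still a path graph with weighted ends.

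The technical heart is now (i) verifying that $G^{(\lambda)}$ satisfies Assumption \ref{211203.1620}, and (ii) establishing $e_2(0,G^{(\lambda)},\tensorc^{(\lambda)})\le 2\,e_2(0,G,\tensorc)$. This is carried out component-by-component. If $\barq(C)=0$ (i.e.\ $C\in\Cszero\sqcup\Ctz$), then $\bar\lambda_C=0$ is forced and two disjoint copies of $C$ contribute $2e_2(C)$. If $C\in\Csone$ and $\bar\lambda_C=1$, the merged component is a tree on $2I(C)$ vertices of total weight zero, lying in $\Cszero(G^{(\lambda)})$ with exponent $2-2I(C)=2e_2(C)$. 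If $C\in\Ctto$ and $\bar\lambda_C\in\{1,2\}$, the merged component is a $2$-vertex path graph with weighted ends (for $\bar\lambda_C=1$) or a $2$-vertex cycle graph (for $\bar\lambda_C=2$). If $C\in\Cttt$ with $I(C)\ge 2$, then $\bar\lambda_C=1$ produces a path graph with weighted ends on $2I(C)$ vertices, while $\bar\lambda_C=2$ produces a cycle graph on $2I(C)$ vertices. The quantitative ingredient is the identity
\begin{align*}
e^+_2(2I(C)) \;=\; (2-2I(C))+2\phi_H(2I(C)) \;=\; 2\brbr{1-I(C)+\phi_H(2I(C))} \;=\; 2e^-_2(I(C)),
\end{align*}
which is the $I_1=I_2=I(C)$ case of the inequality (\ref{220327.1641}) already used in the proof of Lemma \ref{211226.1426}; combined with $e^-_2(k)\le e^+_2(k)$ from (\ref{211202.2516}), this yields a component contribution of at most $2e_2(C)$ (or $2\expoT(C)$ when $C\in\tensorc$), irrespective of how the merged component is placed with respect to $\tensorc^{(\lambda)}$.

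Summing over components gives $e_2(0,G^{(\lambda)},\tensorc^{(\lambda)})\le 2\,e_2(0,G,\tensorc)$, and Proposition \ref{210407.1402} applied to the functional $\funcSec(0,G^{(\lambda)},\tensorc^{(\lambda)},A^{(\lambda)},\bbone)$ yields an $\olog(n^{2e_2(0,G,\tensorc)})$ bound in $L^{p/2}$; taking square roots and summing over the finitely many $\lambda$ completes the proof for $p\ge 2$, and the case $p\in(1,2)$ follows by H\"older's inequality. The main obstacle is the case distinction in the component-by-component analysis, in particular identifying when the merged component is a cycle graph versus a path graph with weighted ends and choosing $\tensorc^{(\lambda)}$ so that the exponent inequality remains tight; all remaining estimates reduce to already-established properties of $\phi_H$.
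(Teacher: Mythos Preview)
Your approach is essentially the paper's: Leibniz expansion, squaring to a $\funcSec$ on the doubled vertex set, component-wise verification of Assumption~\ref{211203.1620}, and the identity $e^+_2(2I)=2e^-_2(I)$. One correction is needed in your definition of $\tensorc^{(\lambda)}$: it must contain \emph{only} the pairs $\{C,\shiftg{C}{m}\}$ for untouched $C\in\tensorc$ (those with $\bar\lambda_C=0$), and \emph{not} the merged path components. When $\bar\lambda_C=1$ with $C\in\Ctt(G)$, the squared Hilbert norm produces the product $I_1(\cdot)I_1(\cdot)$ of the two surviving first-chaos factors (one on each copy), which is $B_n^{G^{(\lambda,C)}}$ rather than $\check{B}_n^{G^{(\lambda,C)}}$; including $G^{(\lambda,C)}$ in $\tensorc^{(\lambda)}$ would therefore falsify the claimed equality with $\funcSec(0,G^{(\lambda)},\tensorc^{(\lambda)},A^{(\lambda)},\bbone)$. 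Your remark that the exponent bound holds ``irrespective of placement'' is correct but does not repair the identity; with the paper's choice $\tensorc^{(\lambda)}=\{C,\shiftg{C}{m}:C\in\cblam{0}\cap\tensorc\}$ the merged path component sits in $\Cttt(G^{(\lambda)})\setminus\tensorc^{(\lambda)}$, contributes $e_2(G^{(\lambda,C)})=e^+_2(2I(C))=2e^-_2(I(C))$, and the rest of your comparison against $2e_2(C)$ or $2\expoT(C)$ then goes through verbatim.
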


\begin{proof}
  As seen in the proof of Lemma \ref{211226.1426},
  we may only consider the case where $\bbf=\bbone$.
  Without loss of generality, we assume that 
$\alpha=0$ and $V$ is written as $V=[m]$ with some $m\in\bbN$.
Fix %$\bbf$,
$G=(V,\edgeWt,\vertWt)$, $\tensorc\subset \Cttt(G)$ and $A\in\cala(V)$.
%Without loss of generality, we can assume $\alpha=0$.
%We fix 
%$G,A,\bbf,\tensorc$
%and 
Consider 
$\cali_n = \funcSec(\alpha,G,\tensorc,A,\bbf)$ defined at (\ref{211126.2643}).
The $i$-th derivative of $\cali_n$ can be written as
\begin{align*}
  D^i\cali_n = 
  \sum_{\bar\lambda\in (\cbr{0}\sqcup \Comp(G))^{[i]}} 
  D^{\bar\lambda} \cali_n,
\end{align*}
where the above summation runs through 
the set $(\cbr{0}\sqcup \Comp(G))^{[i]}$ of all the mappings 
$\bar\lambda$ 
from $\cbr{1,..,i}$ to $\cbr{0}\sqcup \Comp(G)$,
\begin{align*}
  D^{\bar\lambda}\cali_n =
  \rbr{
  %n^\alpha
  \sum_{j\in[n]^m} D^{\bar\lambda_0}_{s_{\bar\lambda^{-1}[0]}} A_n(j)
  \prod_{C\in\Comp(G)\setminus\tensorc} 
    D^{\bar\lambda_C}_{s_{\bar\lambda^{-1}[C]}} B^C_n(j_{V_C})
  \prod_{C\in \tensorc} 
    D^{\bar\lambda_C}_{s_{\bar\lambda^{-1}[C]}} \check{B}^C_n(j_{V_C})
  }_{s_1,..,s_i\in[0,T]}
\end{align*}
with the notations
$\bar\lambda^{-1}[C]:= \bar\lambda^{-1}(\cbr{C})$ and
$\blamc:=\abs{\bar\lambda^{-1}[C]}$ for $C\in\cbr{0}\sqcup \Comp(G)$, and
$s_{K}$ reads
$s_{k_{1}},..,s_{k_{\abs{K}}}$
for $K=\cbr{k_1,..,k_{\abs{K}}}\subset\cbr{1,..,i}$. % with $k_1<..<k_{\abs{K}}$.
For  $\bar\lambda\in (\cbr{0}\sqcup \Comp(G))^{[i]}$ and
$k\in\bbZ_{\geq0}$,
we write
$\cblam{k}=\cbr{C\in \Comp(G)\mid \bar\lambda_C=k}$.
We only consider $\bar\lambda$ satisfying
$\bar\lambda_C\leq\barq(C)$ for $C\in \Comp(G)$,
in other words
\begin{align*}%\label{211211.2300}
  \cblam{1}\subset \Csone(G)\sqcup \Ctt(G),\quad
  \cblam{2}\subset \Ctt(G) \tand 
  \bigsqcup_{k=0,1,2}\cblam{k}=\Comp(G);
\end{align*}
%$C_0\subset \cblam{0}$,
%$C^1\subset \cup_{k=0,1}\cblam{k}$ and
%$C^2_2\subset\cup_{k=0,1,2}\cblam{k}$
otherwise 
$D^{\bar\lambda}\cali_n$ is zero.

Since 
$B^C_n(j_{V_C})$ and $\check{B}^C_n(j_{V_C})$ for $C\in\Cttt(G)$
differ only by some deterministic value thanks to the product formula,
we have
\begin{align*}
  D^{k} B^C_n(j_{V_C})
  =
  D^{k} \check{B}^C_n(j_{V_C})
\end{align*}
for $k\in\bbZ_{\geq1}$.
We can write, hence, for a fixed $\bar\lambda$ 
\begin{align*}
  D^{\bar\lambda}\cali_n =
  \sum_{\lambda}D^\lambda \cali_n
\end{align*}
 with 
\begin{align*}
  D^\lambda \cali_n = 
  c_\lambda \Bigg(%\rbr{
  %n^\alpha
  \sum_{j\in[n]^m} 
    &D^{\lambda_0}_{s_{\lambda^{-1}[0]}} A_n(j)
  \prod_{C\in \cblam{0}\setminus\tensorc} 
    B^C_n(j_{V_C})
  \prod_{C\in \cblam{0}\cap\tensorc} 
    \check{B}^C_n(j_{V_C})
  \nn\\
  &\times
  \prod_{C\in \cblam{1}\sqcup\cblam{2}} 
  \rbr{
    \beta_n^C(j_{V_C})\;
    \prod_{v\in V(C)} 
    I_{(\vertwtlow_v-\lambda_v)}(1_{j_v}^{\otimes(\vertwtlow_v-\lambda_v)})\;
    1_{j_v}^{\otimes\lambda_v}(s_{\lambda^{-1}[v]})
  }\Bigg)_{s_1,..,s_i\in[0,T]},
\end{align*}
where the above summation runs through 
all the mappings $\lambda$ 
from $\cbr{1,..,i}$ to $\cbr{0}\sqcup V$
satisfying that
$\bar\lambda^{-1}[0]=\lambda^{-1}[0]$,
%$\lambda(\bar\lambda^{-1}[0])\subset\cbr{0}$,
%$\lambda(k)=0$ for $k\in\bar\lambda^{-1}(\cbr{0})$ and 
$\bar\lambda^{-1}[C]= \lambda^{-1}(V(C))$ for $C\in \Comp(G)$ and 
%$\lambda(\bar\lambda^{-1}[c])\subset V_c$ for $c\in C$ and 
$\lambda_v\leq \vertwtlow(v)$ for $v\in V$,
%$\lambda(k)\in V_c$ for $k\in\bar\lambda^{-1}(\cbr{c})$ ($c\in C$),
with the notations
$\lambda^{-1}[v]:=\lambda^{-1}(\cbr{v})$ and
$\lambda_v:=\abs{\lambda^{-1}[v]}$ for $v\in \cbr{0}\sqcup V$, and
$c_\lambda=\prod_{v\in V} \frac{\vertwtlow_v!}{(\vertwtlow_v-\lambda_v)!} 
=2^{\abs{\Ctto(G)\cap(\cblam{1}\sqcup\cblam{2})}}$ 
is a conbinatorial constant.
We write $\lambda\ll\bar\lambda$
when $\lambda$ satisfies these relations with $\bar\lambda$.
We have
$\bar\lambda_C = \sum_{v\in V(C)} \lambda_v$ for $C\in\Comp(G)$ and 
$i = \sum_{v\in\cbr{0}\sqcup V} \lambda_v$.

Since 
\begin{align}\label{211211.1711}
  \Bnorm{\bnorm{D^i\cali_n}_{\calh^{\otimes i}}}_p 
  \leq
  \sum_{\bar\lambda:\bar\lambda_C\leq\barq(C)}% \text{ for } c\in C}
  \sum_{\lambda\ll\bar\lambda}
    \Bnorm{\snorm{D^\lambda \cali_n}_{\calh^{\otimes i}}}_p
  =
  \sum_{\bar\lambda:\bar\lambda_C\leq\barq(C)}
  \sum_{\lambda\ll\bar\lambda}
    \Bnorm{\snorm{D^\lambda \cali_n}_{\calh^{\otimes i}}^2 }_{p/2}^{1/2}
\end{align}
for $p\geq2$,
we will consider the functional
$
\snorm{D^\lambda \cali_n}_{\calh^{\otimes i}}^2 
$
for fixed $\bar\lambda\in (\cbr{0}\sqcup \Comp(G))^{[i]}$ 
and $\lambda\in (\cbr{0}\sqcup V)^{[i]}$ such that
$\bar\lambda_C\leq\barq(C)$ for $C\in\Comp(G)$ and
$\lambda\ll\bar\lambda$.
We can write
\begin{align*}
  \snorm{D^\lambda\cali_n}_{\calh^{\otimes i}}^2
  =
  c_\lambda^2 \sum_{j,k\in [n]^m}&
  \abr{D^{\lambda_0}A_n(j), D^{\lambda_0}A_n(k)}_{\calh^{\otimes \lambda_0}}
  %\nn\\&\times
  \Brbr{\prod_{C\in \cblam{0}\setminus\tensorc} 
    B^C_n(j_{V_C}) 
    B^C_n(k_{V_C})}
  \Brbr{\prod_{C\in \cblam{0}\cap\tensorc} 
    \check{B}^C_n(j_{V_C}) 
    \check{B}^C_n(k_{V_C})}
  \nn\\&
  \times\prod_{C\in\cblam{1}\sqcup\cblam{2}} 
  \bigg\{
    \beta_n^{C}(j_{V_C})\; \beta_n^{C}(k_{V_C})\;
    \prod_{v\in V(C)}\beta_n\rbr{j_{v},k_{v}}^{\lambda_v}
    \nn\\&\hspace{100pt}\times
    \Brbr{\prod_{v\in V(C)} I_\rbr{\vertwtlow_v-\lambda_v}(1_{j_v}^{\otimes (\vertwtlow_v-\lambda_v)})}
    \Brbr{\prod_{v\in V(C)} I_\rbr{\vertwtlow_v-\lambda_v}(1_{k_v}^{\otimes (\vertwtlow_v-\lambda_v)})}
  \bigg\}
\end{align*}
and 
\begin{align*}
  &\beta_n^{C}(j_{V_C})\; \beta_n^{C}(k_{V_C})\;
    \prod_{v\in V(C)}\beta_n\rbr{j_{v},k_{v}}^{\lambda_v}
    \Brbr{\prod_{v\in V(C)} I_\rbr{\vertwtlow_v-\lambda_v}(1_{j_v}^{\otimes (\vertwtlow_v-\lambda_v)})}
    \Brbr{\prod_{v\in V(C)} I_\rbr{\vertwtlow_v-\lambda_v}(1_{k_v}^{\otimes (\vertwtlow_v-\lambda_v)})}
    \\=&
    \begin{cases}
      \beta_n^{C}(j_{V_C})\; \beta_n^{C}(k_{V_C})\;
      \beta_n\rbr{j_{v^C},k_{v^C}},
      & \tif C\in \cblam{1}\cap\Csone(G)\;
      \text{($v^C$ is $v\in V(C)$ with $\lambda_v=1$.)}
      \vspssm\\
      \beta_n\rbr{j_{v^C},k_{v^C}} I_1(1_{j_{v^C}}) I_1(1_{k_{v^C}})
      & \tif C\in \cblam{1}\cap\Ctto(G)\;
      \text{($v^C$ is the only vertex in $V(C)$.)}
      \vspssm\\
      \beta_n^{C}(j_{V_C})\; \beta_n^{C}(k_{V_C})\;
      \beta_n\rbr{j_{v^C},k_{v^C}}\; I_1(1_{j_{v^C_0}})I_1(1_{k_{v^C_0}})
      &\tif C\in \cblam{1}\cap\Cttt(G)
      \vspssm\\
      \beta_n\rbr{j_{v^C},k_{v^C}}^2
      &\tif C\in \cblam{2}\cap\Ctto(G)\;
      \text{($v^C$ is the only vertex in $V(C)$.)}
      \vspssm\\
      \beta_n^{C}(j_{V_C})\; \beta_n^{C}(k_{V_C})\;
      \beta_n\brbr{j_{v^C_1},k_{v^C_1}}\beta_n\brbr{j_{v^C_{I(C)}},k_{v^C_{I(C)}}}
      &\tif C\in \cblam{2}\cap\Cttt(G),
    \end{cases}
\end{align*}
where 
for $C\in\cblam{1}\cap\Cttt(G)$, 
$v^C$ is $v\in V(C)$ with $\lambda_v=1$ and
$v^C_0$ is $v(\neq v^C)\in V(C)$ such that $\vertwtlow(v)=1$;
for $C\in\cblam{2}\cap\Cttt(G)$, 
we recall that $v^C_1$ and $v^C_{I(C)}$ are the two vertices $v\in V(C)$ such that $\vertwtlow(v)=1$.
  
Let
\begin{align*}
  G^{(\lambda,C)} :=&
  \dabr{v^C,v^C+m}_{1} (C\vee \shiftg{C}{m}),
  &&\text{ if $C\in\cblam{1}$.  ($v^C$ is $v\in V(C)$ with $\lambda_v=1$.)}
  \\
  G^{(\lambda,C)} :=&
  \dabr{v^C_1,v^C_1+m}_{1}\dabr{v^C_{I(C)},v^C_{I(C)}+m}_{1}
   (C\vee \shiftg{C}{m})
  &&\text{ if $C\in \cblam{2}(\subset\Ctt(G))$,}
\end{align*}
where 
%we recall that $v^c_1$ and $v^c_{I_c}$ are the two terminal vertices of $c\in\Cttt(G)$, and 
for $C\in \cblam{2}\cap \Ctto(G)$ we read
  $G^{(\lambda,C)} = \dabr{v^C,v^C+m}_{2} (C\vee \shiftg{C}{m})$
  with $\cbr{v^C}=V(C)$.
We define a weighted graph $G^{(\lambda)}$ by 
\begin{align*}
  G^{(\lambda)} = 
  \Brbr{\mathop\vee_{C\in \cblam{0}} (C\vee \shiftg{C}{m})}
  %\vee \Brbr{\mathop\vee_{c\in \cblam{1}} G_c^{1}}
  %\vee \Brbr{\mathop\vee_{c\in \cblam{2}} G_c^{2}}.
  \vee \Brbr{\mathop\vee_{C\in \cblam{1}\sqcup\cblam{2}} G^{(\lambda,C)}}.
\end{align*}

Recall $\cblam{1}\subset\Csone(G)\sqcup\Ctt(G)$.
For $C\in\cblam{1}\cap \Csone(G)$,
we have
$ s(G^{(\lambda,C)})=(1+1) + 0 -2(2-1)=0$ 
by (\ref{211225.2120})
and $G^{(\lambda,C)}$ belongs to $\Cszero(G^{(\lambda)})$.
For $C\in\cblam{1}\cap\Ctt(G)$,
we can observe that $G^{(\lambda,C)}$ is a path graph with weighted ends of $2I(C)$ vertices,
%and hence the component $G^*_c$ satisfies the condition (ii)(b)
%in Assumption \ref{211203.1620}
and belongs to $\Cttt(G^{(\lambda)})$.
For $C\in \cblam{2}(\subset\Ctt(G))$,
we can observe that
$G^{(\lambda,C)}$ is a cycle graph of $2I(C)$ vertices in the sense of Definition \ref{220322.1900},
%and thus the component $G^*_c$ satisfies the condition (ii)(a)
%in Assumption \ref{211203.1620}
and belongs to $\Ctz(G^*)$.
In total, we have
\begin{align*}
  \Cszero(G^{(\lambda)}) =& 
  \bcbr{C, \shiftg{C}{m}\mid C\in  \cblam{0}\cap \Cszero(G)(=\Cszero(G))} 
  %\bcbr{c, \shiftg{c}{m}:c\in \cblam{0}\cap C^0(G)} 
  \sqcup \bcbr{G^{(\lambda,C)}\mid C\in\cblam{1}\cap \Csone(G)}\\
  \Csone(G^{(\lambda)}) =& 
  \bcbr{C, \shiftg{C}{m}\mid C\in\cblam{0}\cap \Csone(G)}\\
  \Ctz(G^{(\lambda)}) =& 
  \bcbr{C, \shiftg{C}{m}\mid C\in\cblam{0}\cap \Ctz(G)(=\Ctz(G))} 
  %\bcbr{c, \shiftg{c}{m}:c\in \cblam{0}\cap C^2_0(G)} 
  \sqcup \bcbr{G^{(\lambda,C)}\mid C\in\cblam{2}\cap \Ctt(G)(=\cblam{2})}\\
  \Ctto(G^{(\lambda)}) =& 
  \bcbr{C, \shiftg{C}{m}\mid C\in\cblam{0}\cap \Ctto(G)}\\
  \Cttt(G^{(\lambda)}) =& 
  \bcbr{C, \shiftg{C}{m}\mid C\in\cblam{0}\cap \Cttt(G)} 
  \sqcup \bcbr{G^{(\lambda,C)}\mid C\in\cblam{1}\cap \Ctt(G)}
\end{align*}
and we see that the graph $G^{(\lambda)}$ satisfies Assumption \ref{211203.1620}.

We set 
\begin{align*}
  A^{(\lambda)}=\brbr{A^{(\lambda)}_n(j)}_{j\in[n]^{2m}, n\in\bbN}=
  \Brbr{c_\lambda^2 
  \abr{D^{\lambda_0}A_n(j_{[m]}), D^{\lambda_0}A_n(j_{[m+1,2m]})}
  _{\calh^{\otimes \lambda_0}}}_{j\in[n]^{2m},n\in\bbN},
\end{align*}
and 
%\begin{align*}
$  \tensorc^{(\lambda)} = \bcbr{C, \shiftg{C}{m}\mid C\in\cblam{0}\cap \tensorc}.
$ %\end{align*}
Indeed 
$A^{(\lambda)}\in\cala([2m])$ and
$\tensorc^{(\lambda)}$ is a subset of $\Cttt(G^{(\lambda)})$.
We have
\begin{align*}
  \snorm{D^\lambda\cali_n}_{\calh^{\otimes i}}^2
  =
  \funcSec(0, G^{(\lambda)}, \tensorc^{(\lambda)}, A^{(\lambda)}, \bbone),
\end{align*}
since 
\begin{align}
  \Comp(G^{(\lambda)})\setminus\tensorc^{(\lambda)}
  &=\Cszero(G^{(\lambda)})\sqcup \Csone(G^{(\lambda)})
  \sqcup \Ctz(G^{(\lambda)})
  \sqcup \Ctto(G^{(\lambda)})\sqcup \rbr{\Cttt(G^{(\lambda)})
  \setminus \tensorc^{(\lambda)}}
  \nn\\&=\bcbr{C, \shiftg{C}{m}\mid C\in \cblam{0}\setminus\tensorc} 
  \sqcup \bcbr{G^{(\lambda,C)}\mid C\in \cblam{1}\sqcup\cblam{2}}.
  \label{220531.1915}
\end{align}

We proceed to estimate the exponent 
$e_2(0,G^{(\lambda)},\tensorc^{(\lambda)})$.
For $C\in\cblam{1}\cap\Csone(G)$, we have
\begin{align*}
  e_2(G^{(\lambda,C)})=2-I(G^{(\lambda,C)})
  =2(1-I(C))=2e_2(C).
\end{align*}
In the both cases 
$C\in \cblam{1}\cap\Ctt(G)$ and 
$C\in \cblam{2}\cap\Ctt(G)$,
we have
\begin{align*}
  e_2(G^{(\lambda,C)})&=e^+_2(I(G^{(\lambda,C)}))=e^+_2(2I(C))
  \\&=(2-2I(C))+2\phi_H(2I(C))
  =2\brbr{(2-I(C))-1+\phi_H(2I(C))}
  =2e^-_2(I(C)).
  %\leq 2e^+_2(I_{c}).
\end{align*}

%$\cblam{1}\sqcup\cblam{2}=(\cblam{1}\cap\Csone(G))\sqcup
%(\brbr{\cblam{1}\sqcup\cblam{2}}\cap\Ctt(G))$,
With the notation 
$\cblam{+}=\cblam{1}\sqcup\cblam{2}$,
we have
\begin{align*}
  \sum_{C\in \cblam{+}} e_2(G^{(\lambda,C)}) 
  &=
  \sum_{C\in \cblam{+}\cap \Csone(G)} 2 e_2(C) +
  \sum_{C\in \cblam{+}\cap \Ctto(G)} 2 e^-_2(I(C)) +
  \sum_{C\in \cblam{+}\cap \tensorc} 2 e^-_2(I(C)) +
  \sum_{C\in \cblam{+}\cap \nottenc} 2 e^-_2(I(C)) 
  \\&\leq
  2\bbcbr{
    \sum_{C\in \cblam{+}\cap \Csone(G)} e_2(C) +
  \sum_{C\in \cblam{+}\cap \Ctto(G)} e^-_2(I(C)) +
  \sum_{C\in \cblam{+}\cap \tensorc} e^-_2(I(C)) +
  \sum_{C\in \cblam{+}\cap \nottenc} e^+_2(I(C)) 
  }
  %\\=&
  %2\bbcbr{
  %  \sum_{c\in \cblam{+}\cap C^1(G)} e_2(c) +
  %\sum_{c\in \cblam{+}\cap \Ctto(G)} e_2(c) +
  %\sum_{c\in \cblam{+}\cap \tensorc} \expoT(c) +
  %\sum_{c\in \cblam{+}\cap \nottenc} e_2(c) 
  %}
  \\&=
  2\bbcbr{
    \sum_{C\in \cblam{+}\setminus \tensorc} e_2(C) +
    \sum_{C\in \cblam{+}\cap \tensorc} \expoT(C) 
  }.
\end{align*}
We used the following relation at the last equality.
\begin{align*}
  \brbr{\cblam{+}\cap \Csone(G)}\sqcup
\brbr{\cblam{+}\cap \Ctto(G)}\sqcup
\brbr{\cblam{+}\cap \nottenc}
=
\cblam{+}\cap \brbr{\Cqzero(G)\sqcup \Csone(G)\sqcup\Ctto(G)\sqcup \nottenc}
=
\cblam{+}\setminus\tensorc
\end{align*}

Thus, we obtain
\begin{align*}
  e_2(0,G^{(\lambda)},\tensorc^{(\lambda)})=&\;
  \sum_{C\in \Comp(G^{(\lambda)})\setminus\tensorc^{(\lambda)}} e_2(C) + 
  \sum_{C\in \tensorc^{(\lambda)}} \expoT(C)
  \\=&\;
  2\sum_{C\in\cblam{0}\setminus\tensorc} e_2(C) + 
  \sum_{C\in \cblam{1}\sqcup \cblam{2}} e_2(G^{(\lambda,C)})+
  2\sum_{C\in \cblam{0}\cap \tensorc} \expoT(C)
  \\\leq&\;
  2\sum_{C\in \Comp(G)\setminus\tensorc} e_2(C) +
  2\sum_{C\in \tensorc} \expoT(C)
  =%\\=&
  2e_2(0,G,\tensorc),
\end{align*}
where we used (\ref{220531.1915}) at the first equality. 
Hence, we have the estimate
\begin{align*}
  \Bnorm{\snorm{D^\lambda\cali_n}_{\calh^{\otimes i}}^2}_p
  =\bar O(n^{2e_2(0,G,\tensorc)})
\end{align*}
for $p>1$ by Proposition \ref{210407.1402}.
This estimate holds
for any $\bar\lambda\in (\cbr{0}\sqcup \Comp(G))^{[i]}$ 
and $\lambda\in (\cbr{0}\sqcup V)^{[i]}$ such that
$\bar\lambda_C\leq\barq(C)$ for $C\in\Comp(G)$ and
$\lambda\ll\bar\lambda$. 
By the inequality (\ref{211211.1711}), we have
\begin{align*}
  \Bnorm{\bnorm{D^i\cali_n}_{\calh^{\otimes i}}}_p 
  \leq
  \sum_{\bar\lambda:\bar\lambda_c\leq\barq(c)}% \text{ for } c\in C}
  \sum_{\lambda\ll\bar\lambda}
    \Bnorm{\snorm{D^\lambda \cali_n}_{\calh^{\otimes i}}}_p
  =
  \sum_{\bar\lambda:\bar\lambda_c\leq\barq(c)}
  \sum_{\lambda\ll\bar\lambda}
    \Bnorm{\snorm{D^\lambda \cali_n}_{\calh^{\otimes i}}^2 }_{p/2}^{1/2}
  =\bar O(n^{e_2(0,G,\tensorc)})
\end{align*}
for $p>2$.
\end{proof}

\subsection{Estimates related to the definition of exponents}
Recall that
$\beta_{j_1,j_2} = \langle 1_{j_1}, 1_{j_2}\rangle$
for $j_1,j_2\in[n]$ and $n\in\bbZ_{\geq1}$ with 
$1_{j} = 1_{(T(j-1)/n,Tj/n]}$.
Notice that $\beta_{j_1,j_2}(>0)$ and $1_{j}$ implicitly depend on $n$, 
$H\in\rbr{\frac12,\frac34}$ and $T>0$.
We write
  $\rho_H(\ell)=\abr{1_{\sbr{0,1}}, 1_{\sbr{\ell,\ell+1}}}
  =E\sbr{(B^H_1-B^H_0) (B^H_{\ell+1}-B^H_\ell)}
  =\frac12\rbr{\abs{\ell+1}^{2H}+\abs{\ell-1}^{2H}-2\abs{\ell}^{2H}}$ for $\ell\in\bbZ$.

\begin{lemma}\label{211006.1220}
Assume that $H\in(\frac{1}{2},\frac{3}{4})$. 
\begin{itemize}
  \item [(a)] $\sup_{j_1,j_2\in[n]} \beta_{j_1,j_2} = O(n^{-2H})$ as $n\to\infty$.
  % const depends on $H,T$.%\label{201229.1505}
  \item [(b)] $\sup_{j_1\in[n]} \rbr{\sum_{j_2\in[n]} \beta_{j_1,j_2}}= O(n^{-1})$
  as $n\to\infty$.
  %\label{211005.2115}
  \item [(c)] Let $k\geq2$ and 
  \begin{align*}
    \betacycle{n}{k} = \sum_{j\in[n]^k} 
    %$B_{n,k} = \sum_{j_1,..,j_k\in[n]}
    \beta_n\rbr{j_1,j_2}\cdots
    \beta_n\rbr{j_{k-1},j_k}
    \beta_n\rbr{j_{k},j_1}. 
  \end{align*}
  Then, for $k=2$,
  \begin{align*}
    \betacycle{n}{k} &= O(n^{1-4H}),
  \end{align*}
  and for $k\geq3$,
  \begin{alignat*}{2}
    \betacycle{n}{k}&=
    \begin{cases}
      O(n^{1-2kH})  \quad&\tifsm H< H(k) \\
      O(n^{1-2kH} \;\log n)   \quad&\tifsm H= H(k) \\
      O(n^{-k})   \quad&\tifsm H> H(k) 
    \end{cases}
  \end{alignat*}
  as $n\to\infty$, where $H(k)=\frac12(1+\frac{1}{k})$.
  With $e_2^+(k)=(1-2kH)\vee (-k)$ ($k\geq2$) defined at (\ref{220121.1137}),
  we can write 
  \begin{align}
    \betacycle{n}{k}=\olog(n^{e_2^+(k)})
    \qquad\text{as $n\to\infty$.}
    \label{220325.1344}
  \end{align}

  In particular, we can roughly estimate
  \begin{align*}
    \betacycle{n}{k}=O(n^{1-4H-(k-2)}).
    %\label{220325.1345}
  \end{align*}

\item [(d)] For an integer $k\geq2$,
let $\phi_H(k) =-\frac12 +((\frac12-H)k)\vee (-\frac12)$ 
as in (\ref{211006.1341}).
For $k_1,k_2\geq1$,
\begin{align*}%\label{220120.1949}
 2\phi_H(k_{1}+k_{2}) \leq \phi_H(2k_{1})+\phi_H(2k_{2}).
\end{align*}
\end{itemize}

\end{lemma}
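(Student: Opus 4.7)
The plan is to dispatch parts (a), (b) and (d), which are routine, before tackling (c), which carries the main technical weight. For (a), the identity $\beta_{j_1,j_2} = T^{2H} n^{-2H} \rho_H(j_1 - j_2)$ combined with $\sup_{\ell \in \mathbb{Z}} |\rho_H(\ell)| \leq 1$ (since $|\rho_H(\ell)|$ is the correlation between unit increments of $B$) gives the bound at once. For (b), additivity of the inner product yields $\sum_{j_2 \in [n]} \beta_n(j_1, j_2) = \langle 1_{n, j_1}, 1_{(0,T]}\rangle_{\mathcal H} = E[(B_{Tj_1/n} - B_{T(j_1-1)/n})\,B_T]$, which expands via the fBm covariance formula and is $O(n^{-1})$ uniformly in $j_1$ after applying the mean value theorem to $t \mapsto t^{2H}$ on $[0,T]$.

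For (d), I plan to observe that $\phi_H$ is piecewise linear on $[2,\infty)$: it has slope $\frac12 - H < 0$ on $[2,\,1/(2H-1)]$ and slope $0$ on $[1/(2H-1),\infty)$. Since the slope is non-decreasing in $k$, the function $\phi_H$ is convex on $[2,\infty)$. Because $2k_1, 2k_2 \geq 2$ and $k_1 + k_2 = \frac12(2k_1) + \frac12(2k_2)$, midpoint convexity gives exactly $2\phi_H(k_1+k_2) \leq \phi_H(2k_1) + \phi_H(2k_2)$.

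The core work is (c). For $k = 2$, one has $\bar\beta_n(2) = T^{4H} n^{-4H} \sum_{j_1, j_2 \in [n]} \rho_H(j_1-j_2)^2 \leq T^{4H} n^{-4H} \cdot n \cdot \sum_{\ell \in \mathbb{Z}} \rho_H(\ell)^2 = O(n^{1-4H})$, using $c_H^2 = \sum_\ell \rho_H(\ell)^2 < \infty$ from (\ref{210417.1805}), which follows from $\rho_H(\ell) \sim H(2H-1)|\ell|^{2H-2}$ and $4H-4 < -1$ for $H < \tfrac34$. For $k \geq 3$, I will recognize $\bar\beta_n(k) = T^{2kH} n^{-2kH} \,\mathrm{tr}(R_n^k)$, where $R_n = (\rho_H(i-j))_{i,j \in [n]}$ is the Toeplitz covariance matrix of fractional Gaussian noise. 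Setting $j_1$ free and changing variables to $\ell_i = j_{i+1} - j_i$ for $i = 1, \ldots, k-1$, one obtains, up to boundary corrections of smaller order, $\mathrm{tr}(R_n^k) = n \sum_{\ell \in L_n} \rho_H(\ell_1) \cdots \rho_H(\ell_{k-1})\, \rho_H(\ell_1 + \cdots + \ell_{k-1})$, where $L_n \subset \mathbb{Z}^{k-1}$ has diameter $O(n)$. Three regimes arise according to the relation between $H$ and $H(k) = \tfrac12(1 + 1/k)$. In the subcritical regime $H < H(k)$, the unrestricted $\mathbb{Z}^{k-1}$-series converges absolutely, since the heuristic integrand $|\ell|^{(2H-2)k}$ is integrable on $\mathbb{R}^{k-1}$ precisely when $k(2H-2) + (k-1) < -1$, i.e.\ $H < H(k)$; whence $\mathrm{tr}(R_n^k) = O(n)$ and $\bar\beta_n(k) = O(n^{1-2kH})$. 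In the critical case $H = H(k)$, the truncation at scale $n$ introduces a single logarithmic factor, yielding $O(n \log n)$. In the supercritical regime $H > H(k)$, integral comparison gives $\sum_{\ell \in L_n} \prod_i \rho_H(\ell_i) \rho_H(\sum_i \ell_i) = O(n^{k(2H-1) - 1})$, so $\mathrm{tr}(R_n^k) = O(n^{k(2H-1)})$ and $\bar\beta_n(k) = O(n^{-k})$. The rough estimate $\bar\beta_n(k) = O(n^{1-4H-(k-2)})$ in the final line is then a direct weakening of the case analysis above.

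The main obstacle will be making the asymptotics in (c) rigorous and uniform in $n$: namely, controlling the boundary effects of the cyclic change of variables and pinning down the sharp $O(n^{-k})$ rate in the supercritical regime. I expect to handle this by a peeling argument that partitions the summation domain according to which $|\ell_i|$ are small versus large, replacing small-argument $\rho_H$ by bounded constants and large-argument ones by their $|\ell|^{2H-2}$ tails, and then bounding the resulting multiple sum by a Riemann-sum comparison with $\int_{[-n,n]^{k-1}} |\ell_1|^{2H-2} \cdots |\ell_{k-1}|^{2H-2} |\ell_1 + \cdots + \ell_{k-1}|^{2H-2}\, d\ell$, possibly combined with Young's convolution inequality when iterating over the variables.
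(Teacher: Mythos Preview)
Your proposal is correct throughout; parts (a) coincide with the paper, while (b), (c) and (d) take different routes worth noting. For (b), the paper uses the kernel representation $\langle 1_{j_1},1_{[0,T]}\rangle_\calh=\alpha_H\int_{[0,T]^2}1_{j_1}(t)|t-t'|^{2H-2}\,dt\,dt'$ and bounds the $t'$-integral by $\int_{-T}^T|t'|^{2H-2}\,dt'$; your approach via $E[(B_{t_{j_1}}-B_{t_{j_1-1}})B_T]$ and the mean value theorem is equally short. For (c), the paper does not argue directly but simply cites Lemmas~6, 8 and 10 of \cite{tudor2020asymptotic}, which establish the trichotomy for $a_{n,k}:=n^{-1}\mathrm{tr}(R_n^k)$; your sketch essentially reproves those lemmas, which is more self-contained but requires the boundary and Riemann-sum work you correctly flag as the remaining obstacle. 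For (d), the paper proceeds by an explicit three-case expansion of both sides according to the ordering $H(2k_2)\le H(k_1+k_2)\le H(2k_1)$; your observation that $\phi_H$ is piecewise linear with non-decreasing slope, hence convex on $[2,\infty)$, so that midpoint convexity at $k_1+k_2=\tfrac12(2k_1)+\tfrac12(2k_2)$ gives the inequality at once, is cleaner and avoids the case split entirely.
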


\begin{proof}
  (a) By the self-similarity of the law of fBm and $\rho_H(\ell)\leq1$,
  \begin{align*}
    \beta_{j_1,j_2} = \rbr{T/n}^{2H} \rho_H(j_1-j_2) \tand
    \sup_{j_1,j_2\in[n]}\beta_{j_1,j_2} = \rbr{T/n}^{2H}.
  \end{align*}

  (b) For $n\in\bbN$ and $j_1\in[n]$,
  \begin{align*}
    \sum_{j_2\in[n]} \beta_{j_1,j_2}
    &=\sum_{j_2\in[n]} \abr{1_{j_1},1_{j_2}}
    =\abr{1_{j_1},1_{\sbr{0,T}}}
    =\alpha_H\int_{\sbr{0,T}^2} 1_{j_1}(t) 1_{\sbr{0,T}}(t') \abs{t-t'}^{2H-2}dtdt'
    \\&\leq
    \alpha_H\int_{0}^{T} 1_{j_1}(t) dt
    \int_{-T}^{T}  \abs{t'}^{2H-2}dt'
    =\alpha_H  C_{H,T} T \cdot n^{-1},
  \end{align*}
  where we put 
  $\alpha_H=H(2H-1)$ and
  $C_{H,T}=\int_{-T}^{T}  \abs{t'}^{2H-2}dt'$.

  %{(b) can be proved as in the proof of Lemma A.1(e) of \cite{nourdin2016quantitative}.}
  
  \vspssm
  (c) The estimate is based on the Lemmas 6, 8 and 10 of \cite{tudor2020asymptotic}.
  At (52) of \cite{tudor2020asymptotic}, $a_{n,k}$ ($n\in\bbN,k\geq2$) is defined by
  \begin{align*}
    a_{n,k}=
    \frac1n\sum_{j\in[n]^k}
    \rho_H(j_1-j_2)\rho_H(j_2-j_3)\cdots\rho_H(j_{k-1}-j_k)\rho_H(j_k-j_1).
  \end{align*}
  We can observe
  $T^{2kH}\cdot n^{1-2kH}a_{n,k}=\betacycle{n}{k}$.
  For $k=2$, $a_{n,k}=O(1)$  by Lemma 6(a) in \cite{tudor2020asymptotic};
  For $k\geq3$, by Lemmas 8 and 10 in \cite{tudor2020asymptotic},
  $a_{n,k}$ is estimated as 
  \begin{align*}
    a_{n,k}&=
    \begin{cases}
      O(1)  \quad&\text{if } H< H(k) \\
      O(\log n)   \quad&\text{if } H= H(k) \\
      O(n^{(2H-1)k-1})   \quad&\text{if } H> H(k) 
    \end{cases}
  \end{align*}
  as $n\to\infty$.
  Hence we obtained the estimate of $\betacycle{n}{k}$.

  The estimate $\betacycle{n}{k}=O(n^{1-4H-(k-2)})$ follows from
  $1-2kH=1-4H-2H(k-2)\leq1-4H-(k-2)$ and 
  $-k<-k+3-4H=1-4H-(k-2)$ for $H\in(1/2,3/4)$ and $k\geq2$.
  Notice that we can remove the factor $\log n$ from the estimate.

  (d) Assume $1\leq k_1\leq k_2$.
  Since
  \begin{align*}
    \phi_H(k)=
    \begin{cases}
      - \frac{1}{2} + k(\frac{1}{2}-H)
      & \text{if}\; k\leq\frac1{2H-1}
      \quad\brbr{i.e.\ \frac{1}{2}<H\leq H(k)=\frac12(1+\frac{1}{k})}
      \vspssm\\
      - 1
      & \text{if}\; k\geq \frac1{2H-1}
      \quad\brbr{i.e.\ H(k)\leq H<\frac34}
    \end{cases} 
  \end{align*}
  for $k\geq2$, we have
  \begin{align*}
    \phi_H(2k_1) + \phi_H(2k_2)=
    \begin{cases}
      -1 + (k_1+k_2)(1-2H)
      & \text{if}\; \frac12<H\leq H(2k_2)=\frac12(1+\frac1{2k_2});
      \vspssm\\
      -\frac32 + k_1(1-2H)
      & \text{if}\; 
      H(2k_2)\leq H \leq H(2k_1)=\frac12(1+\frac1{2k_1});
      \vspssm\\
      -2
      & \text{if}\; H(2k_1)\leq H<\frac34.
    \end{cases} 
  \end{align*}
  On the other hand,
  \begin{align*}
    2\phi_H(k_1+k_2)=
    \begin{cases}
      - 1 + (k_1+k_2)(1-2H)
      & \text{if}\; \frac12<H\leq H(k_1+k_2) =\frac12(1+\frac1{(k_1+k_2)})
      \vspssm\\
      - 2
      & \text{if}\; H(k_1+k_2)\leq H<\frac34
    \end{cases} 
  \end{align*}
  Since $\frac1{2k_2}\leq\frac1{k_1+k_2}\leq\frac1{2k_1}$,
  we have $ H(2k_2)\leq H(k_1+k_2)\leq  H(2k_1)$, and
  we can observe that
  $\phi_H(2k_1) + \phi_H(2k_2) \geq 2\phi_H(k_1+k_2)$.
\end{proof}

For $n\in\bbN$ and a connected weighted graph $G=(V,\edgeWt,0)$
  with zero vertex-weight, where $0$ stands for a constant zero map on $V$,
  consider 
  \begin{align}\label{211006.1210}
    \bar\beta_n(G) %:=\bar\beta_{n,V}(\edgeWt) 
    :=\sum_{j\in[n]^V} \beta_{n}^{G}(j)
    =\sum_{j\in[n]^V} \beta_{n,V}(j,\edgeWt)
    = \sum_{j\in[n]^V} \prod _{[v,v']\in p(V)} 
    \beta_{n}(j_{v}, j_{v'})^{\edgeWt([v,v'])}.
    %\beta_{j_{v}, j_{v'}}^{\edgeWt_{v,v'}}.
  \end{align}
\begin{lemma}\label{211028.2340}
  %Assume $H\in(\frac{1}{2},\frac{3}{4})$. 
  %For $n\in\bbN$, a finite set $V$, a map $\edgeWt:p(V)\to\bbZ_{\geq0}$,
  For a connected weighted graph $G=(V,\edgeWt,\vertWt)$ with $\vertWt=0$,
  the following estimate holds, as $n$ tends to $\infty$:
  \begin{align*}
    \bar\beta_n(G)=O(n^{e(G)}),
  \end{align*}
  %$\bar\beta_{n,V}(\edgeWt) =O(n^{e(G)})$ 
  where $e(G)$ is the exponent defined in Section \ref{211006.0300}.
\end{lemma}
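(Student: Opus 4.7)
My plan is to proceed by induction on $I(G)$, after a preliminary rescaling. Using $\beta_n(j,j')=T^{2H}n^{-2H}\rho_H(j-j')$, one extracts the factor $T^{2H\bartheta(G)}n^{-2H\bartheta(G)}$ from the product in $\bar\beta_n(G)$ and reduces the claim to an estimate on the dimensionless sum
\[
S_n(G):=\sum_{j\in[n]^V}\prod_{[v,v']\in p(V)}\rho_H(j_v-j_{v'})^{\edgeWt([v,v'])}.
\]
Since $\vertWt=0$ forces $\barq(G)=0$, we have $\bartheta(G)=I(G)-1+s(G)/2$, and the target $\bar\beta_n(G)=O(n^{e(G)})$ translates to $S_n(G)=O(n^{(2H-1)(I(G)-1)+1})$ when $s(G)=0$, and to $S_n(G)=O(n^{(2H-1)(I(G)-1)+2-2H})$ when $s(G)\geq 2$; the latter is strictly sharper by the factor $n^{1-2H}<1$. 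The base case $I(G)=1$ is immediate: $\bar\beta_n(G)=n$, $s(G)=0$, $e(G)=1$.

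For the induction step I first consider the case in which the non-weighted graph $\frakg(G)$ has a vertex of degree one. If $v$ is such a leaf, joined to its unique neighbor $v'$ by an edge of weight $k\geq 1$, I sum over $j_v$ first. For $k=1$, Lemma \ref{211006.1220}(b) gives $\sup_{j'}\sum_{j_v}\beta_n(j_v,j')=O(n^{-1})$. For $k\geq 2$, the series $\sum_{\ell\in\bbZ}\rho_H(\ell)^k$ converges because $\rho_H(\ell)\sim c|\ell|^{2H-2}$ and $k(2-2H)>1$ for $k\geq 2$ and $H<3/4$, so $\sup_{j'}\sum_{j_v}\beta_n(j_v,j')^k=O(n^{-2Hk})$. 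The reduced graph $G|_{V\setminus\{v\}}$ remains connected with $I'=I(G)-1$ and $s'=s(G)+2-2k$ (still even), so the inductive hypothesis applies. A direct arithmetic check shows that the product of the peeling factor and $n^{e(G|_{V\setminus\{v\}})}$ equals $n^{e(G)}$ for $k=1$, and is strictly smaller (by $n^{1-2H}<1$) for $k\geq 2$.

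If $\frakg(G)$ has no vertex of degree one, then every vertex has degree at least two, $\bartheta(G)\geq I(G)$, $s(G)\geq 2$, and $G$ contains a cycle. Here I use an ear decomposition of $G$: start from a simple base cycle and successively attach ears (simple paths whose interior vertices are new and whose endpoints lie on the subgraph already built). The base cycle is handled by Lemma \ref{211006.1220}(c), which gives $\betacycle{n}{k}=O(n^{1-4H-(k-2)})$ --- precisely $n^{e(C)}$ for a pure $k$-cycle. Each interior vertex of a subsequent ear is then peeled off exactly as in the leaf step above. Tracking how $(I(G),\bartheta(G),s(G))$ evolves under each of these reductions and comparing with the target exponent $e(G)=3-I(G)-2H-Hs(G)$ confirms that the bound is preserved throughout the ear decomposition.

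The main obstacle lies in this leafless case when $s(G)$ is large: several cycles may share vertices and one must apply the (rather sharp) cycle estimate only at the base cycle, letting the remaining structure absorb the slack through tree-type peeling, in order to avoid double-counting. An alternative route, which I would fall back on if the ear bookkeeping becomes unwieldy, is to replace Lemma \ref{211006.1220}(c) by the pointwise bound $\rho_H(\ell)\leq c(1+|\ell|)^{2H-2}$ and convert $S_n(G)$ into a Riemann sum for an explicit multiple integral over $[0,T]^{I(G)}$, which then collapses by scaling; this avoids the combinatorial decomposition but requires more analytic care because some of the resulting integrals become borderline divergent as $H$ approaches $3/4$.
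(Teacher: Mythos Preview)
Your leaf-peeling argument is correct and essentially matches the paper in the tree case. The gap is in the leafless case. Two issues:

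\textbf{(1)} Ear decomposition (open or closed) exists only for $2$-edge-connected graphs, but ``$\frakg(G)$ has minimum degree $\geq 2$'' does not imply $2$-edge-connectivity: two triangles joined by a bridge has no leaves and no ear decomposition. So the leafless case is not covered by your argument as stated.

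\textbf{(2)} Even when an ear decomposition exists, the interior vertices of an ear have degree $2$ in $G$, not degree $1$, so they cannot be ``peeled off exactly as in the leaf step above''. To make the ear step work you must first sup-bound one edge of the ear by Lemma~\ref{211006.1220}(a), $\sup_{j,j'}\beta_n(j,j')=O(n^{-2H})$, which detaches one endpoint and turns the first interior vertex into a genuine leaf; then the remaining interior vertices peel off with factor $O(n^{-1})$ each. Ears with no interior vertices (chords) must be handled entirely by the sup bound. Once you insert these sup-bound steps and redo the arithmetic, the bound does come out to $n^{e(G)}$ --- but this extra step is exactly what was missing.

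The paper's argument avoids ear decompositions by organising the proof around $s(G)$ rather than the leaf/leafless dichotomy. For $s(G)=0$ (tree) one peels weight-$1$ leaves. For $s(G)=2$ one shows that either there is a weight-$1$ leaf (peel it, stay in the $s=2$ class) or else every vertex has $\edgeWt_v=2$, which forces $G$ to be a pure cycle and Lemma~\ref{211006.1220}(c) applies directly. For $s(G)\geq 4$ one does not decompose $G$ at all: one simply picks a connected $G'=(V,\edgeWt',0)$ with $\edgeWt'\leq\edgeWt$ and $\bartheta(G')=I(G)$ (take a spanning tree of $\frakg(G)$ and add back one unit of weight), bounds the $\bartheta(G)-I(G)$ excess factors by $(\sup\beta_n)^{\bartheta(G)-I(G)}=O(n^{-2H(\bartheta(G)-I(G))})$, and applies the $s=2$ case to $G'$. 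This is the missing sup-bound idea from point~(2), applied globally rather than ear-by-ear, and it handles the non-$2$-edge-connected examples automatically. Your $k\geq 2$ leaf peel is then unnecessary (and your claim that it is ``strictly smaller by $n^{1-2H}$'' is not quite right: when $s'=s(G)+2-2k$ drops to $0$ the arithmetic gives equality).
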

\noindent
Notice that for any connected weighted graph $G=(V,\edgeWt,\vertWt)$ with $\vertWt=0$,
we have 
\begin{align*}%\label{220322.2042}
  s(G)=2\brbr{\bartheta(G)-I(G)}+2, 
\end{align*}
and the conditions $\bartheta(G)=I(G)-1$ and $\bartheta(G)\geq I(G)$
are equivalent to $s(G)=0$ and $s(G)\geq2$, respectively.
The above estimate is written as 
\begin{subnumcases}{\bar\beta_n(G)=}%{\bar\beta_{n,V}(\edgeWt)=}
    O(n^{2-I(G)}) 
    & if $\bartheta(G)=I(G)-1$%\redb{$s(G)=0$}
    %\quad (i.e. $\edgeWt(G)=I(G)-1$).	
    \label{211006.1151}
    \\
    O(n^{2-I(G) +(1-4H)-2H(\bartheta(G)-I(G))}) 
    & if $\bartheta(G)\geq I(G)$%\redb{$s(G)\geq2$}
    %O(n^{2-I(G) +(1-4H)-2H(\edgeWt(G)-I(G))}) & if $\edgeWt(G)\geq I(G)$
    %\quad (i.e. $\edgeWt(G)\geq I(G)$). 
    \label{211006.1152}
  \end{subnumcases}
  For $v\in V$, we write
  $\edgeWt_{v}=\sum_{v'\in V, v'\neq v}\edgeWt([v,v'])$.

\begin{proof}[Proof of Lemma \ref{211028.2340}]
  Firstly, we consider the case $\bartheta(G)=I(G)-1$.
  Since we assumed that $G$ is connected, 
  the non-weighted graph $\frakg(G)$ is a tree 
  (i.e. a connected graph without cycle),
  and we have $\edgeWt_{v,v'}=1$  
  for $[v,v']\in p(V)$ with $\edgeWt_{v,v'}>0$.
  We prove the estimate by induction in $I(G)(\geq1)$.
  If $I(G)=1$, then 
    $\bar\beta_n(G)= \sum_{j\in[n]^V} 1 =n$ 
  and the estimate (\ref{211006.1151}) holds true.
  
  Let $d\geq2$ and suppose that the estimate (\ref{211006.1151}) holds 
  for any connected weighted graph $G'$ with zero vertex-weight
  such that $\bartheta(G')=I(G')-1$ and $I(G')\leq d-1$.
  Consider a connected weighted graph $G=(V,\edgeWt,0)$ %with no weight on vertices 
  such that 
  $\bartheta(G)=I(G)-1$ and $I(G)=d$.
  There is at least one vertex $v_0\in V$ such that
  $\edgeWt_{v_0}=1$
  and we denote by $v_1$ the vertex $v\in V$ such that $\edgeWt_{v_0,v}=1$.
  Indeed, if there is no such $v_0$, then by the connectedness of $G$,
  we have $\edgeWt_{v}\geq2$ for $v\in V$ and hence
  $2\,\bartheta(G)=\sum_{v\in V}\edgeWt_v\geq2\,I(G)$, 
  which contradicts to the assumption 
  $\bartheta(G)=I(G)-1$.
  Hence, $\bar\beta_n(G)$ is bounded as follows:
  \begin{align}\label{211006.1241}
    \bar\beta_n(G) &= 
    \sum_{j\in[n]^{V}}
      \Brbr{\prod _{[v,v']\in p(V\setminus \cbr{v_0})} 
      \beta_{n}\rbr{j_{v}, j_{v'}}^{\edgeWt_{v,v'}}}
      \beta_{n}(j_{v_0}, j_{v_1})
    \nn\\&=  
    \sum_{\substack{j\in[n]^{V\setminus \cbr{v_0}}}}
    \beta_{n,V\setminus \cbr{v_0}}\Brbr{j,\edgeWt|_{p(V\setminus \cbr{v_0})}}
    %\Brbr{\prod _{[v,v']\in p(V\setminus \cbr{v_0})} 
    %\beta_{j_{v}, j_{v'}}^{\edgeWt_{v,v'}}}
    \Brbr{\sum_{j'\in[n]} \beta_{n}(j', j_{v_1})}
    \nn\\&\leq 
    \sup_{j''\in[n]} \Brbr{\sum_{j'\in[n]}\beta_{n}\rbr{j', j''}}\;
    \bar\beta_n(G|_{V\setminus\cbr{v_0}}).
  \end{align}
  We can observe that $G|_{V\setminus\cbr{v_0}}$ is again a connected weighted graph with 
  $I(G|_{V\setminus\cbr{v_0}})=I(G)-1=d-1$, 
  $\bartheta(G|_{V\setminus\cbr{v_0}})
  =\bartheta(G)-\edgeWt_{v_0}=\bartheta(G)-1$ and hence
  $\bartheta(G|_{V\setminus\cbr{v_0}})=I(G|_{V\setminus\cbr{v_0}})-1$.
  We apply the assumption of the induction to 
  %$G|_{V\setminus\cbr{v_0}}
  %=(V\setminus\cbr{v_0},\edgeWt|_{p(V\setminus\cbr{v_0})},0)$
  $G|_{V\setminus\cbr{v_0}}$
  and use Lemma \ref{211006.1220}(b) to obtain
  \begin{align*}
    \bar\beta_n(G) =O(n^{-1}n^{2-I(G|_{V\setminus\cbr{v_0}})})=O(n^{2-I(G)}).
  \end{align*}
  The estimate (\ref{211006.1151}) is proved by induction.

  \vspsm
  Secondly, we consider the case $\bartheta(G)=I(G)$, 
  which implies $I(G)\geq2$.
  Again by induction in $I(G)$, we prove (\ref{211006.1152}), 
  which is the following estimate in this case:
  \begin{alignat}{2}\label{220112.2430}
    \bar\beta_n(G)&=O(n^{2-I(G)+(1-4H)})
    &&\quad\text{ as }n\to\infty.
  \end{alignat}
  If $I(G)=2$, then we can write
  $%\begin{align*}
    \bar\beta_n(G) = \sum_{j_1,j_2\in[n]} \beta_n\rbr{j_{1}, j_{2}}^{2},
  $ %\end{align*}
  and we have the estimate (\ref{220112.2430}), that is 
  $\bar\beta_n(G)=O(n^{1-4H})$, 
  by Lemma \ref{211006.1220}(c).
  Let $d\geq3$ and suppose that the estimate (\ref{220112.2430}) holds 
  for any connected weighted graph $G'$ with zero vertex-weight
  such that $\bartheta(G')=I(G')$ and $I(G')\leq d-1$.
  Take a connected weighted graph $G=(V,\edgeWt,0)$ 
  such that $\bartheta(G)=I(G)$ and $I(G)=d$.
  
  First we consider the case where there is no  $v\in V$ such that
  $\edgeWt_{v}=1$.
  By the connectedness of $G$,
  we have $\edgeWt_{v}\geq2$ for $v\in V$ and hence
  $2\,\bartheta(G)=\sum_{v\in V}\edgeWt_v\geq2I(G)$,
  which implies $\edgeWt_{v}=2$ for all $v\in V$.
  If there exists $[v_1,v_2]\in p(V)$ such that $\edgeWt([v_1,v_2])=2$, 
  then we have $\edgeWt([v_i,v])=0$
  for $i=1,2$ and $v\in V\setminus\cbr{v_1,v_2}\neq\emptyset$,
  which contradicts to the connectedness of $G$.
  Therefore, for each $v\in V$, there exist two vertices $v',v''\in V$ 
  ($v',v''\neq v$) such that
  $\edgeWt([v,v'])=\edgeWt([v,v''])=1$,
  and by the connectedness of $G$, 
  $G$ is a cycle graph in the sense of Definition \ref{220322.1900}.
  Hence we can write 
  \begin{align*}
    \bar\beta_{n}(G) = \sum_{j_1,..,j_{I(G)}\in[n]}
    \beta_n\rbr{j_1,j_2} \beta_n\rbr{j_2,j_3} \cdots 
    \beta_n\rbr{j_{I(G)-1},j_{I(G)}} \beta_n\rbr{j_{I(G)},j_1},
  \end{align*}
  and we have 
  $\bar\beta_n(G)=O(n^{1-4H-(I(G)-2)})$
  by Lemma \ref{211006.1220}(c),
  which is the estimate (\ref{220112.2430}) in this case.

  Consider the case where there exists a vertex $v_0\in V$ such that
  $\edgeWt_{v_0}=1$.
  %Writing $v_1$ for the vertex $v\in V$ such that $\edgeWt_{v_0,v}=1$,
  By the same argument as (\ref{211006.1241}), 
  we obtain
  %{but for the replacement of $G$, $v_0$ and $v_1$:}
  \begin{align*}
    \bar\beta_n(G) &\leq 
    \sup_{j''\in[n]} \Brbr{\sum_{j'\in[n]}\beta_n\rbr{j', j''}}
    \bar\beta_n(G|_{V\setminus\cbr{v_0}})
  \end{align*}
  We can observe that $G|_{V\setminus\cbr{v_0}}$ is a connected weighted graph with 
  $I(G|_{V\setminus\cbr{v_0}})=I(G)-1=d-1$, 
  $\bartheta(G|_{V\setminus\cbr{v_0}})=\bartheta(G)-1$ and hence
  $\bartheta(G|_{V\setminus\cbr{v_0}})=I(G|_{V\setminus\cbr{v_0}})$.
  We apply the assumption of the induction to 
  %$G|_{V\setminus\cbr{v_0}}
  %=(V\setminus\cbr{v_0},\edgeWt|_{p(V\setminus\cbr{v_0})},0)$
  $G|_{V\setminus\cbr{v_0}}$
  and use Lemma \ref{211006.1220}(b) to obtain
  \begin{align*}
    \bar\beta_n(G) =O(n^{-1}n^{2-I(G|_{V\setminus\cbr{v_0}})+(1-4H)})
    =O(n^{2-I(G)+(1-4H)}).%\label{220112.1522}
  \end{align*}
  %and the estimate (\ref{211006.1151}) is true by induction.
  Thus we have the estimate (\ref{220112.2430}) in the both cases
  of $I(G)=d$, and
  by induction it holds for any $G$ satisfying $\bartheta(G)=I(G)$.
  
  \vspsm
  Thirdly, consider the case $\bartheta(G) >I(G)$.
  %\contifrom{220112.1000}
  We can find a function
  $\edgeWt':p(V)\to\bbZ_{\geq0}$ such that
  $\edgeWt'([v,v'])\leq\edgeWt([v,v'])$ for each $[v,v']\in p(V)$ and
  the weighted graph $G'=(V,\edgeWt',0)$ is connected with 
  $\bartheta(G')=I(G')\brbr{=I(G)}$ as follows.
  There is a subset $E'\subset p(V)$ such that
  the graph $(V,E')$ is a spanning tree 
  of the graph $\frakg(G)$ since $G$ is connected.
  ($(V,E')$ is a spanning tree of a connected graph $(V,E)$,
  if $(V,E')$ is a subgraph of $(V,E)$ and a tree.) 
  We define another function $\edgeWt'':p(V)\to\bbZ_{\geq0}$ by
  \begin{align*}
    \edgeWt''([v,v'])=
    \begin{cases}
      1 & \text{if}\; [v,v']\in E'\\
      0 & \text{if}\; [v,v']\not\in E'.
    \end{cases}
  \end{align*}
  Since 
  $\sum_{[v,v']\in p(V)} \edgeWt([v,v'])=\bartheta(G)
  >I(G)-1=\sum_{[v,v']\in p(V)} \edgeWt''([v,v'])$
  and 
  $\edgeWt([v,v'])\geq\edgeWt''([v,v'])$ for $[v,v']\in p(V)$,
  there is $[v_1,v_2]\in p(V)$ such that
  $\edgeWt([v_1,v_2])\geq\edgeWt''([v_1,v_2])+1$.
  Then we define $\edgeWt':p(V)\to\bbZ_{\geq0}$ by 
  \begin{align*}
    \edgeWt'([v,v'])=
    \begin{cases}
      \edgeWt''([v,v']) +1  & \text{if}\; [v,v']= [v_1,v_2]\\
      \edgeWt''([v,v'])       & \text{if}\; [v,v']\not= [v_1,v_2],
    \end{cases}
  \end{align*}
  which satisfies the above condition.
  We write
  $\edgeWt'_{v,v'}=\edgeWt'([v,v'])$ for $[v,v']\in p(V)$.
  We have
  \begin{alignat*}{2}
    \bar\beta_n(G) &%=\sum_{j\in[n]^V} B_n(G;j)
    = \sum_{j\in[n]^V} 
    \prod_{[v,v']\in p(V)} \beta_n\rbr{j_{v}, j_{v'}}^{\edgeWt_{v,v'}}
    \\&= \sum_{j\in[n]^V} 
    \bbrbr{\prod_{[v,v']\in p(V)} 
    \beta_n\rbr{j_{v}, j_{v'}}^{\edgeWt_{v,v'}-\edgeWt'_{v,v'}}}
    \bbrbr{\prod_{[v,v']\in p(V)} \beta_n\rbr{j_{v}, j_{v'}}^{\edgeWt'_{v,v'}}}
    \\&\leq 
    \prod _{[v,v']\in p(V)} 
    \Brbr{\sup_{j_{1},j_{2}\in[n]}\beta_n\rbr{j_{1}, j_{2}}}^{\edgeWt_{v,v'}-\edgeWt'_{v,v'}}
    \sum_{j\in[n]^V} 
    \prod_{[v,v']\in p(V)} \beta_n\rbr{j_{v}, j_{v'}}^{\edgeWt'_{v,v'}}
    \\&=
      O(n^{-2H(\bartheta(G)-I(G))})\; \bar\beta_n(G')
      &&(\because \text{Lemma \ref{211006.1220}(a)})
    \\&=
      O(n^{ 2-I(G)+(1-4H)-2H(\bartheta(G)-I(G))}).
  \end{alignat*}
  We applied the estimate (\ref{220112.2430}) 
  to $G'$ since $\bartheta(G')=I(G')\brbr{=I(G)}$.
  Therefore, we obtain the estimate (\ref{211006.1152})
  for $G$ with $\bartheta(G) >I(G)$.
\end{proof}

\section{Derivation of the asymptotic expansion of realized volatility}
\label{220317.2030}

In this section, we first obtain a stochastic expansion of $Z_n$
and specify $u_n$ and $N_n$, 
to which we apply the general theory explained in Section \ref{220421.2330}.
In Section \ref{220220.2416}, we examine the functionals appearing repeatedly in
Section \ref{210430.1627} and \ref{220404.1900}.
We determine the random symbol appearing the asymptotic expansion in Section \ref{210430.1627}
and in Section \ref{220404.1900} we verify the condition [D] to prove Theorem \ref{220404.1630}.

By \cite{nualart2009malliavin}, 
the existence of the malliavin derivative of SDE (\ref{210430.1615}) of any order 
is proved under Assumption \ref{220404.1535},
and the derivatives can be written as solutions to linear SDE's.
The first-order derivative is written as follows:
\begin{align*}
  D_sX_t = 
  V^{[1]}(X_s) +
  \int^t_s V^{[2;1]}(X_\tpr) D_sX_\tpr\, dt' +
  \int^t_s V^{[1;1]}(X_\tpr) D_sX_\tpr\, dB_{t'} 
  \quad\tforsm t\geq s,
\end{align*}
and $D_sX_t=0$ for $t<s$,
where we denote the derivative of $V^{[k]}:\bbR\to\bbR$ by $V^{[k;1]}$ ($k=1,2$).
For the derivative of the higher order, see Proposition 7 in \cite{nualart2009malliavin}.
The estimates on the SDE and its derivatives used in the following discussion 
are summarized in Section \ref{220405.1158}.

We denote the $i$-th derivative of $V^{[k]}$ ($k=1,2$)
by $V^{[k;i]}$ and
abbreviate $V^{[k]}(X_t)$ and $V^{[k;i]}(X_t)$
as $V^{[k]}_t$ and $V^{[k;i]}_t$, respectively.
We write $a(x)=V^{[1]}(x)^2$ and $a_t=a(X_t)$ for brevity.
Recall that 
we write 
$\tj=Tj/n$
%$I_j=\clop{\tjm,\tj}\subset[0,T]$,
$1_j=1_{\clop{\tjm,\tj}}$
for $n\in\bbN$ and $j\in[n]=\cbr{1,...,n}$.

\subsection{Stochastic expansion}
%memo: $a_{t_{j-1}}=(V^{[1]}_{t_{j-1}})^2$.
Recall that we set 
\begin{align*}
  %%% for n\in\bbN
\bbV_n = n^{2H-1}\sum_{j=1}^n (\Delta_jX)^2,\quad
\bbV_\infty = T^{2H-1} \int_0^T (V^{[1]}_t)^2dt \tand
Z_n = %n^{2H-\half}
n^{1/2}\big(\bbV_n-\bbV_\infty).
\end{align*}
%$r_n \yeq n^{2H-\frac{3}{2}}$

\begin{lemma}\label{211007.1600}
The functional $Z_n$ has the following stochastic expansion
with $r_n = n^{2H-3/2}$:
\begin{align}
  %%% for n\in\bbN
  Z_n =  \delta(u_n) + r_n N_n,
  \label{220207.1041}
\end{align}
where
%%% the $\calh$-valued random variable $u_n$ for $n\in\bbN$
\begin{align}
  u_n 
  %{= n^{2H-1/2} \sum_{j\in[n]} (V^{[1]}_{\tjm})^2 I_1(1_j) 1_j}
  = n^{2H-1/2} \sum_{j\in[n]} a_{t_{j-1}}  I_1(1_j) 1_j
  \label{220404.1632}
\end{align}
%with $a_{t_{j-1}}=(V^{[1]}_{\tjm})^2$ 
and
\begin{align}
  N_n &=
    \sum_{j\in[n]} n\rbr{D_{1_j}a_{t_{j-1}}}  I_1(1_j)
    + 2T \sum_{j\in[n]} V^{[2]}_{\tjm} V^{[1]}_{\tjm} I_1(1_j) 
    + T^2 n^{-1} \sum_{j\in[n]} (V^{[2]}_{\tjm})^2  
    +N'_n
    \label{220404.1633}
\end{align}
with some functional $N'_n$ of 
$O_M(n^\alpha)$ with some $\alpha<0$.
  \end{lemma}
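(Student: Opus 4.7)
My plan is to derive \eqref{220207.1041} by explicitly computing $\delta(u_n)$ via Malliavin integration by parts, Taylor-expanding $(\Delta_j X)^2$ using SDE \eqref{210430.1615}, and matching the two expansions against $Z_n = n^{2H-1/2}\sum_j(\Delta_j X)^2 - n^{1/2}\bbV_\infty$ term by term. The core identities are $\delta(Fh) = F\delta(h) - \abr{DF,h}_\calh$, the product formula $I_1(f)^2 = I_2(f^{\otimes 2}) + \norm{f}_\calh^2$, and, thanks to $H>1/2$, the pathwise chain rule $a_t - a_\tjm = \int_\tjm^t a'(X_s)\,dX_s$ (with $a' = 2V^{[1]}V^{[1;1]}$).

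First I would apply the integration-by-parts rule to each summand of $u_n$. Since $\delta(I_1(1_j)\,1_j) = I_1(1_j)^2 - \norm{1_j}_\calh^2 = I_2(1_j^{\otimes 2})$ and $\norm{1_j}_\calh^2 = (T/n)^{2H}$, I obtain
\begin{align*}
  \delta(u_n) \;=\; n^{2H-1/2}\sum_{j\in[n]} a_\tjm\, I_2(1_j^{\otimes 2})
  \;-\; n^{2H-1/2}\sum_{j\in[n]} (D_{1_j}a_\tjm)\, I_1(1_j),
\end{align*}
the second summand being precisely $r_n$ times the first term of the proposed $N_n$. Using $I_1(1_j)^2 = I_2(1_j^{\otimes 2}) + (T/n)^{2H}$, I rewrite the first summand as $n^{2H-1/2}\sum_j a_\tjm I_1(1_j)^2 - T^{2H} n^{-1/2}\sum_j a_\tjm$.

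Next I decompose $\Delta_j X = A_j + M_j$ with $A_j = \intj V^{[2]}_s\,ds$ and $M_j = \intj V^{[1]}_s\,dB_s$, and Taylor-expand $A_j = V^{[2]}_\tjm T/n + \widetilde A_j$ and $M_j = V^{[1]}_\tjm I_1(1_j) + R_j$, with $R_j = \intj(V^{[1]}_s - V^{[1]}_\tjm)\,dB_s$. Substituting into $(\Delta_j X)^2$ and subtracting $a_\tjm I_1(1_j)^2$, the cross term $2A_j M_j$ produces, to leading order, $2Tn^{-1}\sum_j V^{[2]}_\tjm V^{[1]}_\tjm I_1(1_j)$, which matches the second summand of $r_n N_n$ after the outer factor $n^{2H-1/2}$; the term $A_j^2$ produces $T^2 n^{-2}\sum_j (V^{[2]}_\tjm)^2$, matching the third. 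The diagonal gap $T^{2H} n^{-1/2}\sum_j a_\tjm - n^{1/2}\bbV_\infty$ equals $n^{1/2} T^{2H-1}\sum_j \intj (a_\tjm - a_t)\,dt$, and the chain rule expands it into a drift piece of size $O_M(n^{-1/2})$ plus a Young stochastic integral of the same order; divided by $r_n$ both contribute $O_M(n^{1-2H})$ to $N'_n$.

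The main obstacle is verifying that every leftover term---$R_j^2$, the cross terms $2V^{[1]}_\tjm I_1(1_j) R_j$, $\widetilde A_j^2$, $2\widetilde A_j M_j$, and the truncation error from replacing $V^{[1]}_s$ by $V^{[1]}_\tjm$ in the Riemann-sum expansion---belongs to $N'_n$ with $N'_n = O_M(n^\alpha)$ for some $\alpha<0$ in every Malliavin--Sobolev norm. The delicate piece is the cubic-in-$I_1(1_j)$ term from $2V^{[1]}_\tjm I_1(1_j) R_j$, produced by the Young identity $\intj(B_s - B_\tjm)\,dB_s = \tfrac{1}{2} I_1(1_j)^2$. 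I would resolve it with the product formula $I_1(1_j)^3 = I_3(1_j^{\otimes 3}) + 3(T/n)^{2H} I_1(1_j)$: the contraction piece $3(T/n)^{2H}\sum_j c_j I_1(1_j)$ is $O_M(n^{-2H})$ (the sum being a Young integral of order $O_M(1)$), giving $O_M(n^{1-2H})$ in $N'_n$, while the third-chaos sum $\sum_j c_j I_3(1_j^{\otimes 3})$ is bounded by Proposition \ref{210407.1401} applied to the weighted graph $G = (\cbr{v},0,3)$ of exponent $e(G) = 1/2 - 3H$, producing $O_M(n^{3/2-3H})$ in $N'_n$. All higher-order Taylor remainders reduce to similar weighted-graph functionals and are estimated via the Malliavin--SDE bounds in Section \ref{220423.1450}; the constraint $H\in(1/2,3/4)$ is exactly what forces every resulting $\alpha$ to be strictly negative.
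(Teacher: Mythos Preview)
Your proposal is correct and follows essentially the same route as the paper: compute $\delta(u_n)$ via the integration-by-parts identity, Taylor-expand $\Delta_j X$ using the change-of-variables formula for Young integrals, match the leading cross terms to the displayed summands of $N_n$, and control all remainders through the exponent machinery of Section~\ref{210429.1655} together with the Malliavin--SDE H\"older bounds of Section~\ref{220423.1450}. The only organisational difference is that the paper performs the full five-term decomposition $\Delta_j X = \sum_{k=1}^5 T^k_{n,j}$ (your $R_j$ being $T^3_{n,j}+T^4_{n,j}+T^5_{n,j}$) from the outset and then runs through the $25$ cross products systematically, whereas you expand $R_j$ only as far as needed; the cubic $I_1(1_j)^3$ term you single out is exactly the paper's $T^1_{n,j}T^3_{n,j}$ contribution, handled by the same product-formula plus exponent argument.
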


\begin{proof}%[Proof of Lemma \ref{211007.1600}]
We start with decomposing the increment of SDE (\ref{210430.1615})
using the change of variables formula for Young integral.
We write $V^{[(1;1),1]}_{t}=V^{[1;1]}_{t} V^{[1]}_{t}$ for $t\in[0,T]$.
For $0\leq\rze<\ron\leq T$, 
we have %\comm{almost surely}
\begin{align*}
  %X_s-X_t =&
  X_\ron-X_\rze =&
    \int_\rze^\ron V^{[2]}_t dt
    +\int_\rze^\ron V^{[1]}_t dB_t
  \\=&
    V^{[2]}_\rze (\ron-\rze)
    + \int_\rze^\ron \rbr{V^{[2]}_t - V^{[2]}_\rze}dt
  %\\&
    + V^{[1]}_\rze I_1(1_{[\rze,\ron]})
    + V^{[(1;1),1]}_{\rze}%V^{[1;1]}_{\rze} V^{[1]}_{\rze}
    \times \frac12\rbr{I_2(1_{[\rze,\ron]}^{\otimes 2}) + (\ron-\rze)^{2H}}
  \\&
    + \int_\rze^\ron \int_\rze^t 
    \rbr{V^{[(1;1),1]}_{\tpr}-V^{[(1;1),1]}_{\rze}}dB_{\tpr} dB_t
    %(V^{[1;1]}_{\tppr} V^{[1]}_{\tppr}-V^{[1;1]}_{\rze} V^{[1]}_{\rze}))dB_{\tppr} dB_\tpr
    + \int_\rze^\ron \int_\rze^t V^{[(1;1),2]}_{\tpr} d\tpr dB_t.
    %+ \int_\rze^\ron \int_\rze^\tpr V^{[1;1]}_{\tppr} V^{[2]}_{\tppr} d\tppr dB_\tpr.
\end{align*}
Thus, for $n\in\bbN$ and $j=1,...,n$, 
\begin{align}\label{211020.1120}
  \Delta_jX = X_\tj-X_\tjm = \sum_{k=1}^{5}T^{k}_{n,j},
\end{align}
where we set 
\begin{alignat}{3}
  T^{1}_{n,j} &= V^{[1]}_\tjm I_1(1_j),&\quad
  T^{2}_{n,j} &= V^{[2]}_\tjm (\tj-\tjm)
  = V^{[2]}_\tjm T n^{-1},
  \nn\\
  T^{3}_{n,j} &= \frac12 V^{[(1;1),1]}_{\tjm} I_2(1_j^{\otimes 2}),
  %T^{3}_{n,j} &= \frac12 V^{[1,1]}_{\tjm} V^{[1]}_{\tjm} \delta^2(1_{[\tjm,\tj]}^{\otimes 2}),
  & \quad
  T^{4}_{n,j} &= \frac12 V^{[(1;1),1]}_{\tjm} (\tj-\tjm)^{2H}
  = \frac12  V^{[(1;1),1]}_{\tjm} T^{2H} n^{-2H}
\label{220206.2332}
\end{alignat}
and $T^{5}_{n,j} = \sum_{k=1}^{3}R^{k}_{n,j}$
with 
\begin{align}
  R^{1}_{n,j} &= \int_\tjm^\tj \int_\tjm^t 
  (V^{[(1;1),1]}_{\tpr} -V^{[(1;1),1]}_{\tjm}) dB_{\tpr} dB_t,&%\\
  %(V^{[1,1]}_{\tppr} V^{[1]}_{\tppr}-V^{[1,1]}_{\tjm} V^{[1]}_{\tjm}) dB_{\tppr} dB_\tpr,&%\\
  R^{2}_{n,j} &= \int_\tjm^\tj \rbr{V^{[2]}_t-V^{[2]}_\tjm}dt,
  \nn\\
  R^{3}_{n,j} &= \int_\tjm^\tj \int_\tjm^t V^{[(1;1),2]}_{\tpr} d\tpr dB_t.
  %R^{3}_{n,j} &= \int_\tjm^\tj \int_\tjm^\tpr V^{[1,1]}_{\tppr}V^{[2]}_{\tppr} d\tppr dB_\tpr.
  \label{220206.2351}
\end{align}
By Proposition \ref{220413.1450} (ii),  for any $i\in\bbZ_{\geq0}$ and $p>1$,
\begin{align*}
  \sup_{n\in\bbN, j\in[n]}\bnorm{V^{[1]}_\tjm}_{i,p}, 
  \sup_{n\in\bbN, j\in[n]}\bnorm{V^{[2]}_\tjm}_{i,p}
  \tand
  \sup_{n\in\bbN, j\in[n]}\bnorm{V^{[(1;1),1]}_{\tjm}}_{i,p}
\end{align*}
are all finite. Hence we have 
\begin{align}
  \sup_{j\in[n]} \snorm{T^{1}_{n,j}}_{i,p}=O(n^{-H}),
  \quad\sup_{j\in[n]} \snorm{T^{2}_{n,j}}_{i,p}=O(n^{-1})
  \tand
  \sup_{j\in[n]} \snorm{T^{k}_{n,j}}_{i,p}=O(n^{-2H}) \quad (k=3,4)
  \label{220415.1630}
\end{align}
as $n\to\infty$.
 
\vspsm
The functional $Z_n$ is expanded as
\begin{align*}
  Z_n =& n^{1/2}\big(\bbV_n-\bbV_\infty) 
    = n^{1/2} \big( n^{2H-1} \sum_{j\in[n]} (\Delta_j X)^2 -  \bbV_\infty \big)
  \\=& 
    \bar M_n + \mR_{n}  %M^{\bf(gap)}_n 
  +\sum_{\substack{k_1,k_2=1,..,5 \\ (k_1,k_2)\neq(1,1)}}
  n^{2H-1/2} \sum_{j\in[n]} T^{k_1}_{n,j} T^{k_2}_{n,j},
  %\\&+& n^{2H-1/2} (\sum _{\# \in A} \bar M^{\#}_n 
  %+ \sum _{\# \notin A } \bar M^{\#}_n)
  %+ n^{2H-1/2}  \sum_j \{ 2R^{(\ref{0211120524})}_j \cdot T_j + (R^{(\ref{0211120524})}_j)^2 \}
\end{align*}
where
\begin{align}
  \bar M_n=&
  n^{2H-1/2}\sum_{j\in[n]}(V^{[1]}_{\tjm})^2 I_2(1_j ^{\otimes 2}) 
  =
  n^{2H-1/2}\sum_{j\in[n]} a_{\tjm} I_2(1_j ^{\otimes 2}) 
  %{=\delta(u_n) + r_n \cdot n \sum_{j\in[n]} I_1(1_j) D_{1_j}a_{\tjm}}
  \nn\\
  \mR_{n}=&
  %n^{2H-1/2}\sum_{j\in[n]}(V^{[1]}_{\tjm})^2 (T/n)^{2H}
  %-n^{1/2}T^{2H-1}\int^T_0 (V^{[1]}_{t})^2 dt
  %\\=&
  n^{1/2}\bbrbr{n^{2H-1}\sum_{j\in[n]}(V^{[1]}_{\tjm})^2 (T/n)^{2H} 
  -\bbV_\infty}
  =
  n^{1/2}\:T^{2H-1}\int^T_0\rbr{(V^{[1]}_{t_{j(t)-1}})^2-(V^{[1]}_{t})^2}dt.
  \label{210421.2254}
\end{align}
%where we denote $V^{[1]}(\cdot)^2$ by $a(\cdot)$.
%

First we consider 
\begin{align*}
  r_n^{-1}n^{2H-1/2} \sum_{j\in[n]} T^{k_1}_{n,j} T^{k_2}_{n,j}
  =n \sum_{j\in[n]} T^{k_1}_{n,j} T^{k_2}_{n,j}
\end{align*}
for $k_1,k_2=1,...,5$ such that $(k_1,k_2)\neq(1,1)$.
If at least one of $k_1$ and $k_2$ is $5$,
then by the estimate (\ref{220415.1630}) and Lemma \ref{210504.2200} (ii),
we have
\begin{align*}
  n\sum_{j\in[n]} T^{k_1}_{n,j}\, T^{k_2}_{n,j} 
  =O_M(n^{2+(-1-\beta)-H})
  =O_M(n^{1-\beta-H})
\end{align*}
for any $\beta\in(1/2,H)$.
Consider the cases where $(k_1,k_2)=(1,3),(3,1)$.
By the product formula,
\begin{align*}
  n\sum_{j\in[n]} T^{1}_{n,j}T^{3}_{n,j} 
  =& 
  n\sum_{j\in[n]} \frac12  V^{[(1;1),1*2]}_{\tjm}
    I_1(1_j)\ I_2(1_j^{\otimes 2})
  \\=&
  \frac{1}{2} n\sum_{j\in[n]} V^{[(1;1),1*2]}_{\tjm}
    I_3(1_j^{\otimes 3})
  +
  T^{2H}n^{1-2H}\sum_{j\in[n]} V^{[(1;1),1*2]}_{\tjm} I_1(1_j) 
  \\
  =& 
  \frac12\:
  \cali_n(1,(\cbr{1},0,3),\brbr{(V^{[(1;1),1*2]}_{\tjm})_{j\in[n]}}_n,\bbone)
  +T^{2H}\:
  \cali_n(1-2H,(\cbr{1},0,1),\brbr{(V^{[(1;1),1*2]}_{\tjm})_{j\in[n]}}_n,\bbone),
\end{align*}
where we write
$V^{[(1;1),1*2]}_{\tjm}=V^{[1;1]}_{\tjm}\brbr{V^{[1]}_\tjm}^2$.
The corresponding exponents are
\begin{align*}
  e(1,(\cbr{1},0,3))=&1+((2-1)-1+(1/2-2H)-H)=3/2-3H
  \\
  e(1-2H,(\cbr{1},0,1))=&1-2H+((2-1)-1)=1-2H,
\end{align*}
and by Proposition \ref{210823.2400} we obtain
\begin{align*}
  n\sum_{j\in[n]}T^{1}_{n,j}T^{3}_{n,j} = O_M(n^{1-2H}).
\end{align*}
Similary, we have 
\begin{align*}
  n\sum_{j\in[n]}T^{1}_{n,j}T^{2}_{n,j} 
  =& O_M(1),
  &
  n\sum_{j\in[n]}T^{1}_{n,j}T^{4}_{n,j} 
  =& O_M(n^{1-2H}),
  &
  n\sum_{j\in[n]}T^{2}_{n,j}T^{2}_{n,j} 
  =& O_M(1),
  \\
  n\sum_{j\in[n]}T^{2}_{n,j}T^{3}_{n,j} 
  =& O_M(n^{1/2-2H}),
  &
  n\sum_{j\in[n]}T^{2}_{n,j}T^{4}_{n,j} 
  =& O_M(n^{1-2H}),
  &
  n\sum_{j\in[n]}T^{3}_{n,j}T^{3}_{n,j} 
  =& O_M(n^{2-4H}),
  \\
  n\sum_{j\in[n]}T^{3}_{n,j}T^{4}_{n,j} 
  =& O_M(n^{3/2-4H}),
  &
  n\sum_{j\in[n]}T^{4}_{n,j}T^{4}_{n,j} 
  =& O_M(n^{2-4H}).
\end{align*}

\vspssm
%\noindent
The functional $\bar M_n$ is written as 
$\bar M_n=\delta(u_n) + M_{0,n}$
with 
\begin{align*}
  M_{0,n} 
  %{r_n N_{0,n}}
  %{\dot N_n,r_n N^{(0)}_n}
  =n^{2H-1/2} \sum_{j\in[n]} \rbr{D_{1_j} a_{t_{j-1}}} I_1(1_j),
\end{align*}
which is of $O_M(r_n)$ by an argument using the exponent.
We also have
$\mR_{n} =O_M(n^{-1/2})$
by Lemma \ref{210504.2151}, and hence
$r_n^{-1}\mR_{n} =O_M(n^{1-2H})$.
Thus, by setting
\begin{align}
  N'_n=&r_n^{-1}\mR_{n}+
  \sum_{\substack{(k_1,k_2)\neq\\(1,1),(1,2),(2,1),(2,2)}}
      n\sum_{j\in[n]} T^{k_1}_{n,j} T^{k_2}_{n,j}
  \label{220220.2100}
  \\
  N_n=&
  r_n^{-1}M_{0,n}
  %n \sum_{j\in[n]}\rbr{D_{1_j}a_{\tjm}}\:I_1(1_j)
  +2n\sum_{j\in[n]} T^{1}_{n,j} T^{2}_{n,j}
  +n\sum_{j\in[n]} \rbr{T^{2}_{n,j}}^2
  +N'_n,
  \label{220220.2101}
\end{align}
we have $N'_{n} =O_M(n^{1-\beta-H})$ with $1-\beta-H<0$
and obtain the stochastic expansion (\ref{220207.1041}).
\end{proof}
\begin{remark}\label{220228.1345}
  Notice that 
  $N'_n=O_M(n^{1-2H+\epsilon})$ for any $\epsilon>0$.
  By decomposing $\Delta_jX$ further, we can show
  $N'_n=O_M(n^{1-2H})$.
\end{remark}

\subsection{Functionals and their exponents}
\label{220220.2416}
We will investigate the functionals %and their exponents 
appearing when decomposing 
$D_{u_n}M_n$, $D_{u_n}^2\bar M_n$, $N_n$ and $D_{u_n}N_n$.
Recall we defined
\begin{align*}
  u_n =\;&
   n^{2H-1/2} \sum_{j\in[n]} a_{t_{j-1}}  I_1(1_j) 1_j
  \\
  M_n=\;&\delta(u_n)
  =n^{2H-1/2}\sum_{j\in[n]}a_\tjm I_2(1_j^{\otimes2}) 
  -n^{2H-1/2}\sum_{j\in[n]} \rbr{D_{1_j} a_{t_{j-1}}} I_1(1_j)
  \\
  \bar M_n=\;& n^{2H-1/2}\sum_{j\in[n]}a_\tjm I_2(1_j ^{\otimes 2})
  \\
  \cmpns=\;&n^{2H-1/2} \sum_{j\in[n]} \rbr{D_{1_j}a_{\tjm}}I_1(1_j).
\end{align*}
\begin{itemize}
  \item [\bf (i)]  $D_{u_n}M_n$
\end{itemize}
$D_{u_n}M_n$ decomposes as follows
  \begin{align}
    D_{u_n}M_n=& D_{u_n}\Brbr{\bar M_n-\cmpns}
    =
    D_{u_n}\Brbr{
    n^{2H-1/2} \sum_{j\in[n]} a_{t_{j-1}}  I_2(1_j^{\otimes2})}
    -D_{u_n}\Brbr{n^{2H-1/2}\sum_{j\in[n]} \rbr{D_{1_j} a_{t_{j-1}}} I_1(1_j)}
    \nn\\=&
    \cali^{M(2,0)}_{1,n}+\cali^{M(2,0)}_{2,n}
    -(\cali^{M(2,0)}_{3,n}+\cali^{M(2,0)}_{4,n}),
    \label{220221.1811}
  \end{align}
  where
  \begin{align*}
    \cali^{M(2,0)}_{1,n}
    =& 2 n^{4H-1} \sum_{j\in[n]^2} 
    a_{t_{j_1-1}} a_{t_{j_2-1}} 
    I(1_{j_1})I(1_{j_2})\beta_{j_1,j_2}
    \\
    \cali^{M(2,0)}_{2,n}
    =& n^{4H-1} \sum_{j\in[n]^2} 
    \rbr{D_{1_{j_2}} a_{t_{j_1-1}}} a_{t_{j_2-1}}
    I_2(1_{j_1}^{\otimes2})I(1_{j_2})
    \\
    \cali^{M(2,0)}_{3,n}
    =& n^{4H-1}\sum_{j\in[n]^2} 
    \rbr{D_{1_{j_2}} D_{1_{j_1}} a_{t_{j_1-1}}}a_{t_{j_2-1}} 
    I(1_{j_1})I(1_{j_2})
    \\
    \cali^{M(2,0)}_{4,n}
    =& n^{4H-1}\sum_{j\in[n]^2} 
    \rbr{D_{1_{j_1}}a_{t_{j_1-1}}} a_{t_{j_2-1}}
    I(1_{j_2})\beta_{j_1,j_2}.
  \end{align*} 
  The functionals above can be written as
  \begin{align*}
    \cali^{M(2,0)}_{1,n}
    =& 
    2\:\cali_n(4H-1,G^{M(2,0)}_{1},A^{M(2,0)}_{1},\bbone),
    &A^{M(2,0)}_{1,n}(j)=&a_{t_{j_1-1}} a_{t_{j_2-1}}
    \\
    \cali^{M(2,0)}_{2,n}
    =& 
    \cali_n(4H-2,G^{M(2,0)}_{2},A^{M(2,0)}_{2},\bbone),
    &A^{M(2,0)}_{2,n}(j)=&n\rbr{D_{1_{j_2}} a_{t_{j_1-1}}} a_{t_{j_2-1}}
    \\
    \cali^{M(2,0)}_{3,n}
    =&
    \cali_n(4H-3,G^{M(2,0)}_{3},A^{M(2,0)}_{3},\bbone),
    &A^{M(2,0)}_{3,n}(j)=&n^2\rbr{D_{1_{j_2}} D_{1_{j_1}} a_{t_{j_1-1}}} a_{t_{j_2-1}}
    \\
    \cali^{M(2,0)}_{4,n}
    =&
    \cali_n(4H-2,G^{M(2,0)}_{4},A^{M(2,0)}_{4},\bbone),
    &A^{M(2,0)}_{4,n}(j)=&n\rbr{D_{1_{j_1}}a_{t_{j_1-1}}} a_{t_{j_2-1}},
  \end{align*} 
  where the weighted graphs $G^{M(2,0)}_{k}$ ($k=1,...,4$) are expressed as below.
%
%\noindent
Hence the corresponding exponents are calculated as 
\begin{align}
  e(4H-1,G^{M(2,0)}_{1})&= 4H-1 + (2-2+2(1/2-2H))=0
  \nn\\
  e(4H-2,G^{M(2,0)}_{2})&= 4H-2 + (2-1-1+(1/2-2H)) +(2-1-1)=2H-3/2
  \label{220220.2422}
  \\
  e(4H-3,G^{M(2,0)}_{3})&= 4H-3 + 2(2-1-1)=4H-3
  \label{220220.2423}
  \\
  e(4H-2,G^{M(2,0)}_{4})&= 4H-2 + (2-2-1)=4H-3.
  \label{220220.2424}
\end{align} 

Furthermore, the functional $\cali^{M(2,0)}_{1,n}$ can be decomposed as 
$%\begin{align*}
  \cali^{M(2,0)}_{1,n}
  =\cali^{M(2,0)}_{1,1,n}+\cali^{M(2,0)}_{1,2,n}
$ %\end{align*}
with 
\begin{align}
  \cali^{M(2,0)}_{1,1,n}
  =& 2 n^{4H-1} \sum_{j\in[n]^2} 
  a_{t_{j_1-1}} a_{t_{j_2-1}} 
  \beta_{j_1,j_2}^2
  &=&2\,\cali_n(4H-1,G^{M(2,0)}_{1,1},A^{M(2,0)}_{1},\bbone)
  \label{220301.1029}\\
  \cali^{M(2,0)}_{1,2,n}
  =& 2 n^{4H-1} \sum_{j\in[n]^2} 
  a_{t_{j_1-1}} a_{t_{j_2-1}} 
  \delta^2(1_{j_1}\otimes1_{j_2})\beta_{j_1,j_2}
  %I_2(1_{j_1}\tilde\otimes1_{j_2})\beta_{j_1,j_2}
  &=&
  2\,\cali_n(4H-1,G^{M(2,0)}_{1},
  \bcbr{G^{M(2,0)}_{1}},
  A^{M(2,0)}_{1},\bbone),
  \nn
\end{align} 
and the corresponding exponents are calculated as
\begin{align}
  e(4H-1,G^{M(2,0)}_{1,1})&= 4H-1 + (2-2+2(1/2-2H))=0
  \nn\\
  e_2(4H-1,G^{M(2,0)}_{1},\bcbr{G^{M(2,0)}_{1}})&= 
  4H-1 +(1/2-4H)\vee(-2)
  %+ \bcbr{2-2-1+\brbr{-1/2+(2-4H)\vee(-1/2)}}
  =(-1/2)\vee(4H-3).
  \label{220220.2425}
\end{align} 
\captionsetup[subfigure]{labelformat=empty}
\begin{figure}[H]
    \centering
  \begin{subfigure}[t]{0.18\textwidth}
      \centering
      \graphMtwon{}{}
      \caption{$G^{M(2,0)}_{1}$}
  \end{subfigure}
  \begin{subfigure}[t]{0.18\textwidth}
    \centering
    \graphMtwtw{}{}
    \caption{$G^{M(2,0)}_{2}$}
  \end{subfigure}
  \begin{subfigure}[t]{0.18\textwidth}
    \centering
    \graphMtwth{}{}
    \caption{$G^{M(2,0)}_{3}$}
  \end{subfigure}
  \begin{subfigure}[t]{0.18\textwidth}
    \centering
    \graphMtwfo{}{}
    \caption{$G^{M(2,0)}_{4}$}
  \end{subfigure}
  \begin{subfigure}[t]{0.18\textwidth}
    \centering
    \graphMtwonon{}{}
    \caption{$G^{M(2,0)}_{1,1}$}
  \end{subfigure}
\end{figure}

\begin{itemize}
  \item [\bf (ii)]  $D_{u_n}^2\bar M_n$
\end{itemize}
$D_{u_n}^2\bar M_n$ can be decomposed as follows
\begin{align}
  D_{u_n}^2\bar M_n=&
  D_{u_n}\rbr{\cali^{M(2,0)}_{1,n}+\cali^{M(2,0)}_{2,n}}
  =%\\=&
  \rbr{2\:\cali^{M(3,0)}_{1,n}+2\:\cali^{M(3,0)}_{2,n}}
  +\rbr{\cali^{M(3,0)}_{3,n}+\cali^{M(3,0)}_{4,n}
  +\cali^{M(3,0)}_{1,n}+\cali^{M(3,0)}_{5,n}}
  \nn\\=&
  3\:\cali^{M(3,0)}_{1,n}+2\:\cali^{M(3,0)}_{2,n}
  +\cali^{M(3,0)}_{3,n}+\cali^{M(3,0)}_{4,n}+\cali^{M(3,0)}_{5,n},
  \label{220221.1812}
\end{align}
where
\begin{align*}
  \cali^{M(3,0)}_{1,n}=&
  2n^{6H-3/2} \sum_{j\in[n]^3}
  \rbr{D_{1_\jth}a_{t_{\jon-1}}} a_{t_{\jtw-1}}a_{t_{\jth-1}}
  I(1_\jon)I(1_\jtw)I(1_\jth)\beta_{\jon,\jtw}
  \\
  \cali^{M(3,0)}_{2,n}=&
  2n^{6H-3/2} \sum_{j\in[n]^3}
  a_{t_{\jon-1}}a_{t_{\jtw-1}}a_{t_{\jth-1}}
  I(1_\jtw)I(1_\jth)\beta_{\jon,\jtw}\beta_{\jon,\jth}
  \\
  \cali^{M(3,0)}_{3,n}=&
  n^{6H-3/2} \sum_{j\in[n]^3}
  \rbr{D_{1_\jth}D_{1_\jtw} a_{t_{\jon-1}}} a_{t_{\jtw-1}} a_{t_{\jth-1}}
  I^2(1_\jon^{\otimes 2}) I(1_\jtw)I(1_\jth)
  \\
  \cali^{M(3,0)}_{4,n}=&
  n^{6H-3/2} \sum_{j\in[n]^3}
  \rbr{D_{1_\jtw}a_{t_{\jon-1}}} \rbr{D_{1_\jth}a_{t_{\jtw-1}}} a_{t_{\jth-1}}
  I^2(1_\jon^{\otimes 2})I(1_\jtw)I(1_\jth)
  \\
  \cali^{M(3,0)}_{5,n}=&
  n^{6H-3/2} \sum_{j\in[n]^3}
  \rbr{D_{1_\jtw}a_{t_{\jon-1}}} a_{t_{\jtw-1}} a_{t_{\jth-1}}
  I^2(1_\jon^{\otimes 2})I(1_\jth)\beta_{\jtw,\jth}.
\end{align*} 
%{$3$ and $4$ can be made one functional}
The functionals above can be written as
\begin{align}
  \cali^{M(3,0)}_{1,n}=&
  2\,\cali_n(6H-5/2,G^{M(3,0)}_{1},A^{M(3,0)}_{1},\bbone),
  &A^{M(3,0)}_{1,n}(j)=&\rbr{nD_{1_\jth}a_{t_{\jon-1}}} a_{t_{\jtw-1}}a_{t_{\jth-1}}
  \label{220222.2004}\\
  \cali^{M(3,0)}_{2,n}=&
  2\,\cali_n(6H-3/2,G^{M(3,0)}_{2},A^{M(3,0)}_{2},\bbone),
  &A^{M(3,0)}_{2,n}(j)=&a_{t_{\jon-1}}a_{t_{\jtw-1}}a_{t_{\jth-1}}
  \nn\\
  \cali^{M(3,0)}_{3,n}=&
  \cali_n(6H-7/2,G^{M(3,0)}_{3},A^{M(3,0)}_{3},\bbone),
  &A^{M(3,0)}_{3,n}(j)=&
  n^2\rbr{D_{1_\jth}D_{1_\jtw} a_{t_{\jon-1}}} a_{t_{\jtw-1}} a_{t_{\jth-1}}
  \label{220222.2003}\\
  \cali^{M(3,0)}_{4,n}=&
  \cali_n(6H-7/2,G^{M(3,0)}_{3},A^{M(3,0)}_{4},\bbone),
  &A^{M(3,0)}_{4,n}(j)=&
  n^2\rbr{D_{1_\jtw}a_{t_{\jon-1}}} \rbr{D_{1_\jth}a_{t_{\jtw-1}}} a_{t_{\jth-1}}
  \nn\\
  \cali^{M(3,0)}_{5,n}=&
  \cali_n(6H-5/2,G^{M(3,0)}_{5},A^{M(3,0)}_{1},\bbone),
  &A^{M(3,0)}_{5,n}(j)=&
  n\rbr{D_{1_\jtw}a_{t_{\jon-1}}} a_{t_{\jtw-1}} a_{t_{\jth-1}}
  \nn
\end{align} 
where the weighted graphs $G^{M(3,0)}_{k}$ ($k=1,2,3,5$) are expressed as below.
Hence the corresponding exponents are calculated as 
\begin{align}
  e(6H-5/2,G^{M(3,0)}_{1})&= 6H-5/2 + (2-2+2(1/2-2H))+(2-1-1)=2H-3/2
  \nn\\
  e(6H-3/2,G^{M(3,0)}_{2})&= 6H-3/2 + (2-3+2(1/2-2H))=2H-3/2
  \label{220330.1242}\\
  %e_2(6H-3/2,G^{M(3,0)}_{2},\emptyset)&= 6H-3/2 + (1-6H)\vee(-3)
  %=(-1/2)\vee(6H-9/2)
  %\label{220227.1601}\\
  e(6H-7/2,G^{M(3,0)}_{3})&= 6H-7/2 + (2-1-1+(1/2-2H)) + 2(2-1-1)=4H-3
  \label{220222.1954}\\
  e(6H-5/2,G^{M(3,0)}_{5})&= 6H-5/2 + (2-1-1+(1/2-2H)) + (2-2-1)=4H-3
  \label{220227.1600}
\end{align} 

For $\cali^{M(3,0)}_{2,n}$, we can write
\begin{align}
  \cali^{M(3,0)}_{2,n}=&
  2\,\funcSec(6H-3/2,G^{M(3,0)}_{2},
  \emptyset, A^{M(3,0)}_{2}, \bbone),
  \nn
\end{align} 
and we have
\begin{align}
  e_2(6H-3/2,G^{M(3,0)}_{2},\emptyset)&= 6H-3/2 + (1-6H)\vee(-3)
  =(-1/2)\vee(6H-9/2).
  \label{220227.1601}
\end{align} 
\begin{figure}[H]
  \centering
\begin{subfigure}[t]{0.23\textwidth}
    \centering
    \graphMthon{1}{2}{3}
    \caption{$G^{M(3,0)}_{1}$}
\end{subfigure}
\begin{subfigure}[t]{0.23\textwidth}
  \centering
  \graphMthtw{}{}{}
  \caption{$G^{M(3,0)}_{2}$}
\end{subfigure}
\begin{subfigure}[t]{0.23\textwidth}
  \centering
  \graphMthth{}{}{}
  \caption{$G^{M(3,0)}_{3}$}
\end{subfigure}
\begin{subfigure}[t]{0.23\textwidth}
  \centering
  \graphMthfi{}{}{}
  \caption{$G^{M(3,0)}_{5}$}
\end{subfigure}
\end{figure}

\begin{itemize}
  \item [\bf (iii)] $N_n$, $D_{u_n}N_n$
\end{itemize}
The functional $N_n$ defined by (\ref{220220.2101}) is decomposed as 
$N_n=\pertur{0}+\pertur{1}+\pertur{2}+N'_n$
with
\begin{align*}
  \pertur{0}=&\sum_{j\in[n]} \rbr{nD_{1_j} a_{t_{j-1}}} I_1(1_j),
  &%\\
  \pertur{1}=&\sum_{j\in[n]}\rbr{2\,T\: V^{[2]}_{\tjm} V^{[1]}_{\tjm}} I_1(1_j),
  &%\\
  \pertur{2}=&T^2 n^{-1} \sum_{j\in[n]} (V^{[2]}_{\tjm})^2
\end{align*}
and $N'_n$ defined by (\ref{220220.2100}).
Note that $r_nN_{0,n}=M_{0,n}$.

The functionals above can be written as
\begin{align*}
  \cali^{N(1,0)}_{1,n}:=\pertur{0}+\pertur{1}=&
  \cali_n(0,G^{N(1,0)}_{1},A^{N(1,0)}_{1},\bbone),
  &A^{N(1,0)}_{1,n}(j)=&
  nD_{1_j} a_{t_{j-1}}+2\,T\: V^{[2]}_{\tjm} V^{[1]}_{\tjm}
  %\label{220222.2051}
  \\
  %\cali^{N(1,0)}_{2,n}:=
  \pertur{2}=&
  \cali_n(-1,G^{N(1,0)}_{2},A^{N(1,0)}_{2},\bbone),
  &A^{N(1,0)}_{2,n}(j)=&T^2\,(V^{[2]}_{\tjm})^2
  %\label{220222.2052}
\end{align*} 
with $G^{N(1,0)}_{1}=\singlegraph{1}{1}$ 
and $G^{N(1,0)}_{2}=\singlegraph{1}{0}$.
Recall that we denote by $\singlegraph{v}{k}$
a weighted graph $(V,\edgeWt,\vertWt)$ 
such that $V=\cbr{v}$ with some $v$ and $\vertWt(v)=k$.
The exponents are 
\begin{align}
  e(0,G^{N(1,0)}_{1})&= 0 + (2-1-1)=0,&
  e(-1,G^{N(1,0)}_{2})&= -1 + (2-1-0)=0.
  \label{220222.2100}
\end{align} 

Since the only component in $G^{N(1,0)}_{1}$ belongs to 
$\Csone(G^{N(1,0)}_{1})$, and hence
$\Cpearg{1}(G^{N(1,0)}_{1})$ is empty,
by Proposition \ref{210822.1800},
there exist finite sets $\Lambda^{N(2,0)}_{1}$ and $V''$,
$\alpha^\lambda\in\bbR$,
a weighted graph 
$G^\lambda=(V'',\edgeWt^\lambda,\vertWt^\lambda)$, 
$A^\lambda\in\cala(V'')$ and $\bbf^\lambda\in\calf(V'')$
for each $\lambda\in\Lambda^{N(2,0)}_{1}$,
such that
$D_{u_n}\cali^{N(1,0)}_{1,n}$ can be written as
\begin{align} 
  D_{u_n}\cali^{N(1,0)}_{1,n} =&
  \sum_{\lambda\in\Lambda^{N(2,0)}_{1}} 
  \cali_n(\alpha^{\lambda}, G^{\lambda}, A^{\lambda}, \bbf^\lambda),%\bbone),
  \label{220218.1201}
\end{align}
%$(\alpha^\lambda, G^\lambda, A^\lambda, \bbf^\lambda)_{\lambda\in\Lambda}$
and
\begin{align}
  \max_{\lambda\in\Lambda^{N(2,0)}_{1}} e(\alpha^\lambda, G^\lambda)
  = e(0,G^{N(1,0)}_{1}) +2H-3/2 =2H-3/2.
  \label{220218.1202}
\end{align}

  The functional $D_{u_n}N_{2,n}$ can be written as
  \begin{align}
    D_{u_n}N_{2,n}
    =&
    %D_{u_n}\Brbr{ T^2 n^{-1} \sum_{j\in[n]} (V^{[2]}_{\tjm})^2}
    %\\=&
    T^2 n^{2H-3/2}\sum_{j\in[n]^2} 
    D_{1_\jtw}\brbr{(V^{[2]}_{t_{\jon-1}})^2} a_{t_{\jtw-1}}I_1(1_\jtw)
    =%\\=&
    \cali_n(2H-5/2,G^{N(2,0)}_{2},A^{N(2,0)}_{2},\bbone),
    \label{220222.2111}
  \end{align}
where
$A^{N(2,0)}_{2,n}(j)=
nD_{1_\jtw}\brbr{(V^{[2]}_{t_{\jon-1}})^2} a_{t_{\jtw-1}}$ and 
$G^{N(2,0)}_{2}=\singlegraph{1}{0}\vee\singlegraph{2}{1}$. % is expressed by
The exponent $e(2H-5/2,G^{N(2,0)}_{2})$ is 
\begin{align}
  e(2H-5/2,G^{N(2,0)}_{2})
  =2H-5/2+(2-1-0)+(2-1-1)=2H-3/2.
  \label{220222.2112}
\end{align}

%\newpage
\subsection{Random symbols}\label{210430.1627}
Here we consider the random symbols
$(\mT_n,\mT)= (\mS_n^{(3,0)},\mS^{(3,0)})$,
$(\mS_{0,n}^{(2,0)},\mS_0^{(2,0)})$,
$(\mS_n^{(1,0)},\mS^{(1,0)})$ and
$(\mS_{1,n}^{(2,0)},\mS_1^{(2,0)})$
in Condition {\bf [D]} (iii).
We calculate the limit random symbol, which appear in the asymptotic expansion formula, and prove 
the convergence $\mT_n\to\mT$ in $L^p$.
%$\mS^{(3)}$, $\mS_0^{(2)}$, $\mS^{(1)}$ and $\mS_1^{(2)}$
%
%\subsubsection{Notations}
%indep. of $T$
We introduce the following definitions.
\begin{alignat*}{2}
  %\rho_H(k) &:= \frac12 \rbr{\abs{k+1}^{2H} +\abs{k-1}^{2H} -2\abs{k}^{2H}}
  %=\abr{1_{[0,1]}, 1_{[k,k+1]}}_\calh
  %\\
  %c_H^2 &=\sum_{k\in\bbZ} \rho_H(k)^2 
  %\\
  %\beta_n\rbr{j_1,j_2}
  %&:= \abr{1_{\jon}, 1_\jtw}_\calh= T^{2H} n^{-2H} \rho_H(\jon-\jtw)
  %&&\tfornsp \jon,\jtw\in[n]\nn
%\\
\rho_{\tau}(s)&=\alpha_H\, T\, \abs{s-\tau}^{2H-2}
&&\tfornsp s,\tau\in[0,T]
\nn\\
\dotd(\tau;t)&=\int_{[0,T]}D_sX_t\; \rho_{\tau}(s) ds
&&\tfornsp \tau,t\in[0,T]
\nn\\
\ddotd(\tau;t)&=
\int_{[0,T]^2}D^2_{\son,\stw} X_{t}\; \rho_{\tau}(\son)\rho_{\tau}(\stw)d\son d\stw
&\quad&\tfornsp \tau,t\in[0,T]\nn
\end{alignat*}
with $\alpha_H=H(2H-1)$.
Recall we have defined
\begin{alignat*}{2}
  \rho_H(k) &= \frac12 \rbr{\abs{k+1}^{2H} +\abs{k-1}^{2H} -2\abs{k}^{2H}}
  =\abr{1_{[0,1]}, 1_{[k,k+1]}}_\calh
  \nn\\
  c_H^2 &
    =\sum_{k\in\bbZ} \rho_H(k)^2 
    %\text{ where } \rho_H(k) = 1/2(|k+1|^{2H}-2|k|^{2H}+|k-1|^{2H}).
  \\
  \beta_n\rbr{j_1,j_2}=\beta_{j_1,j_2}%=\beta_{j_1,j_2;T}
  &:= \abr{1_{\jon}, 1_\jtw}_\calh
  = T^{2H} n^{-2H} \rho_H(\jon-\jtw)
  &&\tfornsp \jon,\jtw\in[n]\nn
\end{alignat*}
and 
$a(x)=(V^{[1]}(x))^2$.
Let us write
$g(x)=2c_H^2\; T^{4H-1}\; (V^{[1]}(x))^4$
and we can write
$G_\infty=\int^T_0g(X_t)dt$.
%$g'(x)=2c_H^2\cdot T^{4H-1}\cdot4(V^{[1]}(x))^3\cdot V^{[1,1]}(x)$
%Set $j(t)=[n\cdot t/T]+1$, namely
%{For $t\in\clop{0,T}$, we denote $j\in[n]$ such that $t\in\clop{\tjm,\tj}$ by $j_n(t)$ or $j(t)$.
%By convention, we set $j(T)=n$.}

%%\newpage
%\subsubsection{Quasi-torsion}
\begin{itemize}
  \item [\bf (i)] \bf The random symbol $\mS_n^{(3,0)}$ (Quasi-torsion)
\end{itemize}
First we treat the random symbol %$\mS_n^{(3\redcomm{,0})}[(\sfi\sfz)^3]$
$\mS_n^{(3,0)}$ and its weak limit $\mS^{(3,0)}$.
Recall that the random symbol $\mS_n^{(3,0)}$ is written as 
\begin{align*}
  \mS_n^{(3,0)}(\sfi\sfz) 
  = \frac13 \qtor_n[\sfi\sfz]^3
  = \frac13 r_n^{-1} D^2_{u_n[\sfi\sfz]}M_n[\sfi\sfz]
  = \frac13 r_n^{-1} D^2_{u_n}M_n\:[\sfi\sfz]^3.
\end{align*}
%\begin{align*}
%  \qtor_n[\sfi\sfz]^3
%  = 3 \mS_n^{(3,0)}(\sfi\sfz) 
%  = r_n^{-1} D^2_{u_n[\sfi\sfz]}M_n[\sfi\sfz]
%  = r_n^{-1} D^2_{u_n}M_n\:[\sfi\sfz]^3.
%\end{align*}
By the decomposition (\ref{220221.1811}) and (\ref{220221.1812}),
we have
\begin{align*}
  D_{u_n}^2M_n=&
  D_{u_n}\rbr{\cali^{M(2,0)}_{1,n}+\cali^{M(2,0)}_{2,n}
    -(\cali^{M(2,0)}_{3,n}+\cali^{M(2,0)}_{4,n})}
  \\=&
  3\:\cali^{M(3,0)}_{1,n}+2\:\cali^{M(3,0)}_{2,n}
  +\cali^{M(3,0)}_{3,n}+\cali^{M(3,0)}_{4,n}+\cali^{M(3,0)}_{5,n}
  -D_{u_n}\rbr{\cali^{M(2,0)}_{3,n}+\cali^{M(2,0)}_{4,n}}
  %\\D_{u_n}\cmpns=&\cali^{M(2,0)}_{3,n}+\cali^{M(2,0)}_{4,n},
\end{align*}
By the exponents (\ref{220222.1954}), (\ref{220227.1600}) 
and Proposition \ref{210407.1401},
we have 
 $\cali^{M(3,0)}_{3,n}$, $\cali^{M(3,0)}_{4,n}$, $\cali^{M(3,0)}_{5,n}$
$=O_{L^p}(n^{4H-3})=o_{L^p}(r_n)$ for $p>1$.
Similarly, by (\ref{220227.1601}) and Proposition \ref{210407.1402},
we have
$\cali^{M(3,0)}_{2,n}
=O_{L^p}(n^{(-1/2)\vee(6H-9/2)})=o_{L^p}(r_n)$.
By the exponents (\ref{220220.2423}) and (\ref{220220.2424}),
using Proposition \ref{210823.2400} and 
$\bnorm{u_{n}}_{p}=O(1)$ $(p>1)$ from (\ref{220222.1937}),
we have 
$D_{u_n}\cali^{M(2,0)}_{3,n}$,
$D_{u_n}\cali^{M(2,0)}_{4,n}$
$=O_{L^p}(n^{4H-3})=o_{L^p}(r_n)$ for $p>1$.

The functional $\cali^{M(3,0)}_{1,n}$ can be decomposed further as
\begin{align*}
  \cali^{M(3,0)}_{1,n}=
  \cali^{M(3,0)}_{1,1,n}+\cali^{M(3,0)}_{1,2,n},
\end{align*}
where
\begin{align*}
  %%% for n\in\bbN
  \cali^{M(3,0)}_{1,1,n}=&
  2n^{6H-3/2} \sum_{j\in[n]^3}
  \rbr{D_{1_\jth}a_{t_{\jon-1}}} a_{t_{\jtw-1}}a_{t_{\jth-1}}
  I(1_\jth)\beta_{\jon,\jtw}^2
  \\
  \cali^{M(3,0)}_{1,2,n}=&
  2n^{6H-3/2} \sum_{j\in[n]^3}
  \rbr{D_{1_\jth}a_{t_{\jon-1}}} a_{t_{\jtw-1}}a_{t_{\jth-1}}
  I(1_\jth)
  \delta^2(1_\jon\otimes1_\jtw)
  %I_2(1_\jon\tilde\otimes1_\jtw)
  \beta_{\jon,\jtw}.
\end{align*}
The functional $\cali^{M(3,0)}_{1,2,n}$ can be written as
\begin{align*}
  \cali^{M(3,0)}_{1,2,n}=&
  2\,\cali_n(6H-5/2,G^{M(3,0)}_{1},
  \bcbr{G^{M(3,0)}_{1}|_{\cbr{1,2}}},
  A^{M(3,0)}_{1}, \bbone),
\end{align*}
and the corresponding exponent is 
\begin{align*}
  e_2(6H-5/2,G^{M(3,0)}_{1},\bcbr{G^{M(3,0)}_{1}|_{\cbr{1,2}}})
  =&6H-5/2 + (2-2-1+\phi_H(4)) + (2-1-1)
  %\\=&6H-5/2 + (-1+(3/2-4H)\vee (-1))
  \\=&(2H-2)\vee (6H-9/2).
\end{align*}
Hence, by (\ref{220227.1601}) and Proposition \ref{210407.1402},
we have
$\cali^{M(3,0)}_{1,2,n}
=O_{L^p}(n^{(2H-2)\vee (6H-9/2)})=o_{L^p}(r_n)$. 

\vspssm
Consider 
$\cali^{M(3,0)}_{1,1,n}$.
The action of the random symbol $\cali^{M(3,0)}_{1,1,n}[\tti\sfz]^3$ 
to $\Psi(\sfz)$ under the expectation is decomposed as
\begin{align*}
  E\sbr{\Psi(\sfz) \;\cali^{M(3,0)}_{1,1,n} [\tti\sfz]^{3}}
  =E\sbr{\Psi(\sfz) \;\bar\cali^{M(3,0)}_{1,1,n} (\tti\sfz)},
  %\quad{\tfornsp \sfz\in\bbR\tandsm n\in\bbN}
\end{align*}
where 
$\bar\cali^{M(3,0)}_{1,1,n} (\tti\sfz)
=r_n\rbr{\bar{\mS}_{1,n}^{(3,0)} [\tti\sfz]^5
+\bar{\mS}_{2,n}^{(3,0)} [\tti\sfz]^3}$
and
\begin{align}
  \bar{\mS}_{1,n}^{(3,0)}&= 
  2n^{4H-1} \sum_{j\in[n]^3}
  2^{-1} \rbr{D_{1_{j_3}} G_\infty} A^{M(3,0)}_{1,n}(j) \:\beta_{j_1,j_2}^2
  \label{220403.2145}
  \\
  \bar{\mS}_{2,n}^{(3,0)}&=
  2n^{4H-1} \sum_{j\in[n]^3} 
  \brbr{D_{1_{j_3}} A^{M(3,0)}_{1,n}(j)} \:\beta_{j_1,j_2}^2,
  \label{220403.2146}
\end{align}
where $A^{M(3,0)}_{1,n}$ was defined at (\ref{220222.2004}).
Hence, we define 
\begin{align*}
  \bar{\mS}_n^{(3,0)}(\sfi\sfz) 
  =& r_n^{-1}\bar\cali^{M(3,0)}_{1,1,n} (\tti\sfz)
  +\mR_n^{(3,0)}[\sfi\sfz]^3
  = \bar{\mS}_{1,n}^{(3,0)} [\tti\sfz]^5
  +\bar{\mS}_{2,n}^{(3,0)} [\tti\sfz]^3
  +\mR_n^{(3,0)}[\sfi\sfz]^3
\end{align*}
with
\begin{align*}
  \mR_n^{(3,0)}
  =3^{-1} r_n^{-1}
  \Brbr{3\:\cali^{M(3,0)}_{1,2,n}
  +2\:\cali^{M(3,0)}_{2,n}
  +\sum_{k=3,4,5}\cali^{M(3,0)}_{k,n}
  -D_{u_n}\Brbr{\sum_{k=3,4}\cali^{M(2,0)}_{k,n}}}.
\end{align*}

Define the functional ${\mS}^{(3,0)}_{k,\infty}$ ($k=1,2$) by
%{Constant may be wrong}
%\grnb{Check the constant\tto seems okay}
\begin{alignat*}{3}
  {\mS}^{(3,0)}_{1,\infty}&=
  \frac1{4T} \int_0^T \bbrbr{\int_0^T g'(X_t) \dotd(\tau,t) dt}^2
  a_{\tau}d\tau
  \\
  {\mS}^{(3,0)}_{2,\infty}&=
  2 T^{4H-2}c_H^2 \bigg\{
  \int_0^T\bbrbr{ \int_0^T 
  \rbr{a''(X_{t})\;\dotd(\tau,{t})^2 +a'(X_{t})\;\ddotd(\tau,{t})} 
   a_{t} dt} a_{\tau} d\tau
  \nn\\&\hspace*{75pt}+
  %2 T^{4H-2}c_H^2 
  \int_0^T \rbr{\int_0^T \rbr{a'(X_{t}) \dotd(\tau,t)} a_{t}dt }
  \rbr{a'(X_{\tau})\dotd(\tau,\tau)} d\tau
  +%\nn\\&+
  %2 T^{4H-2}c_H^2 
  \int_0^T \rbr{\int_0^T \rbr{a'(X_{t})\dotd(\tau,{t})}^2 dt}a_{\tau} d\tau
  \bigg\}
\end{alignat*}
and let 
\begin{align}
  \mS^{(3,0)}(\tti\sfz) =
  {\mS}^{(3,0)}_{1,\infty}[\tti\sfz]^5 +{\mS}^{(3,0)}_{2,\infty}[\sfi\sfz]^3.
  \label{220308.2520}
\end{align}

\begin{proposition}\label{211010.2330}
  For any $p>1$,
  \begin{align*}
    \bar{\mS}_n^{(3,0)}(\sfi\sfz)\to 
    \mS^{(3,0)}(\tti\sfz)
  \end{align*}
  in $L^p$ as $n\to\infty$ in the sense that
  the coefficient random variable of each degree of $\bar{\mS}_n^{(3,0)}$
  converges to the counterpart of $\mS^{(3,0)}$ in $L^p$.
\end{proposition}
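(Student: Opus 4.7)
The plan is to treat the three contributions to $\bar{\mS}^{(3,0)}_n$ separately: the degree-$3$ remainder $\mR^{(3,0)}_n[\tti\sfz]^3$, the degree-$5$ coefficient $\bar{\mS}^{(3,0)}_{1,n}$, and the degree-$3$ coefficient $\bar{\mS}^{(3,0)}_{2,n}$. For the remainder, nothing new is needed: the discussion immediately before the proposition already shows, via Propositions \ref{210407.1401} and \ref{210407.1402} applied to the weighted graphs $G^{M(3,0)}_{k}$ and $G^{M(2,0)}_{k}$ together with the exponents computed at (\ref{220227.1600}), (\ref{220222.1954}), (\ref{220227.1601}), (\ref{220220.2423}) and (\ref{220220.2424}), that each summand comprising $r_n \mR^{(3,0)}_n$ is of order $o_{L^p}(r_n)$ for every $p>1$, so $\mR^{(3,0)}_n\to 0$ in $L^p$.

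The substantive work lies in identifying the limits of $\bar{\mS}^{(3,0)}_{1,n}$ and $\bar{\mS}^{(3,0)}_{2,n}$ defined at (\ref{220403.2145})--(\ref{220403.2146}). The first idea is to replace each occurrence of $n\,D_{1_{j}}F$ by the pointwise expression $\int_0^T D_sF\,\rho_{t_{j-1}}(s)\,ds$. Writing $D_{1_j}F=\int_{I_j}D_sF\,ds$ and using the identity $\langle 1_{j_1},1_{j_2}\rangle_\calh=\alpha_H\int_{I_{j_1}}\int_{I_{j_2}}|s-s'|^{2H-2}\,ds\,ds'$, one checks that the difference
\begin{align*}
 n\,D_{1_j}X_t-\int_0^T D_sX_t\,\rho_{t_{j-1}}(s)\,ds
\end{align*}
is, uniformly in $j$, $O_{L^p}(n^{-\kappa})$ for some $\kappa>0$ by H\"older continuity of $s\mapsto D_sX_t$ in $L^p$ (Proposition \ref{220413.1450}) together with the local integrability of $\rho_{t_{j-1}}$; the same applies to $n\,D_{1_j}G_\infty$ and to $n\,D_{1_j}a_{t_{j_1-1}}$. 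This allows us to substitute $n\,D_{1_{j_3}}G_\infty\rightsquigarrow\int_0^T g'(X_t)\dotd(t_{j_3-1};t)\,dt$ and $n\,D_{1_{j_3}}a_{t_{j_1-1}}\rightsquigarrow a'(X_{t_{j_1-1}})\dotd(t_{j_3-1};t_{j_1-1})$ inside $\bar{\mS}^{(3,0)}_{1,n}$, with the discarded errors controlled by the exponent estimates of Section \ref{210429.1655}.

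Next, I exploit the scaling $n^{4H}\beta_{j_1,j_2}^2=T^{4H}\rho_H(j_1-j_2)^2$ to carry out the sum in $(j_1,j_2)$. Setting $k=j_1-j_2$ and using $\sum_{k\in\bbZ}\rho_H(k)^2=c_H^2$ together with uniform continuity of the factor $a'(X_{t_{j_1-1}})\dotd(t_{j_3-1};t_{j_1-1})\,a_{t_{j_2-1}}$ under shifts $j_2=j_1-k$ for $|k|$ small (and negligibility of large $|k|$ thanks to the $k^{4H-4}$ decay of $\rho_H(k)^2$), the double sum contracts to $c_H^2 T^{4H}\sum_{j_1}a'(X_{t_{j_1-1}})\dotd(t_{j_3-1};t_{j_1-1})\,a_{t_{j_1-1}}$. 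After the final Riemann-sum pass in $j_1,j_3$ (each contributing $(n/T)$), the prefactor $n^{4H-1}\cdot n\cdot n^{-2}$ combines with $c_H^2 T^{4H-2}$ and the identity $g'=4c_H^2 T^{4H-1}\,a\,a'$ to yield exactly $\mS^{(3,0)}_{1,\infty}$. A parallel, but richer, computation works for $\bar{\mS}^{(3,0)}_{2,n}$: here $D_{1_{j_3}}A^{M(3,0)}_{1,n}(j)$ expands by the Leibniz rule into three pieces corresponding to the three integrals in $\mS^{(3,0)}_{2,\infty}$—one with the second derivative of $a$ and a $\ddotd$ factor (from differentiating the inner $D_{1_{j_3}}a_{t_{j_1-1}}$), one from differentiating $a_{t_{j_2-1}}$ (absorbed after the $\sum_{j_1,j_2}\rho_H^2$ contraction), and one from differentiating $a_{t_{j_3-1}}$ (giving the boundary term with $\dotd(\tau,\tau)$).

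The main obstacle is to justify the exchange of the discrete Malliavin weights $n\,D_{1_j}$ with their continuous analogues uniformly in the summation indices, and to quantify the Riemann-sum error after the $\rho_H^2$-contraction, in $L^p$ rather than merely in probability. For this one needs (i) $L^p$-H\"older regularity in $s$ of the derivatives $D_sX_t$ and $D^2_{s_1,s_2}X_t$ supplied by Proposition \ref{220413.1450}, (ii) an $L^p$-quantitative version of the identity $\sum_k\rho_H(k)^2=c_H^2$ with an explicit rate in $n$, and (iii) control of the remainders through the exponent machinery of Section \ref{210429.1655} applied to the weighted graphs $G^{M(3,0)}_{\cdot}$ and their contractions. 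Once these uniform estimates are in place, the convergence in $L^p$ of each polynomial coefficient follows by dominated convergence, and combining the three steps gives $\bar{\mS}^{(3,0)}_n(\tti\sfz)\to\mS^{(3,0)}(\tti\sfz)$ coefficient-wise in $L^p$, which is precisely the statement of the proposition.
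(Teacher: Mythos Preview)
Your treatment of the remainder $\mR^{(3,0)}_n$ is correct and matches the paper. For the coefficients $\bar{\mS}^{(3,0)}_{1,n}$ and $\bar{\mS}^{(3,0)}_{2,n}$, however, you take a substantially harder route than the paper, and there is a genuine gap in it.

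The paper's argument is a two–line soft argument: first, Lemma~\ref{220307.1330} (built on Lemma~\ref{210423.2110} and the weak convergence $\mu_n\Rightarrow\mu_\infty$ of Lemma~\ref{211020.2435}) gives \emph{almost sure} convergence $\bar{\mS}^{(3,0)}_{k,n}\to\mS^{(3,0)}_{k,\infty}$; second, the exponent calculus (Proposition~\ref{210407.1401} applied to the graphs carrying $\bar{\mS}^{(3,0)}_{k,n}$) gives $\sup_n\|\bar{\mS}^{(3,0)}_{k,n}\|_{q}<\infty$ for every $q>1$. Almost sure convergence plus boundedness in every $L^q$ immediately yields convergence in $L^p$ for any $p>1$ via uniform integrability. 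No quantitative rate on the substitutions $nD_{1_j}\rightsquigarrow\int(\cdot)\rho_\tau\,ds$ is ever needed.

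Your approach instead tries to push every replacement through directly in $L^p$. The concrete problem is the step where you claim
\[
nD_{1_j}X_t-\int_0^T D_sX_t\,\rho_{t_{j-1}}(s)\,ds = O_{L^p}(n^{-\kappa})
\]
by ``H\"older continuity of $s\mapsto D_sX_t$ in $L^p$ (Proposition~\ref{220413.1450})''. That proposition gives only $\sup_{s,t}|D_sX_t|\in L^{\infty-}$; it says nothing about H\"older regularity in $s$, and in fact $D_sX_t$ has a jump at $s=t$ (it vanishes for $s>t$ and equals $V^{[1]}(X_s)+\cdots$ for $s\le t$), so the claimed regularity fails. The paper circumvents this by using only the boundedness of $D_sX_t$ together with $\|\rho_{n,j_n(\tau)}-\rho_\tau\|_{L^1([0,T])}\to0$ (Lemma~\ref{210507.1804}), which suffices for almost sure convergence but does not by itself give a rate. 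Similarly, your ``$L^p$-quantitative version of $\sum_k\rho_H(k)^2=c_H^2$'' is not developed in the paper, and is unnecessary once one works almost surely.

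In short: the identification of the limits that you sketch is morally the content of Lemmas~\ref{210423.2110}--\ref{220307.1330}, but carried out pathwise rather than in $L^p$; the $L^p$ upgrade is then free from the exponent bound. Reorganising your proof along these lines removes both the Hölder-continuity gap and the need for any quantitative substitution estimates.
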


\begin{proof}
For $k=1,2$,
$\bar{\mS}^{(1,0)}_{k,n}$ converges almost surely to
${\mS}^{(1,0)}_{k,\infty}$ by Lemma \ref{210505.1440}.
By arguments using the exponent, we have
%r_n^{-1}\bar\cali^{M(3,0)}_{1,1,k,n}
$\bar{\mS}_{k,n}^{(3,0)}
=O_{L^p}(1)$ 
for any $p>1$.
Thus we have the convergence 
%r_n^{-1}\bar\cali^{M(3,0)}_{1,1,k,n} 
$\bar{\mS}_{k,n}^{(3,0)}
\to\mS^{(3,0)}_{k,\infty}$
in $L^p$ for any $p>1$.
Since $\mR_n^{(3,0)}=o_{L^p}(1)$, 
we obtain the result.
\end{proof}

%%\newpage
%\subsubsection{Quasi-tangent}
\begin{itemize}
  \item [\bf (ii)] \bf The random symbol $\mS_n^{(2,0)}$ (Quasi-tangent)
\end{itemize}
We treat the random symbol 
$\mS_{0,n}^{(2,0)}$ and its weak limit
$\mS_0^{(2,0)}$.
Recall that the random symbol $\mS_{0,n}^{(2,0)}$ and 
the quasi tangent are written as 
\begin{align*}
  \qtan_n[\tti\sfz]^2 
  = 2 \mS_{0,n}^{(2,0)}(\tti\sfz) 
  = r_n^{-1} \rbr{D_{u_n[\sfi\sfz]}M_n[\tti\sfz]-G_\infty[\tti\sfz]^2}
  = r_n^{-1} \rbr{D_{u_n}M_n-G_\infty}\:[\tti\sfz]^2.
\end{align*}
We decompose $D_{u_n}M_n-G_\infty$ into six terms as follows:
\begin{align*}
  G_n^{(2,0)}
  &=D_{u_n}M_n-G_\infty
  \\&=
  \Brbr{\cali^{M(2,0)}_{1,1,n}-G_\infty}
  +\cali^{M(2,0)}_{1,2,n}
  +\cali^{M(2,0)}_{2,n}
  -(\cali^{M(2,0)}_{3,n}+\cali^{M(2,0)}_{4,n}),
  %\\&=
  %{
  %n^{4H-1}  \abr{\sum_{j_1\in[n]}
  %D\rbr{a_{t_{j_1-1}} I^2(1_{j_1}^{\otimes2}) -D_{1_{j_1}} a_{t_{j_1-1}} I(1_{j_1})},
  %\sum_{j_2\in[n]} a_{t_{j_2-1}} I(1_{j_2}) 1_{j_2}}
  %- 2 c_H^2 T^{4H-1} \int^T_0 a_t^2 dt}
\end{align*}
By the exponents calculated at 
(\ref{220220.2423}), (\ref{220220.2424}) and (\ref{220220.2425}),
we have
$\cali^{M(2,0)}_{k,n} =O_{L^p}(n^{4H-3})=o_{L^p}(r_n)$ for $k=3,4$ and
$\cali^{M(2,0)}_{1,2,n} =\bar O_{L^p}(n^{(-1/2)\vee(4H-3)})=o_{L^p}(r_n)$  for $p>1$.
We also have
$\cali^{M(2,0)}_{1,1,n}-G_\infty= o_{L^{\infty-}}(n^{2H - \frac{3}{2}})$
by Lemma \ref{220309.2200}.

Consider 
$\cali^{M(2,0)}_{2,n}$.
The action of the random symbol $\cali^{M(2,0)}_{2,n}[\tti\sfz]^2$ 
to $\Psi(\sfz)$ under the expectation is decomposed as

\begin{align*}
  %%% for \sfz\in\bbR, n\in\bbN
  E \sbr{\Psi(\sfz)\;\cali^{M(2,0)}_{2,n}[\tti\sfz]^2}
  &=
    n^{4H-2} \sum_{j\in[n]^2} E\sbr{\abr{ 
      D\brbr{\Psi(\sfz)\: A^{M(2,0)}_{2,n}(j)
      I(1_{j_2})},
      I(1_{j_1})1_{j_1}
     }} [\tti\sfz]^{2}
  \\&=
    E\sbr{\Psi(\sfz)\; \bar\cali^{M(2,0)}_{2,n}  (\tti\sfz)},
    %\bcbr{ \bar\cali^{M(2,0)}_{2,1,n}  [\tti\sfz]^{4}
    %  + \bar\cali^{M(2,0)}_{2,2,n} [\tti\sfz]^{2}
    %  + \bar\cali^{M(2,0)}_{2,3,n} [\tti\sfz]^{2}}
\end{align*}
where
$\bar\cali^{M(2,0)}_{2,n}  (\tti\sfz)
=\bar\cali^{M(2,0)}_{2,1,n}  [\tti\sfz]^{4}
+ \bar\cali^{M(2,0)}_{2,2,n} [\tti\sfz]^{2}
+ \bar\cali^{M(2,0)}_{2,3,n} [\tti\sfz]^{2}$ with
%$\Psi(\sfz) = \exp(2^{-1}G_\infty [(\tti\sfz)^{\otimes2}])$ 
\begin{alignat*}{2}
  \bar\cali^{M(2,0)}_{2,1,n} &=
    n^{4H-2} \sum_{j\in[n]^2}
      %\big(D_{1_{j_1}} \Psi(\sfz) \big)
    \rbr{2^{-1} D_{1_{j_1}} G_\infty}\:
    A^{M(2,0)}_{2,n}(j)
    I(1_{j_2})I(1_{j_1})
    &&=\cali_n(4H-3,G^{M(2,0)}_{2,1},A^{M(2,0)}_{2,1},\bbone)
  \\
  \bar\cali^{M(2,0)}_{2,2,n} &=
    n^{4H-2} \sum_{j\in[n]^2}
    \rbr{D_{1_{j_1}} A^{M(2,0)}_{2,n}(j)}\:
    I(1_{j_2})I(1_{j_1})
    &&=\cali_n(4H-3,G^{M(2,0)}_{2,1},A^{M(2,0)}_{2,2},\bbone)
  \\
  \bar\cali^{M(2,0)}_{2,3,n} &=
    n^{4H-2} \sum_{j\in[n]^2}A^{M(2,0)}_{2,n}(j)
    I(1_{j_1})\beta_{j_1,j_2}
    &&=\cali_n(4H-2,G^{M(2,0)}_{2,3},A^{M(2,0)}_{2},\bbone),
  \end{alignat*}
  where we define 
  $A^{M(2,0)}_{2,1,n}(j)
  =\rbr{2^{-1} nD_{1_{j_1}} G_\infty}\:A^{M(2,0)}_{2,n}(j)$ and
  $A^{M(2,0)}_{2,2,n}(j)=nD_{1_{j_1}} A^{M(2,0)}_{2,n}(j)$,
  and the weighted graphs are as follows.
\begin{figure}[H]
    \centering
  \begin{subfigure}[t]{0.18\textwidth}
    \centering
    \graphMtwth{}{}
    \caption{$G^{M(2,0)}_{2,1}$}%$G^{M(2,0)}_{3}$
  \end{subfigure}
  \begin{subfigure}[t]{0.18\textwidth}
    \centering
    \graphMtwtwth{}{}
    \caption{$G^{M(2,0)}_{2,3}$}%$G^{M(2,0)}_{4}$
  \end{subfigure}
\end{figure}
The corresponding exponent of the above functionals are
\begin{align*}
  e(4H-3,G^{M(2,0)}_{2,1})
  &=4H-3+2(2-1-1)=4H-3\\
  e(4H-2,G^{M(2,0)}_{2,3})
  &=4H-2+(2-2-1)=4H-3
\end{align*}
and, we have
$\bnorm{\bar\cali^{M(2,0)}_{2,k,n}}_{L^p} = O(n^{4H-3}) = o(r_n)$
for $k=1,2,3$ and $p>1$ by Proposition \ref{210407.1401}.

Hence, we define 
\begin{align*}
  \bar{\mS}_{0,n}^{(2,0)}(\sfi\sfz) =& 
  2^{-1} r_n^{-1} \rbr{
  \bar\cali^{M(2,0)}_{2,n} (\tti\sfz)
  +\bcbr{\brbr{\cali^{M(2,0)}_{1,1,n}-G_\infty}
  +\cali^{M(2,0)}_{1,2,n}
  -(\cali^{M(2,0)}_{3,n}+\cali^{M(2,0)}_{4,n})}[\tti\sfz]^2},
\end{align*}
and $\bar{\mS}_{0,n}^{(2,0)}$ converges to 
$\mS_{0}^{(2,0)}=0$ in $L^p$ as $n\to\infty$.

%\subsubsection{The random symbols $\mS^{(1,0)}$ and $\mS^{(2,0)}_1$
%derived from the perturbation term $N_n$}
\begin{itemize}
  \item [\bf (iii)] \bf The random symbols $\mS^{(1,0)}$ 
\end{itemize}

Recall
\begin{align*}
  \mS^{(1,0)}_n(\tti\sfz) = N_n[\tti\sfz]
\end{align*}
and 
$N_n =\cali^{N(1,0)}_{1,n} +N_{2,n} +N'_n$
with $N'_n=o_M(1)$.
The action of the random symbol $\cali^{N(1,0)}_{1,n}[\tti\sfz]$
to $\Psi(\sfz)$ under the expectation is written as
\begin{align*}
  E\sbr{\Psi(\sfz) \cali^{N(1,0)}_{1,n} [\tti\sfz]}
  &= E \sbr{\Psi(\sfz) \bar\cali^{N(1,0)}_{1,n} (\tti\sfz)},
\end{align*}
with
\begin{align*}
  \bar\cali^{N(1,0)}_{1,n}(\tti\sfz)=&\;
  %\bar\cali^{N(1,0)}_{1,1,n}&=
  n^{-1}\sum_{j\in[n]} \rbr{2^{-1}nD_{1_j} G_\infty}
  \rbr{nD_{1_j} a_{t_{j-1}} + 2\,T\; V^{[2,1]}_{\tjm}}
  [\tti\sfz]^3
  \\
  %\bar\cali^{N(1,0)}_{1,2,n}&=
  &+n^{-1}\sum_{j\in[n]}
  \rbr{n^2D_{1_j}D_{1_j}a_{t_{j-1}} +2T\;nD_{1_j}V^{[2,1]}_{\tjm}}
  [\tti\sfz],
\end{align*}
where we write $V^{[2,1]}(x)=V^{[2]}(x) V^{[1]}(x)$.
We define 
\begin{align}
  \bar{\mS}^{(1,0)}_{1,n}=&\;
  n^{-1}\sum_{j\in[n]} \rbr{2^{-1}nD_{1_j} G_\infty}
  \rbr{nD_{1_j} a_{t_{j-1}} + 2\,T\; V^{[2,1]}_{\tjm}}
  \label{220403.2151}
  \\
  \bar{\mS}^{(1,0)}_{2,n}=&\;
  n^{-1}\sum_{j\in[n]}
  \rbr{n^2D_{1_j}D_{1_j}a_{t_{j-1}} +2T\;nD_{1_j}V^{[2,1]}_{\tjm}}
  +N_{2,n}
  \label{220403.2152}
\end{align}
and
\begin{align*}
  \bar{\mS}_n^{(1,0)}(\sfi\sfz) 
  =\bar{\mS}^{(1,0)}_{1,n}[\tti\sfz]^3
  +\bar{\mS}^{(1,0)}_{2,n}[\sfi\sfz]
  +N'_n[\sfi\sfz].
\end{align*}
Define the functional ${\mS}^{(1,0)}_{k,\infty}$ ($k=1,2$) by
\begin{align}
  {\mS}^{(1,0)}_{1,\infty}&=
  \int_0^T \rbr{\int^T_0 g'(X_t)\; \dotd(\tau,t)dt}
  \rbr{2^{-1}T^{-1} a'(X_{\tau})\; \dotd(\tau,\tau) +V^{[2,1]}_{\tau}} d\tau
  \label{220403.2131}
  \\
  {\mS}^{(1,0)}_{2,\infty}&=
  \int_0^T \cbr{
  T^{-1}\rbr{a''(X_{\tau})\;\brbr{\dotd(\tau,\tau)}^2 
  +a'(X_{\tau})\;\ddotd(\tau,\tau) }
  +2V^{[(2,1);1]}_{\tau}\;\dotd(\tau,\tau) 
  +T\;V^{[2,2]}_{\tau}}d\tau,
  \label{220403.2132}
\end{align}
where $V^{[(2,1);1]}(x)
=V^{[2;1]}(x) V^{[1]}(x)+V^{[2]}(x) V^{[1;1]}(x)$,
and let 
\begin{align}
  \mS^{(1,0)}(\tti\sfz) =
  {\mS}^{(1,0)}_{1,\infty}[\tti\sfz]^3
  +{\mS}^{(1,0)}_{2,\infty}[\sfi\sfz].
  \label{220308.2521}
\end{align}

\begin{proposition} 
  For any $p>1$,
  \begin{align*}
    \bar{\mS}_n^{(1,0)}(\sfi\sfz)\to 
    \mS^{(1,0)}(\tti\sfz)
  \end{align*}
  in $L^p$ as $n\to\infty$ in the sense that
  the coefficient random variable of each degree of 
  $\bar{\mS}_n^{(1,0)}$
  converges to the counterpart of $\mS^{(1,0)}$ in $L^p$.
\end{proposition}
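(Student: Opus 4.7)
The strategy parallels the proof of Proposition \ref{211010.2330}: for each coefficient random variable we will (a) establish a.s.\ convergence via a Riemann--sum identification of the limit, and (b) establish uniform $L^p$-boundedness through the exponent machinery of Section \ref{210429.1655}; then uniform integrability together with the a.s.\ convergence yields the desired $L^p$-convergence. Finally, the remainder term $N'_n[\tti\sfz]$ is handled directly because $N'_n=o_M(1)$ already gives convergence to $0$ in every $L^p$.

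For the a.s.\ convergence, the key observation is that for a smooth enough Malliavin-differentiable functional $F$ the quantity $n\,D_{1_j}F=n\langle DF,1_j\rangle_{\calh}$ behaves, asymptotically, like $\int_0^T D_sF\,\rho_{t_{j-1}}(s)\,ds$, where $\rho_{\tau}(s)=\alpha_H T\,|s-\tau|^{2H-2}$. Applied to $G_\infty=\int_0^T g(X_t)\,dt$ this gives
\begin{align*}
  n\,D_{1_j}G_\infty\ \approx\ \int_0^T g'(X_t)\,\dotd(t_{j-1},t)\,dt,
\end{align*}
applied to $a_{t_{j-1}}$ it gives $a'(X_{t_{j-1}})\,\dotd(t_{j-1},t_{j-1})$, and applied twice to $a_{t_{j-1}}$ it produces the pair $a''(X_{t_{j-1}})\dotd(t_{j-1},t_{j-1})^2+a'(X_{t_{j-1}})\ddotd(t_{j-1},t_{j-1})$. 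The Riemann sum $n^{-1}\sum_{j=1}^n(\cdot)_{t_{j-1}}$ converges to $T^{-1}\int_0^T(\cdot)_{\tau}\,d\tau$, and the term $N_{2,n}=T^2 n^{-1}\sum_j(V^{[2]}_{\tjm})^2$ converges to $T\int_0^T V^{[2,2]}_{\tau}\,d\tau$. Putting these together reproduces exactly $\mS^{(1,0)}_{1,\infty}$ and $\mS^{(1,0)}_{2,\infty}$ defined at (\ref{220403.2131}) and (\ref{220403.2132}). This step should be carried out by invoking a lemma of the same flavour as Lemma \ref{210505.1440}, applied to the relevant product of derivatives $nD_{1_j}G_\infty$, $nD_{1_j}a_{t_{j-1}}$, $n^2D_{1_j}D_{1_j}a_{t_{j-1}}$ and $nD_{1_j}V^{[2,1]}_{\tjm}$.

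For the uniform $L^p$-bounds, one writes each of $\bar{\mS}^{(1,0)}_{1,n}$ and $\bar{\mS}^{(1,0)}_{2,n}$ as a sum of functionals of the form $\cali_n(\alpha,G,A,\bbf)$ based on the one-vertex graph $\singlegraph{1}{0}$ (no Wiener integrals remain after the pairing with $G_\infty$ is taken and after the second derivative acts on $a$): the corresponding exponents are all $\le 0$ (by the same computation as in (\ref{220222.2100})--(\ref{220222.2112})), so Proposition \ref{210407.1401} gives $\bar{\mS}^{(1,0)}_{k,n}=O_{L^p}(1)$ for every $p>1$. A standard uniform-integrability argument then upgrades the a.s.\ convergence to $L^p$-convergence of each coefficient random variable, for every $p>1$.

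The main obstacle---already present in Proposition \ref{211010.2330} but slightly more delicate here because second derivatives of $a_{t_{j-1}}$ enter---is the quantitative Riemann-sum replacement of $n\,D_{1_j}(\cdot)$ and $n^2 D_{1_j}D_{1_j}(\cdot)$ by the kernel integrals involving $\rho_{t_{j-1}}$. This requires uniform continuity in $\tau$ of the processes $\dotd(\tau,\cdot)$ and $\ddotd(\tau,\cdot)$, uniform moment bounds on $D_sX_t$ and $D^2_{s_1,s_2}X_t$ from Section \ref{220423.1450}, and care with the singularity $|s-\tau|^{2H-2}$ of $\rho_\tau$; once these ingredients are in place, the remaining steps reduce to bookkeeping along the decompositions (\ref{220403.2151})--(\ref{220403.2152}).
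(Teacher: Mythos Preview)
Your proposal is correct and follows essentially the same approach as the paper: a.s.\ convergence of $\bar{\mS}^{(1,0)}_{k,n}$ to $\mS^{(1,0)}_{k,\infty}$ for $k=1,2$, uniform $L^p$-boundedness via the exponent machinery, and $N'_n=o_M(1)$ for the remainder. One small remark: the lemma you describe as ``of the same flavour as Lemma~\ref{210505.1440}'' is in fact Lemma~\ref{210505.1440} itself, which states precisely the a.s.\ convergence $\bar{\mS}^{(1,0)}_{k,n}\to\mS^{(1,0)}_{k,\infty}$ for $k=1,2$; your sketch of the Riemann-sum and kernel-replacement mechanics is exactly how that lemma (via Lemma~\ref{220415.1540}) is proved.
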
%%MODIF%%

\begin{proof}
For $k=1,2$,
$\bar{\mS}^{(1,0)}_{k,n}$ converges almost surely to
${\mS}^{(1,0)}_{k,\infty}$ by Lemma \ref{210505.1440}.
By arguments using the exponent,
we have 
$\sup_n \snorm{\bar{\mS}^{(1,0)}_{k,n}}_{p}<\infty$ for any $p>1$,
and the convergences 
$\bar{\mS}^{(1,0)}_{k,n} \to{\mS}^{(1,0)}_{k,\infty}$
in $L^p$ follows.
The term  $N'_n$ is of $o_M(1)$ and negligible.
Thus, we obtain the result.
\end{proof}

%\subsubsection*{The limit of $D_{u_n} N_n$}
%For $D_{u_n}N_n$,
%\vspsm
\begin{itemize}
  \item [\bf (iv)] \bf The random symbols $\mS^{(2,0)}_1$
\end{itemize}
Recall $\mS^{(2,0)}_{1,n} (\tti\sfz) = D_{u_n}N_n [(\tti\sfz)^{2}]$.
By (\ref{220218.1201}) and (\ref{220218.1202}),
we have
$\bnorm{ D_{u_n}\cali^{N(1,0)}_{1,n}}_{p} =O(n^{2H-3/2}) =o(1)$.
By (\ref{220222.2111}) and (\ref{220222.2112}),
$\bnorm{ D_{u_n}N_{2,n}}_{p} =O(n^{2H-3/2}) =o(1)$.
Since
$N'_n$ is of $o_M(1)$ and
$\bnorm{\snorm{u_n}_{\calh}}_{p}=O(1)$ for any $p>1$ by (\ref{211019.1935}),
we also have 
$\bnorm{ D_{u_n}N'_n}_{p} =o(1)$.
Hence we obtain
$\snorm{ D_{u_n}N_n }_{p} = o(1)$
and hence $\mS_1^{(2,0)}=0$.

%%\newpage
\subsection{Proof of Theorem \ref{220404.1630}}
We will verify the conditions in {\bf [D]}.
\label{220404.1900}
%Note that when we don't consider reference variable, 
%we don't need the conditions in Section \ref{210430.2121}.

%\subsubsection{Condition (\ref{220215.1241}) of [D](ii)}
\begin{itemize}
  \item [{\bf [D]}] \bf (ii) (\ref{220215.1241})
\end{itemize}
Recall the representation of $u_n$:
\begin{align*}
  u_n=\rbr{u_n(s)}_{s\in[0,T]}
  =\bbrbr{n^{2H-1/2}\sum_{j\in[n]} a_\tjm I_1(1_j)1_j(s)}_{s\in[0,T]}.
\end{align*}
For $i\in\bbZ_{\geq0}$, by the Leibniz rule,  we have
\begin{align*}
  D^{i}u_n&=
  \rbr{D^{i}_{s_1,..,s_i}u_n(s_0)}_{s_0,s_1,...,s_i\in[0,T]}
  %\\&=
  %\rbr{n^{2H-1/2}\sum_{j\in[n]} \cbr{
  %  \sum_{k=1}^i \rbr{D^{i-1}_{s_1,...,\hat s_k,...,s_i} a_\tjm} 1_j(s_k) 1_j(s_0)
  %  +\rbr{D^{i}_{s_1,...,s_i} a_\tjm} I_1(1_j) 1_j(s_0)
  %}}_{s_0,s_1,...,s_i\in[0,T]}
  %\\&
  =\sum_{k=0}^i \cali^{i,k}_{n},
\end{align*}
with $\calh^{\otimes i+1}$-valued random variables 
$\cali^{i,k}_{n}$ ($k=0,...,i$) defined by 
\begin{align*}
  \cali^{i,0}_{n}&=\bbrbr{
  n^{2H-1/2}\sum_{j\in[n]} \rbr{D^{i}_{s_1,..,s_i} a_\tjm} I^1(1_j) 1_j(s_0)
  }_{s_0,s_1,...,s_i\in[0,T]}
  \\
  \cali^{i,k}_{n}&=\bbrbr{
  n^{2H-1/2}\sum_{j\in[n]} \rbr{D^{i-1}_{s_1,..,\hat s_k,..,s_i} a_\tjm} 1_j(s_k) 1_j(s_0)
  }_{s_0,s_1,...,s_i\in[0,T]}
  \tforsm k=1,...,i
\end{align*}

For $k=1,..,i$,
\begin{align}
  \snorm{\cali^{i,k}_{n}}_{\calh^{\otimes i+1}}^2
  &=
  n^{4H-1}\sum_{j\in[n]^2}
  \abr{D^{i-1} a_{t_{\jon-1}},D^{i-1} a_{t_{\jtw-1}}}_{\calh^{\otimes i-1}}
  \beta_{\jon,\jtw}^2
  =\cali_n(4H-1,G^{(\ref{220220.2351})},A^{i},\bbone)
  \label{220220.2351}
\end{align}
with
$A^{i}_n(j)=
\abr{D^{i-1} a_{t_{\jon-1}},D^{i-1} a_{t_{\jtw-1}}}_{\calh^{\otimes i-1}}$
and $G^{(\ref{220220.2351})}$ as below.
Since the corresponding exponent of this term is 
$e(4H-1,G^{(\ref{220220.2351})})=(4H-1)+(1-4H)=0$,
we have
\begin{align*}
  \snorm{\snorm{ \cali^{i,k}_{n} }_{\calh^{\otimes i+1}} }_{p}
  =\snorm{\snorm{ \cali^{i,k}_{n} }_{\calh^{\otimes i+1}}^2 }_{{p/2}}^{1/2}
  =O(1)
\end{align*}
for $p\geq2$ by Proposition \ref{210407.1401}.
Similary,
\begin{align}
  \snorm{\cali^{i,0}_{n}}_{\calh^{\otimes i+1}}^2
  &=
  n^{4H-1}\sum_{j\in[n]^2}
  \abr{D^{i} a_{t_{\jon-1}},D^{i} a_{t_{\jtw-1}}}_{\calh^{\otimes i}}
  I^1(1_\jon) I^1(1_\jtw)\beta_{\jon,\jtw}
  =\cali_n(4H-1,G^{(\ref{220220.2352})},A^{i,0},\bbone)
  \label{220220.2352}
\end{align}
with
$A^{i,0}_n(j)=
\abr{D^{i} a_{t_{\jon-1}},D^{i} a_{t_{\jtw-1}}}_{\calh^{\otimes i}}$
and $G^{(\ref{220220.2352})}$ as below.
The corresponding exponent is 
$e(4H-1,G^{(\ref{220220.2352})})=(4H-1)+(1-4H)=0$,
and we have 
$\snorm{\snorm{\cali^{i,0}_{n}}_{\calh^{\otimes i+1}} }_{p} 
%\bbnorm{\bnorm{ \cali_{n;0} }_{\calh^{\otimes i+1}}^2 }_{L^{p/2}}^{1/2}
=O(1)$ for $p\geq2$ by Proposition \ref{210407.1401}.
%By the triangle inequality of the norms $\snorm{ \cdot }_{\calh^{\otimes i+1}}$ and $\snorm{ \cdot }_{L^p}$,
Hence, we have
\begin{align}
  \Bnorm{ \bnorm{ D^{i}u_n }_{\calh^{\otimes i+1}} }_{p}
  =O(1).
  \label{211019.1935}
\end{align}
Therefore, we obtain 
\begin{align}
  \snorm{ u_n }_{i,p} = O(1)
  \label{220222.1937}
\end{align}
for any $i\in\bbZ_+$ and $p>1$.

\begin{figure}[H]
  \centering
  \begin{subfigure}[t]{0.18\textwidth}
    \centering
    \graphMtwonon{}{}
    \caption{$G^{(\ref{220220.2351})}$}%{$G^{M(2,0)}_{1,1}$}
  \end{subfigure}
  \begin{subfigure}[t]{0.18\textwidth}
    \centering
    \graphMtwon{}{}
    \caption{$G^{(\ref{220220.2352})}$}%{$G^{M(2,0)}_{1}$}
  \end{subfigure}
\end{figure}

%\subsubsection{Condition (3.19) of [C](ii)}
\begin{itemize}
  \item [{\bf [D]}] \bf (ii) (\ref{220215.1242})
\end{itemize}
%%WRITE CLEARLY%%
Using the definitions in Subsection \ref{220220.2416},
$G^{(2)}_n$ is decomposed as
\begin{align*}
  G_n^{(2)}
  &=D_{u_n}M_n-G_\infty
  \\&=
  \Brbr{\cali^{M(2,0)}_{1,1,n}-G_\infty}
  +\cali^{M(2,0)}_{1,2,n}
  +\cali^{M(2,0)}_{2,n}
  -(\cali^{M(2,0)}_{3,n}+\cali^{M(2,0)}_{4,n}),
  %\\&=
  %{
  %n^{4H-1}  \abr{\sum_{j_1\in[n]}
  %D\rbr{a_{t_{j_1-1}} I^2(1_{j_1}^{\otimes2}) -D_{1_{j_1}} a_{t_{j_1-1}} I(1_{j_1})},
  %\sum_{j_2\in[n]} a_{t_{j_2-1}} I(1_{j_2}) 1_{j_2}}
  %- 2 c_H^2 T^{4H-1} \int^T_0 a_t^2 dt}
\end{align*}
By the exponents calculated at 
(\ref{220220.2422}), (\ref{220220.2423}), (\ref{220220.2424}) and (\ref{220220.2425}),
we have
$\bnorm{\cali^{M(2,0)}_{k,n}}_{7,p} = O(r_n)$ for $k=2,3,4$ 
by Proposition \ref{210823.2400} and 
$\bnorm{\cali^{M(2,0)}_{1,2,n}}_{7,p} = O(r_n)$
by Proposition \ref{210407.2246}
for every $p>1$.
%when without reference variable
%i=8, when with reference variable
%
%
Since
$\cali^{M(2,0)}_{1,1,n}-G_\infty= o_M(n^{2H - \frac{3}{2}})$
by Lemma \ref{220309.2200},
we obtain 
\begin{align*}
  \bnorm{ G_n^{(2)}}_{7,p} = O(n^{2H-3/2}).
\end{align*}
%when without reference variable
%i=8, when with reference variable

%\subsubsection{Condition (3.20), (3.21) of [C](ii)}
\begin{itemize}
  \item [{\bf [D]}] \bf (ii) (\ref{220215.1243}) and (\ref{220215.1244})
\end{itemize}
\begin{align}
  G_n^{(3)}=D_{u_n}G_\infty
  =n^{2H-3/2} \sum_{j\in[n]} \rbr{nD_{1_j} G_\infty} \; a_\tjm  I(1_j)
  =\cali_n(2H-3/2,G^{(\ref{220221.1020})},A^{(\ref{220221.1020})},\bbone),
  \label{220221.1020}
\end{align}
where 
$G^{(\ref{220221.1020})}=\singlegraph{1}{1}$ and 
$A^{(\ref{220221.1020})}_n(j)=\rbr{nD_{1_j} G_\infty} \; a_\tjm$.
The corresponding exponent is 
$e(2H-3/2,G^{(\ref{220221.1020})})=2H-3/2+(2-1-1)=2H-3/2$, 
and hence by Proposition \ref{210823.2400} we have
\begin{align*}
  \bnorm{G_n^{(3)}}_{7,p}=O(r_n).
\end{align*}

Since the only component in $G^{(\ref{220221.1020})}$ belongs to 
$\Csone(G^{(\ref{220221.1020})})$, and hence
$\Cpearg{1}(G^{(\ref{220221.1020})})$ is empty,
by Proposition \ref{210822.1800} and following the argument around 
(\ref{220218.1201}) and (\ref{220218.1202}), we obtain
\begin{align}
  \bnorm{D_{u_n} G_n^{(3)}}_{\comm{8},p}=O(n^{4H-3})=O(r_n^{2})
  \label{220221.1800}
\end{align}
%when without reference variable
%i=8,9, resp. when with reference variable

%\subsubsection{Condition (3.22) of [C](ii)}
\begin{itemize}
  \item [{\bf [D]}] \bf (ii) (\ref{220215.1245})
\end{itemize}
Since 
$D^2_{u_n}G_n^{(2)}=D^3_{u_n}M_n - D_{u_n}^2 G_\infty$ and
$\snorm{D_{u_n}^2 G_\infty}_{6,p} = \bnorm{D_{u_n} G_n^{(3)}}_{6,p}=O(r_n^2)$
by (\ref{220221.1800}),
we only need to treat $D^3_{u_n}M_n$.
From (\ref{220221.1811}) and (\ref{220221.1812}) in the previous subsection, 
we have
\begin{align*}
  D_{u_n}^3M_n
  =&D_{u_n}^3\bar M_n - D_{u_n}^3M_{0,n}
  \\=&D_{u_n}\rbr{3\:\cali^{M(3,0)}_{1,n}+2\:\cali^{M(3,0)}_{2,n}
  +\cali^{M(3,0)}_{3,n}+\cali^{M(3,0)}_{4,n}+\cali^{M(3,0)}_{5,n}}
  -D_{u_n}^2(\cali^{M(2,0)}_{3,n}+\cali^{M(2,0)}_{4,n})
\end{align*}

Consider $\:\cali^{M(3,0)}_{1,n}=
2\,\cali_n(6H-5/2,G^{M(3,0)}_{1},A^{M(3,0)}_{1},\bbone)$.
The two components in $G^{M(3,0)}_{1}$ belong to 
$\Csone(G^{N(1,0)}_{1})$ or $\Cpearg{0}(G^{N(1,0)}_{1})$,
and hence
$\Cpearg{1}(G^{N(1,0)}_{1})$ is empty.
Hence by Proposition \ref{210822.1800},
there exist finite sets $\Lambda^{M(4,0)}_{1}$ and $V''$,
$\alpha^\lambda\in\bbR$,
a weighted graph 
$G^\lambda=(V'',\edgeWt^\lambda,\vertWt^\lambda)$, 
$A^\lambda\in\cala(V'')$ and $\bbf^\lambda\in\calf(V'')$
for each $\lambda\in\Lambda^{M(4,0)}_{1}$,
such that
$D_{u_n}\cali^{M(3,0)}_{1,n}$ can be written as
\begin{align*} 
  D_{u_n}\cali^{M(3,0)}_{1,n} =&
  \sum_{\lambda\in\Lambda^{M(4,0)}_{1}} 
  \cali_n(\alpha^{\lambda}, G^{\lambda}, A^{\lambda}, \bbf^\lambda),%\bbone),
\end{align*}
%$(\alpha^\lambda, G^\lambda, A^\lambda, \bbf^\lambda)_{\lambda\in\Lambda}$
and
\begin{align*}
  \max_{\lambda\in\Lambda^{M(4,0)}_{1}} e(\alpha^\lambda, G^\lambda)
  = e(6H-5/2,G^{M(3,0)}_{1}) +2H-3/2
  =4H-3.
\end{align*}
By Proposition \ref{210823.2400}, we have
\begin{align*}
  \snorm{D_{u_n}\cali^{M(3,0)}_{1,n}}_{6,p}=O(n^{4H-3}).
\end{align*}
For $D_{u_n}\cali^{M(3,0)}_{2,n}$, 
since the only component in $G^{M(3,0)}_{2}$ belongs to $\Cpearg{0}(G^{N(1,0)}_{2})$,
%and $\Cpearg{1}(G^{M(3,0)}_{2})$ is empty, 
following the argument above, we have 
\begin{align*}
  \snorm{D_{u_n}\cali^{M(3,0)}_{2,n}}_{6,p}
  =O(n^{e(6H-3/2,G^{M(3,0)}_{2})+2H-3/2})
  =O(n^{4H-3}),
\end{align*}
where we used 
$e(6H-3/2,G^{M(3,0)}_{2}) =2H-3/2$
from (\ref{220330.1242}).

Consider $D_{u_n}\cali^{M(3,0)}_{3,n}$.
Since $\cali^{M(3,0)}_{3,n}=
\cali_n(6H-7/2,G^{M(3,0)}_{3},A^{M(3,0)}_{3},\bbone)$
and 
$e(6H-7/2,G^{M(3,0)}_{3})=4H-3$
(see (\ref{220222.2003}) and (\ref{220222.1954})),
we have 
$\snorm{\bnorm{D^i\cali^{M(3,0)}_{3,n}}_{\calh^{\otimes i}}}_p=O(n^{4H-3})$
by Proposition \ref{210823.2400} and 
$\bnorm{u_{n}}_{i,p}=O(1)$ from (\ref{220222.1937})
for any $i\in\bbZ_{\geq1}$ and $p>1$.
Hence we obtain
\begin{align*}
  \snorm{D_{u_n}\cali^{M(3,0)}_{3,n}}_{6,p}
  =O(n^{4H-3})
  \tfor p>1.
\end{align*}
By similar arguments, we have
\begin{alignat*}{2}
  \snorm{D_{u_n}\cali^{M(3,0)}_{4,n}}_{6,p},\quad
  %&=O(n^{4H-3}),&
  \snorm{D_{u_n}\cali^{M(3,0)}_{5,n}}_{6,p},\quad
  %&=O(n^{4H-3})
  %\\
  \snorm{D_{u_n}^2\cali^{M(2,0)}_{3,n}}_{6,p},\quad
  %&=O(n^{4H-3}),&\qquad
  \snorm{D_{u_n}^2\cali^{M(2,0)}_{4,n}}_{6,p}
  &=O(n^{4H-3}).
\end{alignat*}
(We can obtain sharper estimates about 
$D_{u_n}^2\cali^{M(2,0)}_{3,n}$ and $D_{u_n}^2\cali^{M(2,0)}_{4,n}$
by using Proposition \ref{210822.1800}.)
%They are estimated as $= O(n^{8H-6})$.
Therefore, we obtain
\begin{align*}
  \snorm{ D^2_{u_n}G_n^{(2)} }_{6,p} = O(r_n^{2}).
\end{align*}

%\subsubsection{Condition (3.25), (3.26) of [C](ii)}
\begin{itemize}
  \item [{\bf [D]}] \bf (ii) (\ref{220215.1246}) and (\ref{220215.1247})
\end{itemize}
The functional $N_n$ is decomposed as 
\begin{align*}
  N_n=\cali^{N(1,0)}_{1,n}+\pertur{2}+N'_n
\end{align*}
%(See Subsection \ref{220220.2416}.)
with $N'_n$ defined by (\ref{220220.2100}).
By the exponents calculated at (\ref{220222.2100}) and Proposition \ref{210823.2400},
we have
$\bnorm{\cali^{N(1,0)}_{1,n}}_{8,p} = O(1)$ and 
$\bnorm{\pertur{2}}_{8,p} = O(1)$
for every $p>1$.
Since $N'_n=o_M(1)$, the condition 
$\snorm{N_n}_{8,p}=O(1)$ for every $p>1$ is satisfied.

Since $D_{u_n}\cali^{N(1,0)}_{1,n}$ is written as 
the sum (\ref{220218.1201}) of functionals satisfying (\ref{220218.1202}),
by Proposition \ref{210823.2400},
we have
%$D_{u_n}\cali^{N(1,0)}_{1,n}$ can be written as
\begin{align*}
  \Bnorm{\bnorm{D^i
  \brbr{D_{u_n}\cali^{N(1,0)}_{1,n}}}_{\calh^{\otimes i}}}_p 
  =O(n^{2H-3/2}) 
\end{align*}
for any $i\in\bbZ_{\geq1}$ and $p>1$.
With $\bnorm{u_{n}}=O_M(1)$ from (\ref{220222.1937}), 
we have 
\begin{align*}
  \snorm{D_{u_n}^2\cali^{N(1,0)}_{1,n}}_{7,p}=O(n^{2H-3/2}).
\end{align*}
Similar arguments work for 
$D_{u_n}^2N_{2,n}$,
and by the exponent (\ref{220222.2112}) we obtain 
\begin{align*}
  \snorm{D_{u_n}^2N_{2,n}}_{7,p}=O(n^{2H-3/2}).
\end{align*}
(We can obtain sharper estimates of 
$D_{u_n}^2\cali^{N(1,0)}_{1,n}$ and $D_{u_n}^2N_{2,n}$
by using Proposition \ref{210822.1800}.)
%They are estimated as $= O(n^{4H-3})$.

By Remark \ref{220228.1345}, we have 
$N'_n=O_M(n^{1-2H+\epsilon})$ for $\epsilon>0$.
By (\ref{220222.1937}), we can write
\begin{align*}
  \snorm{D_{u_n}^2N'_n}_{7,p}=O(n^{1-2H+\epsilon})=O(r_n^\kappa)
\end{align*}
with some $\kappa\in(0,1\wedge\frac{1-2H}{2H-\frac32})$,
%with some $\kappa\in(0,1\wedge(2H-1)/(\frac32-2H))$,
with which we obtain 
$\snorm{D_{u_n}^2N_n}_{7,p}=O(r_n^\kappa)$ for every $p>1$.

%\subsubsection{[C](iv)b}
\begin{itemize}
  \item [{\bf [D]}] \bf(iv)(b)
\end{itemize}
\begin{align}
  DM_n=&
  n^{2H-1/2} \sum_{j_1\in[n]} \cbr{
    (Da_{t_{j_1-1}}) I^2(1_{j_1}^{\otimes2})
    +a_{t_{j_1-1}} 2I(1_{j_1}) 1_{j_1}
    -\big(D(D_{1_{j_1}}a_{t_{j_1-1}})\big)I(1_{j_1})
    -(D_{1_{j_1}}a_{t_{j_1-1}})1_{j_1}
  } 
  \nn\\=&
  2n^{2H-1/2} \sum_{j_1\in[n]} a_{t_{j_1-1}} I(1_{j_1}) 1_{j_1}
  \nn\\&+
  n^{2H-1/2} \sum_{j_1\in[n]} \cbr{
    (Da_{t_{j_1-1}}) I^2(1_{j_1}^{\otimes2})
    -\big(D(D_{1_{j_1}}a_{t_{j_1-1}})\big)I(1_{j_1})
    -(D_{1_{j_1}}a_{t_{j_1-1}})1_{j_1}
  } 
\label{210430.1543}
\end{align}
Noticing the first term in (\ref{210430.1543}) is equal to $2u_n$,
we decompose $\Delta_{M_n}$ as
\begin{align}
  \Delta_{M_n}&=\abr{DM_n,DM_n}
  =\abr{2u_n,2u_n} + 2\abr{2u_n,DM_n-2u_n} +\abr{DM_n-2u_n,DM_n-2u_n}
  \nn\\&=
  2\cali^{M(2,0)}_{1,1,n}+ 2\cali^{M(2,0)}_{1,2,n}
  +4\Brbr{\sum_{k=2,3,4}\cali^{M(2,0)}_{k,n}}
  +\snorm{DM_n-2u_n}^2
  \nn%\label{210427.1521}
\end{align}

Let 
\begin{align}
  \mR^{(\ref{220309.2210})}_n=
  2\rbr{\cali^{M(2,0)}_{1,1,n}-G_\infty}
  + 2\cali^{M(2,0)}_{1,2,n}
  +4\Brbr{\sum_{k=2,3,4}\cali^{M(2,0)}_{k,n}}.
  \label{220309.2210}
\end{align}
By the exponents calculated at 
(\ref{220220.2422}), (\ref{220220.2423}), (\ref{220220.2424}) and (\ref{220220.2425}),
we have
$\cali^{M(2,0)}_{k,n} = O_{L^{\infty-}}(r_n)$ for $k=2,3,4$
by Proposition \ref{210407.1401} and 
$\cali^{M(2,0)}_{1,2,n}= O_{L^{\infty-}}(r_n)$
by Proposition \ref{210407.1402}.
Since
$\cali^{M(2,0)}_{1,1,n}-G_\infty= o_M(n^{2H - \frac{3}{2}})$
by Lemma \ref{220309.2200},
we obtain 
\begin{align*}
  \bnorm{\mR^{(\ref{220309.2210})}_n}_{p} = O(n^{2H-3/2})
\end{align*}
for any $p>1$.

Set 
$s_\infty = s_n = G_\infty.$
We can see $G_\infty\in \bbD^{\infty}$.
We have $G_\infty^{-1}\in L^{\infty-}$ by Assumption \ref{220404.1535} (ii).
Since $\Delta_{M_n}$ is decomposed as 
$\Delta_{M_n}=2G_\infty+\mR_n^{(\ref{220309.2210})}+\snorm{DM_n-2u_n}^2$ 
and $\snorm{DM_n-2u_n}^2\geq0$,
we have
\begin{align*}
  P\sbr{\Delta_{M_n}<s_\infty}
  &\leq P\Bsbr{\mR_n^{(\ref{220309.2210})} <-G_\infty}
  \leq P\Bsbr{\abs{\mR_n^{(\ref{220309.2210})}} >G_\infty}
  = P\Bsbr{\abs{\mR_n^{(\ref{220309.2210})}}\:G_\infty^{-1} >1}
  \\&\leq
  E\Bsbr{\abs{\mR_n^{(\ref{220309.2210})}}^L\; \rbr{G_\infty}^{-L}}
  \leq
  \bnorm{\mR_n^{(\ref{220309.2210})}}_{2L}^L
  \snorm{G_\infty^{-1}}_{2L}^L
  =O(r_n^L)
\end{align*}
for any $L>1/2$.
By taking $L$ large enough, say $L=2$,
we obtain 
\begin{align*}
  P\sbr{\Delta_{M_n}<s_\infty} = O(r_n^{2}).
  %= O(r_n^{1+\kappa}).
\end{align*}

Summing up the above arguments, we have proved Theorem \ref{220404.1630}.
\hfill\qedsymbol
%\comm{Then $s_\infty \in \bbD^\infty$ and $s_\infty^{-1} \in L^{\infty-}$
%since we assumed $\inf_{x\in\bbR} |V^{[1]}(x)|>0$.}

\subsection{In the case of fractional Ornstein-Uhlenbeck process}
\label{220518.2114}
When we apply Theorem \ref{220404.1630} to the quadratic variation of 
a fractional Ornstein-Uhlenbeck (fOU) process, 
Assumption \ref{220404.1535} (i) fails to be satisfied.
However, we can bypass the argument relying upon Assumption \ref{220404.1535} (i)
to  obtain the following theorem.
\begin{theorem}
  Consider the one-dimensional fractional OU process 
  \begin{align*}
    X_t = X_0 - b\int_0^t X_sds + \sigma B_t 
    \qquad\tfor t\in[0,T]
  \end{align*}
  with some $T>0$, $\sigma,b>0$ and $X_0\in\bbR$.
  Define $Z_n$ by $Z_n=n^{1/2}\rbr{\bbV_n - \bbV_\infty}$ with 
  $\bbV_n$ defined at (\ref{220420.1130}) and $\bbV_\infty=\sigma^2\,T^{2H}$.
  Let $r_n=n^{2H-3/2}$.
  Then, for any $M,\gamma>0$ the following estimate holds:
\begin{align*}%\label{220421.1700}
  \sup_{f\in\hat\cale(M,\gamma)}
  \abs{E\sbr{f(Z_n)}- \int_{\bbR}f(z)\hat p_n(z)dz}
  =o(r_n)
  \quad\tassm n\to\infty,
\end{align*}
where 
\begin{align*}
  %\hat p_n(z)&=
  %E\sbr{\exp\rbr{-\frac12 G_\infty^{-1} z^2}}
  %+r_n E\sbr{\mS(\partial_z)^* \exp\rbr{-\frac12 G_\infty^{-1} z^2}}
  %\\
  \hat p_n(z)&=
  E\sbr{\phi(z;0,G_\infty)}
  +r_n E\sbr{\mS(\partial_z)^* \phi(z;0,G_\infty)}
  %\\
  %\hat\Delta_n(f)&=
  %\abs{E\sbr{f(Z_n)}- \int_{z\in\bbR}f(z)\hat p_n(z)dz}
\end{align*}
with 
$G_\infty =2 c_H^2\, \sigma^4\, T^{4H}$ and
\begin{align*}
  \mS(\sfi\sfz)=\mS^{(1,0)}(\sfi\sfz)
  = \int^T_0\rbr{-2b\sigma\, \dotd(\tau,\tau) + Tb^2 X_\tau^2}d\tau\; [\sfi\sfz].
\end{align*}
\end{theorem}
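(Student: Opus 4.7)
The plan is to adapt the proof of Theorem \ref{220404.1630} to the linear setting, bypassing the uniform Sobolev bounds that were used under Assumption \ref{220404.1535}(i). I would exploit the closed-form solution $X_t = e^{-bt}X_0+\sigma\int_0^t e^{-b(t-s)}dB_s$, whose Malliavin derivatives are explicit: $D_sX_t=\sigma e^{-b(t-s)}1_{\{s\leq t\}}$ and $D^kX_t=0$ for all $k\geq 2$. In particular $X_t$ is Gaussian with uniformly bounded moments of every order, and each coefficient $V^{[k;i]}(X_{t_{j-1}})$ appearing in the analysis is a polynomial in $X_{t_{j-1}}$ whose Malliavin derivatives reduce to deterministic kernels; the family therefore belongs to the class $\cala(V)$, replacing the role played by Proposition \ref{220413.1450} in the bounded-coefficients proof.

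Next I would revisit the stochastic expansion of Lemma \ref{211007.1600}. Since $V^{[1]}\equiv\sigma$ is constant and $V^{[1;1]}\equiv0$, we have $V^{[(1;1),1]}\equiv0$, so the terms $T^{3}_{n,j}$, $T^{4}_{n,j}$ in (\ref{220206.2332}) and $R^{1}_{n,j}$, $R^{3}_{n,j}$ in (\ref{220206.2351}) vanish identically; only $T^1_{n,j}=\sigma I_1(1_j)$, $T^2_{n,j}=-bX_{t_{j-1}}Tn^{-1}$ and $R^2_{n,j}$ survive in (\ref{211020.1120}). The decomposition (\ref{220207.1041}) therefore still holds with the same $u_n$, $r_n=n^{2H-3/2}$ and with $N_n$ built from the same recipe, now with far fewer nonzero terms.

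I would then verify Condition {\bf [D]} by following Section \ref{220404.1900} almost verbatim, invoking the exponent-based estimates of Section \ref{210429.1655}. The crucial point is that these estimates apply to any functional of the form $\cali_n(\alpha,G,A,\bbf)$ with $A\in\cala(V)$; in the fOU case the requirement (\ref{220119.1721}) follows from Gaussian moment bounds on $X$ together with the fact that $D^i a_{t_{j-1}}\equiv 0$ for every $i\geq1$ (since $a\equiv\sigma^2$) and $D^i V^{[2]}(X_{t_{j-1}})$ is either the deterministic kernel $-b\sigma e^{-b(t_{j-1}-\cdot)}1_{[0,t_{j-1}]}$ (for $i=1$) or zero (for $i\geq2$). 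The nondegeneracy $G_\infty^{-1}\in L^{\infty-}$ is trivial since $G_\infty=2c_H^2\sigma^4T^{4H}$ is a positive constant, and the truncation $s_n=G_\infty$ together with the bound on $\mR^{(\ref{220309.2210})}_n$ still yields $P[\Delta_{M_n}<s_n]=O(r_n^2)$.

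Finally I would identify the limit random symbols from Section \ref{210430.1627} by substituting $a(x)\equiv\sigma^2$, $g(x)\equiv 2c_H^2 T^{4H-1}\sigma^4$, $V^{[2;1]}\equiv-b$, $V^{[1;1]}\equiv0$, $V^{[(2,1);1]}\equiv-b\sigma$ and $V^{[2,2]}(x)=b^2x^2$. Since $g'\equiv a'\equiv a''\equiv 0$, all terms defining $\mS^{(3,0)}_{1,\infty}$ and $\mS^{(3,0)}_{2,\infty}$, as well as $\mS^{(1,0)}_{1,\infty}$, vanish, so $\mS^{(3,0)}=0$ and $\mS^{(1,0)}(\tti\sfz)=\mS^{(1,0)}_{2,\infty}\,[\tti\sfz]$; the surviving expression collapses to $\int_0^T\{-2b\sigma\,\dotd(\tau,\tau)+Tb^2X_\tau^2\}d\tau\,[\tti\sfz]$, which matches the claim. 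The main obstacle is the verification of Condition {\bf [D]} without appealing to Assumption \ref{220404.1535}(i); the explicit form of the fOU Malliavin derivatives circumvents this because every Sobolev norm of every coefficient functional is controlled directly by polynomial moments of the Gaussian process $X$, while the algebraic collapse of the symbols is a matter of substitution.
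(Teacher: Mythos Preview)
Your proposal is correct and follows essentially the same approach as the paper: the paper's argument is a brief sketch noting that $T^3_{n,j}=T^4_{n,j}=R^1_{n,j}=R^3_{n,j}=0$, that the explicit formulas $X_t=X_0e^{-bt}+\sigma\int_0^t e^{-b(t-s)}dB_s$, $D_sX_t=\sigma\,1_{[0,t]}(s)e^{-b(t-s)}$, $D^iX_t=0$ for $i\geq2$ replace Proposition~\ref{220413.1450}, and that the rest of the verification of Condition~{\bf [D]} and the identification of the random symbols proceed as before. You carry out exactly this plan, supplying the additional detail that $a'=a''=g'=0$ forces $\mS^{(3,0)}=0$ and $\mS^{(1,0)}_{1,\infty}=0$, and that the surviving term $\mS^{(1,0)}_{2,\infty}$ reduces to the stated expression.
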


In this case, the decomposition (\ref{211020.1120}) of the increment of $X_t$ 
is obtained in the same manner with 
$T^{1}_{n,j}=\sigma I_1(1_j)$,
$T^{2}_{n,j}=-bX_{\tjm}T n^{-1}$,
$T^{3}_{n,j}=T^{4}_{n,j}=0$,
$R^{1}_{n,j}=R^{3}_{n,j}=0$ and 
$R^{2}_{n,j}=-b\int^\tj_\tjm(X_t-X_\tjm)dt$,
and the argument of the stochastic expansion of $Z_n$ becomes much shorter.
While determining the limit random symbol and checking Condition {\bf [D]},
we need the estimate of the Malliavin derivative of $X_t$.
Although we cannot use Proposition \ref{220413.1450} due to the unboundedness of $V^{[2]}$,
we can obtain similar estimates from 
an expression of fOU process and its derivatives, namely
$X_t=X_0e^{-bt} + \sigma\int^t_0e^{-b(t-s)}dB_s$,
$D_sX_t=\sigma\, 1_{[0,t]}(s)\, e^{-b(t-s)}$ and 
$D^iX_t=0$ for $i\geq2$.

%\newpage
\section{Technical Lemmas}\label{220405.1158}

\subsection{Estimates related to SDE driven by fBm ($H>1/2$)}
\label{220423.1450}
We review the results from \cite{hu2016rate} in order to obtain bounds related to 
the solution of SDE driven by fBm.
To simplify the notation, we only state the result in the case 
where the dimensions of the space of the process and fBm are both one.
Let $\beta$ be any number satisfying $1/2<\beta<H$.

We introduce the following seminorm and norm.
Let $t,t'\in[0,T]$ with $t<t'$. % and $\beta\in(1/2,H)$.
For a function $x:[0,T]\to\bbR$,
we define the $\beta$-H\"{o}lder seminorm of $x$ on $[t,t']$ by
\begin{align*}
  \norm{x}_{t,t',\beta}=
  \sup\cbr{\frac{\abs{x(s)-x(s')}}{\abs{s-s'}^\beta} \mid s<s'\in[t,t']}
\end{align*}
and denote the uniform norm of $x$ on $t,t'$ by
$\norm{x}_{t,t',\infty}=
\sup_{s\in[t,t']} \abs{x(s)}$.
When $(t,t')=(0,T)$,
we abbreviate $\norm{x}_{t,t',\beta}$ and $\norm{x}_{t,t',\infty}$ as 
$\norm{x}_{\beta}$ and $\norm{x}_{\infty}$, respectively.

%\vspsm
Given a process 
$Y=\cbr{Y_t, t\in[0,T]}$ such that $Y_t\in\bbD^{N,2}$,
for each $t$ and some $N\geq1$ 
we define a random variable $\cald_N^* Y$ 
\begin{align*}
  \cald_N^* Y = 
  \max\Bcbr{\sup_{r_0\in[0,T]}\abs{Y_{r_0}},
  \sup_{r_0, r_1\in[0,T]}\abs{D_{r_1}Y_{r_0}},...,
  \sup_{r_0,...,r_N\in[0,T]}\abs{D^N_{r_1,...,r_N}Y_{r_0}}}
\end{align*}
and 
\begin{align*}
  \cald_N Y = 
  \max\Bcbr{
    \cald_N^* Y,\; \norm{Y}_\beta,\;
    \sup_{r_1\in[0,T]}\norm{ D_{r_1}Y_{\cdot}}_{r_1,T,\beta},...,
  \sup_{r_1,...,r_N\in[0,T]}\norm{D^N_{r_1,...,r_N}Y_{\cdot}}_{r_1\vee,...,\vee r_N,T,\beta}}.
\end{align*}
Here we are taking a good version of the density representing the Malliavin derivative of a functional.
For $k\in\bbZ_{\geq0}$, we denote by $C^k_b(\bbR)$ the space of $k$ times continuously
differentiable functions $f:\bbR\to\bbR$
which are bounded together with its derivatives of order up to $k$.
The following two propositions are basic for estimating functionals of the SDE.

\begin{proposition}[Proposition 3.1 from \cite{hu2016rate}]
\label{210507.1755}
Let $X$ be a solution of SDE (\ref{210430.1615}).
Fix $N\geq0$ and suppose that $V^{[1]}\in C^{N+1}_b(\bbR)$ and $V^{[2]}\in C^{N}_b(\bbR)$.
%Fix $N\geq0$ and suppose that $b\in C^N_b(\bbR)$, $\sigma\in C^{N+1}_b(\bbR)$.
Then there exists a positive constant $K$ such that 
the random variable $\cald_N X$ is bounded by 
$Ke^{K \norm{B}_\beta^{1/\beta}}$.
\end{proposition}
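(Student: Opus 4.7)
My plan is to prove the statement by induction on $N$, following the standard Young-integral/greedy-partition strategy that produces exponential bounds in $\|B\|_\beta^{1/\beta}$.

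For the base case $N=0$, I would only need to control $\|X\|_\infty$ and $\|X\|_\beta$. The idea is to choose a partition $0=\tau_0<\tau_1<\cdots<\tau_L=T$ such that on each sub-interval $[\tau_l,\tau_{l+1}]$ the $\beta$-H\"older norm of $B$ times a constant depending on $\|V^{[1]}\|_{C^1_b}$ is at most a fixed small number (say $1/2$). By the Love--Young inequality, on each such sub-interval the estimate
\[
\|X\|_{\tau_l,\tau_{l+1},\beta}\leq C_1\|V^{[2]}\|_\infty(\tau_{l+1}-\tau_l)^{1-\beta}+C_2\|V^{[1]}\|_{C^1_b}\|B\|_{\tau_l,\tau_{l+1},\beta}\bigl(1+\|X\|_{\tau_l,\tau_{l+1},\beta}\bigr)
\]
holds, which can be absorbed to give a uniform bound on each piece, and the total number of pieces $L$ can be taken $\lesssim 1+\|B\|_\beta^{1/\beta}$. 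Chaining the bounds across pieces then produces the desired exponential factor $e^{K\|B\|_\beta^{1/\beta}}$.

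For the inductive step $N\to N+1$, I would differentiate the SDE to obtain linear Young-type equations for the Malliavin derivatives. For instance,
\[
D_{r_1}X_t=V^{[1]}(X_{r_1})+\int_{r_1}^t V^{[2;1]}(X_u)D_{r_1}X_u\,du+\int_{r_1}^t V^{[1;1]}(X_u)D_{r_1}X_u\,dB_u,
\]
and in general $D^{N+1}_{r_1,\dots,r_{N+1}}X_t$ satisfies a linear Young equation with a drift/noise coefficient built from $V^{[i;k]}(X_\cdot)$ and an inhomogeneous term that is a polynomial in the lower-order derivatives $D^k X$ ($k\leq N$) and their H\"older norms, together with the boundedness of the derivatives of $V^{[i]}$. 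Applying the same partition argument to this linear equation yields
\[
\sup_{r_1,\dots,r_{N+1}}\|D^{N+1}_{r_1,\dots,r_{N+1}}X_\cdot\|_{r_1\vee\cdots\vee r_{N+1},T,\beta}\leq K_{N+1}e^{K_{N+1}\|B\|_\beta^{1/\beta}}\,(\mathcal{D}_N X)^{p_N}
\]
for some power $p_N$, and by the inductive hypothesis $\mathcal{D}_N X\leq K e^{K\|B\|_\beta^{1/\beta}}$, so composing the bounds and enlarging $K$ closes the induction.

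The main obstacle is bookkeeping in the inductive step: the Leibniz rule applied to the coefficients $V^{[i]}(X)$ produces a combinatorial sum of products of lower-order Malliavin derivatives and composed derivatives of $V^{[i]}$, and one must verify that after taking the $\beta$-H\"older seminorm (restricted to the correct sub-interval $[r_1\vee\cdots\vee r_{N+1},T]$) each such term is still controlled by a polynomial in $\mathcal{D}_N X$. The only delicate analytic input beyond this combinatorics is the Young integral inequality used to absorb the nonlinearity on each piece; everything else is Gronwall-type chaining across the $L$ pieces of the partition, which is where the $1/\beta$ exponent in $\|B\|_\beta^{1/\beta}$ arises.
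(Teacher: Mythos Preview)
The paper does not prove this proposition: it is quoted verbatim as Proposition~3.1 of \cite{hu2016rate} and used as a black box. Your sketch is the standard strategy that underlies the original proof in \cite{hu2016rate}: the greedy partition of $[0,T]$ into $L\lesssim 1+\|B\|_\beta^{1/\beta}$ sub-intervals on which the Young estimate can be absorbed, Gronwall-type chaining across the pieces to produce the exponential factor, and induction on the order $N$ via the linear Young equations satisfied by $D^{N+1}X$. So your approach is correct and matches the route taken in the cited reference; there is nothing to compare against in the present paper since it offers no proof of its own.
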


\begin{proposition}\label{220413.1450}
(i) The functional $\norm{B}_\beta$ has finite moments of any order.

  \item (ii) 
  Suppose that $(X_t)_{t\in[0,T]}$ is the solution of SDE (\ref{210430.1615}) under Assumption \ref{220404.1535}(i).
  Then, for any $N\in\bbN$, $\cald_N X$ belongs to $L^{\infty-}$.
\end{proposition}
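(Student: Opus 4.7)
The plan is to derive both statements as direct consequences of Fernique's theorem for Gaussian measures, combined with the explicit pathwise bound recorded in Proposition \ref{210507.1755}.

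For part (i), I would first observe that $\|B\|_\beta$ is almost surely finite, since fBm admits a version that is H\"older continuous of any order strictly less than $H$, and $\beta<H$ by hypothesis. The map $\omega\mapsto\|B(\omega)\|_\beta$ is a measurable seminorm on the Banach space of continuous paths on $[0,T]$, and $B$ is a centred Gaussian random element in that space. Fernique's theorem then supplies a constant $\alpha>0$ with
\[
E\big[\exp(\alpha\|B\|_\beta^{2})\big]<\infty,
\]
which in particular gives $\|B\|_\beta\in L^{\infty-}$. As a hands-on alternative, combining the Gaussian identity $E[|B_t-B_s|^p]^{1/p}=c_p|t-s|^H$ with the Garsia--Rodemich--Rumsey inequality produces the same $L^p$-bounds directly.

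For part (ii), I would apply Proposition \ref{210507.1755} with an arbitrary $N$. Assumption \ref{220404.1535}(i) guarantees $V^{[1]}\in C^{N+1}_b(\bbR)$ and $V^{[2]}\in C^{N}_b(\bbR)$, so the proposition is available and yields
\[
\cald_N X \;\leq\; K\exp\big(K\|B\|_\beta^{1/\beta}\big)
\]
for some $K$ depending only on $N$, $T$, $\beta$ and the $C^{N+1}_b$-bounds of the $V^{[i]}$. Since $\beta\in(1/2,H)$ I have $1/\beta<2$, so the Young-type inequality $\lambda x^{1/\beta}\leq \alpha x^{2}+C(\alpha,\lambda)$, valid for all $x\geq 0$, upgrades the Fernique bound from (i) to
\[
E\big[\exp(\lambda\|B\|_\beta^{1/\beta})\big]<\infty \qquad\text{for every }\lambda>0.
\]
Taking $\lambda=pK$ then yields $E[(\cald_N X)^{p}]<\infty$ for every $p\geq 1$, i.e.\ $\cald_N X\in L^{\infty-}$, as required.

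The only point requiring a mild verification — rather than a genuine obstacle — is the measurability of the suprema defining $\cald_N X$. I would handle this by working with a jointly continuous version of the iterated Malliavin derivatives $(r_0,r_1,\ldots,r_k)\mapsto D^{k}_{r_1,\ldots,r_k}X_{r_0}$, whose existence follows from the linear SDEs they satisfy (recalled at the beginning of Section \ref{220317.2030} and in \cite{nualart2009malliavin}). Once continuity is in hand, each supremum reduces to a countable supremum over rationals and is therefore measurable, so the estimate above applies verbatim.
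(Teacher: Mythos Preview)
Your argument is correct and matches the paper's own proof essentially line for line: Fernique's theorem gives $E[\exp(\alpha\|B\|_\beta^2)]<\infty$, whence (i), and since $1/\beta<2$ the bound $\cald_N X\leq K\exp(K\|B\|_\beta^{1/\beta})$ from Proposition~\ref{210507.1755} inherits finite moments of all orders. Your extra remarks on GRR and on measurability are harmless elaborations beyond what the paper writes, but the core reasoning is identical.
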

\begin{proof}%[Proof of Proposition \ref{220413.1450}]
  %\koko
  By the Fernique theorem, 
  there exists some $\alpha>0$ such that 
  \begin{align*}
    E[\exp(\alpha\norm{B}_\beta^2)]<\infty.
  \end{align*}
  Hence (i) follows immediately.
  Since $1/\beta<2$, 
  the functional $Ke^{K \norm{B}_\beta^{1/\beta}}$ obtained in Propsition \ref{210507.1755} 
  also has finite moments of any order,
  so does $\cald_N X$.
\end{proof}
\noindent
%\comm{We use Proposition \ref{220413.1450} (ii) in the following form:
%\begin{align} \label{201230.1200}
%  \sup_{s_1,...,s_i,t\in[0,T]}\norm{D^i_{s_1,...,s_i}X_t}_{p}<
%  \bbnorm{\sup_{s_1,...,s_i,t\in[0,T]}\abs{D^i_{s_1,...,s_i}X_t}}_{p}<\infty.
%\end{align}}

\vspsm
The next lemma is a version of Lemma A.1 of \cite{hu2016rate} in the one dimensional case.
We use this lemma with $\beta=\beta'\in(1/2,H)$ and $f(x)=x$.
\begin{lemma}[Lemma A.1 (ii) in HLN]\label{220411.1720}
  Let $\beta,\beta'\in(0,1)$ satisfying $\beta+\beta'>1$.
  Assume that $z=\cbr{z_t}_{t\in[0,T]}$ and $x=\cbr{x_t}_{t\in[0,T]}$ are
  H\"older continuous functions with index $\beta,\beta'\in(0,1)$ respectively.
  Suppose that $f:\bbR\to\bbR$ is continuously differentiable. 
  Then the following estimate holds:
  \begin{align*}
    \abs{\int_\rze^\ron f(x_t) dz_t}\leq
    K_1\; \sup_{t\in[\rze,\ron]}\abs{f(x_t)}\;\norm{z}_{\rze,\ron,\beta} \abs{\ron-\rze}^\beta
    +K_2\; \sup_{t\in[\rze,\ron]}\abs{f'(x_t)}\; \norm{x}_{\rze,\ron,\beta'}
    \norm{z}_{\rze,\ron,\beta} \abs{\ron-\rze}^{\beta+\beta'}
  \end{align*}
  where the constants $K_1,K_2$ depend only on $\beta$ and $\beta'$.
  %\begin{align*}
  %  \abs{\int_\rze^\ron f_t dg_t}\leq
  %  K_1\; \norm{f}_{\rze,\ron,\infty}\norm{g}_{\rze,\ron,\beta} \abs{\ron-\rze}^\beta
  %  +K_2\; \norm{f}_{\rze,\ron,\beta}\norm{g}_{\rze,\ron,\beta} \abs{\ron-\rze}^{2\beta}
  %\end{align*}
\end{lemma}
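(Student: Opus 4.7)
The plan is to exploit the standard Young integration estimate that compares the integral with its left-endpoint Riemann approximation, together with the chain rule to relate the H\"older regularity of $f\circ x$ to that of $x$.

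First, I would write
\begin{align*}
\int_\rze^\ron f(x_t)\,dz_t
\;=\; f(x_\rze)\,(z_\ron-z_\rze)
\;+\; R(\rze,\ron),
\end{align*}
where $R(\rze,\ron):=\int_\rze^\ron\brbr{f(x_t)-f(x_\rze)}\,dz_t$. The first term is bounded directly using the H\"older seminorm of $z$:
\begin{align*}
\babs{f(x_\rze)\,(z_\ron-z_\rze)}
\;\leq\; \sup_{t\in[\rze,\ron]}\abs{f(x_t)}\;\norm{z}_{\rze,\ron,\beta}\;\abs{\ron-\rze}^{\beta},
\end{align*}
which yields the $K_1$ term with $K_1=1$.

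Next, I would bound $R(\rze,\ron)$ by the classical Young--L\'oeve estimate: if $g$ is $\beta'$-H\"older on $[\rze,\ron]$ with $g_\rze=0$ and $z$ is $\beta$-H\"older with $\beta+\beta'>1$, then
\begin{align*}
\babs{\int_\rze^\ron g_t\,dz_t}
\;\leq\; K_2\,\norm{g}_{\rze,\ron,\beta'}\,\norm{z}_{\rze,\ron,\beta}\,\abs{\ron-\rze}^{\beta+\beta'},
\end{align*}
for a constant $K_2$ depending only on $\beta,\beta'$. This is the standard estimate obtained from the sewing/Young argument: compare the Riemann sums over a dyadic refinement, use $g_\rze=0$ to remove the boundary term, and use $\beta+\beta'>1$ to sum the resulting geometric series. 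Applying this with $g_t=f(x_t)-f(x_\rze)$ (which vanishes at $\rze$) and using the chain rule to estimate
\begin{align*}
\norm{f\circ x-f(x_\rze)}_{\rze,\ron,\beta'}
\;\leq\;\sup_{t\in[\rze,\ron]}\abs{f'(x_t)}\,\norm{x}_{\rze,\ron,\beta'},
\end{align*}
produces the $K_2$ term in the claimed inequality.

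There is no real obstacle here beyond invoking the Young--L\'oeve estimate in the above form. The only mildly subtle point is the chain-rule bound on the $\beta'$-H\"older seminorm of $f\circ x$, which follows from the mean value theorem applied pointwise to $f(x_s)-f(x_t)$ and then dividing by $\abs{s-t}^{\beta'}$. Everything else is a standard rearrangement, and the two constants $K_1,K_2$ depend only on $\beta,\beta'$ as stated.
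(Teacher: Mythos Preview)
Your argument is correct and is the standard route to such Young integral bounds: split off the left-endpoint term, apply the Young--Lo\`eve remainder estimate to the zero-starting integrand, and control the H\"older seminorm of $f\circ x$ via the mean value theorem. Note, however, that the paper does not supply its own proof of this lemma; it is quoted from Lemma~A.1(ii) of \cite{hu2016rate}, so there is no in-paper argument to compare against. The proof in \cite{hu2016rate} follows essentially the same pattern you describe.
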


\vspssm
Recall the norm of $\abs\calh^{\otimes l}$ was given by
\begin{align*}
  \norm\phi_{\abs\calh^{\otimes l}}^2
  =\alpha_H^l \int_{[0,T]^{l}} \int_{[0,T]^{l}} \abs{\phi(u)} \abs{\phi(v)}
  \abs{u_1-v_1}^{2H-2}...\abs{u_{l}-v_{l}}^{2H-2}dudv,
\end{align*}
and $\norm{\cdot}_{\calh^{\otimes l}} \leq \norm\cdot_{\abs\calh^{\otimes l}}.$
When we estimate the Malliavin norm in the following subsections, 
we often rely on the next inequality.

\begin{lemma}\label{220412.1015}
  (i) (Inequality (2.6) in \cite{hu2016rate})
  For $\phi\in L^{1/H}([0,T]^l)$,
  the following inequality holds:
  \begin{align*}
    \norm\phi_{\abs\calh^{\otimes l}}\leq 
    b_{H,l}\norm\phi_{L^{1/H}([0,T]^l)}.
  \end{align*}

  \item
  (ii) Let $p\geq1/H$.
  For a measurable function $f:\Omega\times[0,T]^l\to\bbR$ such that 
  $\bnorm{ \norm{f}_{L^p(P,\Omega)}}_{L^{1/H}([0,T]^l)}<\infty$,
  the following estimate holds:
  \begin{align*}
    \bnorm{ \norm{f}_{\calh^{\otimes l}} }_{L^p(P,\Omega)} \simleq 
    \bnorm{ \norm{f}_{L^p(P,\Omega)}}_{L^{1/H}([0,T]^l)}
  \end{align*}
\end{lemma}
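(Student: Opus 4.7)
The plan for part (i) is to recognize the defining double integral for $\|\phi\|_{|\calh|^{\otimes l}}^2$ as a product of Riesz potentials and to apply the Hardy--Littlewood--Sobolev (HLS) inequality in each of the $l$ coordinates. Concretely, the kernel $\prod_{k=1}^l |u_k-v_k|^{2H-2}$ is a tensor product of one-dimensional Riesz kernels with exponent $\lambda=2-2H\in(0,1)$, and the one-dimensional HLS inequality asserts
\begin{equation*}
\int_{\bbR}\int_{\bbR}|f(u)||g(v)||u-v|^{-\lambda}\,du\,dv \;\leq\; C_\lambda\,\|f\|_{L^p}\|g\|_{L^q}
\end{equation*}
whenever $1/p+1/q+\lambda=2$. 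Choosing $p=q=1/H$ gives exactly the matching exponents $1/p+1/q=2H=2-\lambda$. I would therefore iterate the inequality coordinate by coordinate (freezing all but one pair $(u_k,v_k)$, applying HLS in that variable, and then applying Minkowski/H\"older in the remaining variables to bring out the $L^{1/H}$ norm) to obtain $\|\phi\|_{|\calh|^{\otimes l}}^2 \leq C \|\phi\|_{L^{1/H}([0,T]^l)}^2$ with a constant $b_{H,l}^2$ depending only on $H$ and $l$. Since $[0,T]^l$ is bounded, the compactly supported case of HLS is what is actually needed.

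For part (ii), I would first apply part (i) pointwise in $\omega\in\Omega$. Using $\|\cdot\|_{\calh^{\otimes l}}\leq\|\cdot\|_{|\calh|^{\otimes l}}$, this gives
\begin{equation*}
\|f(\omega,\cdot)\|_{\calh^{\otimes l}} \;\leq\; b_{H,l}\,\|f(\omega,\cdot)\|_{L^{1/H}([0,T]^l)} \qquad \text{for a.e. } \omega.
\end{equation*}
Taking the $L^p(P)$ norm of both sides and applying Minkowski's integral inequality to interchange the $L^p(P)$ and $L^{1/H}([0,T]^l)$ norms yields the claim, provided $p\geq 1/H$ so that Minkowski's inequality is available in the correct direction.

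The step requiring the most care is the iteration of HLS in part (i): one must verify that applying the inequality in one coordinate produces functions in the right mixed Lebesgue space on the remaining coordinates so that the next application is legitimate, and that the resulting constant is indeed finite and dimension-dependent only through $H$ and $l$. This is standard but slightly tedious; the bounded domain $[0,T]^l$ makes the bookkeeping easier since one need not worry about decay at infinity. Part (ii) is then essentially an application of Minkowski's integral inequality once (i) is in hand, and the sharp hypothesis $p\geq 1/H$ is precisely what makes that inequality go in the needed direction.
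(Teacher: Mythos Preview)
Your proposal is correct and matches the paper's approach. The paper omits the proof of (i) entirely, citing it from \cite{hu2016rate} (where the Hardy--Littlewood--Sobolev route you sketch is indeed the underlying argument), and for (ii) carries out exactly your plan: apply (i) pointwise in $\omega$, then use Minkowski's integral inequality in $L^{pH}(P)$ (equivalently, the triangle inequality for the $L^{pH}$-norm, valid because $pH\geq 1$) to swap the $L^p(P)$ and $L^{1/H}([0,T]^l)$ norms.
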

\begin{proof}
  (i) We omit the proof. See (2.6) in \cite{hu2016rate} and references there.
  \item (ii) Using the triangle inequality for $L^{pH}(P)$-norm, we can prove the estimate.
  \begin{align*}
    \bnorm{ \norm{f}_{\calh^{\otimes l}} }_{L^p(P,\Omega)} &\simleq 
    \bnorm{ \norm{f}_{L^{1/H}([0,T]^l)} }_{L^p(P,\Omega)} 
    %\\&=
    %\rbr{\int_\Omega
    %\rbr{\int_{s\in[0,T]^l} \abs{f(\omega,s)}^{1/H} ds}^{pH}\;d\omega}^{1/p}
    =%\\&=
    \norm{\int_{s\in[0,T]^l} \abs{f(\omega,s)}^{1/H} ds}_{L^{pH}(P)}^H
    \\&\leq
    \rbr{\int_{s\in[0,T]^l} \norm{\abs{f(\cdot,s)}^{1/H}}_{L^{pH}(P)} ds}^H
    =%\\&=
    \bnorm{ \norm{f}_{L^p(P,\Omega)}}_{L^{1/H}([0,T]^l)}
  \end{align*}
\end{proof}

%\subsubsection{\redb{Proof of Lemma \ref{210315.1401}}}
We proceed to give estimates on terms which appeared in Section \ref{220317.2030}.
Recall the residual terms 
$R^{i}_{n,j}$ ($i=1,2,3$)
in the decomposition (\ref{211020.1120}):
\begin{align*}
    R^{1}_{n,j} &= 
    \int_\tl^\tr \int_\tl^t (V^{[(1;1),1]}(X_{\tpr}) - V^{[(1;1),1]}(X_{\tl})) dB_{\tpr} dB_t, 
    %R^{1}_{n,j} &= \int_\tl^\tr \int_\tl^t (f(X_{\tpr})-f(X_{\tl})) dB_{\tpr} dB_t, 
    \\
    R^{2}_{n,j} &= \int_\tl^\tr \rbr{V^{[2]}_t-V^{[2]}_\tl}dt
    \\
    R^{3}_{n,j} &= 
    \int_\tl^\tr \int_\tl^t V^{[(1;1),2]}_{\tpr} d\tpr dB_t.
\end{align*}

\begin{lemma}\label{210315.1401}
  Let $\beta\in(1/2,H)$.
  Suppose that $(X_t)_{t\in[0,T]}$ is the solution of SDE (\ref{210430.1615}) under Assumption \ref{220404.1535}(i).
  
  \item (i)
  Let $f:\bbR\to\bbR$ a smooth function on $\bbR$ with bounded derivatives.
  %and $j+i,j\in{1,..,n}$.
  %
  Then for any $i\in\bbZ_{\geq0}$ and $p>1$,
  there exists a constant $C_{i,p}$ such that
  for any $\rze,\ron \in[0,T]$,
  \begin{align*}
  %f(X_{t_{j+i}})-f(X_{\tj})
  \snorm{f(X_{\rze})-f(X_{\ron}) }_{i,p}
  %=O_M(()^H),
  \leq C_{i,p} |\rze-\ron|^\beta.
  \end{align*}
  %as $n$ goes to $\infty$,
  %where the constant of $O_M$ is independent of $i$ and $j$.
  \label{210504.2200}
  
  \item (ii)
  Consider the functionals $R^{k}_{n,j}$ ($k=1,2,3$) defined in (\ref{220206.2351}).
  For any $i\in\bbZ_{\geq0}$ and $p > 1$,
  the Malliavin norm of  $R^{k}_{n,j}$ is bounded as
\begin{align*}
  \sup_{j\in[n]} \snorm{R^{1}_{n,j}}_{i,p} =O(n^{-3\beta})
  \quad\tand\quad
  \sup_{j\in[n]} \snorm{R^{k}_{n,j}}_{i,p} =O(n^{-1-\beta})
  \quad(k = 2,3)
\end{align*}
as $n\to\infty$.
In particular, 
$\sup_{j\in[n]} \snorm{T^{5}_{n,j}}_{i,p}=O(n^{-1-\beta})$.%R^{(\ref{0211120524})}_{n,j}
  
\end{lemma}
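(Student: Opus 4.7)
For part (i), the plan is to reduce the estimate to the $\beta$-Hölder continuity of $X$ and its Malliavin derivatives, which is encoded in the functional $\cald_N X\in L^{\infty-}$ of Proposition~\ref{220413.1450}(ii). Writing
\[
 f(X_{\rze})-f(X_{\ron})
 =(X_{\rze}-X_{\ron})\int_0^1 f'\bigl(X_\ron+\theta(X_\rze-X_\ron)\bigr)d\theta,
\]
the case $i=0$ follows directly from $|X_\rze-X_\ron|\le \|X\|_\beta|\rze-\ron|^\beta$ combined with the boundedness of $f'$ and Proposition~\ref{220413.1450}. For $i\geq 1$, I would apply $D^i$ using the Faà~di~Bruno formula; every resulting summand is a product of $f^{(k)}(X_\ron+\theta(X_\rze-X_\ron))$ (bounded, by assumption on $f$) with some differences $D^{k_1}\!\cdots D^{k_j}(X_\rze-X_\ron)$ or with Malliavin derivatives $D^\ell X_\rze$, $D^\ell X_\ron$. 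The differences are controlled by the $\beta$-Hölder seminorms of $D^\ell X_\cdot$, hence bounded by $(\cald_N X)\,|\rze-\ron|^\beta$, and $\cald_N X\in L^{\infty-}$ furnishes the required $L^p$-bound.

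For part (ii), the core tool is Lemma~\ref{220411.1720} applied iteratively to the Young integrals defining $R^k_{n,j}$. I treat the three residuals separately.

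\emph{Term $R^1_{n,j}$:} Put $h(\tpr)=V^{[(1;1),1]}_{\tpr}-V^{[(1;1),1]}_{\tjm}$; then $h(\tjm)=0$, $\|h\|_{\tjm,\tj,\infty}\le C\|X\|_\beta n^{-\beta}$ and $\|h\|_{\tjm,\tj,\beta}\le C\|X\|_\beta$. Lemma~\ref{220411.1720} (with $f=\mathrm{id}$, $\beta=\beta'$) then gives $\bnorm{F(t)}\leq C\|X\|_\beta\|B\|_\beta\,n^{-2\beta}$ for $F(t)=\int_{\tjm}^t h(\tpr)dB_{\tpr}$, and an analogous Hölder control of $F$ on $[\tjm,\tj]$. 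Applying Lemma~\ref{220411.1720} again to $\int_{\tjm}^{\tj} F(t)dB_t$ delivers $|R^1_{n,j}|\le C\|X\|_\beta\|B\|_\beta^2 n^{-3\beta}$.

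\emph{Term $R^2_{n,j}$:} The integrand is Lebesgue, so $|R^2_{n,j}|\le (\tj-\tjm)\cdot\sup_{t\in[\tjm,\tj]}|V^{[2]}_t-V^{[2]}_{\tjm}|\le C\|X\|_\beta n^{-1-\beta}$.

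\emph{Term $R^3_{n,j}$:} The inner Lebesgue integral $g(t):=\int_{\tjm}^t V^{[(1;1),2]}_{\tpr}d\tpr$ satisfies $g(\tjm)=0$ and is Lipschitz with $\|g\|_{\tjm,\tj,\infty}\le Cn^{-1}$ and $\|g\|_{\tjm,\tj,\beta'}\le Cn^{-(1-\beta')}$. Lemma~\ref{220411.1720} applied to $\int g\,dB$ yields $|R^3_{n,j}|\le C\|B\|_\beta n^{-1-\beta}$.

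For the Sobolev norms ($i\ge 1$), I would commute $D$ past the Young integrals in $R^k_{n,j}$ using the standard rule $D_r\int_a^b g(t)dB_t=\int_a^b D_r g(t)dB_t+g(r)1_{[a,b]}(r)$, iterate this $i$ times, and observe that each resulting summand is either an iterated Young integral of exactly the same structure as $R^k_{n,j}$, or a boundary term where one $dB$-integration is replaced by an evaluation at a variable frozen inside $[\tjm,\tj]$. All new coefficients appearing through the chain rule and through derivatives of the SDE coefficients are bounded by polynomial expressions in $\cald_N X$ and $\|B\|_\beta$, both of which belong to $L^{\infty-}$. Each of these summands satisfies the same pathwise Young estimate as above (possibly with one fewer $dB$, but the length factor $(\tj-\tjm)^\beta$ of the lost integration is compensated by the support restriction $1_{[\tjm,\tj]}$ being of length $T/n$ so that the $L^p$-norms pick up the same power of $n$). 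Taking $L^p$-norms and using the finite-moment bounds gives the asserted rates.

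The main obstacle is the bookkeeping for the higher-order Malliavin derivatives in part (ii): one has to verify that the boundary terms produced by commuting $D_r$ through nested Young integrals never improve above, nor destroy, the pathwise scales $n^{-3\beta}$ (respectively $n^{-1-\beta}$), across \emph{all} orders $i$. Once the identity $D_r\int g\,dB=\int D_r g\,dB+g(r)1_{[a,b]}(r)$ is iterated carefully and the elementary Young estimates of Lemma~\ref{220411.1720} are applied termwise together with $\cald_N X,\|B\|_\beta\in L^{\infty-}$, the claimed uniform-in-$j$ bounds follow. The last assertion about $T^5_{n,j}=R^1_{n,j}+R^2_{n,j}+R^3_{n,j}$ is immediate from $-3\beta<-1-\beta$ for $\beta>1/2$.
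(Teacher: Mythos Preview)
Your overall plan matches the paper's, and the pathwise $i=0$ estimates in (ii) via Lemma~\ref{220411.1720} are exactly right. But part (i) has a real gap. You assert that the differences $D^{\ell}_{s_1,\dots,s_\ell}(X_{\rze}-X_{\ron})$ are controlled by the $\beta$-H\"older seminorms of $D^\ell X_\cdot$ and hence by $(\cald_N X)\,|\rze-\ron|^\beta$. This is only true when $\bar s:=\max_k s_k\le \rze\wedge\ron$: the seminorm in $\cald_N X$ is $\|D^N_{r_1,\dots,r_N}X_\cdot\|_{r_1\vee\cdots\vee r_N,T,\beta}$, i.e.\ H\"older only above $\bar s$. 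If, say, $\rze<\ron$ and some $s_k\in(\rze,\ron]$, then $D^\ell_{s}X_{\rze}=0$ and the ``difference'' is just $-D^\ell_{s}X_{\ron}$, bounded by $\cald_N X$ but carrying no factor $|\rze-\ron|^\beta$. So there is no pointwise-in-$s$ H\"older bound, and invoking $\cald_N X\in L^{\infty-}$ alone does not produce the required power of $|\rze-\ron|$.

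The paper closes this gap by exactly the mechanism you allude to for the boundary terms in (ii), made precise through Lemma~\ref{220412.1015}(ii): split according to whether $\bar s\le \rze\wedge\ron$ (H\"older bound $Z_N^\Delta|\rze-\ron|^\beta$) or $\bar s\in(\rze\wedge\ron,\rze\vee\ron]$ (bound only $Z_N$), then pass from $\|\cdot\|_{\calh^{\otimes N}}$ to $\|\cdot\|_{L^{1/H}([0,T]^N)}$ in the $s$-variables. On the second region the indicator $1_{\{\rze<\bar s\le\ron\}}$ has $L^{1/H}$-norm of order $|\rze-\ron|^H\le T^{H-\beta}|\rze-\ron|^\beta$, which supplies the missing factor. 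The same step is what actually makes your compensation in (ii) work: it is not the $L^p(\Omega)$-norm but the $L^{1/H}$-norm of $1_{[\tjm,\tj]}(s_i)$ in the Malliavin variables that yields the extra $n^{-H}\le Cn^{-\beta}$. With this correction your bookkeeping for the iterated derivatives in (ii) coincides with the four term types (\ref{210929.2131})--(\ref{210929.2134}) that the paper writes out and estimates.
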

\begin{proof}
  By the Leibniz rule, we can show that
  there exists a random variable $Z_N$
  which is a polynomial in 
  $\cald_N X$ and 
  $\norm{{f^{(i)}}}_\infty=\sup_{x\in\bbR}\abs{f^{(i)}(x)}$ ($i=1,..,N$)
  such that
  for $0\leq s_1,..,s_N\leq t\leq T$,
  \begin{align*}
    \abs{D^N_{\son,..,s_N} f(X_t)}
    \leq Z_N.
  \end{align*}
  Similarly, using the definition of the $\beta$-H\"older seminorm,
  we have
  \begin{align}
    \abs{D^N_{\son,..,s_N} f(X_{t'})- D^N_{\son,..,s_N} f(X_t)}
    \leq \zndelta\,\abs{t-t'}^\beta
    \label{220411.2030}
  \end{align}
  uniformly for $0\leq s_1,..,s_N\leq t,t'\leq T$,
 where $\zndelta$ is a random variable written as a polynomial in 
  $\cald_N X$ and $\norm{{f^{(i)}}}_\infty$ ($i=1,..,N+1$).
  By Proposition \ref{220413.1450} (ii), we have
  $Z_{N}, \zndelta\in L^{\infty-}(P)$.
  Notice that $Z_{N}, \zndelta$ are independent of $t$ and $t'$.
  We write 
  $\displaystyle \bar s=\max_{k=1,..,N} s_k$,\;
  $\displaystyle \maxs{i}=\max_{k\neq i} s_k$
  and
  $\displaystyle \maxs{i,j}=\max_{k\neq i,j} s_k$.

  \vspssm
  (i)
  Suppose $0\leq\rze<\ron\leq T$.
  Consider $N\geq1$.
  %For notational convenience, we denote the random variable 
  %$f(X_{\ron})-f(X_{\rze})$ by $z_{\rze,\ron}$.
  If $\bar s\in (\rze,\ron]$, 
  then the $N$-th Malliavin derivative of $f(X_\ron)-f(X_{\rze})$ is bounded as 
  \begin{align*}
    \abs{D^N_{\son,..,s_N} \brbr{f(X_\ron)-f(X_{\rze})}}=
    \abs{D^N_{\son,..,s_N} f(X_\ron)} &\leq Z_{N}.
  \end{align*}
  When $\bar s \in [0,\rze]$, we have
  \begin{align*}
    \abs{D^N_{\son,..,s_N} \brbr{f(X_\ron)-f(X_{\rze})}}
    &\leq Z^\Delta_{N}\; \abs{\ron-\rze}^{\beta}.
  \end{align*}
  Hence, by Lemma \ref{220412.1015}(ii), we have the estimate
  \begin{align*}
    \Bnorm{ \norm{ 
      D^N \brbr{f(X_{\ron})-f(X_{\rze})}
      %D^N z_{\rze,\ron}
    }_{\calh^{\otimes N}} }_{L^p(P)}
    &\simleq
    \Bnorm{ \norm{
      D^N_{\son,..,s_N} \brbr{f(X_{\ron})-f(X_{\rze})}
      %D^N_{\son,..,s_N} z_{\rze,\ron}
    }_{L^p(P)} }_{L^{1/H}([0,T]^N)}
    \\&\leq
    \snorm{ 
      1_{\cbr{\rze<\bar s\leq\ron}}\,
      \snorm{Z_{N}}_{L^p(P)}
      +
      1_{\cbr{\bar s\leq\rze}}\, \abs{\ron-\rze}^{\beta}\;
      \norm{Z^\Delta_{N}}_{L^p(P)}
      }_{L^{1/H}([0,T]^N)}
    \\&\leq \bar C_{N,p} \abs{\ron-\rze}^{\beta},
  \end{align*}
  for $p>1/H$, where $\bar C_{N,p}$ is independent of $\rze,\ron$.
  By similar arguments, we have
  \begin{align*}
    \bnorm{f(X_{\ron})-f(X_{\rze}) }_{L^p(P)}
    &\leq \bar C_{0,p} \abs{\ron-\rze}^{\beta}.
  \end{align*}
  By setting $C_{i,p} = \sum_{N=0,..,i}\bar C_{N,p}$,
  we obtain (ii).

  \vspssm
  (ii) Let $p>1/H$.
  Fix $0\leq\rze<\ron\leq T$ for a while.
  For notational convenience, 
  we write
  \begin{align*}
    \kerone{r_0}{t} =&\; f(X_{t})-f(X_{\rze})
    \hspace{80pt}\tfor t\in[r_0,r_1],\\
    %z_t =&\; f(X_{t})-f(X_{\rze})\\
    R^{1}_{\rze,\ron}=&\;
    \int_\rze^\ron \rbr{\int_\rze^t (f(X_{\tpr}) - f(X_{\rze})) dB_{\tpr}} dB_t,
  \end{align*}
  where $f(x)=V^{[1;1]}(x)V^{[1]}(x) = V^{[(1;1),1]}(x)$ for brevity.

  For $N\geq1$, the $N$-th Malliavin derivative of 
  the functional $R^{1}_{\rze,\ron}$ has the following expression:
  \begin{align}
    D^N_{s} R^{1}_{\rze,\ron} =
    D^N_{\son,..,s_N} R^{1}_{\rze,\ron} =&
    \int_{\rze\vee \bar s}^\ron 
    \rbr{\int_{\rze\vee \bar s}^t D^N_{\son,..,s_N} \kerone{r_0}{\tpr} dB_{\tpr}} dB_t
    \label{210929.2131}
    \\&+ 
    \sum_{i=1}^{N} 
    1_{[\rze,\ron]}(s_i)
    \rbr{\int_{s_i}^\ron dB_t}
    \rbr{D^{N-1}_{\son,..,\hat{s_i},..,s_N} \kerone{r_0}{s_i}} 
    \label{210929.2132}
    \\&+ 
    \sum_{i=1}^{N} 
    1_{[\rze,\ron]}(s_i)
    \int_\rze^{s_i} D^{N-1}_{\son,..,\hat{s_i},..,s_N} \kerone{r_0}{\tpr} dB_\tpr 
    \label{210929.2133}
    \\&+ 
    \sum_{\substack{i,j=1,..,N\\i\neq j}}
    1_{[\rze,\ron]}(s_i) 1_{[\rze,s_i]}(s_j)\;
    D^{N-2}_{\son,..,\hat{s_i},..,\hat{s_j},..,s_N} \kerone{r_0}{s_j}
    \label{210929.2134}       
  \end{align}
  for $s_1,..,s_N\in[0,T]$ satisfying $\bar s\leq r_1$.
  When $N=1$, the above sum reads the sum of the first three summands.
  %This expression can be obtained following the argument in \cite{nualart2009malliavin} using 
  %the Fr\'{e}chet differentiability of the corresponding map from $W^{1-\alpha}_2$ to $\bbR$.
  %
  We estimate these terms using
  Lemma \ref{220411.1720}
  for each $s_1,..,s_N\in[0,T]$.
  
  %We write $\bar s_N=\max_k s_k$, $\bar s_{N,\hat i}=\max_{k\neq i} s_k$
  %and $\bar s_{N,\hat i,\hat j}=\max_{k\neq i,j} s_k$.
  %When $N=0$, we read $\bar s_N=0$.

  \vspssm
  First we consider the term (\ref{210929.2131}) in the case $\bar s \in (\rze,\ron]$, 
  where 
  \begin{align*}
    D^N_{\son,..,s_N} \kerone{r_0}{t} 
    = D^N_{\son,..,s_N} f(X_t)
    = 1_\cbr{\bar s\leq t}\; D^N_{\son,..,s_N} f(X_t)
    %= 1_{[\bar s,\ron]}(t)\; D^N_{\son,..,s_N} f(X_t)
    \tfor t\in[r_0,r_1].
  \end{align*}
  %$D^N_{\son,..,s_N} z_t = 1_{[\bar s_N,\ron]}(t) D^N_{\son,..,s_N} a(X_t)$.
%
  We have
  \begin{align*}
    \norm{D^N_{\son,..,s_N} f(X_\cdot) }_{\bar s, \ron, \beta}
    \leq \zndelta
    %\leq Z^-_{N,\beta}
    \tand %\\
    \norm{D^N_{\son,..,s_N} f(X_\cdot) }_{\bar s, \ron, \infty}
    \leq Z_{N}.
    %\leq Z^+_{N,\infty}.
  \end{align*}
  %By Proposition \ref{210507.1755}, we have
  %$Z^+_{N,\beta}, Z^+_{N,\infty}\in L^{\infty-}(P)$.
  %
  Let 
  \begin{align*}
    z^{1,+}_{\son,..s_N}(t)
    = \int_{\bar s}^t D^N_{\son,..,s_N} \kerone{r_0}{t'}\;  dB_{\tpr}
    = \int_{\bar s}^t D^N_{\son,..,s_N} f(X_\tpr)\; dB_{\tpr}
    \tfor t\in[\bar s, r_1].
  \end{align*}
  %Let $z^{1,N,+}_{\son,..s_N}(t)= \int_{\bar s_N}^t D^N_{\son,..,s_N} a(X_\tppr)dB_{\tppr}$.
  By Lemma \ref{220411.1720},
  $\norm{ z^{1,+}_{\son,..s_N}}_{\bar s, \ron, \infty}$ and
  $\norm{ z^{1,+}_{\son,..s_N}}_{\bar s, \ron, \beta}$ can be bounded as 
  \begin{align*}
    \norm{ z^{1,+}_{\son,..s_N}}_{\bar s, \ron, \infty}
    &\leq
    \sup_{t\in[\bar s, \ron]} \cbr{
    K_1\, \norm{ D^N_{\son,..,s_N} f(X_\cdot) }_{\bar s, t, \infty}
    \norm{B}_\beta\, \abs{t-\bar s}^\beta
    +K_2\, \norm{ D^N_{\son,..,s_N} f(X_\cdot) }_{\bar s, t, \beta}
    \norm{B}_\beta\, \abs{t-\bar s}^{2\beta}}
    \\&\leq
    (K_1\, Z_{N} + K_2\, \zndelta\, \abs{\ron-\rze}^{\beta})\,
    \norm{B}_\beta \abs{\ron-\rze}^\beta,
  \end{align*}
  and 
  \begin{align*}
    \norm{ z^{1,+}_{\son,..s_N} }_{\bar s, \ron, \beta}
    &\leq
    \sup_{t,\tpr\in[\bar s, \ron]} \bigg\{
    K_1 \norm{ D^N_{\son,..,s_N} f(X_\cdot) }_{t, \tpr, \infty}
    \norm{ B }_\beta \abs{t-\tpr}^\beta
    %\\&\hspace{130pt}
    +K_2 \norm{ D^N_{\son,..,s_N} f(X_\cdot) }_{t, \tpr, \beta}
    \norm{B}_\beta \abs{t-\tpr}^{2\beta}
    \bigg\}/\abs{t-\tpr}^{\beta}
    \\&\leq
    \rbr{K_1\, Z_{N} + K_2\, \zndelta\,  \abs{\ron-\rze}^{\beta}}
    \norm{B}_\beta.
  \end{align*}
  Hence, we obtain 
  \begin{align*}
    \abs{\int_{\bar s}^\ron z^{1,+}_{\son,..s_N} (t) dB_t}
    &\leq
    K_1 \norm{z^{1,+}_{\son,..s_N}}_{\bar s, \ron, \infty}
    \norm{B}_\beta \abs{\ron-\bar s}^\beta
    + 
    K_2 \norm{z^{1,+}_{\son,..s_N}}_{\bar s, \ron, \beta}
    \norm{B}_\beta \abs{\ron-\bar s}^{2\beta}
    \\&\leq
    Z^{(\ref{210929.2131})}_{N,+}\; \abs{\ron-\rze}^{2\beta},
  \end{align*}
  where $Z^{(\ref{210929.2131})}_{N,+} = 
  (K_1+K_2)(K_1\, Z_{N} +K_2\, \zndelta\, T^\beta)  \norm{B}_\beta^2$,
  which has moments of any order.

  \vspssm
  Secondly we consider the case $\bar s \in [0,\rze]$, 
  where 
  \begin{align*}
    D^N_{\son,..,s_N} \kerone{r_0}{t}
    = D^N_{\son,..,s_N} f(X_t) -  D^N_{\son,..,s_N} f(X_\rze).
  \end{align*}
  By (\ref{220411.2030}), we have
  \begin{align*}
      \norm{ D^N_{\son,..,s_N} z_{r_0}(\cdot)}_{\rze, \ron, \beta}
      =\norm{D^N_{\son,..,s_N} f(X_\cdot) }_{\rze, \ron, \beta}
      \leq \zndelta
      \tand %\\
      \norm{ D^N_{\son,..,s_N} z_\rze(\cdot)}_{\rze, \ron, \infty}
      \leq \zndelta \abs{\ron-\rze}^{\beta}.
  \end{align*}
  Let 
  \begin{align*}
    z^{1,-}_{\son,...,s_N}(t)= \int_{\rze}^t D^N_{\son,..,s_N} \kerone{\rze}{\tpr} dB_{\tpr}
    \qquad\tfor t\in[\rze,\ron].
  \end{align*}
  Then, we have
  \begin{align*}
    \norm{z^{1,-}_{\son,...,s_N}}_{\rze, \ron, \infty}
    &\leq
    \sup_{t\in[\rze, \ron]} \rbr{
    K_1 \snorm{D^N_{\son,..,s_N} z_{r_0}}_{\rze, t, \infty}
    \norm{B}_\beta \abs{t-\rze}^\beta
    + K_2 \snorm{D^N_{\son,..,s_N} z_{r_0} }_{\rze, t, \beta}
    \norm{B}_\beta \abs{t-\rze}^{2\beta}}
    \\&\leq
      (K_1+ K_2)\, \zndelta\, \norm{B}_\beta\, \abs{\ron-\rze}^{2\beta}.
    \\
      \norm{z^{1,-}_{\son,...,s_N}}_{\rze, \ron, \beta}
    &\leq
      (K_1 + K_2)\, \zndelta\, \norm{B}_\beta\, \abs{\ron-\rze}^{\beta}
  \end{align*}
  and 
  \begin{align*}
    \abs{\int_\rze^\ron z^{1,-}_{\son,..s_N}(\tpr) dB_\tpr }
    &\leq
      Z^{(\ref{210929.2131})}_{N,-}\; \abs{\ron-\rze}^{3\beta},
  \end{align*}
  where $Z^{(\ref{210929.2131})}_{N,-} = 
  (K_1+K_2)^2\; \zndelta\; \norm{B}_\beta^2$,
  which has moments of any order.

  Hence, we have the estimate
  \begin{align*}
    &\norm{\norm{
      \int_{\rze\vee\bar s}^\ron \rbr{\int_{\rze\vee\bar s}^t D^N_{\son,..,s_N} 
      \kerone{\rze}{\tpr}\, dB_{\tpr}} dB_t
    }_{L^p(P)}}_{L^{1/H}([0,T]^N)}
    \\\leq&\;
    \norm{
      1_{\{\bar s\leq\rze\}}\abs{\ron-\rze}^{3\beta}\;
      \bnorm{Z^{(\ref{210929.2131})}_{N,-}}_{L^p(P)}
      + 1_{\{\rze<\bar s\leq\ron\}}\abs{\ron-\rze}^{2\beta}\;
      \bnorm{Z^{(\ref{210929.2131})}_{N,+}}_{L^p(P)} 
    }_{L^{1/H}([0,T]^N)}
    \\\leq&\; C_{N}^{(\ref{210929.2131})}|\ron-\rze|^{3\beta}
  \end{align*}
  with $C_{N}^{(\ref{210929.2131})}$ independent of $\rze,\ron$.
  We used 
  $\norm{1_{\{\rze<\bar s\leq\ron\}}}_{L^{1/H}([0,T]^N)}\leq 
  \rbr{NT^{N-1}}^H \abs{\ron-\rze}^H$
  and 
  $\abs{\ron-\rze}^H \leq \abs{\ron-\rze}^\beta T^{H-\beta}$
  for $\beta\in(1/2,H)$.

  \vspsm
  Similar arguments work for the terms 
  (\ref{210929.2132}), (\ref{210929.2133}) and (\ref{210929.2134}), 
  and we have for $i=1,...,N$,
  \begin{align*}
    &\norm{1_{[\rze,\ron]}(s_i)\;\Bnorm{
      \Brbr{\int_{s_i}^\ron dB_\tpr} 
      D^{N-1}_{\son,..,\hat{s_i},..,s_N} \kerone{\rze}{s_i} 
    }_{L^p(P)} }_{L^{1/H}([0,T]^N)}
    \\\leq&\;
    \bbnorm{ 
      1_{\cbr{\rze<\maxs{i}\leq s_i\leq\ron}}\;
      %1_{[\maxs{i},\ron]}(s_i)\; 1_{\opcl{\rze,\ron}}(\maxs{i}) \;
      \abs{\ron-\rze}^{\beta}
      \Bnorm{ Z^{(\ref{210929.2132})}_{N-1,+} }_{L^p(P)}
      + 
      1_{\cbr{\maxs{i}\leq\rze\leq s_i\leq\ron}}\;
      %1_{\{\bar s_{n,\hat i}\leq\rze]\}}1_{[\rze,\ron]}(s_i) 
      \abs{\ron-\rze}^{2\beta} 
      \bnorm{ Z^{(\ref{210929.2132})}_{N-1,-}}_{L^p(P)} 
      }_{L^{1/H}([0,T]^N)}
    \\
      \leq&\; C_{N}^{(\ref{210929.2132})}
      \abs{\ron-\rze}^{3\beta},
    \\[10pt]
    &
    \snorm{ 1_{[\rze,\ron]}(s_i)\; \snorm{
      \int_\rze^{s_i}
      D^{N-1}_{\son,..,\hat{s_i},..,s_N} \kerone{\rze}{\tpr} dB_\tpr 
    }_{L^p(P)} }_{L^{1/H}([0,T]^N)}
    \\\leq&\;
    \snorm{ 
      1_{\cbr{\rze<\maxs{i}\leq s_i\leq\ron}}\;
      %1_{\{\rze<\bar s_{n,\hat i}\leq\ron]\}}1_{[\bar s_{n,\hat i},\ron]}(s_i)
      \abs{\ron-\rze}^{\beta} 
      \snorm{ Z^{(\ref{210929.2133})}_{N-1,+} }_{L^p(P)}
    +   
    1_{\cbr{\maxs{i}\leq\rze\leq s_i\leq\ron}}\;
    %1_{\{\bar s_{n,\hat i}\leq\rze]\}}1_{[\rze,\ron]}(s_i)
    \abs{\ron-\rze}^{2\beta} 
    \snorm{Z^{(\ref{210929.2133})}_{N-1,-} }_{L^p(P)} 
    }_{L^{1/H}([0,T]^N)}
    \\\leq&\;
      C_{N}^{(\ref{210929.2133})}
      \abs{\ron-\rze}^{3\beta}
    \\[10pt]
    \text{and for }&\; i\neq j=1,...,N,\\
    &
    \snorm{ 
      1_{[\rze,\ron]}(s_i) 1_{[\rze,s_i]}(s_j)\;
      \norm{D^{N-2}_{\son,..,\hat{s_i},..,\hat{s_j},..,s_N} \kerone{\rze}{s_j}}_{L^p(P)} 
      }_{L^{1/H}([0,T]^N)}
    \\\leq&\;
    \snorm{ 
      1_{\cbr{\rze<\maxs{i,j}\leq s_j\leq s_i \leq\ron}}
      \bnorm{Z^{(\ref{210929.2134})}_{N-2,+} }_{L^p(P)}
      +   
      1_{\cbr{\maxs{i, j}\leq\rze\leq s_j\leq s_i \leq\ron]}}
      \abs{\ron-\rze}^{\beta} 
      \bnorm{Z^{(\ref{210929.2134})}_{N-1,-}}_{L^p(P)} 
      }_{L^{1/H}([0,T]^N)}
    \\\leq&\;
    C_{N}^{(\ref{210929.2134})}
    \abs{\ron-\rze}^{3\beta}
  \end{align*}
  where  
  $Z^{(\ref{210929.2132})}_{N-1,+}$, $Z^{(\ref{210929.2132})}_{N-1,-}$,
  $Z^{(\ref{210929.2133})}_{N-1,+}$, $Z^{(\ref{210929.2133})}_{N-1,-}$,
  $Z^{(\ref{210929.2134})}_{N-2,+}$ and $Z^{(\ref{210929.2134})}_{N-2,-}$
  are random variables with finite moments of any order and
  $C_{N}^{(\ref{210929.2132})}$,
  $C_{N}^{(\ref{210929.2133})}$ and
  $C_{N}^{(\ref{210929.2134})}$ are constants.
  They are all independent of $\rze,\ron$.
  Thus we obtain
  \begin{align*}
    \norm{\norm{D^N R^{1}_{\rze,\ron}}_{\calh^{\otimes N}}}_{L^p(P)}
    &\simleq
    \norm{\norm{D^N_{s_1,...,s_N} R^{1}_{\rze,\ron}}_{L^p(P)}}_{L^{1/H}([0,T]^N)}
    \\&\leq 
    \rbr{
      C_{N}^{(\ref{210929.2131})}
      +N\, C_{N}^{(\ref{210929.2132})}
      +N\, C_{N}^{(\ref{210929.2133})}
      +N(N-1)\, C_{N}^{(\ref{210929.2134})}
    }
    \abs{\ron-\rze}^{3\beta}
  \end{align*}
  Here we used Lemma \ref{220412.1015}(ii).
  By similar arguments, we have 
    $\norm{R^{1}_{\rze,\ron}}_{L^p(P)}
    \simleq
    \abs{\ron-\rze}^{3\beta}$.
  Applying this estimate to $(r_0,r_1)=(\tjm,\tj)$,
  we obtain 
  \begin{align*}
    \sup_{j\in[n]} \snorm{R^{1}_{n,j}}_{i,p} =O(n^{-3\beta}).
  \end{align*}
  %$\sup_{j\in[n]} \snorm{R^{1}_{n,j}}_{i,p} =O(n^{-3\beta})$.

  The same line of arguments shows that 
  the estimates of $R^{2}_{\rze,\ron}$ and $R^{3}_{\rze,\ron}$ stand.
\end{proof}

\vsp
\begin{lemma}\label{210504.2151}
  %stochastic expansion \tilde N_n
  %quasi tangent, G^{(2)}
  %
  Let $f\in C^\infty(\bbR)$ a smooth function with bounded derivatives
  and $(X_t)_{t\in[0,T]}$ the solution of SDE (\ref{210430.1615}) under Assumption \ref{220404.1535}(i).
  %\grnb{Check whether $f^{[1]}(X_{\tjm})$ satisfies the condition for $A$
  %\redb{looks okay}}
  The following estimate holds as $n\to\infty$:
  \begin{align*}
  \int^T_0 \rbr{f(X_{t_{j_n(t)-1}}) -f(X_{t})}dt
  =O_M(n^{-1}),
  \end{align*}
  where $j_n(t)-1=[nt/T]$.
  \end{lemma}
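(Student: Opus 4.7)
}
My plan is to Taylor-expand the integrand, rewrite the leading random discrepancy as a Skorohod integral, and exploit the positivity and scaling of $\beta_n$ that was used throughout Section~\ref{210429.1655}. Write $\Delta_j f := \int_{t_{j-1}}^{t_j}\bigl(f(X_{t_{j-1}})-f(X_t)\bigr)dt$. Taylor's formula gives
\begin{align*}
  f(X_{t_{j-1}})-f(X_t) = -f'(X_{t_{j-1}})(X_t - X_{t_{j-1}}) + R^{(2)}_{n,j}(t),
\end{align*}
where $|R^{(2)}_{n,j}(t)|\leq \tfrac12\|f''\|_\infty (X_t-X_{t_{j-1}})^2$. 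By Lemma~\ref{210315.1401}(i) applied to the identity, together with the bound $\|FG\|_{i,p}\lesssim \|F\|_{i,2p}\|G\|_{i,2p}$, one gets $\|R^{(2)}_{n,j}(t)\|_{i,p}\leq C(t-t_{j-1})^{2\beta}$ for any $\beta\in(1/2,H)$. Hence
\begin{align*}
  \Bnorm{\sum_{j=1}^n\int_{t_{j-1}}^{t_j} R^{(2)}_{n,j}(t)\,dt}_{i,p}
  \;\leq\; C\,n\cdot(T/n)^{2\beta+1} \;=\; O(n^{-2\beta}) \;=\; o(n^{-1}),
\end{align*}
since $2\beta>1$, so the quadratic remainder is negligible at the $n^{-1}$ scale.

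Next I would decompose the leading term via the SDE: $X_t-X_{t_{j-1}} = \int_{t_{j-1}}^t V^{[2]}(X_s)ds + \int_{t_{j-1}}^t V^{[1]}(X_s)dB_s$. The drift contribution $\sum_j f'(X_{t_{j-1}})\int_{t_{j-1}}^{t_j}\!\int_{t_{j-1}}^t V^{[2]}(X_s)\,ds\,dt$ is bounded in every Malliavin norm by $C\sum_j (T/n)^2=O(n^{-1})$. For the Young-integral contribution, freeze the coefficient: writing $V^{[1]}(X_s)=V^{[1]}(X_{t_{j-1}})+(V^{[1]}(X_s)-V^{[1]}(X_{t_{j-1}}))$ and applying Lemma~\ref{220411.1720} exactly as in the proof of Lemma~\ref{210315.1401}(ii) shows that the error created by the freezing is uniformly $O_M(n^{-3\beta})$ per interval (same mechanism as the bound on $R^1_{n,j}$), hence negligible after summation. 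The problem thus reduces to proving
\begin{align*}
  \mathcal{S}_n \;:=\; \sum_{j=1}^n Y_j\,W_j \;=\; O_M(n^{-1}),
  \qquad Y_j := f'(X_{t_{j-1}})V^{[1]}(X_{t_{j-1}}),\quad W_j := \int_{t_{j-1}}^{t_j}(B_t-B_{t_{j-1}})\,dt.
\end{align*}

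By stochastic Fubini, $W_j = I_1(\psi_j)$ where $\psi_j(s)=(t_j-s)\,1_{[t_{j-1},t_j]}(s)$, so the integration-by-parts formula $Y_j\delta(\psi_j) = \delta(Y_j\psi_j) + \langle DY_j,\psi_j\rangle_{\mathcal H}$ yields
\begin{align*}
  \mathcal{S}_n \;=\; \delta\Bigl(\sum_j Y_j\psi_j\Bigr) \;+\; \sum_j \langle DY_j,\psi_j\rangle_{\mathcal H}.
\end{align*}
For the Skorohod integral, Meyer's inequality reduces the task to bounding $\|\sum_j Y_j\psi_j\|_{\mathbb D^{k,p}(\mathcal H)}$. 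By the scaling argument already used in Lemma~\ref{211006.1220}, one has $\langle\psi_j,\psi_k\rangle_{\mathcal H}= \alpha_H(T/n)^{2H+2}\iint(1-u)(1-v)|j-k+u-v|^{2H-2}du\,dv$, which is comparable to $n^{-2H-2}\rho_H(j-k)$-type weights; summing gives $\sum_{j,k}|\langle\psi_j,\psi_k\rangle_{\mathcal H}|\lesssim n^{-2H-2}\cdot n^{2H}=n^{-2}$ (the $n^{2H}$ factor being $\sum_{|k|\leq n}|k|^{2H-2}$ for $H>1/2$). Together with the uniform boundedness of the Malliavin norms of $Y_j$ (Proposition~\ref{220413.1450}(ii)), this yields $\|\delta(\sum_j Y_j\psi_j)\|_{i,p}=O(n^{-1})$. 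For the correction term, $|\langle DY_j,\psi_j\rangle_{\mathcal H}|\lesssim \sup_s|D_sY_j|\cdot\int\!\!\int 1_{[0,T]}(s)\psi_j(s')|s-s'|^{2H-2}ds\,ds'$; since $\int_0^T|s-s'|^{2H-2}ds$ is uniformly bounded in $s'$ for $H>1/2$ and $\int\psi_j(s')ds'=(T/n)^2/2$, each term is $O(n^{-2})$, so the sum of $n$ such terms is $O(n^{-1})$ in every Malliavin norm.

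The main obstacle is the Skorohod-integral bound: one must recognise that, although $|W_j|$ is only of size $n^{-H-1}$, the sum $\sum_j Y_jW_j$ behaves better than triangle inequality would suggest because the $\psi_j$'s have essentially disjoint supports and the positive tail of $\rho_H$ is integrable only in the summed-square sense through the diagonal scaling $n^{-2H-2}\cdot n^{2H}=n^{-2}$. This is precisely the mechanism that the exponent machinery of Section~\ref{210429.1655} formalises, and one can alternatively obtain the same bound by recognising $\sum_j Y_j\psi_j$ as a functional of the form $\cali_n$ with an appropriate weighted graph and applying Proposition~\ref{210407.1401}.
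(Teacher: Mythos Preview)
Your approach is essentially the paper's: both reduce the problem to the weighted sum $\mathcal S_n=\sum_j Y_j I_1(\psi_j)$ plus negligible remainders. The paper reaches this via the change-of-variables formula for Young integrals rather than a Taylor step (so no quadratic remainder appears), and then dispatches $\mathcal S_n$ in one line by writing it as $\cali_n(-1,(\{1\},0,1),A,\bbf)$ with $\bbf_j(t)=n(t_j-t)1_j(t)$ and invoking Proposition~\ref{210823.2400} (exponent $-1$); your hands-on IBP/Meyer computation is precisely what that proposition does internally, so the two routes coincide in substance.

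One correction is needed for the freezing error. The term in question is $\int_{t_{j-1}}^{t_j}\!\int_{t_{j-1}}^t\bigl(V^{[1]}(X_s)-V^{[1]}(X_{t_{j-1}})\bigr)dB_s\,dt$, a $dB\,dt$ integral, not $dB\,dB$ like $R^1_{n,j}$. The inner Young integral is $O_M(n^{-2\beta})$, but the outer integration is $dt$ over an interval of length $T/n$, contributing a factor $n^{-1}$ rather than $n^{-\beta}$; hence each interval is $O_M(n^{-2\beta-1})$ and the sum is $O_M(n^{-2\beta})$ --- this is exactly the paper's $S_{2,n}$. Your stated $O_M(n^{-3\beta})$ per interval, after summation, gives only $O_M(n^{1-3\beta})$, which fails to be $O(n^{-1})$ when $H\le 2/3$, so the analogy to $R^1_{n,j}$ must be adjusted. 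A minor point: the pointwise bound $|R^{(2)}_{n,j}(t)|\le\tfrac12\|f''\|_\infty(X_t-X_{t_{j-1}})^2$ does not by itself control $\|R^{(2)}_{n,j}(t)\|_{i,p}$ for $i\ge1$; use the integral-remainder form and apply the product/chain rule to each factor --- the conclusion $O_M(n^{-2\beta})$ is unchanged.
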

  
  \begin{proof}
  \begin{align}
  \int^T_0\rbr{f(X_{t}) -f(X_{t_{j_n(t)-1}}) }dt
  =&\;
   \int^T_0\rbr{\int^t_{t_{j_n(t)-1}}f'(X_\tpr)V^{[1]}(X_\tpr) dB_\tpr}dt
  +\int^T_0\rbr{\int^t_{t_{j_n(t)-1}}f'(X_\tpr)V^{[2]}(X_\tpr) d\tpr}dt
  \nn\\=&\;
  S^{(\ref{220308.1500})}_{1,n}+S^{(\ref{220308.1500})}_{2,n}+S^{(\ref{220308.1500})}_{3,n},
  \label{220308.1500}
  \end{align}
  where we define the terms 
  $S^{(\ref{220308.1500})}_{1,n},S^{(\ref{220308.1500})}_{2,n}$ and 
  $S^{(\ref{220308.1500})}_{3,n}$ with the notation 
  $f^{[1]}(x)=f'(x)V^{[1]}(x)$ by
  \begin{align*}
  S^{(\ref{220308.1500})}_{1,n}=&
  \int^T_0 \rbr{\int^t_{t_{j(t)-1}} f^{[1]}(X_{t_{j(t)-1}}) dB_\tpr}dt
  =\sum_{j\in[n]} f^{[1]}(X_{\tjm})\int^\tj_\tjm(\tj-t) dB_t
  \\
  S^{(\ref{220308.1500})}_{2,n}=&
  \int^T_0\rbr{\int^t_{t_{j(t)-1}} \rbr{f^{[1]}(X_\tpr) -f^{[1]}(X_{t_{j(t)-1}})} dB_\tpr} dt
  =\sum_{j\in[n]} \int^\tj_\tjm 
  \rbr{\int^t_{\tjm} \rbr{f^{[1]}(X_\tpr) -f^{[1]}(X_{\tjm})} dB_\tpr} dt
  \\
  S^{(\ref{220308.1500})}_{3,n}=&
  \int^T_0 \rbr{\int^t_{t_{j(t)-1}}f'(X_\tpr)V^{[2]}(X_\tpr)\, d\tpr} dt.
  \end{align*}
  The functional $S^{(\ref{220308.1500})}_{1,n}$ can be written as 
  \begin{align*}
    S^{(\ref{220308.1500})}_{1,n} = n^{-1} \sum_{j\in[n]} f^{[1]}(X_{\tjm}) I_1(f_j)
  \end{align*}
  with 
  $f_j =\rbr{1_{j}(t)\; n(\tj-t)}_{t\in[0,T]}$ for $j\in[n]$.
  By Proposition \ref{210823.2400}, % and {the remark(!!!)},%%MODIF%%
  we have
  $S^{(\ref{220308.1500})}_{1,n} =O_M(n^{-1})$.
  %\comm{this is sharp}
  Using the same arguments as in the proof of Lemma \ref{210315.1401} (ii),
  we have 
  $S^{(\ref{220308.1500})}_{2,n} =O_M(n^{-2\beta})$ for any $\beta\in(1/2,H)$ and 
  $S^{(\ref{220308.1500})}_{3,n} = O_M(n^{-1})$.
  %\comm{this is not sharp but enough for here}
  \end{proof}

%%\newpage
\vspsm
%\subsubsection{Estimates of terms related to $D_{u_n} M_n$}
We proceed to estimate terms related to $D_{u_n} M_n$.
Recall 
\begin{align*}  
  \rho_H(k) = \abr{1_{[0,1]}, 1_{[k,k+1]}}_\calh
  = \frac12 \rbr{\abs{k+1}^{2H} + \abs{k-1}^{2H} - 2\abs{k}^{2H}}
\end{align*}
for $k\in\bbZ$ and
$c_H^2 =\sum_{k\in\bbZ} \rho_H(k)^2$.

\begin{lemma}\label{210427.2310}
  There exists some $\kappa>0$ such that
\begin{align*}
n^{-1}\sum_{j\in[n]}
\bbabs{\sum_{k\in\bbZ: j+k\in[n]}\rho_H(k)^2-c_H^2}
=O(n^{(2H-3/2)(1+\kappa)})
\quad\tassm n\to\infty.
\end{align*}
\end{lemma}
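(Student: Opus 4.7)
The strategy is direct: identify the difference inside the absolute value as a tail sum of $\rho_H(k)^2$, bound that tail using the decay rate of $\rho_H$, and then average over $j$. For each $j \in [n]$, the set $\{k \in \mathbb{Z} : j+k \in [n]\}$ equals $\{1-j,\ldots,n-j\}$, so
\begin{align*}
c_H^2 - \sum_{k: j+k \in [n]} \rho_H(k)^2 \;=\; \sum_{k \leq -j} \rho_H(k)^2 + \sum_{k \geq n-j+1} \rho_H(k)^2.
\end{align*}
The first step is the standard asymptotic $\rho_H(k) = H(2H-1)|k|^{2H-2} + O(|k|^{2H-3})$ as $|k| \to \infty$, obtained by Taylor expansion of $\tfrac12(|k+1|^{2H}+|k-1|^{2H}-2|k|^{2H})$. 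This yields a uniform bound $\rho_H(k)^2 \leq C|k|^{4H-4}$ for $|k|\geq 1$.

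Since $H \in (1/2, 3/4)$ gives $4H-4 \in (-2,-1)$, the tail sums are summable, and for any integer $M \geq 1$,
\begin{align*}
\sum_{k \geq M}\rho_H(k)^2 \;\leq\; C\sum_{k \geq M} k^{4H-4} \;\leq\; C' M^{4H-3},
\end{align*}
with the same bound for the symmetric left tail. Thus
\begin{align*}
\bigg|c_H^2 - \sum_{k: j+k \in [n]} \rho_H(k)^2\bigg| \;\leq\; C''\bigl(j^{4H-3} + (n-j+1)^{4H-3}\bigr).
\end{align*}

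Averaging over $j$ and using that $4H-3 \in (-1,0)$ for $H \in (1/2, 3/4)$ (so $\sum_{j=1}^n j^{4H-3} = O(n^{4H-2})$),
\begin{align*}
n^{-1}\sum_{j\in[n]}\bbabs{\sum_{k\in\bbZ: j+k\in[n]}\rho_H(k)^2 - c_H^2}
\;\leq\; 2C'' n^{-1}\sum_{j=1}^n j^{4H-3} \;=\; O(n^{4H-3}).
\end{align*}
Finally, observing the key algebraic identity $4H-3 = 2(2H-\tfrac{3}{2})$, and since $2H - \tfrac{3}{2} < 0$, we may take $\kappa = 1$ to conclude $O(n^{4H-3}) = O(n^{(2H-3/2)(1+\kappa)})$. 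No step looks hard; the main thing to verify carefully is the constant and the power in the decay of $\rho_H$, but this is a classical computation.
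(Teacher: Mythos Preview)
Your proof is correct and in fact cleaner than the paper's. The paper does not use the $j$-dependent tail bound $j^{4H-3}+(n-j+1)^{4H-3}$; instead it introduces a cutoff parameter $\epsilon\in(0,1)$, splits the sum over $j$ into ``interior'' indices $1+n^\epsilon\le j\le n-n^\epsilon$ (where the missing tail has length at least $n^\epsilon$, giving a uniform bound $O(n^{\epsilon(4H-3)})$) and the remaining $O(n^\epsilon)$ ``boundary'' indices (for which the difference is trivially bounded by $c_H^2$), and then observes that any choice $\epsilon\in(1/2,\,2H-1/2)$ makes both contributions $o(n^{2H-3/2})$. Your direct summation avoids this two-regime split and the auxiliary parameter, and it delivers the explicit constant $\kappa=1$, whereas the paper's argument only produces some unspecified $\kappa>0$.
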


\begin{proof}
For $\epsilon\in(0,1)$, the sum in question can be bounded as
  \begin{align*}
&\quad n^{-1}\sum_{j\in[n]}\bbabs{\sum_{j+k\in[n]}\rho_H(k)^2-c_H^2}
\\&=
n^{-1}\sum_{1+n^\epsilon \leq j\leq n-n^\epsilon}
\bbabs{\sum_{j+k\in[n]}\rho_H(k)^2 -c_H^2}
+n^{-1}\sum_{j<1+n^\epsilon \torsm n-n^\epsilon<j}
\abs{\sum_{j+k\in[n]}\rho_H(k)^2 -c_H^2}
\\&=
O(n^{\epsilon(4H-3)}+n^{-1}\; n^\epsilon).
\end{align*}
By taking $\epsilon$ satisfying $1/2<\epsilon<2H-1/2$, 
we have $(\epsilon(4H-3))\vee(\epsilon-1)<2H-3/2$.
\end{proof}

We often need the estimate of the difference of 
the functionals %$\cali^{M(2,0)}_{1,1,n}$ and $G_\infty$ defined at 
%(\ref{220301.1029}) and (\ref{220301.1030}), respectively.
\begin{align*}
  \cali^{M(2,0)}_{1,1,n}
  = 2 n^{4H-1} \sum_{j\in[n]^2} 
  a(X_{t_{j_1-1}}) a(X_{t_{j_2-1}})\,
  %a_{t_{j_1-1}} a_{t_{j_2-1}} 
  \beta_{j_1,j_2}^2
  \wtand
  G_\infty =2 c_H^2 T ^{4H-1}\int^T_0 (V^{[1]}(X_{t}))^4 dt.
\end{align*}
\begin{lemma} \label{220309.2200}
  For the solution $(X_t)_{t\in[0,T]}$ of SDE (\ref{210430.1615}) satisfying Assumption \ref{220404.1535}(i),
  consider the following functional
  \begin{align}
    S^{(\ref{211010.1630})}_n:=
    \cali^{M(2,0)}_{1,1,n}-G_\infty&=
    2 n^{4H-1} \sum_{j\in[n]^2} a_{t_{j_1-1}} a_{t_{j_2-1}} \beta_{j_1,j_2}^2
    - 2\, c_H^2\, T^{4H-1} \int^T_0 a_t^2 dt.
    \label{211010.1630}
  \end{align}
  Then, 
  $S^{(\ref{211010.1630})}_n =o_M(n^{2H - \frac{3}{2}})$.
\end{lemma}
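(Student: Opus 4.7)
The plan is to substitute $\beta_{j_1,j_2} = T^{2H}n^{-2H}\rho_H(j_1-j_2)$ into $\cali^{M(2,0)}_{1,1,n}$ to get
\begin{align*}
\cali^{M(2,0)}_{1,1,n} = 2T^{4H}n^{-1}\sum_{j_1,j_2\in[n]} a_{t_{j_1-1}} a_{t_{j_2-1}}\, \rho_H(j_1-j_2)^2,
\end{align*}
and then split the error in three stages: replace $a_{t_{j_2-1}}$ by $a_{t_{j_1-1}}$, close the kernel sum $\sum_k\rho_H(k)^2$ to $c_H^2$, and recognize the resulting Riemann sum. Concretely, write $S^{(\ref{211010.1630})}_n = R_n^{(1)} + R_n^{(2)} + R_n^{(3)}$ with
\begin{align*}
R_n^{(1)} &= 2T^{4H}n^{-1}\sum_{j_1\in[n]} a_{t_{j_1-1}}^2 \Bigl(\sum_{k:\,j_1+k\in[n]} \rho_H(k)^2 - c_H^2\Bigr),\\
R_n^{(2)} &= 2T^{4H}n^{-1}\sum_{j_1,j_2\in[n]} a_{t_{j_1-1}}\bigl(a_{t_{j_2-1}} - a_{t_{j_1-1}}\bigr)\rho_H(j_1-j_2)^2,\\
R_n^{(3)} &= 2c_H^2 T^{4H-1}\Bigl(\tfrac{T}{n}\sum_{j_1\in[n]} a_{t_{j_1-1}}^2 - \int_0^T a_t^2\,dt\Bigr).
\end{align*}
Each piece is estimated separately in the Sobolev seminorms $\|\cdot\|_{i,p}$.

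For $R_n^{(3)}$, since $a^2 = (V^{[1]})^4 \in C^\infty_b(\bbR)$ under Assumption~\ref{220404.1535}(i), Lemma~\ref{210504.2151} yields $R_n^{(3)} = O_M(n^{-1})$, and $n^{-1} = o(n^{2H-3/2})$ because $H > 1/2$. For $R_n^{(1)}$, the uniform Malliavin bounds $\sup_{n,j}\|a_{t_{j-1}}^2\|_{i,p} < \infty$ provided by Proposition~\ref{220413.1450}(ii) allow one to pull the factor $a_{t_{j_1-1}}^2$ out in the $\|\cdot\|_{i,p}$ norm and reduce to the deterministic estimate of Lemma~\ref{210427.2310}; this gives $\|R_n^{(1)}\|_{i,p} = O(n^{(2H-3/2)(1+\kappa)}) = o(n^{2H-3/2})$.

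The main obstacle is $R_n^{(2)}$, where one must balance the local Hölder regularity of $X$ against the off-diagonal decay of $\rho_H$. The plan is to use Lemma~\ref{210315.1401}(i) applied to $a = (V^{[1]})^2$ to obtain, for any fixed $\beta\in(\tfrac12,H)$, the estimate $\|a_{t_{j_2-1}} - a_{t_{j_1-1}}\|_{i,2p} \leq C\, n^{-\beta}|j_1 - j_2|^\beta$, combine this with the uniform bound on $\|a_{t_{j_1-1}}\|_{i,2p}$ via a product (Leibniz) bound in $\bbD^{i,p}$, and then apply Minkowski's inequality to arrive at
\begin{align*}
\|R_n^{(2)}\|_{i,p} \;\leq\; C\, n^{-\beta}\sum_{1 \leq |k| \leq n} |k|^\beta \rho_H(k)^2.
\end{align*}
Using the asymptotic $\rho_H(k) = O(|k|^{2H-2})$, the kernel sum is $O(n^{(\beta+4H-3)_+ + \epsilon})$ for any $\epsilon>0$ (with a logarithmic factor at the borderline). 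For $H\in(\tfrac12,\tfrac34)$ the exponent $\beta$ can always be chosen in $(\tfrac12,H)$ with $\beta > \tfrac32 - 2H$, which is possible precisely because $H > \tfrac12$: when $\beta \leq 3 - 4H$ the sum converges and the bound is $O(n^{-\beta+\epsilon}) = o(n^{2H-3/2})$, while when $\beta > 3 - 4H$ the bound reduces to $O(n^{4H-3+\epsilon})$, which is $o(n^{2H-3/2})$ precisely because $H < \tfrac34$. The delicate part will be this final tuning of $\beta$ against the summability threshold $3-4H$, which is where the upper bound $H < 3/4$ enters essentially.
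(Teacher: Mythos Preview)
Your proof is correct and follows essentially the same approach as the paper: the same three-term decomposition (your $R_n^{(1)},R_n^{(2)},R_n^{(3)}$ are the paper's $S_{2,n},S_{1,n},S_{3,n}$ up to relabeling), handled by the same lemmas (Lemma~\ref{210427.2310} for the kernel completion, Lemma~\ref{210315.1401}(i) for the H\"older piece, and Lemma~\ref{210504.2151} for the Riemann sum). The paper states the H\"older-sum bound more compactly as $O(n^{(-\beta)\vee(4H-3)})$, whereas you split into the two regimes $\beta\lessgtr 3-4H$, but the content is identical.
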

\begin{proof}
  The functional $S^{(\ref{211010.1630})}_n$ decomposes as
  \begin{align*}
    S^{(\ref{211010.1630})}_n
    &=
    %2 n^{4H-1} \sum_{j_1,j_2} a_{t_{j_1-1}} a_{t_{j_2-1}} \beta_{j_1,j_2}^2
    %-2 c_H^2 T^{4H-1} \int^T_0 a_t^2 dt
    %\nn\\&=
    2\, T^{4H} \rbr{S^{(\ref{211010.1630})}_{1,n}
    +S^{(\ref{211010.1630})}_{2,n} 
    +S^{(\ref{211010.1630})}_{3,n}},
  \end{align*}
where
\begin{align*}
  S^{(\ref{211010.1630})}_{1,n}&=
  n^{-1} \sum_{\substack{j\in[n]\\j+k\in[n]}}
  %n^{-1} \sum_{\substack{1\leq j_1\leq n\\1\leq j_1+j\leq n}}
  a_{t_{j-1}} a_{t_{j+k-1}} \rho_H(k)^2
  - n^{-1} \sum_{\substack{j\in[n]\\j+k\in[n]}}
  %- n^{-1} \sum_{\substack{1\leq j_1\leq n\\1\leq j_1+j\leq n}}
  a_{t_{j-1}}^2 \rho_H(k)^2
  \\
  S^{(\ref{211010.1630})}_{2,n}&=
    n^{-1} \sum_{j\in[n]}
    %n^{-1} \sum_{\substack{1\leq j_1\leq n\\1\leq j_1+j\leq n}}
      a_{t_{j-1}}^2 \sum_{j+k\in[n]}\rho_H(k)^2
    - c_H^2\; n^{-1} \sum_{j\in[n]} a_{t_{j-1}}^2
    %- c_H^2 n^{-1} \sum_{1\leq j\leq n} a_{t_{j-1}}^2
  \\
  S^{(\ref{211010.1630})}_{3,n}&=
    c_H^2\; n^{-1} \sum_{j\in[n]} a_{t_{j-1}}^2
    %c_H^2 n^{-1} \sum_{1\leq j\leq n} a_{t_{j-1}}^2
    - c_H^2T^{-1}\int^T_0 a_t^2 dt.
\end{align*}
For $i\in\bbN$, $p>1$ and $\beta\in(1/2,H)$,
there exists a constant $C_{i,p,\beta}$ 
independent of $n$, $j$ and $k$ such that
%$
%a(X_{t_{j+i-1}})-a(X_{\tjm})
%=O_M((i/n)^H)$,
\begin{align*}
\snorm{ a(X_{t_{j+k-1}})-a(X_{t_{j-1}}) }_{i,p}
\leq C_{i,p,\beta} (\abs{k}/n)^\beta
\end{align*}
by Lemma \ref{210504.2200} (i).
By the fact 
$\rho_H(k)=O(\abs{k}^{2H-2})$ as $k\to\infty$,
we have the following estimate
\begin{align*}
  \snorm{S^{(\ref{211010.1630})}_{1,n}}_{i,p}
  \simleq
  n^{-1}\sum_{\substack{j\in[n]\\j+k\in[n]}}
  \abs{k}^{4H-4} \; (\abs{k}/n)^\beta
  =O(n^{(-\beta)\vee(4H-3)})
  = o(n^{2H-\frac{3}{2}}).
\end{align*}
By Lemma \ref{210427.2310}, we have
$S^{(\ref{211010.1630})}_{2,n}= o_M(n^{2H-3/2})$
since 
$\sup_{t\in[0,T]}\norm{a_t}_{k,p}<\infty$ for any $k\geq1$ and $p>1$.
By Lemma \ref{210504.2151},
$S^{(\ref{211010.1630})}_{3,n}=O_M(n^{-1})$. 
Hence, we obtain 
$S^{(\ref{211010.1630})}_{n}= o_M(n^{2H - \frac{3}{2}})$.
\end{proof}

%%\newpage
\vspsm
\subsection{Lemmas related to the random symbols}%%MODIF%%
Recall that
%$X_t$ is the solution to SDE (\ref{210430.1615}) and
we define
$a(x)=(V^{[1]}(x))^2$,
$g(x)=2c_H^2\; T^{4H-1}\; (V^{[1]}(x))^4$ and
we write $a_t=a(X_t)$.
Similarly we write
$f_t := f(X_t)$ and $f'_t := f'(X_t)$ for a generic function $f$.
For $t\in\clop{0,T}$, we write $j_n(t)$ or $j(t)$ for
$j\in[n]$ such that $t\in\clop{\tjm,\tj}$.
By convention, we set $j(T)=n$.

Define the functions 
$\rho_\tau$ ($\tau\in[0,T]$) and $\rho_{n,j}$ ($j\in[n]$) on $[0,T]$ 
\begin{align*}
  \rho_{\tau}(s)= 
  \alpha_H\,T\; \abs{s-\tau}^{2H-2}
  \quad\tand\quad
  \rho_{n,j}(s)=%\rho_{j}(s)=
  \alpha_H\,n \int_{s'\in I_{j}} \abs{s-s'}^{2H-2} ds'
  =\rbr{\frac{T}{n}}^{-1}\int_{s'\in I_{j}} \rho_{s'}(s)ds'.
\end{align*}
for $s\in[0,T]$.
%\redb{For $\omega$-almost surely,(ToDelete)}
Define the ($\omega$-dependent) functions
$\dot d_{n,j}$ and $\ddot d_{n,j}$ on $[0,T]$
for $j\in[n]$  by
%for $n\in\bbN$ and $j\in[n]$ 
\begin{align*}
  \dotd_{n,j}(t)
  &:= n\abr{1_{j},DX_t}_\calh
  =\int_{[0,T]}D_sX_t\; \rho_{n,j}(s) ds,
  \\
  \ddotd_{n,j}(t)
  &:= n^2\abr{1_{j}^{\otimes 2},D^2X_t}_{\calh^{\otimes 2}}
  =\int_{[0,T]^2}D^2_{\son,\stw} X_{t}\; \rho_{n,j}(\son)\rho_{n,j}(\stw)d\son d\stw
\end{align*}
and the functions
$\dotd_{n}$ and $\ddotd_{n}$ on $[0,T]^2$ by
\begin{align*}
  \dotd_{n}(\tau,t)=\dotd_{n,j(\tau)}(t),\quad
  \ddotd_{n}(\tau,t)=\ddotd_{n,j(\tau)}(t).
\end{align*}
Recall that the functions
$\dotd$ and $\ddotd$ on $[0,T]^2$ 
are defined by 
\begin{align*}
  \dotd(\tau,t)&:=\int_0^T D_sX_t\; \rho_{\tau}(s) ds,
  \qquad\qquad
  \ddotd(\tau,t):=
  \int_{[0,T]^2}D^2_{\son,\stw} X_{t}\; \rho_{\tau}(\son)\rho_{\tau}(\stw)d\son d\stw.
\end{align*}
The asymptotic variance $G_\infty$ was defined at (\ref{220301.1030}):
\begin{align*}
  G_\infty =2 c_H^2 T ^{4H-1}\int^T_0 (V^{[1]}(X_{t}))^4 dt.
\end{align*}

\subsubsection{About the coefficients of quasi-torsion}
Set the finite measures $\mu_n$ and $\mu_\infty$ on $[0,T]^2$ by 
\begin{align*}
  \mu_{n}(ds_1,ds_2)&=
  2\, n^{4H-1}\sum_{j\in[n]^2}
  \beta_{j_1,j_2}^2\; \delta_{(t_{j_1-1},t_{j_2-1})}(ds_1,ds_2)
  =
  2\, T^{4H}n^{-1}
  \sum_{j\in[n]^2}
  \rho_H(\jtw-\jon)^2\; \delta_{(t_{j_1-1},t_{j_2-1})}(ds_1,ds_2)
  \\
  \mu_{\infty}(ds_1,ds_2)&
  = 2\, T^{4H-1}c_H^2\; ds_1\,\delta_{s_1}(ds_2)
  = 2\, T^{4H-1}c_H^2\; \phi_*\mu_{[0,T]},
\end{align*}
where $\phi_*\mu_{[0,T]}$ is the pushforward measure of 
the Lebesgue measure $\mu_{[0,T]}$ on $[0,T]$ by
$\phi:[0,T]\to[0,T]^2$ defined by $\phi(s)=(s,s)$.
Define the ($\omega$-dependent) functions $\qtorker{1}{n}$ and $\qtorker{2,k}{n}$ ($k=1,2,3$) 
on $[0,T]^2$ by
%and the functionals $\qtorker{1}{n}$ and $\qtorker{2,k}{n}$ ($k=1,2,3$) 
%dependent on $s_1,s_2\in[0,T]$ by
%%%%this is a wrong way to see $\qtorker{1}{n}$
\begin{align*}
  \qtorker{1}{n}(\son,\stw) &:=
  %K^{(1)}_n(\son,\stw) &:=
  n\sum_{j\in[n]} 2^{-1}\rbr{D_{1_{j}}G_\infty}
  \rbr{D_{1_{j}}a_{\son}}a_{t_{j-1}}a_{\stw},
  &
  \qtorker{2,1}{n}(\son,\stw) &:=
  n \sum_{j\in[n]} (D^2_{1_{j}^{\otimes 2}}a_{\son}) a_{t_{j-1}} a_{\stw},
  \\
  \qtorker{2,2}{n}(\son,\stw) &:=
  n \sum_{j\in[n]}(D_{1_{j}}a_{\son})(D_{1_{j}}a_{t_{j-1}})a_{\stw},
  &
  \qtorker{2,3}{n}(\son,\stw) &:=
  n \sum_{j\in[n]} (D_{1_{j}}a_{\son}) a_{t_{j-1}} (D_{1_{j}}a_{\stw}).
\end{align*}
Let $\qtorker{2}{n} = \sum_{k=1,2,3} \qtorker{2,k}{n}$.
The functional $\bar{\mS}_{k,n}^{(3,0)}$ defined at
(\ref{220403.2145}) and (\ref{220403.2146})
are written as
\begin{align*}
  %\comm{2^{-1}}
  %\comm{r_n^{-1}\; \bar\cali_{1,1,k,n}^{M(3,0)}}
  \bar{\mS}_{k,n}^{(3,0)}&=
  \int_{[0,T]^2} \mu_{n}(ds_1,ds_2)\qtorker{k}{n}(s_1,s_2)
  \hsp\tfor k=1,2.
\end{align*}

We can write 
%\grnb{Check the constant\tto seems okay}
\begin{align*}
  \qtorker{1}{n}(\son,\stw) &=
  %n\sum_{j\in[n]} 2^{-1}\rbr{D_{1_{j}}G_\infty}\rbr{D_{1_{j}}a_{\son}}a_{t_{j-1}}a_{\stw}  
  %=
  \frac{1}{2T} \int_0^T %\int_{\tau\in[0,T]}
  \rbr{\int_0^T g'_t \dotd_{n}(\tau,t) dt}
  \brbr{a'_{\son}\dotd_{n}(\tau,\son)}
  a_{t_{j(\tau)-1}}a_{\stw}d\tau
  \\
  \qtorker{2,1}{n}(\son,\stw) &=
  %n \sum_{j\in[n]} (D_{1_{j}}D_{1_{j}}a_{\son}) a_{t_{j-1}} a_{\stw}
  %=
  \frac{1}{T} \int_0^T %\int_{\tau\in[0,T]}
  \brbr{a''_{\son}\;\dotd_{n}(\tau,\son)^2  +a'_{\son}\ddotd_{n}(\tau,\son)}
  a_{t_{j(\tau)-1}} a_{\stw} d\tau
  \\
  \qtorker{2,2}{n}(\son,\stw) &=
  %n \sum_{j\in[n]}(D_{1_{j}}a_{\son})(D_{1_{j}}a_{t_{j-1}})a_{\stw}
  %=
  \frac{1}{T} \int_0^T %\int_{\tau\in[0,T]}
  \brbr{a'_{\son} \dotd_{n}(\tau,\son)}
  \brbr{a'_{t_{j(\tau)-1}} \dotd_{n}(\tau,t_{j(\tau)-1})}
  a_{\stw} d\tau
  \\
  \qtorker{2,3}{n}(\son,\stw) &=
  %n \sum_{j\in[n]} (D_{1_{j}}a_{\son}) a_{t_{j-1}} (D_{1_{j}}a_{\stw})
  %=
  \frac{1}{T} \int_0^T %\int_{\tau\in[0,T]}
  \brbr{a'_{\son} \dotd_{n}(\tau,\son)}
  \brbr{a'_{\stw} \dotd_{n}(\tau,\stw)}
  a_{t_{j(\tau)-1}} d\tau
\end{align*}
and we define functions 
$\qtorker{1}{\infty}$ and $\qtorker{2,k}{\infty}$ ($k=1,2,3$) 
on $[0,T]^2$ by
\begin{align*}
  \qtorker{1}{\infty}(\son,\stw) &:=
  \frac{1}{2T} \int_0^T
  \bbrbr{\int_0^T g'(X_t) \dotd(\tau,t) dt}
  \brbr{a'(X_{\son})\dotd(\tau,\son)}
  a_{\tau}a_{\stw}d\tau
  \\
  \qtorker{2,1}{\infty}(\son,\stw) &:=
  \frac{1}{T} \int_0^T
  \brbr{a''(X_{\son})\;\dotd(\tau,\son)^2 +a'(X_{\son})\;\ddotd(\tau,\son) }
  a_{\tau} a_{\stw} d\tau
  \\
  \qtorker{2,2}{\infty}(\son,\stw) &:=
  \frac{1}{T} \int_0^T
  \brbr{a'(X_{\son})\dotd(\tau,\son)}
  \brbr{a'(X_{\tau})\dotd(\tau,\tau)}
  a_{\stw}d\tau
  \\
  \qtorker{2,3}{\infty}(\son,\stw) &:=
  \frac{1}{T} \int_0^T
  \brbr{a'(X_{\son})\dotd(\tau,{\son})}
  \brbr{a'(X_{\stw})\dotd(\tau,{\stw})}
  a_{\tau} d\tau.
\end{align*}
We write 
$\qtorker{2}{\infty} = \sum_{k=1,2,3} \qtorker{2,k}{\infty}$.
Notice that %$\mS^{(3,0)}(\tti\sfz)$ \comm{koko}
\begin{align*}
  %\comm{2^{-1}}
  %\comm{r_n^{-1}\; \bar\cali_{1,1,k,n}^{M(3,0)}}
  \mS_{k,\infty}^{(3,0)}&=
  \int_{[0,T]^2} \mu_{\infty}(ds_1,ds_2)\, \qtorker{k}{\infty}(s_1,s_2)
  \hsp\tfor k=1,2,
\end{align*}
where $\mS_{k,\infty}^{(3,0)}$ are defined
in page \pageref{220308.2520}.

\begin{lemma}\label{210423.2110}
(i) For $k=1,2$, the following convergence holds almost surely as $n\to\infty$:
\begin{align*}
  \sup_{(\son,\stw)\in[0,T]^2}
  \abs{\qtorker{k}{n}(\son,\stw) -\qtorker{k}{\infty}(\son,\stw)}
  \to 0
\end{align*}

\item (ii) For $k=1,2$, 
$\qtorker{k}{\infty}$ 
%$\qtorker{k}{\infty}(\son,\stw)$ 
is a continuous function on $[0,T]^2$ almost surely.
\end{lemma}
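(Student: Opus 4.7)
My plan is to prove (ii) first by establishing joint continuity of the building blocks $(\tau,t)\mapsto\dotd(\tau,t)$ and $(\tau_1,\tau_2,t)\mapsto\widetilde{\ddotd}(\tau_1,\tau_2,t):=\int_{[0,T]^2}D^2_{s_1,s_2}X_t\,\rho_{\tau_1}(s_1)\rho_{\tau_2}(s_2)\,ds_1ds_2$ on $[0,T]^2$ and $[0,T]^3$ respectively (a.s.), and then to deduce (i) from a Fubini identity expressing the discretized kernels as local averages of the limits. Note that for each $k=1,2$ the function $\qtorker{k}{\infty}$ is an outer integral over $\tau$ of a continuous combination of $X_\cdot$, $a(X_\cdot)$, $a'(X_\cdot)$, $a''(X_\cdot)$, $g'(X_\cdot)$ and pointwise evaluations of $\dotd$ and $\ddotd$; once joint continuity is granted, the continuity in $(s_1,s_2)$ of $\qtorker{k}{\infty}$ follows from dominated convergence with a.s. dominating bounds provided by $\cald_2 X\in L^{\infty-}$ (Proposition \ref{220413.1450}(ii)).

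For (i), the key observation is the averaging identity obtained by Fubini: since $\rho_{n,j}(s)=(T/n)^{-1}\int_{I_j}\rho_{s'}(s)\,ds'$, one has
\begin{align*}
\dotd_n(\tau,t)\;=\;\frac{n}{T}\int_{I_{j(\tau)}}\dotd(\tau',t)\,d\tau',
\quad
\ddotd_n(\tau,t)\;=\;\frac{n^2}{T^2}\int_{I_{j(\tau)}\times I_{j(\tau)}}\widetilde{\ddotd}(\tau'_1,\tau'_2,t)\,d\tau'_1 d\tau'_2.
\end{align*}
Thus $\dotd_n(\cdot,\cdot)$ and $\ddotd_n(\cdot,\cdot)$ are averages of the putative limits over cubes of diameter $T/n$ centered near the diagonal $\{\tau_1=\tau_2=\tau\}$. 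Granted joint continuity of $\dotd$ and $\widetilde\ddotd$ on compact sets, these averages converge uniformly in $(\tau,t)$ to $\dotd(\tau,t)$ and $\ddotd(\tau,t)=\widetilde{\ddotd}(\tau,\tau,t)$. Combined with the trivial uniform approximation $\sup_{\tau\in[0,T]}|a_{t_{j(\tau)-1}}-a_\tau|\to0$ (from continuity of $t\mapsto a(X_t)$ on compact $[0,T]$) and a bound $\sup_{\tau,t}|\dotd(\tau,t)|+\sup_{\tau,t}|\ddotd(\tau,t)|\le C(\cald_2 X)<\infty$ a.s., the integrands defining $\qtorker{k}{n}(s_1,s_2)$ converge uniformly in $(\tau,s_1,s_2)$ to those defining $\qtorker{k}{\infty}(s_1,s_2)$, and dominated convergence for the outer $\tau$-integral yields the uniform convergence claim.

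The hard part is the joint continuity of $\dotd$ and $\widetilde\ddotd$, because the kernel $\rho_\tau(s)=\alpha_H T|s-\tau|^{2H-2}$ has an integrable but singular peak at $s=\tau$. I will handle this by splitting, for $(\tau,t)$ near a target $(\tau_0,t_0)$, the integral $\int_0^T D_sX_t\,\rho_\tau(s)\,ds=\int_0^t D_sX_t\,\rho_\tau(s)\,ds$ into a neighborhood $|s-\tau_0|\le\delta$ and its complement: the former is bounded by $\cald_1X\cdot C\delta^{2H-1}$ uniformly in $\tau$ near $\tau_0$ (since $2H-1>0$), while on the complement $\rho_\tau(s)\to\rho_{\tau_0}(s)$ uniformly and $|D_sX_t-D_sX_{t_0}|\le\cald_1X\cdot|t-t_0|^\beta$ by the Hölder control from Proposition \ref{210507.1755}. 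Letting $\delta\to0$ after first sending $(\tau,t)\to(\tau_0,t_0)$ gives continuity; the same argument, now with $D^2X_t$ and two singular kernels, yields joint continuity of $\widetilde\ddotd$ (here one uses the tensor structure of the kernel and the uniform bound on $D^2X_t$ from Proposition \ref{220413.1450}(ii)). The remaining verifications, uniform continuity on the compact boxes and dominated convergence in the outer integrals, are then routine.
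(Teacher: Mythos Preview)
Your approach is correct and genuinely different from the paper's, though it contains one small imprecision worth flagging.

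\textbf{Comparison with the paper.} The paper argues in the opposite order: it first proves~(i) and deduces~(ii) from it. The key intermediate step is Lemma~\ref{210507.1804}, which shows $\sup_\tau\|\rho_{n,j(\tau)}-\rho_\tau\|_{L^1}\to0$ via a truncation argument (cut off $\rho$ at level $M$, use uniform continuity of the truncated kernel, let $M\to\infty$). From this and the uniform bound on $D_sX_t$ one gets $\sup_{\tau,t}|\dotd_n-\dotd|\to0$ and $\sup_{\tau,t}|\ddotd_n-\ddotd|\to0$ (Lemma~\ref{220415.1540}(2)), and then a chain of triangle inequalities yields~(i). Part~(ii) is then essentially free: each $\qtorker{k}{n}$ is continuous because the approximate kernels $\rho_{n,j}$ are \emph{bounded} (no singularity), and a uniform limit of continuous functions is continuous. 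In particular, the paper never has to confront the joint continuity of $(\tau,t)\mapsto\dotd(\tau,t)$ directly.

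Your route---the averaging identity $\dotd_n(\tau,t)=\frac{n}{T}\int_{I_{j(\tau)}}\dotd(\tau',t)\,d\tau'$ plus uniform continuity of $\dotd$ and $\widetilde\ddotd$ on compacts---is arguably more conceptual, and it yields a stronger result: you get $\sup_\tau|\dotd_n(\tau,t_{j(\tau)-1})-\dotd(\tau,\tau)|\to0$ uniformly, whereas the paper only establishes and uses $L^1(d\tau)$-convergence for this diagonal term (see~(\ref{220308.1200})). The price is that you must prove joint continuity of $\dotd(\tau,t)$ in $\tau$ with the singular kernel $\rho_\tau$, which the paper sidesteps.

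\textbf{One imprecision.} In your continuity argument for $\dotd$, the claimed bound $|D_sX_t-D_sX_{t_0}|\le\cald_1X\,|t-t_0|^\beta$ is only valid for $s\le t\wedge t_0$: the H\"older seminorm in $\cald_1X$ controls $t\mapsto D_sX_t$ on $[s,T]$, but $D_sX_t$ has a jump at $t=s$. This does not break the argument: on the ``bad'' strip $s\in[\,t\wedge t_0,\,t\vee t_0\,]$ the integrand is bounded by $\cald_1^*X\cdot\rho_\tau(s)$, whose integral over that strip is $O(|t-t_0|^{2H-1})$ uniformly in $\tau$ (worst case $\tau$ inside the strip). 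Add this term to your estimate and the conclusion stands.
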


\begin{proof}(i)
Consider $\qtorker{1}{n}$ and $\qtorker{1}{\infty}$.
The following inequality holds for
$\son,\stw\in[0,T]$ and $n\in\bbN$ almost surely:
\begin{align}
&2T\abs{\qtorker{1}{n}(\son,\stw)-\qtorker{1}{\infty}(\son,\stw)}
\nn\\=&\;
\abs{\int_{0}^T
\rbr{\int_0^T g'_t\;\dotd(\tau,t) dt}
\rbr{(a'_{\son}\;\dotd(\tau,\son)} a_{\tau}a_{\stw}d\tau
-\int_0^T
\rbr{\int_0^T g'_t\;\dotd_n(\tau,t) dt}
\rbr{(a'_{\son}\;\dotd_n(\tau,\son)}
a_{t_{j(\tau)-1}}a_{\stw}d\tau}
\nn\\\leq&\;
\abs{a'_{\son}}\abs{a_{\stw}}
\int_{0}^T\abs{
\rbr{\int_0^T g'_t\; \dotd(\tau,t) dt} \dotd(\tau,\son)
\rbr{a_{\tau}-a_{t_{j(\tau)-1}}}} d\tau
\nn\\&+
\abs{a'_{\son}}\abs{a_{\stw}}
\int_{0}^T\abs{
\rbr{\int_0^T g'_t\;\dotd(\tau,t) dt}
\rbr{\dotd(\tau,\son)-\dotd_n(\tau,\son)}
a_{t_{j(\tau)-1}}}d\tau
\nn\\&+
\abs{a'_{\son}}\abs{a_{\stw}}
\int_{0}^T \abs{
\rbr{\int_0^T g'_t\;\rbr{\dotd(\tau,t) - \dotd_n(\tau,t)} dt}
\dotd_n(\tau,\son) a_{t_{j(\tau)-1}}} d\tau
\nn\\\leq&\;
T\;
\sup_{t\in[0,T]}\abs{a'_{t}}\; \sup_{t\in[0,T]}\abs{a_{t}}\; 
\sup_{t\in[0,T]}\abs{g'_t}\;
\sup_{\tau,t\in[0,T]}\abs{\dotd(\tau,t)}^2\;
\int_{0}^T \abs{a_{\tau}-a_{t_{j(\tau)-1}}}d\tau
\nn\\&+
T^2\;
\sup_{t\in[0,T]}\abs{a'_{t}}\; \sup_{t\in[0,T]}\abs{a_{t}}^2\;
\sup_{t\in[0,T]}\abs{g'_t}\;
\sup_{\tau,t\in[0,T]}\abs{\dotd(\tau,t)}\;
\sup_{\tau,t\in[0,T]}\abs{\dotd(\tau,t) -\dotd_n(\tau,t)}
\nn\\&+
T^2\;
\sup_{t\in[0,T]}\abs{a'_{t}} \sup_{t\in[0,T]}\abs{a_{t}}^2
\sup_{t\in[0,T]}\abs{g'_{t}}
\sup_{n\in\bbN,\tau,t\in[0,T]} \abs{\dot d_n(\tau,t)}\;
\sup_{\tau,t\in[0,T]} \abs{\dotd(\tau,t) - \dot d_n(\tau,t)}
\label{210506.1804}
\end{align}
The last sum of the inequality,
which are independent of $s_1$ and $s_2$,
converges to $0$
by (\ref{210506.1501}), (\ref{210506.1511}) and (\ref{210506.1517})
%\comm{Check 220303.1300-2.}
Hence, we obtain
\begin{align*}
  \sup_{(\son,\stw)\in[0,T]^2}\abs{\qtorker{1}{n}(\son,\stw)-\qtorker{1}{\infty}(\son,\stw)}
  \to 0
  \qquad\text{as $n\to\infty$ almost surely.}
\end{align*}
%as $n$ goes to $\infty$ almost surely.

For $\qtorker{2,1}{n}$ and $\qtorker{2,1}{\infty}$, 
the following inequality holds for
$\son,\stw\in[0,T]$ and $n\in\bbN$ almost surely:
\begin{align*}
&T\abs{\qtorker{2,1}\infty(\son,\stw)-\qtorker{2,1}{n}(\son,\stw)}
\\=&\;
\abs{\int_{0}^T
\rbr{a''_{\son}\;\dotd(\tau,{\son})^2 +a'_{\son}\;\ddotd(\tau,{\son})}
a_{\tau}a_{\stw}d\tau
-\int_0^T
\rbr{a''_{\son}\;\dotd_n(\tau,{\son})^2+a'_{\son}\;\ddotd_n(\tau,{\son})}
a_{t_{j(\tau)-1}}a_{\stw}d\tau}
%\\\leq&
%\abs{a''_{\son}} \abs{a_{\stw}}
%\abs{\int_{0}^T
%\rbr{\dotd(\tau,{\son})^2 a_{\tau}-\dotd_n(\tau,{\son})^2 a_{t_{j(\tau)-1}}} d\tau}
%\\&+
%\abs{a'_{\son}} \abs{a_{\stw}}
%\abs{\int_{0}^T \ddotd(\tau,{\son}) a_{\tau}d\tau
%-\int_0^T \ddotd_n(\tau,{\son}) a_{t_{j(\tau)-1}} d\tau}
\\\leq&
\abs{a''_{\son}} \abs{a_{\stw}}\rbr{
\abs{\int_{0}^T \dotd(\tau,{\son})^2
\rbr{a_{\tau} -a_{t_{j(\tau)-1}}} d\tau}
+\abs{\int_0^T \rbr{\dotd(\tau,{\son})^2 -\dot d_n(\tau,{\son})^2}
a_{t_{j(\tau)-1}}d\tau}}
\\&+
\abs{a'_{\son}} \abs{a_{\stw}}\rbr{
\abs{\int_{0}^T\ddotd(\tau,{\son})\rbr{a_{\tau}-a_{t_{j(\tau)-1}}}d\tau}
+\abs{\int_0^T \rbr{\ddotd(\tau,{\son}) -\ddot d_n(\tau,{\son})}
a_{t_{j(\tau)-1}}d\tau}}
\\\leq&
\sup_{t\in[0,T]}\abs{a''_{t}} \sup_{t\in[0,T]}\abs{a_{t}}
\sup_{\tau,t\in[0,T]}\abs{\dotd(\tau,{t})}^2
\int_{0}^T \abs{a_{\tau} -a_{t_{j(\tau)-1}}} d\tau
\\&+
T\;
\sup_{t\in[0,T]}\abs{a''_{t}} \sup_{t\in[0,T]}\abs{a_{t}}^2\;
\rbr{\sup_{\tau,\son\in[0,T]}\abs{\dotd(\tau,\son)}
+\sup_{n\in\bbN,\tau,\son\in[0,T]}\abs{\dot d_n(\tau,\son)}}
\sup_{\tau,\son\in[0,T]}\abs{\dotd(\tau,\son) -\dot d_n(\tau,\son)}
\\&+
\sup_{t\in[0,T]}\abs{a'_{t}} \sup_{t\in[0,T]}\abs{a_{t}}
\sup_{\tau,t\in[0,T]}\abs{\ddotd(\tau,{t})}
\int_{0}^T\abs{a_{\tau}-a_{t_{j(\tau)-1}}}d\tau
\\&+
T\;
\sup_{t\in[0,T]}\abs{a'_{t}} \sup_{t\in[0,T]}\abs{a_{t}}^2
\sup_{\tau,\son\in[0,T]} \abs{\ddotd(\tau,\son) -\ddot d_n(\tau,\son)}
\end{align*}
%The last sum of the inequality are independent of $s_1$ and $s_2$.
By (\ref{210506.1501}) (\ref{210506.1511}), (\ref{210506.1503}), 
(\ref{210506.1512}), (\ref{210506.1517}) and the continuity of $X_t$ in $t$, 
the last sum converges to $0$ almost surely.
%\comm{Check 220304.1000-1.}
Hence, we obtain
\begin{align*}
  \sup_{(\son,\stw)\in[0,T]^2}
  \abs{\qtorker{2,1}{n}(\son,\stw)-\qtorker{2,1}{\infty}(\son,\stw)}
  \to 0
  \qquad\text{as $n\to\infty$ almost surely.}
\end{align*}
%as $n$ goes to $\infty$ almost surely.

\vspssm
For $\qtorker{2,2}{n}$ and $\qtorker{2,2}{\infty}$, 
we have the following inequality for 
$\son,\stw\in[0,T]$ and $n\in\bbN$ almost surely:
\begin{align*}
&T\abs{\qtorker{2,2}{\infty}(\son,\stw)-\qtorker{2,2}{n}(\son,\stw)}
\\=&\;
\abs{\int_{0}^T \rbr{a'_{\son}\; \dotd(\tau,\son)}
\rbr{a'_{\tau}\; \dotd(\tau,\tau)}a_{\stw}d\tau
-\int_{0}^T \rbr{a'_{\son}\; \dotd_n(\tau,\son)}
\rbr{a'_{t_{j(\tau)-1}}\; \dotd_n(\tau,t_{j(\tau)-1})}a_{\stw}d\tau}
%\\\leq&\;
%\abs{a'_{\son}} \abs{a_{\stw}} 
%\abs{\int_{0}^T \dotd(\tau,\son)
%\rbr{a'_{\tau}\;\dotd(\tau,\tau)}d\tau
%- \int_{0}^T \dotd_n(\tau,\son)
%\rbr{a'_{t_{j(\tau)-1}}\; \dotd_n(\tau,t_{j(\tau)-1})}d\tau}
\\\leq&\;
\abs{a'_{\son}}\abs{a_{\stw}}\bigg\{
\abs{\int_{0}^T \dotd(\tau,\son)
\rbr{a'_{\tau} -a'_{t_{j(\tau)-1}}} \dotd(\tau,\tau) d\tau}
+
\abs{\int_{0}^T
\rbr{\dotd(\tau,\son)-\dotd_n(\tau,\son)}
a'_{t_{j(\tau)-1}}\; \dotd(\tau,\tau) d\tau}
\\&\hsp\hsp+
\abs{\int_{0}^T \dotd_n(\tau,\son) a'_{t_{j(\tau)-1}}
\rbr{\dotd(\tau,\tau)-\dotd_n(\tau,t_{j(\tau)-1})}d\tau}\bigg\}
%\\&+
%\abs{\int_{0}^T \dotd_n(\tau,\son) a'_{t_{j(\tau)-1}}
%\rbr{\dotd(\tau,\tau)-\dotd(\tau,t_{j(\tau)-1})}d\tau}
%+
%\abs{\int_{0}^T \dot d_n(\tau,\son) a'_{t_{j(\tau)-1}}
%\rbr{\dotd(\tau,t_{j(\tau)-1})-\dot d_n(\tau,t_{j(\tau)-1})}d\tau}\bigg\}
\\\leq&
\sup_{t\in[0,T]} \abs{a'_{t}} \sup_{t\in[0,T]}\abs{a_{t}}
\sup_{\tau,\son\in[0,T]}\abs{\dotd(\tau,\son)}^2
\int_{0}^T \abs{a'_{\tau}-a'_{t_{j(\tau)-1}}}d\tau
\\&+
T\;\sup_{t\in[0,T]}\abs{a'_{t}}^2 
\sup_{t\in[0,T]}\abs{a_{t}}
\sup_{\tau,t\in[0,T]} \abs{\dotd(\tau,t)}
\sup_{\tau,\son\in[0,T]} \abs{\dotd(\tau,\son) -\dot d_n(\tau,\son)}
\\&+
\sup_{t\in[0,T]}\abs{a'_{t}}^2 
\sup_{t\in[0,T]}\abs{a_{t}}
\sup_{n\in\bbN,\tau,\son\in[0,T]}\abs{\dot d_n(\tau,\son)}
\int_{0}^T \abs{\dotd(\tau,\tau)-\dotd_n(\tau,t_{j(\tau)-1})} d\tau
%\\&+
%\sup_{t\in[0,T]}\abs{a'_{t}}^2\, \sup_{t\in[0,T]}\abs{a_{t}}
%\sup_{n\in\bbN,\tau,\son\in[0,T]}\abs{\dot d_n(\tau,\son)}
%\int_{0}^T \abs{\dotd(\tau,\tau)-\dotd(\tau,t_{j(\tau)-1})} d\tau
%\\&+
%T\;\sup_{t\in[0,T]}\abs{a'_{t}}^2\, \sup_{t\in[0,T]}\abs{a_{t}}
%\sup_{n\in\bbN,\tau,\son\in[0,T]}\abs{\dot d_n(\tau,\son)}
%\sup_{\tau,t\in[0,T]} \abs{\dotd(\tau,t) -\dotd_n(\tau,t)}
\end{align*}
%The last sum of the inequality are independent of $s_1$ and $s_2$.
By (\ref{210506.1501}), (\ref{210506.1517}), (\ref{210506.1511}), (\ref{220308.1200})
and the continuity of $X_t$ in $t$, 
the last sum converges to $0$ almost surely.
%\comm{Check 220305.1300-2,3.}
Hence, we obtain
\begin{align*}
  \sup_{(\son,\stw)\in[0,T]^2}
  \abs{\qtorker{2,2}{n}(\son,\stw)-\qtorker{2,2}{\infty}(\son,\stw)}
  \to 0
  \qquad\text{as $n\to\infty$ almost surely.}
\end{align*}

For $\qtorker{2,3}{n}$ and $\qtorker{2,3}{\infty}$, 
we have the following inequality for
$\son,\stw\in[0,T]$ and $n\in\bbN$ almost surely:
\begin{align*}
&T\abs{\qtorker{2,3}\infty(\son,\stw)-\qtorker{2,3}{n}(\son,\stw)}
\\=&
\abs{\int_{0}^T \rbr{a'_{\son}\;\dotd(\tau,{\son})}
a_{\tau}\rbr{a'_{\stw} \dotd(\tau,{\stw})} d\tau
- \int_{0}^T \rbr{a'_{\son}\;\dotd_n(\tau,\son)} 
a_{t_{j(\tau)-1}} \rbr{a'_{\stw}\;\dotd_n(\tau,\stw)} d\tau}
%\\\leq&
%\abs{a'(X_{\son})a'(X_{\stw})}
%\abs{\int_{0}^T \dotd(\tau,{\son}) a_{\tau} \dotd(\tau,{\stw}) d\tau
%- \int_{0}^T \dot d_n(\tau,\son) a_{t_{j-1}} \dot d_n(\tau,\stw) d\tau}
\\\leq&
\abs{a'_{\son}}\abs{a'_{\stw}}
\abs{\int_{0}^T \dotd(\tau,{\son}) \rbr{a_{\tau} -a_{t_{j(\tau)-1}}} \dotd(\tau,{\stw}) d\tau}
\\&+
\abs{a'_{\son}} \abs{a'_{\stw}}
\abs{\int_{0}^T \rbr{\dotd(\tau,{\son})-\dotd_n(\tau,\son)} a_{t_{j(\tau)-1}} \dotd(\tau,\stw) d\tau}
\\&+
\abs{a'_{\son}}\abs{a'_{\stw}}
\abs{\int_{0}^T \dotd_n(\tau,\son) a_{t_{j(\tau)-1}}
\rbr{\dotd(\tau,{\stw})-\dotd_n(\tau,\stw)} d\tau}
\\\leq&
\sup_{t\in[0,T]}\abs{a'_{t}}^2
\sup_{\tau,\son\in[0,T]}\abs{\dotd(\tau,\son)}^2
\int_{0}^T \abs{a_{\tau} -a_{t_{j(\tau)-1}}} d\tau 
\\&+
T\;\sup_{t\in[0,T]}\abs{a'_{t}}^2
\sup_{\tau,\son\in[0,T]}\abs{\dotd(\tau,\son)}
\sup_{t\in[0,T]} \abs{a_{t}} 
\sup_{\tau,t\in[0,T]} \abs{\dotd(\tau,t) -\dotd_n(\tau,t)}
\\&+
T\;\sup_{t\in[0,T]}\abs{a'_{t}}^2
\sup_{n\in\bbN,\tau,\son\in[0,T]}\abs{\dotd_n(\tau,\son)}
\sup_{t\in[0,T]}\abs{a_{t}}
\sup_{\tau,t\in[0,T]} \abs{\dotd(\tau,t) -\dotd_n(\tau,t)}
\end{align*}
%Again, the last sum of the inequality are independent of $s_1$ and $s_2$.
By (\ref{210506.1501}) (\ref{210506.1511}), (\ref{210506.1517}) 
and the continuity of $X_t$ in $t$, 
the last sum converges to $0$ almost surely.
Hence, we obtain
\begin{align*}
  \sup_{(\son,\stw)\in[0,T]^2}
  \abs{\qtorker{2,3}{n}(\son,\stw)-\qtorker{2,3}{\infty}(\son,\stw)}
  \to 0
  \qquad\text{as $n\to\infty$ almost surely.}
\end{align*}

\item(ii)
We can see that $\qtorker{1}{n}$ and $\qtorker{2}{n}$
are continuous functions on $[0,T]^2$ for fixed $\omega$, $n\in\bbN$.
This follows from the continuity of $\dotd_{n,j}$ and $\ddotd_{n,j}$ ($j\in[n]$).
(See Lemma \ref{220415.1540} (3).)
Since the convergence of $\qtorker{k}{n}$ to $\qtorker{k}{\infty}$ ($k=1,2$) 
is uniform on $[0,T]^2$,
the limit $\qtorker{k}{\infty}$ 
is also continuous on $[0,T]^2$ almost surely.
\end{proof}

\begin{lemma}\label{220307.1330}
  For $k=1,2$,
  %\comm{$r_n^{-1}\bar\cali^{M(3,0)}_{1,1,k,n}$}
  $\bar{\mS}_{k,n}^{(3,0)}$
  converges to 
  $\mS^{(3,0)}_{k,\infty}$ almost surely.
\end{lemma}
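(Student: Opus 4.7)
The strategy is to exploit the representation
\[
\bar{\mS}_{k,n}^{(3,0)}=\int_{[0,T]^2}\mK_k^n(s_1,s_2)\,\mu_n(ds_1,ds_2),\qquad
\mS_{k,\infty}^{(3,0)}=\int_{[0,T]^2}\mK_k^\infty(s_1,s_2)\,\mu_\infty(ds_1,ds_2),
\]
and to separate the convergence of the integrand from the convergence of the reference measure by the triangle inequality
\[
\bar{\mS}_{k,n}^{(3,0)}-\mS_{k,\infty}^{(3,0)}
=\int(\mK_k^n-\mK_k^\infty)\,d\mu_n+\int \mK_k^\infty\,(d\mu_n-d\mu_\infty).
\]

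For the first integral, I would use Lemma \ref{210423.2110}(i): the uniform convergence $\sup_{[0,T]^2}|\mK_k^n-\mK_k^\infty|\to 0$ almost surely reduces the question to the uniform boundedness of the total mass $\mu_n([0,T]^2)$. By the self-similarity $\beta_{j_1,j_2}=T^{2H}n^{-2H}\rho_H(j_2-j_1)$ one gets
\[
\mu_n([0,T]^2)=2T^{4H}n^{-1}\sum_{j_1\in[n]}\sum_{k:\,j_1+k\in[n]}\rho_H(k)^2,
\]
which is bounded by $2T^{4H}c_H^2$ for every $n$ (and in fact converges to $\mu_\infty([0,T]^2)=2T^{4H}c_H^2$ by Lemma \ref{210427.2310}, applied deterministically). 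Therefore the first integral tends to zero almost surely.

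For the second integral, I would show the weak convergence $\mu_n\Rightarrow\mu_\infty$ against any continuous test function $f\in C([0,T]^2)$, which then applies to the random but a.s.\ continuous $f=\mK_k^\infty$ thanks to Lemma \ref{210423.2110}(ii). Writing
\[
\int f\,d\mu_n=2T^{4H}n^{-1}\sum_{j_1\in[n]}\sum_{k:\,j_1+k\in[n]}f(t_{j_1-1},t_{j_1+k-1})\rho_H(k)^2,
\]
I would split the inner sum at $|k|\leq K$ and $|k|>K$. The tail is controlled uniformly in $n$ by $\|f\|_\infty\sum_{|k|>K}\rho_H(k)^2$, which is arbitrarily small by choice of $K$ because $c_H^2<\infty$. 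For the finite-$k$ part, uniform continuity of $f$ on $[0,T]^2$ replaces $f(t_{j_1-1},t_{j_1+k-1})$ by $f(t_{j_1-1},t_{j_1-1})$ up to an error that vanishes as $n\to\infty$ (since $|t_{j_1+k-1}-t_{j_1-1}|\leq KT/n$), and then a Riemann-sum argument yields
\[
2T^{4H}n^{-1}\sum_{j_1\in[n]}f(t_{j_1-1},t_{j_1-1})\sum_{|k|\leq K}\rho_H(k)^2
\longrightarrow 2T^{4H-1}\Bigl(\sum_{|k|\leq K}\rho_H(k)^2\Bigr)\int_0^T f(t,t)\,dt.
\]
Letting $K\to\infty$ gives $2T^{4H-1}c_H^2\int_0^T f(t,t)\,dt=\int f\,d\mu_\infty$, proving the claimed weak convergence.

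The main obstacle is the second step: although morally $\mu_n$ ``concentrates on the diagonal'' like $\mu_\infty$, the contribution $\rho_H(k)^2$ decays only polynomially ($\rho_H(k)^2=O(|k|^{4H-4})$) and so the off-diagonal mass is not immediately negligible. The tail-plus-truncation argument sketched above is precisely designed to handle this; once it is set up, combining the two parts and invoking the a.s. continuity of $\mK_k^\infty$ from Lemma \ref{210423.2110}(ii) yields $\bar{\mS}_{k,n}^{(3,0)}\to\mS_{k,\infty}^{(3,0)}$ almost surely, for $k=1,2$.
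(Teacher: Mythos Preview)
Your proposal is correct and follows essentially the same route as the paper: the same triangle-inequality split, Lemma~\ref{210423.2110}(i) for the first piece together with a uniform bound on $\mu_n([0,T]^2)$, and weak convergence $\mu_n\Rightarrow\mu_\infty$ combined with the a.s.\ continuity of $\mK_k^\infty$ (Lemma~\ref{210423.2110}(ii)) for the second. The only cosmetic difference is that the paper records the boundedness of the total mass and the weak convergence as a separate Lemma~\ref{211020.2435} (whose proof is omitted there), whereas you supply the tail-truncation/Riemann-sum argument for $\mu_n\Rightarrow\mu_\infty$ inline.
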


%comm{check the constant\tto seems okay 220309.2500}
\begin{proof}
For $k=1,2$, we have
\begin{align*}
  &\abs{ \bar{\mS}_{k,n}^{(3,0)}
  -\mS^{(3,0)}_{k,\infty} }
  \\\leq&
    \abs{ \int_{[0,T]^2} \mu_{n}(d\son,d\stw) 
    \rbr{\qtorker{k}{n}(\son,\stw) - \qtorker{k}{\infty}(\son,\stw)}}
  \\&+
    \abs{\int_{[0,T]^2} \mu_{n}(d\son,d\stw) \qtorker{k}{\infty}(\son,\stw)
    - \int_{[0,T]^2} \mu_{\infty}(d\son,d\stw) \qtorker{k}{\infty}(\son,\stw)}.
\end{align*}
The first term converges to zero almost surely
since the total mass of the (deterministic) measures $\mu_{n}$  on $[0,T]^2$
are bounded by $C_{(\ref{210416.2230})}$ (Lemma \ref{211020.2435} (i))
and $\qtorker{k}{n}$ converges to $\qtorker{k}\infty$ in the sup-norm on $[0,T]^2$ 
almost surely (Lemma \ref{210423.2110}).
We can prove that the second term also converges to zero almost surely
using the weak convergence $\mu_{n}\to\mu_{\infty}$  (Lemma \ref{211020.2435} (ii))
and the continuity of $\qtorker{k}\infty(\son,\stw)$ in $\son,\stw$.
Hence we have proved
%$r_n^{-1}\bar\cali^{M(3,0)}_{1,1,k,n}$
$\bar{\mS}_{k,n}^{(3,0)}$
converges to $\mS^{(3,0)}_{k,\infty}$ almost surely.
\end{proof}

\subsubsection{About the coefficients of random symbols from perturbation terms}
Define the functional $\bar{\mS}^{(1,0)}_{k,n}$ ($k=1,...,5$) by
\begin{align*}
  \bar{\mS}^{(1,0)}_{1,1,n}&=
  2^{-1}T^{-1} \int_0^T
  \rbr{\int^T_0 g'_t\; \dotd_n(\tau,t)dt}
  a'_{t_{j(\tau)-1}}\; \dotd_n(\tau,t_{j(\tau)-1}) d\tau
  \\
  \bar{\mS}^{(1,0)}_{1,2,n}&=
  \int_0^T
  \rbr{\int_0^T g'_t\;\dotd_n(\tau, t)dt} V^{[2,1]}_{t_{j(\tau)-1}} d\tau
  \\
  \bar{\mS}^{(1,0)}_{2,1,n}&=
  T^{-1}\int_0^T
  \rbr{a''_{t_{j(\tau)-1}}\;\brbr{\dotd_n(\tau,t_{j(\tau)-1})}^2
  +a'_{t_{j(\tau)-1}}\;\ddotd_n(\tau,t_{j(\tau)-1}) }d\tau
  \\
  \bar{\mS}^{(1,0)}_{2,2,n}&=
  2\int_0^T
  V^{[(2,1);1]}_{t_{j(\tau)-1}}\;\dotd_n(\tau,t_{j(\tau)-1}) d\tau
  \\
  \bar{\mS}^{(1,0)}_{2,3,n}&=N_{2,n}=
  T\int_0^T
  V^{[2,2]}_{t_{j(\tau)-1}} d\tau,
\end{align*}
and the functional $\bar{\mS}^{(1,0)}_{k,\infty}$ ($k=1,...,5$) by
\begin{alignat*}{3}
  {\mS}^{(1,0)}_{1,1,\infty}&=
  2^{-1}T^{-1} \int_0^T
  \rbr{\int^T_0 g'_t\; \dotd(\tau,t)dt}
  a'_{\tau}\; \dotd(\tau,\tau) d\tau
  ,&\quad%\\
  {\mS}^{(1,0)}_{1,2,\infty}&=
  \int_0^T
  \rbr{\int_0^T g'_t\;\dotd(\tau, t)dt} V^{[2,1]}_{\tau} d\tau
  \\
  {\mS}^{(1,0)}_{2,1,\infty}&=
  T^{-1}\int_0^T 
  \rbr{a''_{\tau}\;\brbr{\dotd(\tau,\tau)}^2 
  +a'_{\tau}\;\ddotd(\tau,\tau) }d\tau
  ,&%\\
  {\mS}^{(1,0)}_{2,2,\infty}&=
  2\int_0^T
  V^{[(2,1);1]}_{\tau}\;\dotd(\tau,\tau) d\tau
  \\
  {\mS}^{(1,0)}_{2,3,\infty}&=
  T\int_0^T V^{[2,2]}_{\tau} d\tau.
\end{alignat*}
Then we have
\begin{align*}
  \bar{\mS}^{(1,0)}_{1,n}&=
  \bar{\mS}^{(1,0)}_{1,1,n}+\bar{\mS}^{(1,0)}_{1,2,n},&
  \bar{\mS}^{(1,0)}_{2,n}&=
  \bar{\mS}^{(1,0)}_{2,1,n}+\bar{\mS}^{(1,0)}_{2,2,n}+\bar{\mS}^{(1,0)}_{2,3,n}
  \\
  {\mS}^{(1,0)}_{1,\infty}&=
  {\mS}^{(1,0)}_{1,1,\infty}+{\mS}^{(1,0)}_{1,2,\infty},&
  {\mS}^{(1,0)}_{2,\infty}&=
  {\mS}^{(1,0)}_{2,1,\infty}+{\mS}^{(1,0)}_{2,2,\infty}+{\mS}^{(1,0)}_{2,3,\infty},
\end{align*}
where 
$\bar{\mS}^{(1,0)}_{1,n}$, $\bar{\mS}^{(1,0)}_{2,n}$,
${\mS}^{(1,0)}_{1,\infty}$ and ${\mS}^{(1,0)}_{2,\infty}$
are defined 
%in page \pageref{220308.2521}.
at 
(\ref{220403.2151}), (\ref{220403.2152}),
(\ref{220403.2131}) and (\ref{220403.2132}), respectively.

\begin{lemma}\label{210505.1440}
  For $k=1,2$, 
  $\bar{\mS}^{(1,0)}_{k,n}$ converges to
  $\mS^{(1,0)}_{k,\infty}$ almost surely.
\end{lemma}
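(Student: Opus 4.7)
The plan is to prove each convergence $\bar{\mS}^{(1,0)}_{k,l,n}\to{\mS}^{(1,0)}_{k,l,\infty}$ separately for $(k,l)\in\{(1,1),(1,2),(2,1),(2,2),(2,3)\}$, and then combine them. The case $(k,l)=(2,3)$ is a standard Riemann sum convergence for the step-function approximation of the continuous integrand $V^{[2,2]}_\tau$, which holds pathwise by the continuity of $t\mapsto X_t$ and the boundedness of the derivatives of $V^{[2]}$ under Assumption \ref{220404.1535}(i). Each of the remaining four cases has the same structure: the difference $\bar{\mS}^{(1,0)}_{k,l,n}-{\mS}^{(1,0)}_{k,l,\infty}$ is an integral of a polynomial in $a,a',a'',V^{[2,1]},V^{[(2,1);1]}$ evaluated along the process, multiplied by factors of $\dotd_n(\tau,t_{j(\tau)-1})$ and $\ddotd_n(\tau,t_{j(\tau)-1})$ in one term versus $\dotd(\tau,\tau)$ and $\ddotd(\tau,\tau)$ in the other.

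The plan for each such case is to insert and subtract intermediate quantities, mimicking the decomposition done in the proof of Lemma \ref{210423.2110}, and thereby reduce the estimate to sums of products of three kinds of factors: (a) deterministic sup-norms of $a^{(i)}$, $V^{[k]}_t$, $\dotd(\tau,t)$, $\ddotd(\tau,t)$, which are finite almost surely because the coefficients are in $C_b^\infty(\mathbb{R})$ and by Proposition \ref{220413.1450}(ii); (b) the uniform bound $\sup_{n,\tau,t}|\dotd_n(\tau,t)|<\infty$ coming from the references (\ref{210506.1511}) and (\ref{220308.1200}); and (c) one of the three ``vanishing'' quantities $\int_0^T |f_\tau-f_{t_{j(\tau)-1}}|\,d\tau$, $\sup_{\tau,t}|\dotd(\tau,t)-\dotd_n(\tau,t)|$, $\sup_{\tau,t}|\ddotd(\tau,t)-\ddotd_n(\tau,t)|$, each of which tends to $0$ almost surely as $n\to\infty$ (the first by continuity of $X$ and dominated convergence, the latter two by the cited references (\ref{210506.1501}), (\ref{210506.1517}) and their analogues for the second-derivative kernel). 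For instance, in the case $(k,l)=(1,1)$ I would write
\begin{align*}
2T\bigl(\bar{\mS}^{(1,0)}_{1,1,n}-{\mS}^{(1,0)}_{1,1,\infty}\bigr)
&=\int_0^T\!\!\Bigl(\!\int_0^T\!\! g'_t\bigl(\dotd_n(\tau,t)-\dotd(\tau,t)\bigr)dt\Bigr)a'_{t_{j(\tau)-1}}\dotd_n(\tau,t_{j(\tau)-1})\,d\tau\\
&\quad+\int_0^T\!\!\Bigl(\!\int_0^T\!\! g'_t\,\dotd(\tau,t)\,dt\Bigr)\bigl(a'_{t_{j(\tau)-1}}\dotd_n(\tau,t_{j(\tau)-1})-a'_\tau\dotd(\tau,\tau)\bigr)d\tau,
\end{align*}
and then further split the second term to isolate $a'_{t_{j(\tau)-1}}-a'_\tau$ and $\dotd_n(\tau,t_{j(\tau)-1})-\dotd(\tau,\tau)$ (where the latter is handled by triangle inequality plus (\ref{220308.1200})).

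The case $(k,l)=(2,1)$ is the only one where the second-order Malliavin derivative $\ddot d$ appears, so it will be the main obstacle; one needs the analogue of (\ref{210506.1517}) for the quantity $\sup_{\tau,\son}|\ddotd(\tau,\son)-\ddotd_n(\tau,\son)|\to 0$ a.s., together with the bound $\sup_{\tau,\son}|\ddotd(\tau,\son)|<\infty$ a.s.; both are already used in Lemma \ref{210423.2110} via references (\ref{210506.1503}), (\ref{210506.1512}). After this decomposition, each summand is bounded pathwise by a finite random constant times one of the three vanishing quantities, and the almost sure convergence $\bar{\mS}^{(1,0)}_{k,l,n}\to{\mS}^{(1,0)}_{k,l,\infty}$ follows. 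Summing over $l$ gives the lemma for $k=1,2$. Apart from writing out the algebraic splitting neatly, no further machinery is required.
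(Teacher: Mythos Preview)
Your proposal is correct and relies on the same ingredients as the paper's proof, namely the uniform bounds and convergence statements for $\dotd_n$, $\ddotd_n$ from Lemma~\ref{220415.1540} together with pathwise continuity of $X$. The only difference is organizational: the paper bounds each integrand uniformly, shows its pointwise convergence, and then invokes the dominated convergence theorem directly, whereas you carry out an explicit insert-and-subtract splitting in the style of Lemma~\ref{210423.2110}; both routes are equivalent here.
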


\begin{proof}
  %\contifrom{220307.1400}
  Consider $\bar{\mS}^{(1,0)}_{1,n}$.
By (\ref{210506.1501}) and the continuity of $X_t$ in $t$,
we have the following estimate
\begin{align*}
  \abs{g'_t\; \dotd_n(\tau,t)\; a'_{t_{j(\tau)-1}}\; \dotd_n(\tau,t_{j(\tau)-1})}
  \leq
  \sup_{t\in[0,T]}\abs{g'_t}
  \sup_{t\in[0,T]}\abs{a'_t}
  \bbrbr{\sup_{n\in\bbN, j\in[n], t\in[0,T]}\abs{\dotd_{n,j}(t)}}^2
  <\infty
\end{align*}
for $n\in\bbN,\;\tau\in[0,T],\;t\in[0,T]$ almost surely.
By (\ref{210506.1511}), (\ref{210506.1514}) and the continuity of $X_t$ in $t$,
\begin{align*}
  g'_t\; \dotd_n(\tau,t)\; a'_{t_{j(\tau)-1}}\; \dotd_n(\tau,t_{j(\tau)-1})
  \to g'_t\; \dotd(\tau,t)\; a'_{\tau}\; \dotd(\tau,\tau)
\end{align*}
for $(t,\tau)\in[0,T]^2$ as $n\to\infty$.
By the Lebesgue convergence theorem, 
we obtain the convergence
\begin{align*}
  \bar{\mS}^{(1,0)}_{1,n} \to {\mS}^{(1,0)}_{1,\infty}
\end{align*}
as $n\to\infty$ almost surely.

Similarly, by (\ref{210506.1501}), (\ref{210506.1511}) and the pathwise continuity of $X_t$,
we have
$\bar{\mS}^{(1,0)}_{2,n} \to {\mS}^{(1,0)}_{2,\infty}$.
By (\ref{210506.1501}), (\ref{210506.1503}), (\ref{210506.1514}), (\ref{210506.1516})
and the continuity of $X_t$, we have 
$\bar{\mS}^{(1,0)}_{3,n} \to {\mS}^{(1,0)}_{3,\infty}$.
By (\ref{210506.1501}), (\ref{210506.1514}) and the continuity of $X_t$,
we have
$\bar{\mS}^{(1,0)}_{4,n} \to {\mS}^{(1,0)}_{4,\infty}$ almost surely.
By the continuity of $X_t$, we have 
$\bar{\mS}^{(1,0)}_{5,n} \to {\mS}^{(1,0)}_{5,\infty}$ almost surely.
\end{proof}

%%\newpage
\subsubsection{Basic estimates}
Recall the definition of functions 
$\rho_{\tau}$ and $\rho_{n,j}$ 
defined on $[0,T]$:
\begin{align*}
  \rho_{\tau}(s)=&\alpha_H\,T\; \abs{s-\tau}^{2H-2}
  &&\tfornsp \tau\in[0,T],
  \\
  \rho_{n,j}(s)=&%\rho_{j}(s)=
  \alpha_H\,n \int_{s'\in I_{j}} \abs{s-s'}^{2H-2} ds'
  =\rbr{\frac{T}{n}}^{-1}\int_{s'\in I_{j}} \rho_{s'}(s)ds'
  %\rho_{j,n}(s)=&%\rho_{j}(s)=
  %\alpha_H\; n\int_{s'\in I_{j}} \abs{s-s'}^{2H-2}ds'
  &&\tfornsp j\in[n].
\end{align*}
\begin{lemma}\label{210507.1804}
(1) %There exists a constant $C_{(\ref{210507.1725})}>0$ independent of $n$ and $\tau$ such that 
The functions $\rho_{\tau}$ and $\rho_{n,j}$ are bounded in $L^1([0,T])$, that is
\begin{align}
  C_{(\ref{210507.1725})}:=
  \sup_{\tau\in[0,T]} \snorm{\rho_{\tau}}_{L^1}\vee
  \sup_{n\in\bbN,j\in[n]} \snorm{\rho_{n,j}}_{L^1}
  %\sup_{\tau\in[0,T],n\in\bbN} \snorm{\rho_{j_n(\tau),n}}_{L^1}
  <\infty.
  %\leq C_{(\ref{210507.1725})}.
  \label{210507.1725}
\end{align}
(2)
The function $\rho_{n,j_n(\tau)}$ on $[0,T]$ converges to $\rho_{\tau}$
in $L^1([0,T])$ uniformly in  $\tau\in[0,T]$.
In other words,
\begin{align*}
\lim_{n\to\infty}\sup_{\tau\in[0,T]}
\snorm{\rho_{n, j_n(\tau)}-\rho_{\tau}}_{L^1}=0.
\end{align*}
\end{lemma}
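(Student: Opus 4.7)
The plan is to deduce both parts from the integral representation $\rho_{n,j}(s)=(T/n)^{-1}\int_{I_j}\rho_{s'}(s)\,ds'$ already displayed in the definition, which reduces every claim about $\rho_{n,j}$ to an averaged version of the corresponding claim about $\rho_\tau$.

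\textbf{Part (1).} First I would bound $\snorm{\rho_\tau}_{L^1}$ by direct computation: since $2H-2\in(-1,-\tfrac12)$, the function $|s-\tau|^{2H-2}$ is integrable, and
\begin{align*}
  \snorm{\rho_\tau}_{L^1}
  =\alpha_H T\int_0^T|s-\tau|^{2H-2}\,ds
  =\frac{\alpha_H T}{2H-1}\bigl[(T-\tau)^{2H-1}+\tau^{2H-1}\bigr]
  \leq\frac{2\alpha_H T^{2H}}{2H-1},
\end{align*}
uniformly in $\tau\in[0,T]$. Next, by Tonelli,
\begin{align*}
  \snorm{\rho_{n,j}}_{L^1}
  =\int_0^T\!(T/n)^{-1}\!\int_{I_j}\rho_{s'}(s)\,ds'\,ds
  =(T/n)^{-1}\!\int_{I_j}\snorm{\rho_{s'}}_{L^1}\,ds'
  \leq\sup_{s'\in[0,T]}\snorm{\rho_{s'}}_{L^1},
\end{align*}
so the same constant $C_{(\ref{210507.1725})}$ works for both families.

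\textbf{Part (2).} Using the same representation and the fact that $\tau\in I_{j_n(\tau)}$, I would write
\begin{align*}
  \rho_{n,j_n(\tau)}(s)-\rho_\tau(s)
  =(T/n)^{-1}\int_{I_{j_n(\tau)}}\bigl(\rho_{s'}(s)-\rho_\tau(s)\bigr)\,ds',
\end{align*}
so that by Minkowski,
\begin{align*}
  \snorm{\rho_{n,j_n(\tau)}-\rho_\tau}_{L^1}
  \leq\sup_{\substack{s',\tau\in[0,T]\\|s'-\tau|\leq T/n}}
  \snorm{\rho_{s'}-\rho_\tau}_{L^1}.
\end{align*}
Thus it suffices to prove that the right-hand side tends to $0$ as $n\to\infty$, which is the uniform $L^1$-continuity of $\tau\mapsto\rho_\tau$. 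After the change of variables $u=s-\tau$ and setting $\delta=s'-\tau$, the inner quantity is controlled by
\begin{align*}
  \alpha_H T\int_{-T}^{T}\bigl|\,|u-\delta|^{2H-2}-|u|^{2H-2}\bigr|\,du.
\end{align*}
I would split this integral at $|u|=2|\delta|$. On $|u|\leq 2|\delta|$, each term is integrable on its own, and each contributes $O(|\delta|^{2H-1})$. On $|u|>2|\delta|$, $|u-\delta|\geq|u|/2$, so the mean value theorem applied to $x\mapsto x^{2H-2}$ yields the pointwise bound $C\,|u|^{2H-3}|\delta|$, and $\int_{2|\delta|}^{T}u^{2H-3}\,du=O(|\delta|^{2H-2})$ because $2H-3<-1$. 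Hence the total contribution is $O(|\delta|^{2H-1})$, giving the explicit rate
\begin{align*}
  \sup_{\tau\in[0,T]}\snorm{\rho_{n,j_n(\tau)}-\rho_\tau}_{L^1}=O(n^{-(2H-1)})\to 0.
\end{align*}

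\textbf{Main obstacle.} There is no real obstacle; the only delicate point is handling the near-diagonal singularity of $|u|^{2H-2}$ when establishing translation continuity in $L^1$, but this is cleanly dealt with by the split at $|u|=2|\delta|$ and relies only on $2H-2\in(-1,0)$, which holds because $H\in(\tfrac12,\tfrac34)$.
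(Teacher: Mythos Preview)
Your proof is correct. Part (1) matches the paper's approach essentially line for line: both bound $\snorm{\rho_\tau}_{L^1}$ by $\int_{-T}^{T}\rho(s)\,ds$ with $\rho(s)=\alpha_H T|s|^{2H-2}$, and then use Tonelli to transfer the bound to $\rho_{n,j}$.

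The difference lies in Part (2). The paper proves the $L^1$-convergence via a \emph{truncation argument}: it sets $\rho_\tau^M=\rho_\tau\wedge M$, chooses $M_\epsilon$ so that the $L^1$ mass above $M_\epsilon$ is at most $\epsilon$, uses ordinary uniform continuity of the bounded function $\rho(\cdot)\wedge M_\epsilon$ on $[-T,T]$ to control $\snorm{\rho_{n,j_n(\tau)}^{M_\epsilon}-\rho_\tau^{M_\epsilon}}_{L^1}$, and concludes by the triangle inequality. This is the standard ``continuity of translation in $L^1$ via approximation by bounded continuous functions'' pattern. You instead establish the $L^1$-continuity of $\tau\mapsto\rho_\tau$ \emph{directly} by a near/far split of the integral at $|u|=2|\delta|$, exploiting the explicit power-law form to get the quantitative rate $\sup_\tau\snorm{\rho_{n,j_n(\tau)}-\rho_\tau}_{L^1}=O(n^{-(2H-1)})$. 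Your route is more hands-on and yields strictly more information (a convergence rate), at the cost of the small computation with the mean-value theorem; the paper's route is softer and would work for any locally integrable translation family without change, but gives no rate.
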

\begin{proof}
  (1)
Let $\rho(s)=\alpha_H\:T \;\abs{s}^{2H-2}$ for $s\in[-T,T]$.
We have
\begin{align*}%\label{210507.1725}
  \snorm{\rho_{\tau}}_{L^1} =\int^T_0 \rho_\tau(s)ds 
  =\int^{T-\tau}_{-\tau}\rho(s)ds
  <\int^{T}_{-T}\rho(s)ds<\infty
  \tfor \tau\in[0,T].
\end{align*}
Similary, by the Fubini's theorem,
\begin{align*}
  \snorm{\rho_{n,j}}_{L^1} =\int^T_0 \rho_{n,j}(s) ds
  =\rbr{\frac{T}{n}}^{-1}
  \int^{t_{j}}_{t_{j-1}}ds'\int^T_0 \rho_{s'}(s) ds
  %=T^{-1} n \int^T_0 \int^{t_{j}}_{t_{j-1}} \rho(s-s') ds'ds
  <\int^{T}_{-T}\rho(s)ds<\infty
  \tfor j\in[n] \tandsm n\in\bbN.
\end{align*}

\vspssm
%By the integrability of $\rho$ (in $s=0$),
\noindent(2) 
For $\tau\in[0,T]$, $M>0$ and $n\in\bbN$,
define functions on $[0,T]$ by
\begin{align*}
  \rho^M_{\tau}(s)=\rho_\tau(s)\wedge M \tand
  \rho_{n,j}^M(s)=T^{-1}n\int_{s'\in I_{j}}\rho^M_{s'}(s)ds'.
  %=T^{-1}n\int_{s'\in I_{j}}(\rho(s-s')\wedge M)ds'
\end{align*}
%Note that $I_j$ implicitly depends on $n$.
For $\epsilon>0$, there exists $M_\epsilon>0$ such that 
\begin{align*}
  \int_{-T}^T\rho(s)ds - \int_{-T}^T\rho(s)\wedge M_\epsilon\, ds <\epsilon.
\end{align*}
Since $\rho(s)\wedge {M_\epsilon}$ is uniformly continuous in $s\in[-T,T]$,
there exists $n_\epsilon\in\bbN$ %$N_\epsilon\in\bbN$ 
such that 
\begin{align*}
  \abs{\rho(s_1)\wedge {M_\epsilon} - \rho(s_2)\wedge {M_\epsilon}}<\epsilon/T
\end{align*}
for $s_1,s_2\in[-T,T]$ satisfying $\abs{s_1-s_2}\leq T/n_\epsilon$. %$|s_1-s_2|\leq T/N_\epsilon$.
Then for $n\geq n_\epsilon$ and $\tau\in[0,T]$, we have
\begin{align*}
  \snorm{\rho_{n,j_n(\tau)}^{M_\epsilon}-\rho_\tau^{M_\epsilon}}_{L^1}
  &=
  \bbnorm{T^{-1}n\int_{s'\in I_{j_n(\tau)}}
\rbr{\rho_{s'}(\cdot)\wedge M_\epsilon - \rho_{\tau}(\cdot)\wedge M_\epsilon}ds'}_{L^1}
  \\
  &\leq T^{-1}n\int_0^T\int_{s'\in I_{j_n(\tau)}}
  \babs{\rho_{s'}(s)\wedge M_\epsilon - \rho_{\tau}(s)\wedge M_\epsilon}ds'ds
  \leq\epsilon,
\end{align*}
since $|s'-\tau|\leq T/n_\epsilon$ for $s'\in I_{j_n(\tau)}$.
Note that this estimate holds uniformly in $\tau$.
We also have
\begin{align*}
  \snorm{\rho_{n,j}-\rho_{n,j}^{M_\epsilon}}_{L^1}
  =&
  %\snorm{T^{-1}n\int_{s'\in I_{j}}\rho_{s'}(\cdot)ds'
  %-T^{-1}n\int_{s'\in I_{j}}\rho_{s'}^{M_\epsilon}(\cdot)ds'}_{L^1}\\
  %=&
  T^{-1}n\int_0^T
  \rbr{\int_{s'\in I_{j}}\rbr{\rho_{s'}(s)-\rho_{s'}(s)\wedge {M_\epsilon}}ds'}ds
  <\epsilon
  &&\text{for any }j\in[n]\tandsm n\in\bbN
  \\
  \snorm{ \rho_\tau^{M_\epsilon} -\rho_\tau}_{L^1}
  <&\int_{-T}^T \rbr{\rho(s) -\rho(s)\wedge M_\epsilon}ds<\epsilon
  &&\text{for any }\tau\in[0,T].
\end{align*}
By the inequality
\begin{align*}
  \snorm{ \rho_{n,j_n(\tau)}-\rho_\tau }_{L^1}\leq
  \snorm{\rho_{n,j_n(\tau)}-\rho_{n,j_n(\tau)}^{M_\epsilon}}_{L^1}
  +\snorm{\rho_{n,j_n(\tau)}^{M_\epsilon}-\rho_\tau^{M_\epsilon}}_{L^1}
  +\snorm{\rho_\tau^{M_\epsilon}-\rho_\tau}_{L^1}
\end{align*}
for $\tau\in[0,T]$ and $n\in\bbN$, we obtain the convergence.
\end{proof}

\begin{lemma}\label{220415.1540}
(1) The following bounds hold almost surely:
\begin{alignat}{2}
\sup_{n\in\bbN}\sup_{j\in[n],t\in[0,T]} \abs{\dot d_n(j;t)}
%\sup_{n\in\bbN}\sup_{\tau,t\in[0,T]} \abs{\dot d_n(\tau;t)}
\vee \sup_{\tau,t\in[0,T]}\abs{\dotd(\tau;t)}
<&\infty &\quad&a.s.		\label{210506.1501}\\
\sup_{n\in\bbN}\sup_{j\in[n],t\in[0,T]} \abs{\ddot d_n(j;{t})}
%\sup_{n\in\bbN}\sup_{\tau,t\in[0,T]} \abs{\ddot d_n(\tau;{t})}
\vee \sup_{\tau,t\in[0,T]}\abs{\ddotd(\tau;{t})}
<&\infty &&a.s.			\label{210506.1503}
%\\
%\sup_{n\in\bbN}\sup_{\tau\in[0,T]} \abs{\int_{[0,T]} g'(X_t)\: \dot d_n(\tau;t)  dt}
%<&\infty &&a.s.		\nn%\label{210506.1504}
\end{alignat}

\item (2)
The following convergences hold almost surely as $n$ goes to $\infty$:
\begin{alignat}{2}
\sup_{\tau,t\in[0,T]} \abs{\dot d_n(\tau;t) -\dotd(\tau;t)}&\to0 &\quad&a.s.
\label{210506.1511}\\
\sup_{\tau,t\in[0,T]} \abs{\ddot d_n(\tau;t) -\ddotd(\tau;t)}&\to0 &&a.s.
\label{210506.1512}\\
\abs{\dotd_n(\tau,t_{j(\tau)-1}) -\dotd(\tau,\tau)} &\to0 
&&\text{ for any $\tau\in[0,T]$  a.s.}			\label{210506.1514}\\
\abs{\ddotd_n(\tau,t_{j(\tau)-1}) -\ddotd(\tau,\tau)} &\to0
&&\text{ for any $\tau\in[0,T]$  a.s.}			\label{210506.1516}
%
%
%\abs{\dotd(\tau;t_{j(\tau)-1}) -\dotd(\tau;\tau)} &\to0 
%&&\text{ for any $\tau\in[0,T]$  a.s.}			\\%\label{210506.1514}\\
%\abs{\ddotd(\tau;t_{j(\tau)-1}) -\ddotd(\tau;\tau)} &\to0
%&&\text{ for any $\tau\in[0,T]$  a.s.}			\\%\label{210506.1516}
\end{alignat}
In partcular, 
\begin{align}
  \int_{0}^T \abs{\dotd_n(\tau,t_{j(\tau)-1}) -\dotd(\tau,\tau)}d\tau &\to0 \quad a.s.
  \label{220308.1200}
\end{align}
\item (3)
For fixed a.s. $\omega\in\Omega$, $n\in\bbN$ and $j\in[n]$,
$\dotd_{n,j}$ and $\ddotd_{n,j}$ are continuous functions on $[0,T]$.

\item (4)
Let $f$ a continuous function on $\bbR$.
Then the following integral converges to $0$ as $n\to\infty$.
\begin{align}
\int_{0}^T \abs{f(X_{\tau})-f(X_{t_{j(\tau)-1}})} d\tau &\to0 
\quad a.s. \label{210506.1517}
%\\
%\int_{0}^T \abs{\dotd(\tau;\tau)-\dotd(\tau;t_{j(\tau)-1})}d\tau &\to0 
%\quad a.s.\text{--------- to be deleted - NAZE} 
%\nn\\
%\label{210506.1513}\\
%\sup_\tau
%\abs{\int_{t\in[0,T]} g'(X_t)\;\dotd_n(\tau;t)dt -\int_{[0,T]}g'(X_t)\; \dotd(\tau;t) dt} &\to0 
%\quad a.s. \nn%\label{210506.1515}
\end{align}
\end{lemma}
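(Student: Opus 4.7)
\textbf{Proof plan for Lemma \ref{220415.1540}.}
The whole lemma rests on two ingredients: (i) pointwise a.s.\ bounds and continuity in the time argument of the Malliavin derivatives $(s,t)\mapsto D_sX_t$ and $(s_1,s_2,t)\mapsto D^2_{s_1,s_2}X_t$ coming from Proposition \ref{210507.1755} / Proposition \ref{220413.1450}, and (ii) the $L^1$-boundedness and $L^1$-convergence of the kernels $\rho_{n,j}$ and $\rho_\tau$ provided by Lemma \ref{210507.1804}. My plan is to isolate these two inputs once and then get all four parts by a uniform integral estimate of H\"older's inequality type.

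For part (1), write
\[
|\dot d_{n,j}(t)|\;\leq\;\babr{\sup_{s,u\in[0,T]}|D_sX_u|}\,\|\rho_{n,j}\|_{L^1}
\]
and the same with $\rho_\tau$ in place of $\rho_{n,j}$. Proposition \ref{210507.1755} (through $\cald_1 X\leq Ke^{K\|B\|_\beta^{1/\beta}}<\infty$ a.s.) gives the pointwise supremum a.s.\ finite, while Lemma \ref{210507.1804}(1) furnishes a uniform bound on the $L^1$-norms. This yields (\ref{210506.1501}); the estimate (\ref{210506.1503}) for $\ddot d$ is analogous using $\cald_2 X<\infty$ a.s.\ and $\|\rho_{n,j}\|_{L^1}^2\leq C_{(\ref{210507.1725})}^2$.

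For part (2), the same H\"older-type bound yields
\[
|\dot d_n(\tau,t)-\dot d(\tau,t)|\;\leq\;\babr{\sup_{s,u}|D_sX_u|}\,\snorm{\rho_{n,j_n(\tau)}-\rho_\tau}_{L^1},
\]
and the right-hand side tends to $0$ uniformly in $(\tau,t)$ by Lemma \ref{210507.1804}(2), giving (\ref{210506.1511}). For (\ref{210506.1512}) I will split
\[
\rho_{n,j_n(\tau)}^{\otimes 2}-\rho_\tau^{\otimes 2}
=(\rho_{n,j_n(\tau)}-\rho_\tau)\otimes \rho_{n,j_n(\tau)}+\rho_\tau\otimes(\rho_{n,j_n(\tau)}-\rho_\tau)
\]
and bound each term by $\cald_2 X\cdot \snorm{\rho_{n,j_n(\tau)}-\rho_\tau}_{L^1}\cdot C_{(\ref{210507.1725})}$. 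For (\ref{210506.1514}) I decompose
\[
\dot d_n(\tau,t_{j(\tau)-1})-\dot d(\tau,\tau)=\bigl[\dot d_n(\tau,t_{j(\tau)-1})-\dot d(\tau,t_{j(\tau)-1})\bigr]+\bigl[\dot d(\tau,t_{j(\tau)-1})-\dot d(\tau,\tau)\bigr];
\]
the first bracket is controlled by (\ref{210506.1511}) (applied uniformly in $t$), and the second vanishes by continuity of $t\mapsto \dot d(\tau,t)$, which follows from the a.s.\ continuity of $t\mapsto D_sX_t$ (Proposition \ref{210507.1755}) via dominated convergence with dominant $\cald_1 X\cdot \rho_\tau\in L^1([0,T])$. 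The same scheme handles (\ref{210506.1516}), and (\ref{220308.1200}) then follows by bounded convergence since the integrand is uniformly bounded by $2\,\cald_1 X\cdot C_{(\ref{210507.1725})}$.

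Part (3) is a direct dominated-convergence statement: for fixed $\omega$, $n$, $j$, continuity of $t\mapsto D_sX_t$ a.s.\ and the domination $|D_sX_t\,\rho_{n,j}(s)|\leq \cald_1X\,\rho_{n,j}(s)$ (integrable in $s$) give continuity of $t\mapsto \dot d_{n,j}(t)$; the $\ddot d_{n,j}$ case is identical in $[0,T]^2$. Part (4) is immediate: the path $t\mapsto X_t$ is a.s.\ continuous on $[0,T]$, hence uniformly continuous, so $f\circ X$ is as well, and $\sup_{\tau\in[0,T]}|f(X_\tau)-f(X_{t_{j(\tau)-1}})|\to 0$ a.s. The only genuinely delicate point in the whole argument is the joint continuity needed to pass from (\ref{210506.1511}) to (\ref{210506.1514})--(\ref{210506.1516}); everything else is a mechanical application of the two inputs mentioned above.
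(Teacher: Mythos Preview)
Your proposal is correct and follows essentially the same route as the paper: bound by $\sup|D^{(\cdot)}X|\cdot\|\rho\|_{L^1}$ for part (1), use Lemma~\ref{210507.1804}(2) for (\ref{210506.1511})--(\ref{210506.1512}), and the same two-term splitting for (\ref{210506.1514})--(\ref{210506.1516}).

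One small point worth tightening: when you invoke ``the a.s.\ continuity of $t\mapsto D_sX_t$'' to get continuity of $t\mapsto \dot d(\tau,t)$ (and likewise in part (3)), note that $t\mapsto D_sX_t$ has a jump at $t=s$ (it is $0$ for $t<s$ and $V^{[1]}(X_s)$ at $t=s$). The paper handles this by splitting into $s<\tau$ and $s>\tau$ explicitly. Your dominated-convergence argument still goes through because the exceptional set is a single point, but you should say ``continuous in $t$ for $t\neq s$'' rather than unqualified continuity.
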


\begin{proof} 
  (1) The following estimate holds almost surely
  \begin{align*}
    \abs{\dot d_n(\tau;t)} &\leq
    \int_{[0,T]}\abs{D_sX_t}\; \abs{\rho_{j(\tau),n}(s)} ds
    %\leq \sup_{s,t\in[0,T]}\abs{D_sX_t} \snorm{\rho_{j(\tau),n}}_{L^1([0,T])}
    \leq C_{(\ref{210507.1725})} \sup_{s,t\in[0,T]}\abs{D_sX_t}
    \\
    \abs{\ddot d_n(\tau;t)} &=
    \int_{[0,T]^2} \abs{D^2_{\son,\stw} X_{t}}\;
    \abs{\rho_{j(\tau),n}(\son)}\: \abs{\rho_{j(\tau),n}(\stw)} d\son d\stw
    \leq 
    (C_{(\ref{210507.1725})})^2
    \sup_{\son,\stw,t\in[0,T]} \abs{D^2_{\son,\stw} X_{t}}\:
  \end{align*}
  for any $t,\tau\in [0,T]$ and  $n\in\bbN$.
  Similarly,  for $t,\tau\in [0,T]$, we have
  \begin{align*}
  \abs{\dotd(\tau;t)}\leq
  C_{(\ref{210507.1725})} \sup_{s,t\in[0,T]}\abs{D_sX_t},\qquad%\tand
  \abs{\ddotd(\tau;t)}\leq 
  (C_{(\ref{210507.1725})})^2
  \sup_{\son,\stw,t\in[0,T]}\abs{D^2_{\son,\stw} X_{t}}.
  \end{align*}
  Since
  $ \sup_{s,t}\abs{D_sX_t}$ and $ \sup_{\son,\stw,t}\abs{D^2_{\son,\stw} X_{t}}$ 
  are almost surely finite by Proposition \ref{210507.1755},
  we obtain (\ref{210506.1501}) and (\ref{210506.1503}).
  %\comm{The estimate (\ref{210506.1504}) follows from (\ref{210506.1501}).}
  %% MEMO: using the inequality:
  %\beas
  %\bigg|\int_{t\in[0,T]}  g'(X_t)\dot d_n(\tau;t)  dt\bigg|\leq
  %T\cdot \sup_t |g'(X_t)| \sup_{s,t}|D_sX_t| \cdot C^{(\ref{210507.1725})}
  %\hspsm\text{a.s.}
  %\eeas
  %for $n\in\bbN$ and $\tau\in[0,T]$.

\vspssm\noindent (2)
Since the following inequalities stand almost surely for $\tau,t\in[0,T]$ and $n\in\bbN$:
\begin{align*}
  \abs{\dot d_n(\tau;t)-\dotd(\tau;t)}&=
  \abs{\int_{[0,T]}D_sX_t\; \rbr{\rho_{j(\tau),n}(s) -\rho_{\tau}(s)}ds}\leq
  \sup_{s,t\in[0,T]}\abs{D_sX_t}\;
  \sup_{\tau\in[0,T]}\snorm{\rho_{j_n(\tau),n}-\rho_{\tau}}_{L^1([0,T])}
  \\
  \abs{\ddot d_n(\tau;t)-\ddotd(\tau;t)}
  &\leq
  2\sup_{\son,\stw,t\in[0,T]}\abs{D^2_{\son,\stw} X_{t}}\;
  %\sup_\tau \snorm{ \rho_{\tau}(s) }_{L^1(s\in[0,T])}
  \sup_{\tau\in[0,T]}\snorm{\rho_{j_n(\tau),n}-\rho_{\tau}}_{L^1([0,T])}\;
  \bbrbr{\sup_{\tau\in[0,T]} \snorm{\rho_{\tau}}_{L^1}\vee
  \sup_{n\in\bbN,j\in[n]} \snorm{\rho_{j,n}}_{L^1}},
  %\:C_{(\ref{210507.1725})},
\end{align*}
we obtain (\ref{210506.1511}) and (\ref{210506.1512})
by Lemma \ref{210507.1804}.

%%\newpage
For $\tau\in[0,T]$ and $n\in\bbN$, we have
\begin{align*}
\abs{\dotd_n(\tau,t_{j_n(\tau)-1}) -\dotd(\tau,\tau)}
&\leq
\abs{\dotd_n(\tau,t_{j_n(\tau)-1}) -\dotd(\tau,t_{j_n(\tau)-1})}
+\abs{\dotd(\tau,t_{j_n(\tau)-1}) -\dotd(\tau,\tau)}
\\&\leq
\sup_{\tau,t\in[0,T]} \abs{\dotd_n(\tau,t) -\dotd(\tau,t)}+
\int_0^T \abs{D_sX_{t_{j_n(\tau)-1}} -D_sX_\tau}\; \rho_{\tau}(s) ds
\end{align*}
The first term converges to zero almost surely by (\ref{210506.1511}).
Consider the second term.
For $s<\tau$, 
$D_sX_{t_{j(\tau)-1}}$ converges to $D_sX_\tau$ as $n\to\infty$,
since $D_sX_t$ is continuous in $t$ except for $t=s$;
For $s>\tau$, we have $D_sX_{t_{j(\tau)-1}}=D_sX_\tau=0$.
The integrand is bounded by $2\sup_{s,t\in[0,T]}\abs{D_sX_t}\;\rho_\tau(s)$,
which is an integrable function in $s$.
Hence, by Lebesgue's dominated convergence theorem,
the second term converges to zero almost surely,
and we obtain (\ref{210506.1514}).
%\comm{uniform continuity of $D_sX_t$ in $s,t$ can be used to obtain uniform convergence in $\tau$, 
%which is stronger and not necessary here.}
By similar arguments, we can prove (\ref{210506.1516}) and
the continuity of $\dotd_{n,j}$ and $\ddotd_{n,j}$ .
%$\abs{\ddotd_n(\tau,t_{j(\tau)-1}) - \ddotd(\tau,\tau)}\to0$  for any $\tau\in[0,T]$  a.s.	

Thanks to Lebesgue's dominated convergence theorem,
we have (\ref{220308.1200}) from (\ref{210506.1501}) and (\ref{210506.1514}); 
(\ref{210506.1517}) is proved by the continuity of $f$ and $X_t$ in $t$.
\end{proof}

%\subsubsection{lemmas}

\begin{lemma}\label{211020.2435}
(i) Let $f:[0,T]^2\to\bbR$  a bounded continuous function.
Then there exists a constant $C_{(\ref{210416.2230})}>0$ such that
\begin{align}
  \sup_{n\in\bbN} \abs{\int_{[0,T]^2} \mu_n(ds_1,ds_2) f(s_1,s_2)}
  \leq C_{(\ref{210416.2230})} \snorm{f}_\infty
  \label{210416.2230}
\end{align}

\item
(ii) The measure $\mu_n$ on $[0,T]^2$ weakly converges to $\mu_\infty$.
%where $\mu_\infty$
%\begin{align*}
%\int_{[0,T]^2}\mu_\infty(ds_1,ds_2) f(s_1,s_2) 
%=2T^{4H-1}c_H^2\int_{[0,T]} f(t,t)dt
%\end{align*}
\end{lemma}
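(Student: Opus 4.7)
My plan is to treat (i) as an immediate corollary of bounding the total mass $\mu_n([0,T]^2)$, and then derive (ii) by a double-sum reindexing followed by dominated convergence in the lag variable.

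For (i), I would first use the self-similarity identity $\beta_{j_1,j_2}=T^{2H}n^{-2H}\rho_H(j_2-j_1)$ to rewrite
\[
\mu_n([0,T]^2)=2T^{4H}n^{-1}\sum_{j_1,j_2\in[n]}\rho_H(j_2-j_1)^2.
\]
Reindexing with $k=j_2-j_1$ bounds this by $2T^{4H}n^{-1}\cdot n\cdot\sum_{k\in\bbZ}\rho_H(k)^2 = 2T^{4H}c_H^2$, which is finite for $H\in(1/2,3/4)$ by the definition of $c_H^2$ at (\ref{210417.1805}). Then (\ref{210416.2230}) follows with $C_{(\ref{210416.2230})}=2T^{4H}c_H^2$ from the trivial estimate $|\int f\,d\mu_n|\le\|f\|_\infty\mu_n([0,T]^2)$.

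For (ii), let $f:[0,T]^2\to\bbR$ be bounded and continuous. The same reindexing gives
\[
\int_{[0,T]^2} f\,d\mu_n = 2T^{4H}\sum_{k\in\bbZ}\rho_H(k)^2\;\Phi_{n,k}(f),\qquad \Phi_{n,k}(f):=n^{-1}\sum_{\substack{j_1\in[n]\\ j_1+k\in[n]}} f(t_{j_1-1},t_{j_1+k-1}).
\]
For each fixed $k\in\bbZ$, I would show $\Phi_{n,k}(f)\to T^{-1}\int_0^T f(s,s)\,ds$ as $n\to\infty$: since $t_{j_1+k-1}-t_{j_1-1}=Tk/n\to0$, uniform continuity of $f$ on $[0,T]^2$ together with the Riemann sum approximation $(T/n)\sum_{j_1} f(t_{j_1-1},t_{j_1-1})\to\int_0^T f(s,s)\,ds$ yields the claim. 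Moreover $|\Phi_{n,k}(f)|\le\|f\|_\infty$ uniformly in $n$ and $k$, so the summable majorant $k\mapsto \rho_H(k)^2\|f\|_\infty$ (recall $\sum_k\rho_H(k)^2=c_H^2<\infty$) permits Lebesgue's dominated convergence theorem on the counting measure over $\bbZ$. Taking the limit inside the $k$-sum gives
\[
\lim_{n\to\infty}\int_{[0,T]^2} f\,d\mu_n = 2T^{4H-1}c_H^2\int_0^T f(s,s)\,ds = \int_{[0,T]^2} f\,d\mu_\infty,
\]
which is the desired weak convergence.

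The only delicate point is justifying the exchange of limit and infinite $k$-sum; this is where the assumption $H<3/4$ implicitly enters through $c_H^2<\infty$. Everything else is routine: uniform continuity of $f$ on the compact $[0,T]^2$ handles the shift-by-$Tk/n$ inside $\Phi_{n,k}(f)$ for each fixed $k$, and the uniform bound $\|f\|_\infty$ supplies the dominating function. I do not expect a serious obstacle here.
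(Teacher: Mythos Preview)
Your argument is correct and complete. The paper itself omits the proof of this lemma (it states ``We omit the proof''), so there is no approach to compare against; your route via the lag reindexing $k=j_2-j_1$, the Riemann-sum limit $\Phi_{n,k}(f)\to T^{-1}\int_0^T f(s,s)\,ds$, and dominated convergence in $k$ using $\sum_k\rho_H(k)^2=c_H^2<\infty$ is exactly the standard way to handle such diagonal-concentrating discrete measures, and all steps are justified as you describe.
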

We omit the proof.

\subsection{Lemmas related to the exponents}
%\comm{Add proofs}
Recall that we introduce a class $\cala$ of families of functionals
and a class $\calf$ of families of elements in $\abs\calh$.
See (\ref{220119.1721}) and (\ref{210518.0001}).
Since the following lemmas are elementary,
we omit the proof.
\begin{lemma}\label{201229.1452}
  (1) (a) 
  Let $V$ a nonempty finite set, $v\in V$ and $i\in\bbN$.
  Consider $A=(A_n(j))_{j\in[n]^V, n\in\bbN}\in\cala(V)$ and 
  $\bbf=(\bbf_v)\in\calf(\cbr{v})$.
  Define $A'=(A'_n(j))_{j\in[n]^{V},n\in\bbN}$ by 
  \begin{align*}
    A'_n(j)=
    n^i\Babr{D^i A_n(j),\kerfvn{v}{j_v}^{\otimes i}}_{\calh^{\otimes i}}.
  \end{align*}
  Then $A'$ belongs to $\cala(V)$.

\begin{scrap}
  (1) (a) 
  Let $V$ a nonempty finite set, $\cbr{v_0}\notin V$ a singleton and $i\in\bbN$.
  Consider $A=(A_n(j))_{j\in[n]^V, n\in\bbN}\in\cala(V)$ and 
  $\bbf=(\bbf_v)_{v\in\cbr{v_0}}\in\calf(\cbr{v_0})$.
  Define $A'=(A'_n(k))_{k\in[n]^{V\sqcup\cbr{v_0}},n\in\bbN}$ by 
  \begin{align*}
    A'_n(k)=
    n^i\Babr{D^i A_n(k_{V}), f_{v_0,n;k_{v_0}}^{\otimes i}}_{\calh^{\otimes i}}.
  \end{align*}
  Then $A'$ belongs to $\cala(V\sqcup\cbr{v_0})$.

  (b-1) Let $V$ and $V'$ two disjoint nonempty finite sets.
  Consider $A\in\cala(V)$ and $A'\in\cala(V')$.
  The family of functionals $A''=(A''_n(k))_{k\in[n]^{V\sqcup V'},n\in\bbN}$ defined by 
  \begin{align*}
    A''_n(k) = A_n(k_V) A'_n(k_{V'})
  \end{align*}
  belongs to $\cala(V\sqcup V')$.

\end{scrap}

\item 
(b-1) Let $V\subset V'$ two nonempty finite sets.
Consider $A\in\cala(V)$.
The family of functionals $A'=(A'_n(j))_{j\in[n]^{V'},n\in\bbN}$ defined by 
\begin{align*}
  A'_n(j) = A_n(j_V)
\end{align*}
belongs to $\cala(V')$.

\item 
(b-2) Let $V$ a nonempty finite set.
Consider $A,A'\in\cala(V)$.
The family of functionals $A''=(A''_n(j))_{j\in[n]^{V},n\in\bbN}$ defined by 
\begin{align*}
  A''_n(j) = A_n(j) A'_n(j)
\end{align*}
belongs to $\cala(V)$.

\item
(c) Let $V$ and $V'$ two disjoint nonempty finite sets and $i\in\bbN$.
For $A\in\cala(V)$ and $A'\in\cala(V')$, 
the family of functionals $A''=(A''_n(j))_{j\in[n]^{V\sqcup V'},n\in\bbN}$ defined by 
\begin{align*}
  A''_n(j) = \abr{D^i A_n(j_V), D^i A'_n(j_{V'})}_{\calh^{\otimes i}}
\end{align*}
belongs to $\cala(V\sqcup V')$.

\vspssm\item
(2)
For $i=1,2$, $n\in\bbN$ and $j_1,j_2\in[n]$, consider 
$\kerfvn{i}{j_i}\in\abs\calh$
%$f_{i,n;j_i}\in\abs\calh$
such that $\supp(\kerfvn{i}{j_i})\subset [(j_i-1)T/n, j_iT/n]$ 
%such that $\supp(f_{i,n;j_i})\subset [(j_i-1)T/n, j_iT/n]$ 
and that there exists a constant $C>0$ such that
\begin{align*}
  \max_{i=1,2} \sup_{n\in\bbN}
    \sup_{j_i\in[n]} \sup_{t\in[0,T]} \abs{\kerfvn{i}{j_i}(t)}
  \leq C.
\end{align*}
Then, the following bound holds:
\begin{align}\label{211019.2220}
  \sup_{n\in\bbN}\sup_{j_1,j_2\in[n]}
  \abs{\frac{\abr{\kerfvn{1}{j_1}, \kerfvn{2}{j_2}}}{\beta_n\rbr{j_1,j_2}}}
  %\abs{\frac{\langle f_{1,n;j_1}, f_{2,n;j_2}\rangle}{\beta_n\rbr{j_1,j_2}}}
  \leq C^2.
\end{align}
\end{lemma}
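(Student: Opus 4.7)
The plan is to verify each closure property of the class $\cala(V)$ by directly checking the three defining conditions (Malliavin smoothness, $\abs{\calh}^{\otimes i}$-valuedness of derivatives, and the uniform sup-norm-in-$L^p$ bound (\ref{220119.1721})), and to prove (2) by a direct kernel-domination argument.

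For (1)(a), the crux is the scaling with $n^i$. Writing the pairing in the $\abs{\calh}$-sense, for any $k \in \bbZ_{\geq 0}$ one has the representation
\begin{align*}
  D^k_{r_1,\ldots,r_k} A'_n(j)
  = n^i \alpha_H^i \int_{[0,T]^{2i}}
  \rbr{D^{i+k}_{s, r_1,\ldots,r_k} A_n(j)}\,
  \kerfvn{v}{j_v}^{\otimes i}(t)
  \prod_{\ell=1}^i \abs{s_\ell - t_\ell}^{2H-2}\, ds\, dt.
\end{align*}
Using (\ref{210518.0001}) to bound $\abs{\kerfvn{v}{j_v}(t)} \leq C_\bbf 1_{n,j_v}(t)$ and the fact that $\sup_{t\in[0,T]} \int_0^T \abs{s-t}^{2H-2} ds$ is finite, the single-coordinate integral $\int\int 1_{n,j_v}(t)\abs{s-t}^{2H-2}\,ds\,dt$ is of order $n^{-1}$; thus the $i$-fold product is of order $n^{-i}$ and cancels the prefactor $n^i$. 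Taking $L^p$-norms inside the integral (by the $L^p$-triangle inequality) and invoking the uniform pointwise sup-in-$L^p$ bound of $D^{i+k} A_n(j)$ from the definition of $\cala(V)$ yields both the uniform bound on $\snorm{A'_n(j)}_{L^p}$ and on the $L^p$-norm of the pointwise version $D^k_{r_1,\ldots,r_k} A'_n(j)$, confirming $A' \in \cala(V)$.

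For (1)(b-1), (b-2), and (c), everything is essentially formal. (b-1) follows because a family of random variables that is bounded in $\bbD^{k,p}$ uniformly in $j\in[n]^V$ is a fortiori bounded uniformly in $j\in[n]^{V'}$. (b-2) follows from the Leibniz rule for $D^k(A_n A'_n)$ together with H\"older's inequality, which turns a product of elements in $\cala(\cdot)$ into another such element. (c) uses the Cauchy--Schwarz inequality in $\calh^{\otimes i}$ to estimate
\begin{align*}
  \abs{A''_n(j)} \leq \bnorm{D^i A_n(j_V)}_{\calh^{\otimes i}} \bnorm{D^i A'_n(j_{V'})}_{\calh^{\otimes i}},
\end{align*}
and similarly for derivatives $D^k A''_n(j)$ via the Leibniz rule; Lemma \ref{220412.1015} converts the $\calh^{\otimes i}$-norms to $L^{1/H}$-norms of the pointwise representatives, which are uniformly bounded by hypothesis. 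The $\abs{\calh}^{\otimes i}$-valuedness of the derivatives in each case follows from the same domination arguments.

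For (2), the argument is a one-line kernel comparison. Since $\abs{\kerfvn{i}{j_i}(t)} \leq C\, 1_{n,j_i}(t)$ and the integral kernel $\abs{s-t}^{2H-2}$ is nonnegative,
\begin{align*}
  \babs{\abr{\kerfvn{1}{j_1}, \kerfvn{2}{j_2}}_{\calh}}
  &\leq \alpha_H \int_{[0,T]^2} \babs{\kerfvn{1}{j_1}(s)}\, \babs{\kerfvn{2}{j_2}(t)}\, \abs{s-t}^{2H-2}\, ds\, dt \\
  &\leq C^2 \alpha_H \int_{[0,T]^2} 1_{n,j_1}(s)\, 1_{n,j_2}(t)\, \abs{s-t}^{2H-2}\, ds\, dt
  = C^2 \beta_n(j_1, j_2),
\end{align*}
and division by the strictly positive quantity $\beta_n(j_1, j_2)$ gives (\ref{211019.2220}).

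The only step with any content is the $n^i$ scaling verification in (1)(a); the rest is routine. I anticipate no serious obstacle, which is consistent with the author's remark that the proof is elementary and omitted.
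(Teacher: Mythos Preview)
Your sketch is correct and matches what the paper intends: the authors explicitly state ``Since the following lemmas are elementary, we omit the proof,'' so there is no proof in the paper to compare against. Your handling of the only non-trivial point---the $n^i$ scaling in (1)(a) via the bound $\int_{I_{j_v}}\int_0^T |s-t|^{2H-2}\,ds\,dt = O(n^{-1})$---is exactly the computation one expects, and your kernel-domination argument for (2) is the natural one.
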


\begin{lemma}\label{211031.2620}
  Consider a nonempty finite set $V$, 
  a function $\vertWt:V\to \bbZ_{\geq0}$ and 
  $f_v\in \calh$ for each $v\in V$.
  We set 
  \begin{align*}
  \Pi(\vertWt)=
  \cbr{\pi:p(V)\to\bbZ_{\geq0}\mid
  \vertWt(v)\geq\pi_v \tforsm v\in V}
  \end{align*}
  with
  $\pi_v = \sum_{v'\in V,v'\neq v}\pi([v,v'])$. % for $v\in V$.
  We write
  $\bar\pi = \sum_{[v,v']\in p(V_{c_1})}\pi([v,v'])$ and 
  $\bar\vertwtlow = \sum_{v\in V}\vertWt(v)$.

  Then the following equality holds:
  \begin{align*}
    &\prod_{v\in V} I_{\vertWt(v)}(f_v^{\otimes \vertWt(v)})
    = \prod_{v\in V} \delta^{\vertWt(v)}(f_v^{\otimes \vertWt(v)})
    \\=&
    \sum_{\pi\in\Pi(\vertWt)} c(\pi)\,
      \delta^{(\bar\vertwtlow - 2\bar\pi)}
      %\delta^{(\bar\vertwtlow - 2\sum_{[v,v']\in p(V)}\pi([v,v']))}
      \brbr{\subotimes{v\in V} (f_v^{\otimes(\vertwtlow_v - \pi_{v})})}\;
      \prod_{[v,v']\in p(V)} \abr{f_{v}, f_{v'}}^{\pi([v,v'])},
  \end{align*}
  where $c(\pi)$ is a integer-valued constant.
  We read $I_{q}(f^{\otimes q})= 1$ if $q=0$ and 
  $\delta^{(\bar\vertwtlow - 2\bar\pi)}
  \brbr{\subotimes{v\in V} (f_v^{\otimes(\vertwtlow_v - \pi_{v})})}=1$
  if $\bar\vertwtlow - 2\bar\pi=0$ by convention.
  When $\abs{V}=1$, hence $p(V)=\emptyset$, 
  we consider $\Pi(\vertWt)$ is a singleton 
  with the function $\pi:\emptyset\to\bbZ_{\geq0}$, 
  and read $\pi_v=0$ for $\cbr{v}=V$ and $\bar\pi=0$.
\end{lemma}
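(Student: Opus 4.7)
The first equality is immediate: since each $f_v^{\otimes \vertWt(v)}$ is a symmetric tensor in $\calh^{\otimes \vertWt(v)}$, we have $I_{\vertWt(v)}(f_v^{\otimes \vertWt(v)}) = \delta^{\vertWt(v)}(f_v^{\otimes \vertWt(v)})$ by the standard identification of multiple Wiener integrals with iterated Skorohod integrals on symmetric tensors. The substance of the lemma is therefore the second equality, and the plan is to prove it by induction on $m := |V|$. For the base case $m = 1$, $p(V) = \emptyset$, so $\Pi(\vertWt)$ consists of the single empty function, for which $\bar\pi = 0$ and $\pi_v = 0$; the right-hand side reduces, with $c(\emptyset) = 1$, to $\delta^{\vertWt(v)}(f_v^{\otimes \vertWt(v)})$, matching the left-hand side by convention.

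For the inductive step, I would fix $v_0 \in V$ and set $V' = V \setminus \{v_0\}$. Applying the inductive hypothesis to the product over $V'$ expands it as a linear combination, indexed by $\pi' \in \Pi(\vertWt|_{V'})$, of terms of the form
\begin{align*}
  c(\pi')\,\delta^{N'}\!\Brbr{\subotimes{v \in V'} f_v^{\otimes(\vertWt(v) - \pi'_v)}} \prod_{[v,v'] \in p(V')} \abr{f_v, f_{v'}}^{\pi'([v,v'])},
\end{align*}
with $N' = \sum_{v \in V'}(\vertWt(v) - \pi'_v)$. Each such term would then be multiplied by $\delta^{\vertWt(v_0)}(f_{v_0}^{\otimes \vertWt(v_0)})$ via the classical binary product formula for multiple Wiener integrals (Proposition 1.1.3 of \cite{nualart2006malliavin}),
\begin{align*}
  I_p(F)\, I_q(G) = \sum_{r=0}^{p \wedge q} r!\, \tbinom{p}{r}\tbinom{q}{r}\, I_{p+q-2r}(F \tilde\otimes_r G),
\end{align*}
applied with $F = \subotimes{v \in V'} f_v^{\otimes(\vertWt(v) - \pi'_v)}$ and $G = f_{v_0}^{\otimes \vertWt(v_0)}$. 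By the multinomial theorem, each $r$-fold contraction $F \tilde\otimes_r G$ expands as a sum over tuples $(k_v)_{v \in V'} \in \bbZ_{\geq0}^{V'}$ with $\sum_v k_v = r$ and $k_v \leq \vertWt(v) - \pi'_v$, with each summand proportional to $\prod_{v \in V'} \abr{f_v, f_{v_0}}^{k_v}$ and to the tensor in which the $f_v$-multiplicity is reduced by $k_v$ (for $v \in V'$) and the $f_{v_0}$-multiplicity by $r$. Extending $\pi'$ to $\pi \in \Pi(\vertWt)$ by $\pi([v,v_0]) = k_v$ for $v \in V'$, one has $\bar\pi = \bar\pi' + r$ and $\pi_{v_0} = r$, so the surviving divergence has the correct order $\bar\vertwtlow - 2\bar\pi$ and acts on $\subotimes{v \in V} f_v^{\otimes(\vertWt(v) - \pi_v)}$, as asserted.

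The only technical care, rather than a genuine obstacle, lies in the combinatorial bookkeeping to verify that the two-stage sum over $(\pi', (k_v))$ is in bijection with the one-stage sum over $\pi \in \Pi(\vertWt)$, and that the resulting integer coefficient $c(\pi)$, obtained by multiplying the inductive coefficient $c(\pi')$ by the binomial $r!\tbinom{N'}{r}\tbinom{\vertWt(v_0)}{r}$ and the multinomial factor $r!/\prod_v k_v!$ governing the placement of the $r$ contractions among the factors of $F$, is integer-valued. Since the lemma asserts only the existence of such an integer $c(\pi)$, no explicit identification is required; a direct count after completing the induction yields $c(\pi) = \prod_{v \in V} \vertWt(v)! \big/ \bigl(\prod_{v \in V}(\vertWt(v) - \pi_v)! \cdot \prod_{[v,v'] \in p(V)} \pi([v,v'])!\bigr)$, but this formula is not needed for the statement. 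The convention reading $\delta^0(\cdot) = 1$ and $I_0(\cdot) = 1$ handles the degenerate endpoint $\bar\vertwtlow - 2\bar\pi = 0$ seamlessly within the inductive argument.
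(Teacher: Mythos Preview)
Your proof is correct. The paper itself omits the proof of this lemma, stating just before Lemma~\ref{201229.1452} that ``Since the following lemmas are elementary, we omit the proof,'' and Lemma~\ref{211031.2620} falls under that blanket omission. Your inductive argument on $|V|$, peeling off one vertex and invoking the binary product formula $I_p(F)\,I_q(G)=\sum_{r} r!\binom{p}{r}\binom{q}{r} I_{p+q-2r}(F\tilde\otimes_r G)$, is precisely the standard route one would take to establish such a multilinear product identity, and the bijection $(\pi',(k_v)_{v\in V'})\leftrightarrow \pi$ you describe is exactly the right bookkeeping device. The one point worth making explicit is that the tensor $\subotimes{v\in V'} f_v^{\otimes(\vertWt(v)-\pi'_v)}$ is not symmetric, so to apply the product formula you are implicitly using $\delta^{N'}(h)=I_{N'}(\tilde h)$ and then computing the contraction of the symmetrization; the multinomial expansion you invoke is what results from averaging the $r$ contracted slots over all positions of $\tilde F$. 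Your explicit formula for $c(\pi)$ is also correct (it reduces to $r!\binom{p}{r}\binom{q}{r}$ in the binary case), though as you note the lemma does not require it.
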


\begin{lemma} \label{211120.1740}
  Let $V$ a finite set.
  Suppose $(F_v)_{v\in V}$ such that 
  $F_v\in\bbD^\infty$ and
  can be written as 
  $D^iF_v = (D^i_{s_i,..,s_1}F_v)_{s_1,..,s_i\in[0,T]}$
  for $i\in\bbZ_{\geq1}$.
  Then for $i\in\bbZ_{\geq1}$,
  the $i$-th derivative of $\prod_{v\in V} F_v$ can be written as:
  \begin{align}
    D^i \prod_{v\in V} F_v
    =
    \rbr{
      \sum_{\lambda\in V^{i}} \prod_{v\in V} 
      D^{\ilamv}_{s_{\lambda,v}} F_v
    }_{\son,..,s_i\in[0,T]},
  \end{align}
  where the above summation runs through 
  the set $V^i$ of all the mappings $\lambda$ from $\cbr{1,..,i}$ to $V$,
  and %for $v\in V$ 
  we denote 
  $\ilamv:=\abs{\lambda^{-1}(\cbr{v})}$.
  $D^{\ilamv}_{s_{\lambda,v}} F_v$ reads $F_v$ if $\ilamv=0$, and 
  $D^{\ilamv}_{s_{\lambda,v}} F_v$ reads
  $D^{\ilamv}_{s_{k_1},..,s_{k_{\ilamv}}} F_v$
  if $\lambda^{-1}(\cbr{v})=\cbr{k_1,..,k_{\ilamv}}\subset\cbr{1,..,i}$.
\end{lemma}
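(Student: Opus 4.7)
My plan is to prove the formula by induction on $i \geq 1$, using only the first-order Leibniz rule for the Malliavin derivative $D(FG) = (DF)G + F(DG)$ (extended pointwise in the parameter to products $\prod_{v \in V} F_v$).

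For the base case $i=1$, the Leibniz rule yields
\begin{align*}
D_{s_1}\prod_{v \in V} F_v = \sum_{v_0 \in V} (D_{s_1} F_{v_0}) \prod_{v \neq v_0} F_v,
\end{align*}
and this matches the right-hand side of the claimed formula: the summation index set $V^{1}$ consists of maps $\lambda : \{1\} \to V$ which are in bijection with elements $v_0 \in V$ via $\lambda(1) = v_0$. For such $\lambda$, $i_{\lambda,v_0} = 1$ and $i_{\lambda,v} = 0$ for $v \neq v_0$, so $D^{i_{\lambda,v_0}}_{s_{\lambda,v_0}} F_{v_0} = D_{s_1} F_{v_0}$ and the other factors reduce to $F_v$.

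For the inductive step, assume the formula holds at level $i-1$. Apply $D_{s_i}$ to both sides. On the right-hand side, for each fixed $\lambda' \in V^{i-1}$ the Leibniz rule gives
\begin{align*}
D_{s_i} \prod_{v \in V} D^{i_{\lambda',v}}_{s_{\lambda',v}} F_v
= \sum_{v_0 \in V} \Bigl( D_{s_i} D^{i_{\lambda',v_0}}_{s_{\lambda',v_0}} F_{v_0} \Bigr) \prod_{v \neq v_0} D^{i_{\lambda',v}}_{s_{\lambda',v}} F_v.
\end{align*}
Each pair $(\lambda', v_0) \in V^{i-1} \times V$ corresponds uniquely to the map $\lambda \in V^{i}$ defined by $\lambda|_{\{1,\dots,i-1\}} = \lambda'$ and $\lambda(i) = v_0$, and this correspondence is a bijection $V^{i-1} \times V \to V^{i}$. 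Under it, $i_{\lambda, v_0} = i_{\lambda',v_0} + 1$ and $i_{\lambda,v} = i_{\lambda',v}$ for $v \neq v_0$; the multi-parameter $s_{\lambda, v_0}$ is the concatenation of $s_{\lambda',v_0}$ with the extra variable $s_i$, so that $D_{s_i} D^{i_{\lambda',v_0}}_{s_{\lambda',v_0}} F_{v_0} = D^{i_{\lambda, v_0}}_{s_{\lambda, v_0}} F_{v_0}$. Summing over $(\lambda', v_0)$ therefore reproduces the right-hand side at level $i$, completing the induction.

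The argument is essentially combinatorial bookkeeping once the first-order Leibniz rule is granted, so no step should present a genuine obstacle; the only point requiring care is checking that the tuple $s_{\lambda,v_0}$ associated with the extended map $\lambda$ is exactly the concatenation of the tuple associated with $\lambda'$ and the new variable $s_i$, which follows directly from the convention that $\lambda^{-1}(\{v_0\})$ is listed in increasing order of its elements in $\{1,\dots,i\}$.
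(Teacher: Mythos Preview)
Your proof is correct. The paper actually omits the proof of this lemma entirely, stating just before it that ``Since the following lemmas are elementary, we omit the proof.'' Your induction on $i$ via the first-order Leibniz rule, together with the bijection $V^{i-1}\times V \to V^{i}$ given by $(\lambda',v_0)\mapsto \lambda$ with $\lambda|_{\{1,\dots,i-1\}}=\lambda'$ and $\lambda(i)=v_0$, is exactly the standard argument one would supply and is what the authors have in mind. One small remark: the ordering issue you raise at the end is moot here, since for $F_v\in\bbD^\infty$ the kernel $D^k F_v$ is symmetric in its $k$ arguments, so the notation $D^{i_{\lambda,v}}_{s_{\lambda,v}}F_v$ is unambiguous regardless of how the indices in $\lambda^{-1}(\{v\})$ are listed.
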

%$\lambda[v]:=\lambda^{-1}(\cbr{v})$ and
%$D^{|\lambda[v]|}_{s_{\lambda[v]}}$ means
%$D^{i_v}_{s_{k_1},..,s_{k_{i_v}}}$ 

%\bibliographystyle{plain}
%\bibliography{bibtex20080401}
% BibTeX users please use one of
%\bibliographystyle{spbasic}      % basic style, author-year citations
%\bibliographystyle{spmpsci}      % mathematics and physical sciences
%\bibliographystyle{spphys}       % APS-like style for physics
%\bibliography{bibtex-20200418-20200531-20201001}   % name your BibTeX data base
%\bibliography{../../_general/_bib/bibtex-20200418-20200531-20201001} 
%\bibliography{../../../_bib/bibtex220423}
%\bibliography{../../../_bib/bibtex220423-220512}
\bibliography{bibtex220423-220512}
\end{document}
%%%%%%%%%%%%%%%%%%%%%%%%%%%%%%%%%%%%%%
%%%%%%%%%%%%%%%%%%%%%%%%%%%%%%%%%%%%%%
%%%%%%%%%%%%%%%%%%%%%%%%%%%%%%%%%%%%%%
%%%%%%%%%%%%%%%%%%%%%%%%%%%%%%%%%%%%%%
%%%%%%%%%%%%%%%%%%%%%%%%%%%%%%%%%%%%%%